\def\al{\alpha}
\def\R{\mathbb R}
\def\S{\mathbb S}
\def\N{\mathbb N}
\def\C{\mathcal C}
\def\D{\mathscr D}
\def\M{\mathcal M}
\def\s{\sigma}
\def \fe {\varphi_{\alpha}}
\def \gl {\psi_{\alpha}}
\def\a{\alpha}
\def\l{\ell}
\def\k{\kappa}
\def\d{\mathrm{d}}
\def\Q{\mathcal{Q}}
\newtheorem{theo}{Theorem}[section]
\newtheorem{prop}[theo]{Proposition}
\newtheorem{cor}[theo]{Corollary}
\newtheorem{lem}[theo]{Lemma}
\newtheorem{rmq}[theo]{Remark}
\def \leq {\leqslant}
\def \geq {\geqslant}
\def \RF {\mathcal{RF}}
\def \F {\mathcal{F}}
\numberwithin{equation}{section}
\def\beq{\begin{equation}}
\def\eeq{\end{equation}}
\def\beqn{\begin{equation*}}
\def\eeqn{\end{equation*}}
\def\bea{\begin{eqnarray}}
\def\eea{\end{eqnarray}}
\def\bean{\begin{eqnarray*}}
\def\eean{\end{eqnarray*}}
\def\bary{\begin{array}}
\def\eary{\end{array}}
\title[On ballistic annihilation]
{\textbf{Uniqueness of the self-similar profile for a kinetic annihilation model
}}
\author{Véronique Bagland \& Bertrand Lods }
\address{\textbf{Véronique Bagland}, Clermont Universit\'e, Universit\'{e} Blaise Pascal, Laboratoire de Math\'{e}matiques, CNRS UMR 6620,  BP 10448, F-63000 Clermont-Ferrand,
France.}\email{Veronique.Bagland@math.univ-bpclermont.fr}
\address{\textbf{Bertrand Lods}, Universit\`{a} degli
Studi di Torino \& Collegio Carlo Alberto, Department of Economics and Statistics, Corso Unione Sovietica, 218/bis, 10134 Torino, Italy.}\email{lods@econ.unito.it}
\begin{document}

\maketitle

\begin{abstract}
We prove the uniqueness of the self-similar profile solution for a modified Boltzmann equation describing probabilistic ballistic annihilation. Such a model describes a system of hard spheres such that, whenever two particles meet, they either annihilate with probability $\al \in (0,1)$ or they undergo an elastic collision with probability $1-\al$. The existence of a self-similar profile for $\al$ smaller than an explicit threshold value $\underline{\alpha}_1$ has been obtained in our previous contribution \cite{jde}. We complement here our analysis of such a model by showing that, for some $\alpha^{\sharp} $ explicit, the self-similar profile is unique for $\al \in (0,\alpha^{\sharp})$. \\

\noindent {\sc Keywords:} Boltzmann equation, ballistic annihilation, self-similar profile, uniqueness.
\end{abstract}

\tableofcontents

\section{Introduction}

We investigate in the present paper a kinetic model, recently introduced in  \cite{Ben-Naim,coppex04,coppex05,Maynar1,Maynar2,Trizac}, which describes the so-called probabilistic ballistic annihilation of hard-spheres. In  such a description, a system of (elastic) hard spheres interact according to the following mechanism: they freely move between collisions while, whenever two particles meet, they either annihilate with probability $\alpha \in (0, 1)$ or they undergo an elastic collision with probability $1-\alpha$. In the spatially homogeneous situation, the  velocity distribution $f(t,v)$  of particles with velocity $v \in \R^d$ $(d \geq 2)$ at time $t \geq 0$ satisfies the following
\begin{equation}\label{BE}
\partial_t f(t,v)=(1-\alpha)\Q(f,f)(t,v) -\alpha \Q^-(f,f)(t,v)
\end{equation}
where $\Q$ is the quadratic Boltzmann collision operator defined by
 \begin{equation*}\label{bilin}
 \Q(g,f)(v) =\int_{\R^d \times \S^{d-1}} |v-v_*|
         \Big( g(v') f(v_*') - g(v) f(v_*) \Big) \,\frac{\d v_*\,\d\sigma}{|\mathbb{S}^{d-1}|},
 \end{equation*}
 where the post-collisional velocities $v'$ and $v'_*$ are parametrized by
 \begin{equation*}\label{eq:rel:vit}
 v'  =  \frac{v+v_*}{2} + \frac{|v-v_*|}{2}\;\s,   \qquad
v'_* =  \frac{v+v_*}{2} - \frac{|v-v_*|}{2}\;\s,   \qquad \s  \in  {\S}^{d-1}.
 \end{equation*}
The above collision operator  $\Q(g,f)$ splits as $\Q(g,f)=\Q^+(g,f)-\Q^-(g,f)$ where the gain part $\Q^+$ is given by
 $$ \Q^+(g,f)(v) =  C_d \int _{\R^d \times {\S}^{d-1}}|v-v_*| f(v'_*) g(v')\, \d v_* \,  \d\sigma$$
with $C_d=1/|\mathbb{S}^{d-1}|$  while the loss part $\Q^-$ is defined as
 $$\Q^-(g,f)(v)=g(v)L(f)(v), \qquad \text{ with } \qquad L(f)(v)=\int_{\R^d}|v-v_*|f(v_*)\d v_*.$$
For the sequel of the paper, we shall define the annihilation operator
$$\mathbb{B}_\al(f,f):=(1-\alpha)\Q(f,f) -\alpha \Q^-(f,f)=(1-\alpha)\Q^+(f,f) -\Q^-(f,f).$$

We 	refer to \cite{jde} and the references therein for a more detailed description of the above annihilation model. Throughout the paper we shall use the notation
$\langle \cdot \rangle = \sqrt{1+|\cdot|^2}$. We denote, for any
$\eta \in \R$, the Banach space
\[
     L^1_\eta(\R^d) = \left\{f: \R^d \to \R \hbox{ measurable} \, ; \; \;
     \| f \|_{L^1_\eta} := \int_{\R^d} | f (v) | \, \langle v \rangle^\eta \d v
     < + \infty \right\}.
\]

\subsection{Self-similar solutions}
From the mathematical viewpoint, the well-posedness of Equation \eqref{BE} has been studied in our previous contribution \cite{jde} where it is  proved that if $f_0\in L^1_{3}(\R^d)$ is a nonnegative distribution function, then,  there exists a unique nonnegative weak solution 
$$f\in\C([0,\infty);L_2^1(\R^d)) \cap L^1_{\mathrm{loc}}((0,\infty);L^1_{3}(\R^d))$$
to \eqref{BE} such that $f(0,\cdot)=f_0$. Moreover, multiplying \eqref{BE} by $1$ or $|v|^2$ and integrating with respect to $v$, one  obtains
$$\dfrac{\d}{\d t}\int_{\R^d}f(t,v)\d v=-\alpha \int_{\R^d}\Q^-(f,f)(t,v)\d v \leq 0$$
and
$$\dfrac{\d}{\d t}\int_{\R^d}|v|^2\,f(t,v)\d v=-\alpha \int_{\R^d} |v|^2\Q^-(f,f)(t,v)\d v \leq 0.$$
It is clear therefore that \eqref{BE} does not admit any non trivial steady
solution and, still formally, $f(t,v) \to 0$ as $t \to \infty.$
According to physicists, solutions to \eqref{BE}
should approach for large times a self-similar solution $F=F_{\alpha}$ (depending \textit{a priori} on the parameter $\alpha$) to \eqref{BE} of
the form
\begin{equation}\label{scalingfpsi}F_{\alpha}(t,v)=\lambda(t)\,\psi_\alpha(\beta(t)v)\end{equation}
for some suitable scaled functions $\lambda(t), \beta(t) \geq 0$  
and some nonnegative function  $\psi_\a=\psi_\a(\xi)$ such that
\beq\label{bas}
 \psi_\a \ \equiv \hspace{-3.5mm} / \hspace{2mm} 0 \qquad \mbox{ and } \qquad
\int_{\R^d} \psi_\alpha(\xi) \, (1+|\xi|^2)\, \d\xi <\infty.
\eeq
Notice that, as observed in \cite{jde}, $F_{\alpha}(t,v)$ is a solution to \eqref{BE} if and
only if $\psi_\alpha(\xi)$ is a solution to the rescaled problem
{$$\dfrac{\dot{\lambda}(t)\beta^{d+1}(t)}{\lambda^2(t)}\psi_\alpha(\xi)
+\dfrac{\dot{\beta}(t)\beta^{d}(t)}{\lambda(t)}\xi \cdot \nabla_\xi \psi_\alpha(\xi)
=\mathbb{B}_{\alpha}(\psi_\alpha,\psi_\alpha)(\xi)$$}
where the dot symbol stands for the time derivative. The profile $\psi_\alpha$ being independent of time $t$, there should exist some constants $\mathbf{A}=\mathbf{A}_{\psi_\alpha}$ and $\mathbf{B}=\mathbf{B}_{\psi_{\alpha}}$  such that
$\mathbf{A}=\frac{\dot{\lambda}(t)\beta^{d+\gamma}(t)}{\lambda^2(t)},$ and $\mathbf{B}=\frac{\dot{\beta}(t)\beta^{d+\gamma-1}(t)}{\lambda(t)}$.
Thereby, $\psi_\alpha$ is a solution to
\begin{equation*}
\mathbf{A}\psi_\alpha(\xi)+\mathbf{B}\xi \cdot \nabla_\xi \psi_\alpha(\xi)
=\mathbb{B}_{\alpha}(\psi_\alpha,\psi_\alpha)(\xi).
\end{equation*}
The coefficients $\mathbf{A}$ and $\mathbf{B}$ can explicitly be expressed in terms of the profile $\psi_{\alpha}$. Indeed, integrating first the above stationary problem  with respect to $\xi$ and then multiplying  it by $|\xi|^2$ and integrating again with respect to $\xi$ one sees that \eqref{bas} implies that
$$\mathbf{A}=-\frac{\alpha}{2} \int_{\R^d}
\left(\frac{d+2}{\int_{\R^d} \psi_\alpha(\xi_*)\, \d\xi_*}
-\frac{d\,|\xi|^2}{\int_{\R^d} \psi_\alpha(\xi_*)\, |\xi_*|^2\, \d\xi_*}\right)
\Q^-(\psi_\alpha ,\psi_\alpha )(\xi)\d \xi$$
and
$$\mathbf{B} =-\frac{\alpha}{2}\int_{\R^d}
\left(\frac{1}{\int_{\R^d} \psi_\alpha(\xi_*)\, \d\xi_*}
-\frac{|\xi|^2}{\int_{\R^d} \psi_\alpha(\xi_*)\, |\xi_*|^2\, \d\xi_*}\right)
\Q^-(\psi_\alpha ,\psi_\alpha )(\xi)\d \xi.$$

It was the main purpose of our previous contribution \cite{jde}, to prove the existence of an explicit range of parameters for which such a profile exists. Namely, we have

\begin{theo}\label{existence}{\cite{jde}}There exists some explicit threshold value $\underline{\alpha}_1\in (0,1)$ such that, for any $\alpha \in (0,\underline{\alpha}_1)$ the steady problem
\begin{equation}
\label{tauT}
\mathbf{A}_{\psi_\al} \psi_\al(\xi)+\mathbf{B}_{\psi_\al} \xi \cdot \nabla_\xi \psi_\al(\xi)
=\mathbb{B}_\al(\psi_\al,\psi_\al)(\xi)
\end{equation}
admits a nontrivial nonnegative solution $\psi_\al \in L^1_2(\R^d) \cap L^2(\R^d)$ with
\begin{equation}\label{init}\int_{\R^d} \psi_\al(\xi)\left(\begin{array}{c}
                                   1 \\
                                   \xi \\
                                   |\xi|^2
                                 \end{array}\right)\d\xi=\left(\begin{array}{c}
   1 \\
    0 \\
    \frac{d}{2}
\end{array}\right)\end{equation}
and where
\begin{equation}\begin{cases}\label{ABalpha}
\mathbf{A}_{\psi_\al} &=\displaystyle -\dfrac{\alpha}{2} \int_{\R^d}
\left( d+2
-2\,|\xi|^2\right)
\Q^-(\psi_\al ,\psi_\al )(\xi)\d \xi\\
& \\
\mathbf{B}_{\psi_\al} &=\displaystyle -\dfrac{\alpha}{2d}\int_{\R^d}
\left(d
-2|\xi|^2\right)
\Q^-(\psi_\al ,\psi_\al )(\xi)\d \xi.\end{cases}\end{equation}
Moreover, for any $\alpha_\star<\underline{\alpha}_1$ there exists $K > 0$  such that
$$\sup_{\alpha \in (0,\alpha_\star)} \left(\|\psi_\al\|_{L^1_{3}} + \|\psi_\al\|_{L^{2}}\right) \leq K.$$
\end{theo}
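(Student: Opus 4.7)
The plan is to construct the profile $\psi_\alpha$ as a stationary solution of the rescaled kinetic dynamics associated to (\ref{BE}), obtained via a dynamical fixed-point argument of Tykhonov type, in the spirit of the Mischler--Mouhot--Rodriguez Ricard approach for self-similar profiles of inelastic Boltzmann equations, adapted to the present annihilation setting.

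First, I perform a self-similar change of variables. For scaling functions $\lambda(t), \beta(t) > 0$ solving a pair of coupled ODEs driven by the moment functionals $\mathbf{A},\mathbf{B}$ of (\ref{ABalpha}), I set $g(t,\xi) = \lambda(t)^{-1}\beta(t)^{-d} f(t, \beta(t)^{-1}\xi)$, so that $g$ satisfies the normalization (\ref{init}) at every time $t\geq 0$ provided it does at $t=0$. The rescaled function obeys
\begin{equation*}
\partial_t g = \mathbb{B}_\alpha(g,g) - \mathbf{A}_g\, g - \mathbf{B}_g\, \xi \cdot \nabla_\xi g,
\end{equation*}
whose stationary solutions are exactly the profiles $\psi_\alpha$ solving (\ref{tauT}).

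Next, I would work in the convex set
\begin{equation*}
\mathcal{K} = \bigl\{ g \in L^1_3(\R^d) \cap L^2(\R^d)\ :\ g \geq 0,\ g \text{ satisfies } (\ref{init}),\ \|g\|_{L^1_3} \leq M_1,\ \|g\|_{L^2} \leq M_2 \bigr\}
\end{equation*}
for suitable $M_1, M_2 > 0$. To show that the rescaled semigroup $\{S^\alpha_t\}_{t \geq 0}$ leaves $\mathcal{K}$ invariant, two a priori estimates are required: a third-moment bound obtained from a Povzner-type inequality on $\Q^+$ combined with the dissipation provided by $\Q^-$ (the smallness $\mathbf{B}_g = \mathcal{O}(\alpha)$ ensuring that the transport term does not produce moments faster than they are dissipated); and an $L^2$ propagation estimate based on the regularity theory for the gain operator $\Q^+$ (Bouchut--Desvillettes, Lions, Mouhot--Villani). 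Once invariance is established, continuity of $S^\alpha_t$ with respect to the weak-$\ast$ topology of $L^1_2$ together with weak compactness of $\mathcal{K}$ allows the application of Tykhonov's theorem to produce, for each small $t>0$, a fixed point $g^\alpha_t \in \mathcal{K}$ of $S^\alpha_t$. A diagonal argument along $t_n \to 0^+$ then extracts a common fixed point $\psi_\alpha$ of the whole semigroup, i.e., the desired stationary profile; the uniform bound for $\alpha \in (0,\alpha_\star)$ follows by tracking the $\alpha$-dependence of $M_1, M_2$.

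The main obstacle is closing these a priori estimates in the presence of the transport term $\mathbf{B}_g\,\xi\cdot\nabla_\xi g$. This drift is not dissipative and, at the $L^2$ level, it cannot be absorbed by the smoothing provided by $\Q^+$ unless $\mathbf{B}_g$ is small enough; the explicit threshold $\underline{\alpha}_1$ is determined precisely by quantifying this competition. An additional subtlety is that $\mathbf{A}_g$ and $\mathbf{B}_g$ depend nonlinearly on $g$ through the denominators $\int g\,\d\xi$ and $\int |\xi|^2 g\,\d\xi$, so the continuity of $S^\alpha_t$ on $\mathcal{K}$ requires uniform positive lower bounds on these integrals, which are however guaranteed by the normalization (\ref{init}).
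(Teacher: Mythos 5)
Your overall strategy is the one the paper itself recalls in Appendix B from \cite{jde}: rescale to the time-dependent annihilation equation \eqref{BEscaled}, exhibit a convex set left invariant by the rescaled semigroup via Povzner-type moment estimates and $L^2$ propagation based on the smoothing of $\Q^+$, and conclude by a (time-dependent) Tykhonov fixed-point argument. However, as written your construction has two concrete gaps. First, the claim that the normalization \eqref{init} is propagated by the rescaled flow is false for the momentum component: the coefficients $\mathbf{A}_g,\mathbf{B}_g$ are designed so that mass and energy are conserved, but multiplying \eqref{BEscaled} by $\xi_j$ gives $\tfrac{\d}{\d t}\int \xi_j g\,\d\xi = -\bigl(\mathbf{A}_g-(d+1)\mathbf{B}_g\bigr)\int\xi_j g\,\d\xi-\alpha\int \xi_j\,\Q^-(g,g)\,\d\xi$, and $\int \xi_j\,\Q^-(g,g)\,\d\xi=\tfrac12\int\!\!\int(\xi_j+\xi_{*,j})|\xi-\xi_*|\,g\,g_*\,\d\xi\,\d\xi_*$ does not vanish for a general zero-mean $g$ (the elastic part conserves momentum, the annihilation part does not). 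Hence your set $\mathcal{K}$ is not invariant. The paper resolves this by restricting to \emph{isotropic} (radially symmetric) data, a class preserved by the flow, for which zero momentum is automatic; your proof needs this (or an additional drift in the rescaling) to be made correct.

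Second, the invariant set must carry much more than $L^1_3\cap L^2$ bounds: the well-posedness and continuity of the semigroup used in the fixed-point argument require initial data as in \eqref{hypini} (high-order moments, weighted $L^2$ and weighted $\mathbb{H}^1$), and accordingly the invariant set $\mathcal{Z}$ of Appendix B also contains bounds on $\|\psi\|_{L^2_{\frac{d+9}{2}+2\kappa}}$, on moments of order $\max\{\frac{9+d(d-2)}{2}+2\kappa,10+d+4\kappa\}$ and on $\|\psi\|_{\mathbb{H}^1_{\frac{7+d}{2}+\kappa}}$. This is not a cosmetic point: the explicit threshold $\underline{\alpha}_1$ does not come from the $L^2$ estimate alone (which only yields $\underline{\alpha}=\min(\alpha_2,\alpha_0)$, Lemma \ref{evol:L2}), but from closing the weighted $\mathbb{H}^1_q$ propagation estimate of Proposition \ref{propH1q}, where the drift term $\mathbf{B}_\psi\,\xi\cdot\nabla_\xi\psi$ competes with the coercivity constant $C_0$ of $\Q^-$ and forces $\al<\underline{\alpha}_1=\min\{\underline{\alpha},\tfrac{4C_0}{\sqrt d\,(d+6)}\}$. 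So your identification of where the threshold comes from (the $L^2$ level) is misplaced, and without the Sobolev-level estimates the fixed-point scheme cannot be run on your set $\mathcal{K}$. A minor remark: once mass and energy are normalized as in \eqref{init}, the coefficients \eqref{ABalpha} contain no denominators, so the lower bounds you worry about at the end are not an issue.
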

\begin{rmq} We give in the Appendix a sketchy proof of Theorem \ref{existence}, referring to \cite{jde} for details. Notice that the stationary solutions constructed in \cite{jde} are radially symmetric and therefore satisfy the above zero momentum assumption. Note that   we consider here $\alpha  \in (0,\alpha_\star)$ with $\alpha_\star <\underline{\alpha}_1$ in order to get uniform estimates with respect to $\alpha$. In the physical dimension $d=3$, one sees that $\underline{\alpha}_1\leq {2/7}$. 
\end{rmq}

\begin{rmq}
Let us note that here as in \cite{jde}, we only consider profiles satisfying \eqref{init}. Indeed, once we have shown some result of existence or uniqueness for such profiles, we readily get the same result for profiles with arbitrary positive mass and energy and {zero momentum} by a simple rescaling. 
\end{rmq}

Our goal in the present paper is to prove the uniqueness of such a self-similar profile (for a smaller range of the parameters $\alpha$). More precisely, our main result can be formulated as
\begin{theo}\label{theo:main}
There exists some explicit $\alpha^{\sharp} \in (0,\underline{\alpha}_1)$  such that, for any $\alpha \in (0,\alpha^{\sharp})$, the solution $\psi_{\alpha}$ to \eqref{tauT} satisfying \eqref{init} is {\it unique}.
\end{theo}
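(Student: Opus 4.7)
The plan is a perturbation argument around the elastic Maxwellian, exploiting the fact that the coefficients $\mathbf{A}_{\psi_\al}$ and $\mathbf{B}_{\psi_\al}$ in \eqref{ABalpha} carry an overall prefactor $\al$. Let $\psi_\al^{(1)}$ and $\psi_\al^{(2)}$ be two solutions of \eqref{tauT} satisfying \eqref{init}, and set $h_\al := \psi_\al^{(1)} - \psi_\al^{(2)}$. By \eqref{init},
\[
\int_{\R^d} h_\al(\xi)\,\d\xi = 0, \qquad \int_{\R^d} h_\al(\xi)\,\xi\,\d\xi = 0, \qquad \int_{\R^d} h_\al(\xi)\,|\xi|^2\,\d\xi = 0,
\]
so $h_\al$ lies in the orthogonal complement (in the Maxwellian-weighted $L^2$ inner product) of the hydrodynamic kernel of the linearized elastic Boltzmann operator $\mathcal{L} g := 2\Q(\mathcal{M},g)$, where $\mathcal{M}$ denotes the centered Maxwellian with unit mass and kinetic energy $d/2$.

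First I would establish that $\psi_\al \to \mathcal{M}$ in a suitable weighted norm as $\al \to 0^+$. This is a compactness argument: the uniform bounds of Theorem \ref{existence} in $L^1_3 \cap L^2$, together with the fact that $\mathbf{A}_{\psi_\al}, \mathbf{B}_{\psi_\al} = O(\al)$, permit extraction of a limit along any sequence $\al_n \to 0$ which must satisfy $\Q(\psi,\psi) = 0$ with the normalization \eqref{init}; the unique such function being $\mathcal{M}$, the full limit holds. Ideally one obtains a quantitative rate $\|\psi_\al - \mathcal{M}\|_\ast = \varepsilon(\al) \to 0$ in a norm strong enough for the nonlinear estimates below.

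Next, using bilinearity of the (symmetrised) operator $\mathbb{B}_\al$, one writes
\[
\mathbb{B}_\al\bigl(\psi_\al^{(1)},\psi_\al^{(1)}\bigr) - \mathbb{B}_\al\bigl(\psi_\al^{(2)},\psi_\al^{(2)}\bigr) = \mathbb{B}_\al\bigl(\psi_\al^{(1)},h_\al\bigr) + \mathbb{B}_\al\bigl(h_\al,\psi_\al^{(2)}\bigr),
\]
so $h_\al$ satisfies a linear inhomogeneous equation whose leading order as $\al \to 0$ is $\mathcal{L} h_\al = 0$, modulo corrections of size $O(\al)$ or $o(1)$ coming from: (i) the replacement of $\psi_\al^{(i)}$ by $\mathcal{M}$ inside $\mathbb{B}_\al$, producing terms bounded by $\varepsilon(\al)\,\|h_\al\|$; (ii) the scaling terms $\mathbf{A}_{\psi_\al^{(i)}}\psi_\al^{(i)} + \mathbf{B}_{\psi_\al^{(i)}}\,\xi\cdot\nabla_\xi\psi_\al^{(i)}$, each carrying an explicit $\al$ through \eqref{ABalpha}; and (iii) the contributions arising from the differences $\mathbf{A}_{\psi_\al^{(1)}}-\mathbf{A}_{\psi_\al^{(2)}}$ and $\mathbf{B}_{\psi_\al^{(1)}}-\mathbf{B}_{\psi_\al^{(2)}}$, which are linear functionals of $h_\al$ with an explicit prefactor $\al$. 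Testing the resulting equation against $h_\al/\mathcal{M}$ and invoking the classical coercivity $\langle \mathcal{L} g, g\rangle_{\mathcal{M}^{-1}} \leq -\mu_0\|g\|^2$ on the orthogonal of the hydrodynamic modes, all correction terms can be absorbed for $\al$ sufficiently small, yielding $h_\al\equiv 0$ and an explicit definition of $\al^\sharp$.

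The principal obstacle is the transport term $\mathbf{B}_{\psi_\al^{(i)}}\,\xi\cdot\nabla_\xi\psi_\al^{(i)}$: it is unbounded, and controlling it requires either $H^1$-type regularity of the profiles (which must be upgraded from the $L^2$ bound of Theorem \ref{existence} using smoothing properties of $\Q^+$) or an integration by parts in the weak formulation, shifting the derivative onto the Maxwellian-weighted test function. A related subtle point is identifying a norm that simultaneously (a) realises the spectral gap of $\mathcal{L}$, (b) controls $\mathbb{B}_\al(f,g)$ bilinearly in terms of the uniform norms of Theorem \ref{existence}, and (c) is compatible with the qualitative convergence $\psi_\al \to \mathcal{M}$; this is where the explicit value of $\al^\sharp$ will ultimately be pinned down.
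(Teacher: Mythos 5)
Your overall architecture (Boltzmann limit by compactness, equation for the difference $h_\al$ with the linearized operator as leading term and $O(\al)$/$o(1)$ corrections, spectral gap to absorb them) is exactly the paper's strategy, but the functional setting you choose creates a genuine gap. You propose to test the equation for $h_\al$ against $h_\al/\M$ and invoke the coercivity of $\mathcal{L}$ in $L^2(\M^{-1})$. This requires $h_\al\in L^2(\M^{-1})$, i.e.\ Gaussian decay of the profiles, and no such bound is available: the a priori estimates only give \emph{exponential} tails, $\int_{\R^d}\psi_\al(\xi)e^{A|\xi|}\,\d\xi\leq M$ (Proposition \ref{prop:tails}), and for this class of self-similar profiles Gaussian decay is not expected. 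Without membership of $h_\al$ in the Gaussian-weighted space the coercivity inequality cannot even be written, so the absorption step collapses. The paper circumvents precisely this obstruction by working in $\mathcal{X}=L^1(e^{a|\xi|}\d\xi)$ with $a$ small, where the linearized operator $\mathscr{L}$ retains its spectral gap and the invertibility estimate \eqref{XY2} holds (by the enlargement result of \cite{Mo}), where the bilinear continuity \eqref{continueQ1} of $\Q^\pm$ is elementary, and where the difference of two profiles is controlled thanks to the exponential-tail estimate for differences (Proposition \ref{prop:tailsdiff}); the unbounded drift term $\mathbf{B}\,\xi\cdot\nabla$ is then handled through the Sobolev control of differences (Proposition \ref{prop:estimDiff} and Corollary \ref{cor:estimdiff}), which bounds $\|g_\al\|_{\mathbb{W}^{1,1}_1(m_a)}$ by $\|g_\al\|_{L^1(m_a)}$ --- you identify this difficulty but leave it unresolved, and in your $L^2(\M^{-1})$ framework the analogous estimates would again demand Gaussian weights.

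A second, smaller gap concerns the explicitness of $\al^\sharp$. Your first step is a pure compactness argument, hence non-quantitative, and you only remark that ``ideally'' one gets a rate $\varepsilon(\al)$. The paper supplies the missing mechanism a posteriori: once uniqueness is known for small $\al$, the identity $\Q(\M,\M)=0$ yields the nonlinear inequality $\|\psi_\al-\M\|_{\mathcal{Y}}\leq c_1\|\psi_\al-\M\|_{\mathcal{Y}}^2+c_2\al$ (see \eqref{Lexp}), which upgrades the qualitative convergence to the explicit rate $\|\psi_\al-\M\|_{\mathcal{Y}}\leq 2c_2\al$ and hence to an explicit threshold. Some device of this kind is needed in your proof as well if the theorem's claim of an \emph{explicit} $\al^\sharp$ is to be met.
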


\subsection{Strategy of proof and organization of the paper} In all the sequel, for any $\al \in (0,1)$, we denote by $\mathscr{E}_\al$  the set of {\it all  nonnegative solution} $\psi_\al$ to \eqref{tauT} with \eqref{init}. Theorem \ref{existence} asserts that provided the parameter $\alpha$ belongs to $(0,\underline{\alpha}_1)$, the set $\mathscr{E}_\al$ is non empty while our main result, Theorem \ref{theo:main}, states that $\mathscr{E}_{\al}$ reduces to a singleton as soon as $\al$ is small enough.

Our strategy of proof is inspired by a strategy adopted in
\cite{BCL, MiMo2, MiMo3, AloLo3} for the study of driven granular gases associated to different kinds of forcing
terms. The approach is based upon the knowledge of some specific limit problem corresponding to $\alpha \to 0$. 

To be more precise, since $\mathbb{B}_0=\Q$ is the classical Boltzmann operator and because one expects  $\mathbf{A}_{\psi_0}=\mathbf{B}_{\psi_0}=0$, one formally notices that for $\alpha=0$, the set  $\mathscr{E}_0$ reduces to the set of distributions $\psi_0$ satisfying \eqref{init} and such that
$$\Q(\psi_0,\psi_0)=0.$$
It is well-known that the steady solution $\psi_{0}$ is therefore a unique Maxwellian distribution; in other words, one expects  $\mathscr{E}_0$ to reduce  to a singleton: $\mathscr{E}_0=\{\M\}$ where $\M$ is the normalized Maxwellian distribution
\begin{equation}\label{M}
\M(\xi)=\pi^{-\frac{d}{2}} \exp\left(-|\xi|^2\right), \qquad \xi \in \mathbb{R}^d.
\end{equation}
The particular case $\alpha=0$ will be referred to as the ``{\it Boltzmann limit}'' in the sequel. 

As in \cite{BCL, MiMo2, MiMo3, AloLo3}, our strategy is based upon the knowledge of such a ``Boltzmann limit'' and on
quantitative estimates of the difference between solutions $\psi_{\alpha}$ to our
original problem \eqref{tauT} and the equilibrium state in the Boltzmann limit. More precisely, our approach is based upon the following three main steps:

\begin{enumerate}
\item First, we prove that \textit{any solution} $\gl$ to \eqref{tauT} satisfies
$$\lim_{\al \to 0}\gl=\mathcal{M}$$
in some suitable sense. This step consists in finding  a suitable Banach space $ \mathcal{X}$ such that $\gl \in \mathcal{X}$ for any $\al>0$ and $\lim_{\al \to 0}\|\gl-\mathcal{M}\|_{\mathcal{X}}=0$. Notice that the above limit will be deduced from a compactness argument.
\item Using the linearized Boltzmann operator around 
 the limiting Maxwellian $\mathcal{M}$
\begin{equation*}
  \mathscr{L} (h)=\Q(\mathcal{M},h)+\Q(h,\mathcal{M})
\end{equation*}
we prove that, if $\gl$ and $\fe$ are two functions in $\mathscr{E}_{\alpha}$, then, for any $\varepsilon >0$, there exists some threshold value $\widetilde{\alpha}$ such that
\begin{equation}\label{Ld}\|\mathscr{L}(\gl-\fe)\|_{{\mathcal{X}}} \leq \varepsilon\|\gl-\fe\|_{{\mathcal{Y}}} \qquad \forall \alpha \in (0,\widetilde{\alpha})\end{equation}
for  some suitable subspace ${\mathcal{Y}} \subset {\mathcal{X}}.$ The proof of such a step comes from precise {\it a posteriori} estimates on the difference of solutions to \eqref{tauT} and the first step. Using then the spectral properties of the linearized operator $\mathscr{L}$ in ${\mathcal{X}}$, one can deduce that the above inequality \eqref{Ld} implies the existence of some positive constant $C_{2} > 0$ such that
$$\|\fe-\gl\|_{{\mathcal{Y}}} \leq C_{2}\al \|\fe-\gl\|_{{\mathcal{Y}}}, \qquad \al \in (0,\widetilde{\al})$$
from which we deduce directly that $\fe=\gl$ provided $\al$ is small enough. This proves Theorem \ref{theo:main}. However, since the first step of this strategy is based upon a compactness argument, the approach as described is non quantitative and no indication on the parameter $\alpha^{\sharp}$ is available at this stage.
\item The final step in our strategy is to provide a quantitative version of the first step. This will be achieved, as in \cite{AloLo3}, by providing a suitable nonlinear relation involving the norm $\|\psi_{\al}-\M\|_{\mathcal{Y}}$ for any $\al.$ 
\end{enumerate}

To prove the first step of the above strategy, one has first to identify a Banach space $\mathcal{X}$ on which uniform estimates are available for any solution  $\psi_{\al}$ to \eqref{tauT}. We can already anticipate that $\mathcal{X}$ is a weighted $L^{1}$-space with exponential weight:
$$\mathcal{X}=L^{1}(\R^{d}, \exp(a|v|)\,\d v), \qquad a > 0$$
and the determination of such a space will be deduced from uniform {\it a posteriori} estimates on elements of $\mathscr{E}_{\al}.$ Such estimates are described in Section \ref{sec:apost} and rely on a careful study of the moments of solutions to \eqref{tauT}. Concerning the convergence of any solution $\psi_{\al} \in \mathscr{E}_{\al}$ towards the Maxwellian $\M$, we first prove that
$$\lim_{\al \to 0}\|\psi_{\al}-\M\|_{\mathbb{H}^{m}_{k}}=0$$
where $\mathbb{H}^{m}_{k}$ is a suitable (weighted) Sobolev space (see Notations hereafter). The proof of such a convergence result is, as already mentioned, based upon a compactness argument and requires a careful investigation of the regularity properties of the solution to \eqref{tauT}.   Our approach for the study of regularity of solutions to \eqref{tauT} is similar to that introduced in \cite{AloLo3} for granular gases and differs from the related contributions on the matter \cite{MiMo3,MiMo2} where the regularity of steady solutions is deduced from the properties of the time-dependent problem. In contrast with these results, our methodology is direct and \textit{relies only on the steady equation} \eqref{tauT} and the crucial estimate is a regularity result for $\Q^{+}(f,g)$ (see Theorem \ref{regularite}). By using standard interpolation inequalities, we complete the first step of our program. 
Concerning the second step (2), it uses in a crucial way some control of the difference of two solutions $\gl,\fe \in \mathscr{E}_{\alpha}$ in some Sobolev norms (see Proposition \ref{prop:estimDiff}). Again, such estimates rely on the regularity properties of the collision operator $\Q^{+}$. Notice also that the spectral properties of the linearized operator $\mathscr{L}$ have been investigated recently in \cite{Mo} where it is shown that the linearized operator shares the same spectral features in the weighted $L^{1}$-space $\mathcal{X}$ than in the more classical space $L^{2}(\R^{d},\M^{-1}(v)\d v)$ (in which the self-adjointness of $\mathscr{L}$ allows easier computations of its spectrum).  The proof of this second step is achieved in Section \ref{sec:sec3} and, more precisely, in Sections \ref{sec:BoLi}, which deals with the Boltzmann limit, and \ref{sec:unique} which addresses the non-quantitative proof of the uniqueness result.

Finally, the proof of of the third above step, as already mentioned is simply based upon a nonlinear estimate on $\|\psi_{\al}-\M\|_{\mathcal{Y}}$ of the form
\begin{equation*} 
\|\psi_\al-\M \|_{\mathcal{Y}} \leq c_1 \|\psi_\al -\M\|_{\mathcal{Y}}^2 + c_2 \alpha  \end{equation*}
for some positive constants $c_{i} > 0$, $i=1,2$ and for $\alpha$ small enough. Such a nonlinear estimate is provided in Section \ref{sec:quanti}. Notice that, as in \cite{AloLo3} and in  contrast with the reference \cite{MiMo3} on granular gases, our approach on the quantitative estimates comes {\it a posteriori} (in the sense that quantitative estimates are deduced from the uniqueness result whereas, in \cite{MiMo3}, the uniqueness result is already proved through quantitative estimates). The main difference between these two approaches is that the present one does not rely on any entropy estimates. 

The paper is ended by three appendices. In Appendix \ref{sec:reguQ+} we give a detailed proof of the regularity properties of the gain part operator $\Q^{+}(g,f)$ which, as already said, play a crucial role in our analysis of the regularity of the solution $\psi_{\al}$ to \eqref{tauT}. Then, in Appendix B, we briefly recall some of the main steps of the proof of the existence of the self-similar profile $\psi_{\al}$. This gives us also the opportunity to sharpen slightly the constants appearing in Theorem \ref{existence} with respect to \cite{jde}. In this appendix, we also investigate the regularity properties of the solution to the time-dependent version of \eqref{tauT} introduced in \cite{jde}. These results have their own interest and illustrates the robustness of the method developed in Section \ref{sec:regularity} to investigate the Sobolev regularity of the steady solution $\psi_{\al}$. Finally, Appendix C recalls some useful interpolations inequalities that are repeatedly used in the paper.

\subsection{Notations} Let us introduce the notations we shall use
in the sequel. 
More generally we define the weighted Lebesgue space $L^p_\eta
(\R^d)$ ($p \in [1,+\infty)$, $\eta \in \R$) by the norm
$$\| f \|_{L^p_\eta} = \left[ \int_{\R^d} |f (v)|^p \, \langle v
        \rangle^{p\eta} \d  v \right]^{1/p} \qquad 1 \leq p < \infty$$
        while $\| f \|_{L^\infty_\eta} =\mathrm{ess-sup}_{v \in \R^d} |f(v)|\langle v\rangle^\eta$ for $p=\infty.$

We shall also use weighted Sobolev spaces $\mathbb{H}^s_\eta(\R^d)$ ($s \in \R_+$, $\eta \in \R$). When $s \in \N$, they  are defined by the norm
$$\| f \|_{\mathbb{H}^s_\eta} = \left( \sum_{|\ell|\leq s} \| \partial^\ell f \|^2_{L^2_\eta} \right)^{1/2}$$
where for $\ell\in\N^d$,  $\partial^\ell=\partial_{\xi_1}^{\ell_1} \ldots \partial_{\xi_d}^{\ell_d}$ and $|\ell|=\ell_1+\ldots +\ell_d$. Then, the definition is extended to real positive values of $s$ by interpolation. For negative value of $s$, one defines $\mathbb{H}^{s}_{-\eta}(\R^{d})$ as the dual space of $\mathbb{H}^{-s}_{\eta}(\R^{d}),$ $\eta \in \R$.


\section{A posteriori estimates on $\psi_\al$}\label{sec:apost}
In all the sequel, for any $\alpha \in (0,\underline{\al}_{1})$, $\psi_{\alpha} \in \mathscr{E}_{\al}$ denotes a solution to
\begin{equation*}
\mathbf{A}_{\psi_\al} \psi_\al(\xi)+\mathbf{B}_{\psi_\al} \xi \cdot \nabla_\xi \psi_\al(\xi)
=\mathbb{B}_\al(\psi_\al,\psi_\al)(\xi)
\end{equation*}
which satisfies \eqref{init} where $\mathbf{A}_{\psi_\al}$ and $\mathbf{B}_{\psi_{\al}}$ are given by \eqref{ABalpha}. Let us note that $\mathbf{A}_{\psi_\al}$ and $\mathbf{B}_{\psi_\al}$ have no sign. However,

$$0 < d\mathbf{B}_{\psi_\al}-\mathbf{A}_{\psi_\al}=\alpha
  \displaystyle \int_{\R^d} \Q^-(\psi_\al,\psi_\al)(\xi)\, \d \xi\,=:\alpha\,\mathbf{a}_{\psi_\al},$$
and
$$0 < (d+2)\mathbf{B}_{\psi_\al}-\mathbf{A}_{\psi_\al}= \dfrac{2\alpha}{d} \displaystyle \int_{\R^d} |\xi|^2\Q^-(\psi_\al,\psi_\al)(\xi)\, \d \xi\,=:\alpha\,\mathbf{b}_{\psi_\al}.$$
Notice that
\begin{equation}\label{ABab}
\mathbf{A}_{\psi_\al}=-\dfrac{\alpha}{2}(d+2)\mathbf{a}_{\psi_\al}+\dfrac{\alpha}{2}d\,\mathbf{b}_{\psi_\al} \qquad \text{ and } \qquad \mathbf{B}_{\psi_\al}=-\dfrac{\alpha}{2} \mathbf{a}_{\psi_\al}+\frac{\alpha}{2}\mathbf{b}_{\psi_\al}.\end{equation}

\subsection{Uniform moments estimates}

We establish several a posteriori estimates on $\psi_\al \in \mathscr{E}_\al$, uniform with respect to the parameter $\alpha$.  We first introduce several notations. For any $k \geq 0$, let  us introduce the moment of order $2k$ as
\begin{equation}\label{moments}
 M_k(\psi_\alpha)=\int_{\R^d} \psi_\alpha(\xi) |\xi|^{2k}\, \d\xi, \qquad \psi_\al \in \mathscr{E}_\al.
\end{equation}
One has first the obvious uniform estimates
\begin{lem}\label{estimab}
For any $\alpha \in (0, \underline{\alpha_1})$ and any $\psi_\al \in \mathscr{E}_\al$ one has
$$\sqrt{2 d} \geq \mathbf{a}_{\psi_\al} \geq \frac{d^2}{4}\; (M_{3/2}(\psi_\al))^{-1} , \qquad \sqrt{d/2} \geq M_{1/2}(\psi_\alpha) \geq \frac{d^2}{4}\; (M_{3/2}(\psi_\al))^{-1} ,$$
while
$$\frac{2}{d} M_{3/2}(\psi_\al)+ \left(\frac{d}{2}\right)^{\frac{1}{2}}\geq \mathbf{b}_{\psi_\al} \geq  \left(\frac{d}{2}\right)^{\frac{1}{2}}, \qquad M_{3/2}(\psi_\al) \geq \left(\frac{d}{2}\right)^{\frac{3}{2}}.$$
Finally, $\mathbf{a}_{\psi_\al} \leq \sqrt{d} \leq \sqrt{2}\mathbf{b}_{\psi_\al}$.
\end{lem}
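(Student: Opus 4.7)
The plan is to exploit three basic tools: (i) the explicit normalization \eqref{init}, which fixes $M_{0}(\psi_\al)=1$, $\int_{\R^d}\xi\psi_\al\,\d\xi=0$ and $M_{1}(\psi_\al)=d/2$; (ii) Cauchy--Schwarz/H\"older interpolation between the moments $M_{k}(\psi_\al)$; and (iii) the convexity bound
\[
L(\psi_\al)(\xi)=\int_{\R^d}|\xi-\xi_*|\,\psi_\al(\xi_*)\,\d\xi_*\;\geq\;\bigl|\xi-\textstyle\int_{\R^d}\xi_*\psi_\al\,\d\xi_*\bigr|=|\xi|,
\]
which follows from Jensen's inequality applied to the convex function $z\mapsto |\xi-z|$ together with the zero momentum condition. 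No single step is genuinely difficult; the only mildly delicate point is recognizing that combining (iii) with an interpolation between $M_{0},M_{1},M_{3/2}$ is what produces the factor $d^{2}/4$.

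For the moment estimates, first rewrite
\[
\mathbf{a}_{\psi_\al}=\int_{\R^d\times\R^d}|\xi-\xi_*|\,\psi_\al(\xi)\psi_\al(\xi_*)\,\d\xi\,\d\xi_*,\qquad
\mathbf{b}_{\psi_\al}=\tfrac{1}{d}\int_{\R^d\times\R^d}(|\xi|^{2}+|\xi_*|^{2})|\xi-\xi_*|\,\psi_\al(\xi)\psi_\al(\xi_*)\,\d\xi\,\d\xi_*,
\]
where the second identity follows by symmetrization. For the upper bounds I would apply the triangle inequality $|\xi-\xi_*|\leq |\xi|+|\xi_*|$ and \eqref{init}: for $\mathbf{a}_{\psi_\al}$ this gives $\mathbf{a}_{\psi_\al}\leq 2M_{1/2}(\psi_\al)$, and Cauchy--Schwarz between $M_{0}$ and $M_{1}$ yields $M_{1/2}(\psi_\al)\leq \sqrt{d/2}$, hence $\mathbf{a}_{\psi_\al}\leq \sqrt{2d}$. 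For $\mathbf{b}_{\psi_\al}$ the same triangle inequality gives
\[
\mathbf{b}_{\psi_\al}\leq \tfrac{1}{d}\bigl(2M_{3/2}(\psi_\al)+2M_{1}(\psi_\al)M_{1/2}(\psi_\al)\bigr)\leq \tfrac{2}{d}M_{3/2}(\psi_\al)+\sqrt{d/2}.
\]

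For the lower bounds, I would use (iii): integrating $\psi_\al(\xi)L(\psi_\al)(\xi)\geq |\xi|\psi_\al(\xi)$ against $1$ and $|\xi|^{2}$ respectively gives
\[
\mathbf{a}_{\psi_\al}\geq M_{1/2}(\psi_\al),\qquad \mathbf{b}_{\psi_\al}\geq \tfrac{2}{d}M_{3/2}(\psi_\al).
\]
The bound $M_{1/2}(\psi_\al)\geq d^{2}/(4M_{3/2}(\psi_\al))$ follows from Cauchy--Schwarz on $M_{1}(\psi_\al)=\int \psi_\al|\xi|^{1/2}\cdot|\xi|^{3/2}\,\d\xi\leq M_{1/2}(\psi_\al)^{1/2}M_{3/2}(\psi_\al)^{1/2}$ and $M_{1}(\psi_\al)=d/2$, while $M_{3/2}(\psi_\al)\geq (d/2)^{3/2}$ follows from the H\"older inequality $M_{1}(\psi_\al)\leq M_{0}(\psi_\al)^{1/3}M_{3/2}(\psi_\al)^{2/3}$. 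Combining, $\mathbf{a}_{\psi_\al}\geq M_{1/2}(\psi_\al)\geq d^{2}/(4M_{3/2}(\psi_\al))$ and $\mathbf{b}_{\psi_\al}\geq \tfrac{2}{d}(d/2)^{3/2}=\sqrt{d/2}$.

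Finally, the sharpened bound $\mathbf{a}_{\psi_\al}\leq\sqrt{d}$ comes from a single Cauchy--Schwarz applied to the probability measure $\psi_\al(\xi)\psi_\al(\xi_*)\,\d\xi\,\d\xi_*$:
\[
\mathbf{a}_{\psi_\al}\leq \biggl(\int_{\R^d\times\R^d}|\xi-\xi_*|^{2}\psi_\al(\xi)\psi_\al(\xi_*)\,\d\xi\,\d\xi_*\biggr)^{1/2}=\sqrt{2M_{1}(\psi_\al)-2\bigl|\textstyle\int\xi\psi_\al\d\xi\bigr|^{2}}=\sqrt{d},
\]
using again \eqref{init}. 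Combined with $\mathbf{b}_{\psi_\al}\geq\sqrt{d/2}$ this yields $\sqrt{d}\leq \sqrt{2}\,\mathbf{b}_{\psi_\al}$, closing the chain of inequalities.
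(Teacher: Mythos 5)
Your proposal is correct and follows essentially the same route as the paper: triangle inequality plus the normalization \eqref{init} for the upper bounds, Jensen's inequality $L(\psi_\al)(\xi)\geq|\xi|$ for the lower bounds on $\mathbf{a}_{\psi_\al}$ and $\mathbf{b}_{\psi_\al}$, Cauchy--Schwarz/H\"older moment interpolation for $M_{1/2}\leq\sqrt{d/2}$, $M_1\leq\sqrt{M_{1/2}M_{3/2}}$ and $M_{3/2}\geq(d/2)^{3/2}$, and a final Cauchy--Schwarz with respect to $\psi_\al\otimes\psi_\al$ for $\mathbf{a}_{\psi_\al}\leq\sqrt{d}$. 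The only cosmetic differences (symmetrizing $\mathbf{b}_{\psi_\al}$ and using H\"older rather than Jensen for $M_{3/2}\geq(d/2)^{3/2}$) do not change the argument.
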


 \begin{proof} From the definition of $\mathbf{a}_{\psi_\al}$ and \eqref{init}, one has first
 $$\mathbf{a}_{\psi_\al} \leq \int_{\R^d \times \R^d} \psi_\al(\xi)\psi_\al(\xi_*)\left(|\xi|+|\xi_*|\right)\d\xi\d\xi_* =2  M_{1/2}(\psi_\alpha)$$
 while, from Jensen's inequality, 
 $$\int_{\R^d}\psi_\al(\xi_*)|\xi-\xi_*|\d \xi_* \geq  |\xi| \qquad \text{and} \qquad \mathbf{a}_{\psi_\al} \geq   M_{1/2}(\psi_\alpha).$$ Moreover, by H\"older's inequality, $M_{1/2}(\psi_\alpha) \leq \sqrt{M_1(\psi_\alpha)}=\sqrt{d/2}$  and
$$\frac{d}{2}=M_1(\psi_\alpha) \leq \sqrt{M_{1/2}(\psi_\alpha)  M_{3/2}(\psi_\al)}.$$
In the same way, one obtains
 $$ \frac{2}{d} M_{3/2}(\psi_\al) \leq \mathbf{b}_{\psi_\al} \leq  \frac{2}{d} M_{3/2}(\psi_\al)+ M_{1/2}(\psi_\al)\,$$
 and we conclude by noticing that, by Jensen's inequality, $M_{3/2}(\psi_\al) \geq \left(\frac{d}{2}\right)^{\frac{3}{2}}$.

Finally, by Cauchy-Schwarz' inequality, one has
\begin{equation*}
\mathbf{a}_{\psi_\al}  \leq \left(\int_{\R^{2d}} \psi_\al(\xi)\psi_\al(\xi_*)\left|\xi-\xi_*\right|^2\d\xi\d\xi_*\right)^{1/2}= \left(2\int_{\R^d}\psi_\al(\xi)\,|\xi|^2\,\d\xi\right)^{1/2}=\sqrt{d}.
\end{equation*} Since we already saw that $\mathbf{b}_{\psi_\al} \geq \sqrt{d/2}$,  this gives the last estimate.
 \end{proof}

Then, one can reformulate \cite[Proposition 3.4]{jde} to get
\begin{prop}\label{propmom}
For any $\alpha_\star<\underline{\alpha}_1$, one has ,
$$\sup_{\alpha \in (0,\alpha_\star)} \sup_{\psi_\al \in \mathscr{E}_\al} M_k(\psi_\alpha) =\mathbf{M}_k < \infty \qquad \forall k \geq 0$$
where $\mathbf{M}_{k}$ depends only on $\alpha_{\star}$  and $k \geq 0$.
\end{prop}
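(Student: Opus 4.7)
My plan is to treat this as a reformulation of the moment estimate from our previous work \cite[Proposition 3.4]{jde}: the point is that the bound depends only on the stationary equation \eqref{tauT}, the normalization \eqref{init}, and the a priori bounds of Lemma \ref{estimab}, none of which distinguish between the particular solution constructed in \cite{jde} and a generic element of $\mathscr{E}_\al$. The strategy is the classical Povzner–Bobylev moment closure, which I outline in three steps.

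The first step is a weak formulation of the moment balance. Testing \eqref{tauT} against $|\xi|^{2k}$ and integrating by parts in the drift term, using $\int_{\R^d}|\xi|^{2k}\, \xi\cdot\nabla_\xi \psi_\al\, \d\xi = -(2k+d)M_k(\psi_\al)$ together with \eqref{ABab}, I would obtain an identity of the form
\begin{equation*}
\alpha\bigl[k\,\mathbf{b}_{\psi_\al}-(k-1)\mathbf{a}_{\psi_\al}\bigr]M_k(\psi_\al)+\int_{\R^d}|\xi|^{2k}\Q^-(\psi_\al,\psi_\al)(\xi)\,\d\xi=(1-\alpha)\int_{\R^d}|\xi|^{2k}\Q^+(\psi_\al,\psi_\al)(\xi)\,\d\xi.
\end{equation*}
By Lemma \ref{estimab} the bracket on the left is bounded uniformly in $\alpha$ and in $\psi_\al\in\mathscr{E}_\al$, so this $O(\alpha)$ term is harmless.

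The second step is to invoke the standard Povzner–Bobylev inequality for hard-sphere kernels: for each $k>1$ there exist $\varepsilon_k\in(0,1)$ and $C_k>0$ such that, for any nonnegative $\psi$,
\begin{equation*}
\int_{\R^d}|\xi|^{2k}\Q^+(\psi,\psi)(\xi)\,\d\xi\leq (1-\varepsilon_k)\int_{\R^d}|\xi|^{2k}\Q^-(\psi,\psi)(\xi)\,\d\xi+C_k\sum_{j}M_{k_j}(\psi)M_{k-k_j}(\psi),
\end{equation*}
where the sum runs over a finite set of indices with $1\leq k_j\leq k-1$. Combined with the trivial bound $(1-\alpha)\leq 1$, this controls the right-hand side of the identity above by a negative multiple of $\int|\xi|^{2k}\Q^-$ plus lower-order moment products. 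The loss integral in turn admits the lower bound $\int|\xi|^{2k}\Q^-(\psi_\al,\psi_\al)\,\d\xi\geq M_{k+1/2}(\psi_\al)\,M_0(\psi_\al)-M_k(\psi_\al)\,M_{1/2}(\psi_\al)$, obtained via $|\xi-\xi_*|\geq |\xi|-|\xi_*|$, with all terms uniformly controlled by Lemma \ref{estimab} and \eqref{init}.

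The third step is an induction on $k$. For $k\in\{0,1\}$ the bound is provided by \eqref{init}. Assuming $\sup_{\alpha,\psi_\al}M_j(\psi_\al)\leq\mathbf{M}_j$ for every $j\leq k$, the combination of Steps 1 and 2 yields
\begin{equation*}
\varepsilon_k M_{k+1/2}(\psi_\al)\leq C_k'+C_k''\,\alpha\, M_k(\psi_\al)
\end{equation*}
uniformly in $\alpha\in(0,\alpha_\star)$ and in $\psi_\al\in\mathscr{E}_\al$; hence $M_{k+1/2}$ is uniformly bounded, and by interpolation so is $M_j$ for every real $j\in[0,k+1]$. Iterating completes the proof. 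The only place that requires mild care is the Povzner constant $\varepsilon_k$: one must check that the $(1-\alpha)$ prefactor in front of $\Q^+$ does not spoil the sign of the effective dissipation; since $\alpha_\star<\underline{\alpha}_1<1$ is strictly bounded away from $1$, a uniform $\varepsilon_k$ is retained, and this is the only obstruction, easily accommodated by the standard proof.
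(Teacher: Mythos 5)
Your Step 1 contains the critical error. You claim that, by Lemma \ref{estimab}, the bracket $k\,\mathbf{b}_{\psi_\al}-(k-1)\mathbf{a}_{\psi_\al}$ is bounded uniformly in $\alpha$ and in $\psi_\al\in\mathscr{E}_\al$, so that the corresponding term is an ``harmless $O(\alpha)$''. Lemma \ref{estimab} gives a uniform bound on $\mathbf{a}_{\psi_\al}$, but for $\mathbf{b}_{\psi_\al}$ it only gives the two-sided estimate $\sqrt{d/2}\leq \mathbf{b}_{\psi_\al}\leq \frac{2}{d}M_{3/2}(\psi_\al)+\sqrt{d/2}$: the upper bound involves $M_{3/2}(\psi_\al)$, which is exactly the quantity whose uniform boundedness is at stake. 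Moreover your identity has the bracket on the wrong side: testing \eqref{tauT} against $|\xi|^{2k}$ and using \eqref{ABab} gives
\begin{equation*}
\int_{\R^d}|\xi|^{2k}\Q^-(\psi_\al,\psi_\al)\,\d\xi=(1-\alpha)\int_{\R^d}|\xi|^{2k}\Q^+(\psi_\al,\psi_\al)\,\d\xi+\alpha\bigl[k\,\mathbf{b}_{\psi_\al}-(k-1)\mathbf{a}_{\psi_\al}\bigr]M_k(\psi_\al),
\end{equation*}
so the drift contributes a \emph{source} term of size $\alpha\,k\,\mathbf{b}_{\psi_\al}M_k\sim \alpha\,k\,M_{3/2}\,M_k$, not a dissipative one as your sign suggests. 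With your sign the term looks innocuous; with the correct sign it is precisely the main obstruction.

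As a consequence your induction step collapses at its first nontrivial instance. Taking $k=3/2$ (the first moment beyond \eqref{init}), the source term is of order $\alpha M_{3/2}^2$, which is of the \emph{same} order as the Povzner dissipation $M_2\geq\frac{2}{d}M_{3/2}^2$; it cannot be written as $C''_k\,\alpha\,M_k$ with constants coming from the induction hypothesis, since only $M_0$ and $M_1$ are known at that stage. The balance closes only because $\alpha$ is quantitatively small: one obtains, as in the paper's inequality \eqref{M32}, $\bigl(1-\beta_{3/2}(\alpha)-\tfrac{3\alpha}{2}\bigr)M_{3/2}^2\leq C_1M_{3/2}+C_2$, which requires $\alpha<\alpha_0=\frac{1-\varrho_{3/2}}{\frac32-\varrho_{3/2}}$ (and one checks $\alpha_\star<\underline{\alpha}_1\leq\alpha_0$), not merely $\alpha<1$. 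So the ``only obstruction'' you identify --- the prefactor $(1-\alpha)$ in front of $\Q^+$ --- is not the real one; the real one is the $\alpha k\mathbf{b}_{\psi_\al}M_k$ term whose coefficient is itself of size $M_{3/2}$. Once the uniform bound on $M_{3/2}$ is secured by this quadratic inequality, $\mathbf{b}_{\psi_\al}$ becomes uniformly bounded and the higher moments do follow by the Povzner-based induction you sketch; but that preliminary bootstrap on $M_{3/2}$, together with the explicit smallness threshold it imposes on $\alpha$, is the heart of the proof and is missing from your argument.
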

\begin{proof} The proof follows from the computations made in \cite[Proposition 3.4]{jde}; we sketch only the main steps for the sake of completeness. Let { $\alpha\leq \alpha_\star$} and $\psi_\al \in \mathscr{E}_\al$. One has, for any $k \geq 0$,
\beq\label{eqmom}
\al(k-1)\mathbf{a}_{\psi_\al}\,M_k(\psi_\al)= \al\,k\,\mathbf{b}_{\psi_\al}\,M_k(\psi_\al) + \int_{\R^d} \mathbb{B}_\al\left(\psi_\al,\psi_\al\right)(\xi)\,|\xi|^{2k}\d\xi
\eeq
and, in particular,
$$\al\,\mathbf{a}_{\psi_\al}\,M_{3/2}(\psi_\al)=3\al\,\mathbf{b}_{\psi_\al}\,M_{3/2}(\psi_\al) + 2\int_{\R^d}\mathbb{B}_\al(\psi_\al,\psi_\al)(\xi)\,|\xi|^3\d\xi.$$
The last integral can be estimated thanks to Povzner's estimates \cite[Lemma 3.2]{jde} \footnote{Notice that here we used an improved version of \cite[Lemma 3.2]{jde} where, thanks to Jensen's inequality, we bound $\Q^-(\psi_\al,\psi_\al)$ from below in a more accurate way.} and
$$
\int_{\R^d}\mathbb{B}_\al(\psi_\al,\psi_\al)(\xi)\,|\xi|^3\d\xi \leq -(1-\beta_{3/2}(\al))M_2(\psi_\al)
+\frac{3}{2}\beta_{3/2}(\al)\left(M_{3/2}(\psi_\al)\,M_{1/2}(\psi_\al)+ \left(\frac{d}{2}\right)^2\right).
$$
Thus, since $\mathbf{a}_{\psi_\al} \geq 0$, we get
$$2 (1-\beta_{3/2}(\al))M_2(\psi_\al)\leq
3\beta_{3/2}(\al)\left(M_{3/2}(\psi_\al)\,M_{1/2}(\psi_\al)+\left(\frac{d}{2}\right)^2\right) + 3\al\,\mathbf{b}_{\psi_\al}\,M_{3/2}(\psi_\al)$$
Since $M_2(\psi_\alpha) \geq \frac{2}{d}M_{3/2}(\psi_\al)^2$, using Lemma \ref{estimab} we obtain
\begin{multline*}
\left(\frac{2}{d}(1-\beta_{3/2}(\al))-\frac{3\al}{d}\right)M_{3/2}^2(\psi_\al)\leq
\frac{3}{2}\beta_{3/2}(\al)\sqrt{d/2}\,M_{3/2}(\psi_\al) + \frac{3}{2}\beta_{3/2}(\al)\left(\frac{d}{2}\right)^{2}\\ + \frac{3\al}{d}\, \,\left(\frac{d}{2}\right)^{\frac{3}{2}}\,M_{3/2}(\psi_\al)
\end{multline*}
so that
\begin{equation}\label{M32}
\left(1- \beta_{3/2}(\al)-\frac{3\al}{2}\,\right)M_{3/2}^2(\psi_\al)  \leq C_1 \,M_{3/2}(\psi_\al)+C_2
\end{equation}
where $C_1, C_2$ are positive constants  independent of $\al$ (notice that we used the fact that $\beta_{3/2}(\al) \leq \beta_{3/2}(0)$). Since $\beta_{3/2}(\al)=(1-\alpha)\varrho_{3/2}$ (see \eqref{varrhoK} for the definition of $\varrho_{3/2}$),
setting
$$\alpha_0=\frac{1-\varrho_{3/2}}{\frac{3}{2}-\varrho_{3/2}}$$
we have for $\alpha_\star < \min \{\alpha_0,\underline{\alpha}_1\} = \underline{\alpha}_1$ (by \cite{jde})
$$\sup_{\alpha \in (0,\alpha_\star)}\sup_{\psi_\al \in \mathscr{E}_\al} M_{3/2}(\psi_\al) < \infty$$
thanks to \eqref{M32}. Then, reproducing the computations of \cite[Proposition 3.4]{jde}, one sees that, once $M_{3/2}(\psi_\al)$ is uniformly bounded, the same is true for all $M_k(\psi_\al)$ with $k \geq 3/2$.
\end{proof}

\begin{rmq}\label{ABbound}
Notice that, thanks to the above estimate, one sees that there exists some universal constant $C > 0$ such that
$$\left|\mathbf{A}_{\psi_\al}\right|+\left|\mathbf{B}_{\psi_\al}\right|   \leq C \alpha \qquad \forall \psi_\al \in \mathscr{E}_\al, \quad \forall \alpha \in (0,\alpha_\star).$$
\end{rmq}

\begin{rmq}
As in \cite[Lemma 2.3]{MiMo3}, one may deduce from Proposition \ref{propmom} that
 there exists $C_0 > 0$ such that, for any $\alpha\in(0,\alpha_\star)$,
\begin{equation}\label{concentration}
\int_{\R^d}\psi_\al (\xi_*)|\xi-\xi_*|\d\xi_* \geq C_0\langle \xi\rangle \qquad\forall \xi \in \R^d, \qquad \psi_\al \in \mathscr{E}_\al.\end{equation}
This inequality shall be useful in next section.
\end{rmq}

\subsection{High-energy tails for the steady solution}\label{sect:high}

We are interested here in estimating the high-energy tails of the solution $\psi_\alpha$. In all the sequel, $\alpha_{\star} < \underline{\al}_{1}$ is fixed. Namely, we have the following
\begin{prop}\label{prop:tails}  There exist some constants $A >0$ and $M >0$ such that, for any  $\alpha \in (0,\alpha_\star]$ and any $\psi_\alpha \in \mathscr{E}_\al$, one has
$$\int_{\R^d} \psi_\alpha(\xi) \exp\left(A |\xi|\right) \d\xi \leq M.$$
\end{prop}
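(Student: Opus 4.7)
I would follow the classical Bobylev-type strategy (developed for the hard-sphere Boltzmann equation by Bobylev, refined by Gamba--Panferov--Villani, and adapted to related kinetic settings with a drift term by Mischler--Mouhot and Alonso--Lods) which extracts exponential tails from uniform polynomial moment control combined with a sharp Povzner inequality. The first step is to test the stationary equation against $|\xi|^{2k}$ and integrate by parts on the drift term, as in \eqref{eqmom}, to obtain
\begin{equation*}
\int_{\R^d}\mathbb{B}_\al(\psi_\al,\psi_\al)(\xi)\,|\xi|^{2k}\,\d\xi \;=\; \al\bigl[(k-1)\mathbf{a}_{\psi_\al}-k\,\mathbf{b}_{\psi_\al}\bigr]\,M_k(\psi_\al).
\end{equation*}
Splitting $\mathbb{B}_\al=(1-\al)\Q^+-\Q^-$ and applying the sharp Povzner estimate of \cite[Lemma 3.2]{jde} (as used in the proof of Proposition \ref{propmom}), one has
\begin{equation*}
(1-\al)\int \Q^+(\psi_\al,\psi_\al)|\xi|^{2k}\d\xi \;\leq\; (1-\al)(1-\varrho_k)\int \Q^-(\psi_\al,\psi_\al)|\xi|^{2k}\d\xi + (1-\al)\,\varrho_k\,S_k(\psi_\al),
\end{equation*}
with $\varrho_k \to 0$ as $k\to\infty$ and $S_k(\psi_\al)$ a symmetric bilinear sum of moments strictly below order $2k+1$, of the form $S_k(\psi_\al)\leq C\sum_{j=1}^{k-1}\binom{k}{j}\bigl[M_{j+\frac{1}{2}}(\psi_\al)M_{k-j}(\psi_\al)+M_{j}(\psi_\al)M_{k-j+\frac{1}{2}}(\psi_\al)\bigr]$.

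Next I would combine the two estimates above and invoke the concentration lower bound \eqref{concentration}, which yields $\int \Q^-(\psi_\al,\psi_\al)|\xi|^{2k}\,\d\xi \geq C_0\,M_{k+\frac{1}{2}}(\psi_\al)$, to deduce
\begin{equation*}
(\al+\varrho_k-\al\varrho_k)\,C_0\,M_{k+\frac{1}{2}}(\psi_\al) \;\leq\; (1-\al)\varrho_k\,S_k(\psi_\al)\;+\;\al\,k\,\mathbf{b}_{\psi_\al}\,M_k(\psi_\al).
\end{equation*}
Since $\al+\varrho_k-\al\varrho_k\geq\max(\al,\varrho_k)$ and $\mathbf{b}_{\psi_\al}$ is uniformly bounded by Lemma \ref{estimab} together with Proposition \ref{propmom}, dividing through produces the $\al$-uniform recursive inequality
\begin{equation*}
C_0\,M_{k+\frac{1}{2}}(\psi_\al)\;\leq\; S_k(\psi_\al)+C_1\,k\,M_k(\psi_\al),\qquad k\geq k_0,
\end{equation*}
for constants $C_0,C_1,k_0$ depending only on $\al_\star$. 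The crucial cancellation here is that the compensating factor $\al$ in front of the drift contribution on the right exactly matches the lower bound $\al+\varrho_k-\al\varrho_k\geq\al$ on the coefficient on the left, so that all $\al$-dependence drops out.

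Finally, setting the renormalised moments $z_k:=M_k(\psi_\al)/(2k)!$ and $w_k:=M_{k+\frac{1}{2}}(\psi_\al)/(2k+1)!$, dividing the recursive inequality by $(2k+1)!$ and exploiting the standard identity $\binom{k}{j}(2j+1)!(2(k-j))!/(2k+1)!=\binom{k}{j}/\binom{2k+1}{2j+1}$ together with the Bobylev--Gamba--Panferov--Villani combinatorial bounds on these coefficients, I would prove by induction on $k$ that $z_k+w_k\leq B\,\lambda^k$ for some $B,\lambda>0$ independent of $\al\in(0,\al_\star]$ and $\psi_\al\in\mathscr{E}_\al$. Summing
\begin{equation*}
\int_{\R^d}\psi_\al(\xi)\exp(A|\xi|)\d\xi \;=\; \sum_{k\geq 0} A^{2k}\,z_k \;+\;\sum_{k\geq 0} A^{2k+1}\,w_k
\end{equation*}
for any $A<\lambda^{-1/2}$ then yields the desired uniform exponential moment bound. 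The main obstacle is the $\al$-uniform derivation of the recursive inequality in the second step: both the coefficient $\al+\varrho_k-\al\varrho_k$ and the drift term on the right-hand side vanish as $\al\to 0$, and only their precise matching (via Lemma \ref{estimab} and Remark \ref{ABbound}) yields an inequality free of $\al$-dependence. The subsequent combinatorial induction is classical but depends on a quantitative control of the Povzner constants $\varrho_k$ as $k\to\infty$, which for the hard-sphere setting under consideration is supplied by \cite[Lemma 3.2]{jde}.
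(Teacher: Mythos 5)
Your overall architecture (moment identity \eqref{eqmom}, Povzner estimate, Gamma-renormalized moments \`a la Bobylev--Gamba--Panferov, geometric induction) is the same as the paper's, but there is a genuine gap in your second step, and it is exactly where the uniformity in $\al$ must be won. The sharp Povzner estimate of \cite[Lemma 3.2]{jde} does \emph{not} have the form you assert. It gives
$\int_{\R^d}\Q^+(\psi_\al,\psi_\al)|\xi|^{2k}\,\d\xi \leq \varrho_k\int_{\R^d}\Q^-(\psi_\al,\psi_\al)|\xi|^{2k}\,\d\xi + \tfrac{\varrho_k}{2}(\text{bilinear cross terms})$,
i.e.\ the \emph{whole} gain is damped by the factor $\varrho_k=\mathbf{O}(1/k)$; equivalently, $\int\mathbb{B}_\al(\psi_\al,\psi_\al)|\xi|^{2k}\d\xi\leq -(1-\beta_k(\al))M_{k+1/2}(\psi_\al)+\beta_k(\al)\,S_k$ with $\beta_k(\al)=(1-\al)\varrho_k$, using Jensen to bound the loss from below by $M_{k+1/2}$. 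Consequently the coefficient of $M_{k+1/2}$ surviving on the left is $1-\beta_k(\al)\geq 1-\varrho_k$, which is bounded away from zero \emph{uniformly in $\al$}; there is no vanishing prefactor, no need for the ``precise matching'' of $\al$-factors you invoke, and the drift contribution $\al\,k\,\mathbf{b}_{\psi_\al}M_k$ is handled simply by $\al\leq\al_\star$ together with the uniform bound on $\mathbf{b}_{\psi_\al}$.

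Your version, $\int\Q^+\leq(1-\varrho_k)\int\Q^-+\varrho_k S_k$, is (for large $k$) a true but much lossier consequence of the sharp estimate, and it is fatal to the rest of your argument: it leaves on the left only the sliver $\al+(1-\al)\varrho_k$ of the loss term, which tends to $0$ as $\al\to0$, $k\to\infty$. Dividing by this coefficient uniformly in $\al\in(0,\al_\star]$ forces you to bound $(1-\al)\varrho_k/\bigl(\al+(1-\al)\varrho_k\bigr)$ by $1$, thereby discarding the crucial $\mathbf{O}(1/k)$ smallness in front of the bilinear sum. The resulting recursion $C_0M_{k+1/2}\leq S_k+C_1kM_k$ with a \emph{bare} bilinear sum does not close the geometric induction: after renormalizing by $\Gamma(2k+\gamma)$ and applying \cite[Lemma 4]{BoGaPa}, the bilinear term carries a factor of order $k^{\gamma}$ against a left-hand side of order $k$, whereas the paper's recursion \eqref{cizp} has $k^{\gamma}$ against $k$ only \emph{because} the retained $\beta_k(\al)=\mathbf{O}(1/k)$ has already absorbed one power of $k$ (so the relevant ratio is $k^{\gamma-1}\to0$). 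With your inequality one can at best conclude stretched-exponential tails $\exp(A|\xi|^{s})$, $s<1$, not the full exponential moment claimed. (Minor point: the paper does not need \eqref{concentration} here; the Jensen bound $\int\psi_\al(\xi_*)|\xi-\xi_*|\d\xi_*\geq|\xi|$ suffices, but that substitution is harmless.)
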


\begin{proof}
We adapt here the strategy of \cite{BoGaPa,BCL}. Formally, we have
$$\int_{\R^d} \psi_\alpha(\xi) \exp\left(r |\xi|\right) \d\xi
=\sum_{k=0}^{\infty} \frac{r^k}{k!}\; M_{k/2}(\psi_\alpha), $$
where $M_{k}(\psi_\alpha)$ is defined by \eqref{moments}.
In order to prove Proposition \ref{prop:tails}, it is sufficient to show that the series in the right hand side has a positive and finite radius of convergence. By the Cauchy-Hadamard theorem, it suffices to prove the existence of some uniform constant $C>0$  such that for any  $\alpha  \in (0,\alpha_\star]$ and any $\psi_\alpha \in \mathscr{E}_\al$,
$$M_{k/2}(\psi_\alpha)\leq C^k \, k!, \qquad \forall k\in\N.$$
 Let us now introduce the renormalized moments
$$z_p(\psi_\alpha)=\frac{M_p(\psi_\alpha)}{\Gamma(2p+\gamma)}, \qquad p\geq 0$$
where $\Gamma$ denotes the Gamma function and $\gamma>0$ is a constant (to be fixed later on). Thereby, we are thus led to show the existence of some constants $\gamma>0$ and $K>0$ such that for any  $\alpha  \in (0,\alpha_\star]$ and any $\psi_\alpha \in \mathscr{E}_\al$,
\beq\label{claim_exp_tail}
z_{k/2}(\psi_\alpha)\leq K^k, \qquad \forall k\in\N_*.
\eeq
Now, we deduce from \eqref{eqmom}, Lemma \ref{estimab}, Proposition
\ref{propmom} and from Povzner's estimates \cite[Lemma 3.2]{jde} that
$$(1-\beta_p(\alpha))M_{p+1/2}(\psi_\alpha) \leq S_p
+ \left(\frac{2\alpha \, p}{d} \; \mathbf{M}_{3/2}
+ \alpha p \sqrt{\frac{d}{2}}\right) M_p(\psi_\alpha)$$
with
\begin{multline*}
 S_p\leq \beta_p(\alpha) \sum_{j=1}^{[\frac{p+1}{2}]}
\left(\bary{c}p\\j\\\eary \right)\left(M_{j+1/2}(\psi_\alpha)
M_{p-j}(\psi_\alpha)+M_j(\psi_\alpha) M_{p-j+1/2}(\psi_\alpha)\right)\\
+(1-\beta_p(\alpha))\sqrt{\frac{d}{2}} M_p(\psi_\alpha).
\end{multline*}
Consequently, by \cite[Lemma 4]{BoGaPa}
\bean
(1-\beta_p(\alpha)) \frac{\Gamma(2p+\gamma+1)}{\Gamma(2p+\gamma)}\;
z_{p+1/2}(\psi_\alpha) & \leq & C \beta_p(\alpha)
\frac{\Gamma(2p+2\gamma+1)}{\Gamma(2p+\gamma)} \; Z_p(\psi_\alpha) \\
& + & \left(\frac{2\alpha \, p}{d} \; \mathbf{M}_{3/2}
+ (1-\beta_p(\alpha)+\alpha p) \sqrt{\frac{d}{2}}\right)
z_p(\psi_\alpha),
\eean
where $C=C(\gamma)$ does not depend on $p$ and
$$Z_p(\psi_\alpha) = \max_{1\leq j\leq [\frac{p+1}{2}]}
\left\{ z_{j+1/2}(\psi_\alpha) z_{p-j}(\psi_\alpha),
z_j(\psi_\alpha) z_{p-j+1/2}(\psi_\alpha) \right\}.$$
We have $\beta_p(\alpha)=(1-\alpha)\varrho_p$ where $\varrho_p$ is defined by \eqref{varrhoK}. It is easily checked that
$\varrho_p=\mathbf{O}\left(\frac{1}{p}\right)$ for $d\geq 3$. Thus,
$\beta_p(\alpha)=\mathbf{O}\left(\frac{1}{p}\right)$  uniformly with respect to $\alpha$. Next, $1-\beta_p(\alpha)\geq 1-\varrho_p>0$ for $p>1$ since the mapping $p>1\mapsto \varrho_p$ is strictly decreasing and $\varrho_1=1$. Moreover, for any $\gamma>0$,
$$
\lim_{p\to +\infty} \frac{\Gamma(2p+2\gamma+1)}{\Gamma(2p+\gamma)}\; (2p)^{-\gamma-1} =1, \qquad \lim_{p\to +\infty} \frac{\Gamma(2p+\gamma+1)}{\Gamma(2p+\gamma)}\; (2p)^{-1} =1.$$
Let $\gamma\in(0,1)$. There exist some constants $c_1, c_2, c_3>0$ such that for $p\geq 3/2$,
\begin{equation}\label{cizp} c_1 \,p \,z_{p+1/2}(\psi_\alpha)\leq c_2 \,p^\gamma \,Z_p(\psi_\alpha)
+c_3 \,p \,z_p(\psi_\alpha).\end{equation}
Let $k_0\in\N $ and $K\in\R$ satisfying $k_0\geq 3$,
$$c_2\left(\frac{k_0}{2}\right)^{\gamma-1}\leq \frac{c_1}{2}, \qquad \mbox{ and } \qquad K\geq \max \left\{\max_{1\leq k\leq k_0}\sup_{\al\in (0,\al_\star)}\sup_{\psi_\al \in \mathscr{E}_\al} z_{k/2}(\psi_\alpha), 1, \frac{2c_3}{c_1}\right\}. $$
It follows from Proposition \ref{propmom} that such a $K$ exists.
We now proceed by induction to prove that \eqref{claim_exp_tail} holds.
For $k\leq k_0$, it readily follows from the definition of $K$. Let
$k_*\geq k_0$. Assume that \eqref{claim_exp_tail} holds for
any $k\leq k_*$. Then, taking $p=\frac{{k}_*}{2}$ in the above inequality \eqref{cizp} and noting that $Z_{\frac{{k}_*}{2}}(\psi_\alpha)$ only involves renormalized moments $z_{j/2}(\psi_\alpha)$ for $j\leq {k}_*$, we may use the induction hypothesis and we get
$$c_1\,  z_{\frac{{k}_*+1}{2}}(\psi_\alpha)\leq  c_2 \left(\frac{{k}_*}{2} \right)^{\gamma-1} K^{{k}_*+1}+c_3\, K^{k_*} \leq c_1\, K^{k_*+1},$$
whence \eqref{claim_exp_tail}.
\end{proof}

\subsection{Regularity of the steady state}\label{sec:regularity}
  We investigate here the regularity of any nonnegative  solution $\psi_\al$ to \eqref{tauT}. We begin with showing uniform $L^2_k$-estimates of $\psi_\al$:
\begin{prop}\label{propo:L2} For any $k \geq 0,$  one has
\beq\label{L2norm}
\sup_{\al\in (0,\al_\star)}\sup_{\psi_\al \in \mathscr{E}_\al}\|\psi_\al\|_{L^2_{k}}< \infty.
\eeq
 \end{prop}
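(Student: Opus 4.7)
The natural approach is a weighted $L^{2}$ energy estimate combined with a bootstrap on the weight exponent $k$, starting from the base case $k=0$ furnished by Theorem \ref{existence} and exploiting the uniform $L^{1}$-moment bounds of Proposition \ref{propmom}. The goal is to derive, uniformly in $\al\in(0,\al_{\star})$ and $\psi_{\al}\in \mathscr{E}_{\al}$, a recursive inequality of the form $\|\psi_{\al}\|_{L^{2}_{k+1/2}}\leq C_{k}(1+\|\psi_{\al}\|_{L^{2}_{k}})$, which can then be iterated by half-integer jumps.

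Concretely, I would multiply \eqref{tauT} by $\psi_{\al}\langle\xi\rangle^{2k}$ and integrate over $\R^{d}$. The drift term is reshaped via integration by parts as
\[
\mathbf{B}_{\psi_{\al}}\int_{\R^{d}} \psi_{\al}\,\xi\cdot \nabla_{\xi}\psi_{\al}\langle\xi\rangle^{2k}\d\xi=-\frac{\mathbf{B}_{\psi_{\al}}}{2}\int_{\R^{d}}\psi_{\al}^{2}\,\nabla_{\xi}\cdot\bigl(\xi\langle\xi\rangle^{2k}\bigr)\d\xi,
\]
and together with the algebraic contribution $\mathbf{A}_{\psi_{\al}}\|\psi_{\al}\|_{L^{2}_{k}}^{2}$ is bounded in absolute value by $C\,\al\,\|\psi_{\al}\|_{L^{2}_{k}}^{2}$ thanks to Remark \ref{ABbound}. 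For the loss part on the right-hand side, the concentration estimate \eqref{concentration} gives the crucial lower bound
\[
\int_{\R^{d}}\psi_{\al}^{2}(\xi)\,L(\psi_{\al})(\xi)\,\langle\xi\rangle^{2k}\d\xi\ \geq \ C_{0}\,\|\psi_{\al}\|_{L^{2}_{k+1/2}}^{2}\,.
\]

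The key point is the control of the gain contribution. Splitting the weight as $\langle\xi\rangle^{2k}=\langle\xi\rangle^{k-1/2}\langle\xi\rangle^{k+1/2}$ and applying Cauchy-Schwarz,
\[
\int_{\R^{d}}\Q^{+}(\psi_{\al},\psi_{\al})\,\psi_{\al}\,\langle\xi\rangle^{2k}\d\xi\ \leq\ \|\Q^{+}(\psi_{\al},\psi_{\al})\|_{L^{2}_{k-1/2}}\,\|\psi_{\al}\|_{L^{2}_{k+1/2}},
\]
I would invoke the weighted $L^{2}$-boundedness of $\Q^{+}$ coming from Theorem \ref{regularite} of Appendix \ref{sec:reguQ+}, of the form
\[
\|\Q^{+}(f,g)\|_{L^{2}_{k-1/2}}\ \lesssim\ \|f\|_{L^{1}_{k+1/2}}\|g\|_{L^{2}_{k-1/2}}+\|f\|_{L^{2}_{k-1/2}}\|g\|_{L^{1}_{k+1/2}}\,.
\]
Proposition \ref{propmom} provides uniform control $\|\psi_{\al}\|_{L^{1}_{k+1/2}}\leq \mathbf{M}$, and $\|\psi_{\al}\|_{L^{2}_{k-1/2}}\leq \|\psi_{\al}\|_{L^{2}_{k}}$ is trivial. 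Applying Young's inequality with a small coefficient $\eta<C_{0}$ then absorbs $\eta\,\|\psi_{\al}\|_{L^{2}_{k+1/2}}^{2}$ into the loss term, leaving the desired recursion.

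Starting from $k=0$, where $\|\psi_{\al}\|_{L^{2}}$ is uniformly bounded by Theorem \ref{existence}, iterating by half-integer steps yields the conclusion for every $k\geq 0$. The main technical point is to ensure that the weighted $L^{2}$-estimate for $\Q^{+}$ costs only an $L^{1}$-moment of one argument (harmless thanks to Proposition \ref{propmom}) while the other argument is paid at the level $L^{2}_{k-1/2}$ rather than $L^{2}_{k+1/2}$: this asymmetry is precisely what allows the Young absorption against the loss term. A secondary but routine check is that the integration-by-parts is justified, which follows a posteriori from the fact that each $\psi_{\al}$ is shown to lie in $L^{2}\cap L^{1}_{2}$ by Theorem \ref{existence} and one can approximate by truncating the weight $\langle\xi\rangle^{2k}$ and passing to the limit once the uniform bounds are in hand.
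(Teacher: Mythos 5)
There is a genuine gap, and it sits exactly at the step you flag as ``the main technical point'': the asymmetric weighted $L^2$ bound for the gain term. The estimate
$\|\Q^{+}(f,g)\|_{L^{2}_{k-1/2}}\lesssim \|f\|_{L^{1}_{k+1/2}}\|g\|_{L^{2}_{k-1/2}}+\|f\|_{L^{2}_{k-1/2}}\|g\|_{L^{1}_{k+1/2}}$
is not provided by Theorem \ref{regularite} (which is a Sobolev \emph{regularity} estimate, with weights $\eta+\kappa$, $\kappa>3/2$, on the $\mathbb{H}^s$ factor) nor by the convolution estimates of Theorem \ref{alogam}, which for hard spheres ($\Phi\in L^\infty_{-1}$) give $\|\Q^+(f,g)\|_{L^2_{k-1/2}}\lesssim \|f\|_{L^1_{k+1/2}}\|g\|_{L^2_{k+1/2}}$, i.e.\ the $L^2$ factor also pays the kernel weight. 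Indeed no such gain of a full weight on the $L^2$ argument can hold: in configurations where $\xi'\simeq\xi$ is large and $\xi'_*\simeq\xi_*$ is bounded, the factor $|\xi-\xi_*|\simeq|\xi|$ must be carried by the argument evaluated near $\xi$. With the correct estimate your scheme gives, after Cauchy--Schwarz, a gain contribution of size $C\,\mathbf{M}_{k+1/2}\|\psi_\al\|_{L^2_{k+1/2}}^2$, which is of the \emph{same} order as the loss lower bound $C_0\|\psi_\al\|_{L^2_{k+1/2}}^2$ from \eqref{concentration} but with a constant that is not small, so the Young absorption fails and the recursion never closes. The paper's proof avoids this by not using Cauchy--Schwarz at all on the gain term: it invokes the Alonso--Gamba \emph{gain of integrability} estimate \cite[Corollary 2.2]{AloGa}, which bounds $\int\Q^+(\psi_\al,\psi_\al)\psi_\al\langle\xi\rangle^{2k}\d\xi$ by $\varepsilon\,\|\psi_\al\|_{L^1_k}\|\psi_\al\|_{L^2_k}^2+C_\varepsilon\|\psi_\al\|_{L^1_{\frac{d(d-3)}{d-1}+k}}^{2-1/d}\|\psi_\al\|_{L^2_k}^{1+1/d}$; the quadratic part carries an arbitrarily small $\varepsilon$ (so it can be absorbed by the loss term), and the remainder is strictly sub-quadratic ($1+1/d<2$), which is what closes the estimate.

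A second, related problem is your base case. You take the uniform $L^2$ bound for $k=0$ from Theorem \ref{existence}, but that bound concerns the particular solutions constructed in \cite{jde}, whereas Proposition \ref{propo:L2} must hold for \emph{every} element of $\mathscr{E}_\al$ --- uniqueness is precisely what is not yet known, so this is circular for an arbitrary solution. The sub-quadratic structure above is what lets the paper prove the $k=0$ bound from scratch: after absorption one is left with $\|\psi_\al\|_{L^2}^2\lesssim\|\psi_\al\|_{L^2}^{1+1/d}$, hence a uniform bound with no a priori $L^2$ information. Finally, for $k=0$ the paper does not simply bound the drift/algebraic terms by $C\al\|\psi_\al\|_{L^2}^2$ as you propose: it exploits $\mathbf{b}_{\psi_\al}\geq \mathbf{a}_{\psi_\al}/\sqrt{2}$ and compounds $\mathbf{a}_{\psi_\al}\|\psi_\al\|_{L^2}^2$ with the loss integral, so that the sign condition is exactly $\al<\al_2$ with $\underline{\alpha}_1\leq\al_2$; a crude absorption with an inexplicit constant could force $\al$ below a threshold smaller than $\underline{\alpha}_1$ and would not give the statement on the whole range $(0,\al_\star)$ for every $\al_\star<\underline{\alpha}_1$.
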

 \begin{proof} For given $k \geq 0$, we multiply \eqref{tauT} by $\psi_\al(\xi)\langle \xi \rangle^{2k}$ and integrate over $\R^d$. Then, one obtains
 \begin{multline*}
 \mathbf{A}_{\psi_\al} \|\psi_\al\|^2_{L^2_k} + \mathbf{B}_{\psi_\al} \int_{\R^d} \left(\xi \cdot \nabla_\xi\psi_\al(\xi)\right)\psi_\al(\xi)\langle \xi\rangle^{2k}\d\xi=\\
 (1-\al)\int_{\R^d}\Q^+(\psi_\al,\psi_\al)(\xi)\psi_\al(\xi)\langle \xi\rangle^{2k}\d\xi -\int_{\R^d} \Q^-(\psi_\al,\psi_\al)(\xi)\psi_\al(\xi)\langle \xi\rangle^{2k}\d \xi.
 \end{multline*}
 One notices that
\begin{equation*}\begin{split}
\int_{\R^d} \left(\xi \cdot\nabla_\xi \psi_\al(\xi)\right)\psi_\al(\xi)\langle \xi\rangle^{2k}\d\xi&=-\frac{d}{2}\|\psi_\al\|_{L^2_k}^2-k\int_{\R^d}\psi_\al^2(\xi)\,|\xi|^2\,\langle \xi\rangle^{2(k-1)}\d\xi\\
&=-\frac{d+2k}{2}\|\psi_\al\|_{L^2_k}^2 + k \|\psi_\al\|_{L^2_{k-1}}^2
\end{split}\end{equation*}
and we obtain
 \begin{multline}\label{outil1}
 \left(2\mathbf{A}_{\psi_\al} -(d+2k)\mathbf{B}_{\psi_\al}\right)\|\psi_\al\|^2_{L^2_k} + 2k\mathbf{B}_{\psi_\al} \|\psi_\al\|_{L^2_{k-1}}^2=\\
 2(1-\al)\int_{\R^d}\Q^+(\psi_\al,\psi_\al)(\xi)\psi_\al(\xi)\langle \xi\rangle^{2k}\d\xi -2\int_{\R^d} \Q^-(\psi_\al,\psi_\al)(\xi)\psi_\al(\xi)\langle \xi\rangle^{2k}\d \xi.
 \end{multline}
Now, according to \cite[Corollary 2.2]{AloGa}, for any $\varepsilon \in (0,1)$, there exists $C_\varepsilon > 0$ such that  $$\int_{\R^d}\Q^+(\psi_\al,\psi_\al)(\xi)\psi_\al(\xi)\langle \xi\rangle^{2k}\d\xi \leq C_\varepsilon \|\psi_\al\|_{L^1_{\frac{d(d-3)}{d-1}+k}}^{2-1/d}\,\|\psi_\al\|_{L^2_k}^{1+1/d}  + \varepsilon \|\psi_\al\|_{L^1_{k}}\,\|\psi_\al\|_{L^2_{k}}^2$$
and, using Prop. \ref{propmom}, one sees that
 \beq\label{outil2}
\int_{\R^d}\Q^+(\psi_\al,\psi_\al)(\xi)\psi_\al(\xi)\langle \xi\rangle^{2k}\d\xi \leq C_\varepsilon \, \mathbf{M}_{\frac{d(d-3)}{d-1}+k}^{2-1/d} \,\|\psi_\al\|_{L^2_k}^{1+1/d}  + \varepsilon\, \mathbf{M}_{k}\,\|\psi_\al\|_{L^2_{k}}^2
\eeq
 where $\mathbf{M}_k$ and $\mathbf{M}_{\frac{d(d-3)}{d-1}+k}$ do not depend on $\alpha$.
We first consider the case $k=0$ and then $k>0$.\\

$\bullet$ \textit{First step: $k=0$.} We look now for conditions on $\alpha$
ensuring that
$$-\left(2\mathbf{A}_{\psi_\al}-d\mathbf{B}_{\psi_\al}\right)\|\psi_\al\|_{L^2}^2 -2\int_{\R^d} \Q^-(\psi_\al,\psi_\al)(\xi)\psi_\al(\xi)\d \xi$$
can absorb the leading order term $2\varepsilon \mathbf{M}_{k}\,\|\psi_\al\|_{L^2_{k}}^2=2\varepsilon \,\|\psi_\al\|_{L^2}^2$. One has
\begin{equation}\label{2A-dB}\left(2\mathbf{A}_{\psi_\al}-d\mathbf{B}_{\psi_\al}\right)=-\dfrac{\alpha}{2}\left(d+4\right)\mathbf{a}_{\psi_\al} + \frac{\alpha \, d}{2}\;\mathbf{b}_{\psi_\al}.\end{equation}
Now, by Lemma \ref{estimab}, one has $\mathbf{b}_{\psi_\al} \geq \dfrac{1}{\sqrt{2}}\;\mathbf{a}_{\psi_\al}$ and it is enough to estimate
$$\mathbb{K}_2:=\dfrac{\alpha}{2}\left(d+4-\frac{d}{\sqrt{2}}\right)\mathbf{a}_{\psi_\al}\,\|\psi_\al\|_{L^2}^2  -2\int_{\R^d} \Q^-(\psi_\al,\psi_\al)(\xi)\psi_\al(\xi) \d \xi .$$
As in \cite[Section 4]{jde}, we compound $\|\psi_\al\|_{L^2}^2$ and $\mathbf{a}_{\psi_\al}$ into a unique integral to get
$$ \mathbf{a}_{\psi_\al}\,\|\psi_\al\|_{L^2}^2 \leq 2\int_{\R^d} \Q^-(\psi_\al,\psi_\al)(\xi)\psi_\al(\xi) \d \xi $$
and therefore, with   \eqref{concentration},
$$\mathbb{K}_2 \leq -\eta_2(\al)\int_{\R^d} \Q^-(\psi_\al,\psi_\al)(\xi)\psi_\al(\xi) \d \xi \leq -\eta_2(\al)\, C_0 \|\psi_\al\|_{L^2_{1/2}}^2 \leq -\eta_2(\al)\, C_0 \|\psi_\al\|_{L^2}^2$$
where $\eta_2(\alpha)=2-\a\left(d+4-\frac{d}{\sqrt{2}}\right).$ Thus
\beq\label{alpha2}
\eta_2(\al) >0 \Longleftrightarrow  \alpha < \alpha_2:=\dfrac{2\sqrt{2}}{4\sqrt{2}+d(\sqrt{2}-1)}.
\eeq
But, $\underline{\alpha}_1\leq \underline{\alpha}=\min(\alpha_2,\alpha_0)$ by Lemma \ref{evol:L2} and Proposition \ref{propH1q}. Thus,
choosing $\alpha_\star<\underline{\alpha}_1$ and $\varepsilon  \leq \eta_2(\alpha_\star) C_0/4$, we get for any $\al\in(0,\alpha_\star)$,
$$\frac{\eta_2(\alpha_\star) \, C_0}{2}\; \|\psi_\al\|_{L^2}^2 \leq  2\,  C_\varepsilon \,  \mathbf{M}_{\frac{d(d-3)}{d-1}}^{2-1/d}\; \|\psi_\al\|_{L^2}^{1+1/d},$$
where the constants are independent of $\alpha$. This completes the proof of \eqref{L2norm} for $k=0$.\\

$\bullet$ \textit{Second step: $k>0$.}
 Using  \eqref{concentration}, \eqref{outil2}, Remark \ref{ABbound} and  bounding the $L^2_{k-1}$ norm by the $L^2_k$ one, \eqref{outil1} leads to
  $$2C_0 \|\psi_\al\|^2_{L^2_{k+1/2}} \leq C_k \|\psi_\al\|_{L^2_k}^2 + 2\, C_\varepsilon \, \mathbf{M}_{\frac{d(d-3)}{d-1}+k}^{2-1/d} \,\|\psi_\al\|_{L^2_k}^{1+1/d}  + 2 \,\varepsilon\, \mathbf{M}_{k}\,\|\psi_\al\|_{L^2_{k}}^2 $$
for some constant $C_k >0$ independent of $\al\in(0,\alpha_\star)$.
Now, choosing  $\varepsilon$ such that $ 2 \varepsilon \mathbf{M}_{k} \leq C_0$ we get the existence of some positive constants $C_{1,k}  > 0$ and $C_{2,k} > 0$ (independent of $\al\in(0,\alpha_\star)$) such that
 $$C_0 \|\psi_\al\|_{L^2_{k+1/2}}^2\leq C_{1,k}\|\psi_\al\|_{L^2_k}^2 + C_{2,k} \|\psi_\al\|_{L^2_k}^{1+1/d}.$$
 Now, one uses the fact that, for any $R > 0$,
 $$\|\psi_\al\|_{L^2_{k}}^2 \leq (1+R^2)^{k}\|\psi_\al\|_{L^2}^2+R^{-1}\|\psi_\al\|_{L^2_{k+1/2}}^2$$
and, since $\sup_{\al \in (0,\alpha_\star)} \sup_{\psi_\al \in \mathscr{E}_\al}\|\psi_\al\|_{L^2} < \infty$, one can choose $R > 0$ large enough so that $C_{1,k} R^{-1}=C_0/2$ to obtain
$$\tfrac{C_0}{2}\|\psi_\al\|_{L^2_{k+1/2}}^2 \leq C_{3,k} + C_{2,k} \|\psi_\al\|_{L^2_k}^{1+1/d} \qquad \forall \al \in (0,\al_\star)\,,\,\forall k \geq 0.$$
The conclusion follows easily since $1+1/d < 2$. \end{proof}
We extend now these estimates to general $\mathbb{H}^m_k$ estimates. The key argument is the regularity of $\Q^+$ obtained recently in \cite{AloLo3} in dimension $d=3$.
\begin{theo}\label{regularite} For any $\varepsilon >0$ and any $s \geq \frac{3-d}{2},$ $\eta \geq 0$, there exists $C=C(\varepsilon,s,\eta)$ such that
\begin{multline}\label{regestim}
\|\Q^+(f,g)\|_{\mathbb{H}^{s+\frac{d-1}{2}}_\eta} \leq C \,\|g\|_{\mathbb{H}^s_{\eta+\kappa}}\|f\|_{L^1_{2\eta+\kappa}} + \varepsilon\| f\|_{\mathbb{H}^{s+\frac{d-3}{2}}_{\eta+\frac{d+3}{2}}}\,\|g\|_{\mathbb{H}^{s+\frac{d-3}{2}}_{\eta+1}}\\
+\varepsilon \sum_{|\l|=s+\frac{d-1}{2}}\left( \| g\|_{L^1_{\eta+1}}\,\|\partial^\ell f\|_{L^2_{\eta+1}}+\| f\|_{L^1_{\eta+1}}\,\| \partial^\ell g\|_{L^{2}_{\eta+1}}\right)
\end{multline}
where $\kappa>3/2$.
\end{theo}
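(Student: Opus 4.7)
The plan is to adapt the weighted Sobolev smoothing estimate for $\Q^+$ developed in \cite{AloLo3}, combining a Bouchut--Desvillettes--Lions--Wennberg gain-of-regularity argument with a parameter-dependent mollification of the collision kernel. Writing $u=v-v_*$ and $\hat u=u/|u|$, I would first introduce a smooth cutoff $\Theta_\delta=\Theta_\delta(|u|,\s\cdot\hat u)$ that removes both the grazing angles $|\s\cdot\hat u|\approx 1$ and the large relative velocities $|u|\gtrsim\delta^{-1}$, thereby splitting the kernel $B(u,\s)=|u|$ as $B=B^{\mathrm{reg}}_\delta+B^{\mathrm{sing}}_\delta$ and decomposing $\Q^+=\Q^+_{\mathrm{reg},\delta}+\Q^+_{\mathrm{sing},\delta}$.

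For the regular part I would invoke the classical $\Q^+$-smoothing theorem: since $B^{\mathrm{reg}}_\delta$ is smooth and compactly supported in the relative velocity, $\Q^+_{\mathrm{reg},\delta}$ gains $(d-1)/2$ derivatives in $L^2$. To lift this to $\mathbb{H}^{s+(d-1)/2}_\eta$ my plan is to (i) distribute the derivatives $\partial^\ell$ onto $g$ only via the translation trick $v\mapsto v+h$, which amounts to translating $v',v'_*$ by $h/2$; (ii) redistribute the polynomial weight using the elementary inequalities $\langle v\rangle^\eta\leq C\langle v'\rangle^\eta\langle v'_*\rangle^\eta$ and $\langle v'_*\rangle^\eta\leq C(\langle v\rangle+\langle v_*\rangle)^\eta$, absorbing one factor into the $\mathbb{H}^s_{\eta+\kappa}$-norm of $g$ and paying the remainder in an $L^1$-norm of $f$. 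This should produce the leading-order term $C_\delta\,\|g\|_{\mathbb{H}^s_{\eta+\kappa}}\|f\|_{L^1_{2\eta+\kappa}}$ in \eqref{regestim}, with $\kappa>3/2$ stemming from the $L^2$-to-$L^1$ embedding and from the polynomial extent of $\Theta_\delta$.

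For the singular part I would use Bobylev's Fourier representation of $\Q^+$, pass $s+(d-1)/2$ derivatives onto $f$ and $g$ and integrate by parts in $\s$ exploiting the smoothness of the angular cutoff. The expected outcome is a bound of the form
$$\|\Q^+_{\mathrm{sing},\delta}(f,g)\|_{\mathbb{H}^{s+(d-1)/2}_\eta}\leq \omega(\delta)\,\|f\|_{\mathbb{H}^{s+(d-3)/2}_{\eta+(d+3)/2}}\,\|g\|_{\mathbb{H}^{s+(d-3)/2}_{\eta+1}}+\mathcal{R}_\delta(f,g),$$
where $\omega(\delta)\to 0$ as $\delta\to 0$ and $\mathcal{R}_\delta(f,g)$ collects the ``top-order'' configurations in which all $s+(d-1)/2$ derivatives land on a single factor; these cannot be mollified away and give exactly the last sum in \eqref{regestim}. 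Choosing $\delta$ such that $\omega(\delta)<\varepsilon$ then finishes the proof.

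The hard part will be the careful bookkeeping of the polynomial weights under the change of variables to the post-collisional coordinates and the correct identification of the intermediate Sobolev order $s+(d-3)/2$ together with the weight $(d+3)/2$ appearing in the error term. The asymmetry between $f$ and $g$ (exponent $2\eta+\kappa$ on $f$ versus $\eta+\kappa$ on $g$) is forced by the fact that only $g$ enters the smoothing estimate through an $L^2$-based Sobolev norm, while $f$ must absorb the full polynomial factor transferred from both post-collisional arguments via the Leibniz rule applied to $\partial^\ell\bigl(\langle v\rangle^\eta\Q^+(f,g)\bigr)$.
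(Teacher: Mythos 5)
There is a genuine gap, and it sits exactly where the paper's proof makes its main new contribution: the treatment of \emph{large relative velocities}. You place the region $|u|\gtrsim\delta^{-1}$ of the hard-sphere kernel $\Phi(|u|)=|u|$ into the ``singular'' remainder and claim its contribution can be made $\omega(\delta)$-small by Bobylev's formula and integration by parts in $\sigma$. That cannot work as stated: integration by parts in the angular variable exploits smoothness of the angular cutoff and has no effect on the radial truncation, and the kernel piece $|u|\mathbf{1}_{|u|>\delta^{-1}}$ has $L^\infty_{-1}$-norm equal to $1$ for every $\delta$. Smallness for that piece can only be bought by trading growth in $|u|$ against extra polynomial weights (e.g. $|u|\mathbf{1}_{|u|>\delta^{-1}}\leq \delta^{\theta}|u|^{1+\theta}$), and then the convolution estimates of Theorem \ref{alogam} force weights strictly larger than $\eta+1$ on \emph{both} factors in the top-order terms. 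This is not the estimate \eqref{regestim}: the precise weight $\eta+1$ on $\|\partial^\ell g\|_{L^2_{\eta+1}}$ is what later allows the $\varepsilon$-term to be absorbed by the coercive term $C_0\|F_\ell\|^2_{L^2_{k+1/2}}$ in Theorem \ref{theo:HK} and Proposition \ref{prop:estimDiff}, so the loss is not cosmetic. The paper's proof goes the opposite way: the smooth part $\Phi_S$ is chosen with $\Phi_S(r)=r$ for $r>2\epsilon$, so that the \emph{radial} remainder $\Phi_R$ is compactly supported and small in $L^\infty$ (and the angular remainder small through the constants $C_{2,1,\eta,1}(b_R)$, $C_{2,2,\eta,1}(b_R)$), while the whole large-velocity behaviour is kept inside the smooth kernel; the price is that the classical smooth-kernel gain-of-regularity theorem (Mouhot--Villani) no longer applies, and Lemma \ref{Sob} re-proves it via the Carleman representation and the radial Fourier transform for non-compactly-supported $\Phi_S$, the weight $\kappa>3/2$ being exactly what makes $\sup_{\omega}\|\langle\cdot\rangle^{-\eta}\mathbf{G}^\kappa_{i,k,\ell}(\cdot,\omega)\|_{\mathbb{H}^{r_k}}$ finite at large $|u|$.

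Two further points in your sketch are symptoms of the same confusion. First, your ``regular'' kernel $|u|\,\Theta_\delta$ is not smooth at $u=0$ (the paper's assumption \eqref{smoothphi} also imposes $\Phi_S(r)=0$ for $r<\epsilon$, and the resulting small-$|u|$ remainder is handled through $\|\Phi_R\|_{L^\infty}$ small), so the classical smoothing theorem you invoke does not directly apply to it. Second, your explanation that $\kappa>3/2$ ``stems from the $L^2$-to-$L^1$ embedding and the polynomial extent of $\Theta_\delta$'' is inconsistent with your own decomposition: for a compactly supported smooth kernel no such weight is needed ($\kappa=0$ suffices in Mouhot--Villani); in the paper $\kappa>3/2$ arises precisely because the smooth kernel is \emph{not} compactly supported, through the decay requirement $|R_\omega(u)|\leq C|u|^{k-|\ell|-\kappa-\frac{d}{2}+\frac{3}{2}}\in L^2$ at infinity. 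To repair your argument you would essentially have to move the large-$|u|$ region back into the smooth part and prove the weighted smooth-kernel estimate of Lemma \ref{Sob}, which is the paper's route.
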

The proof of this result, for general dimension $d$, is given in Appendix \ref{sec:reguQ+}. One then has
\begin{theo}\label{theo:HK}
 Setting
$${\hat{\al}_0=\underline{\alpha}_1 \qquad \mbox{and}\qquad \hat{\alpha}_m=\min\left(\underline{\alpha}_1\,,\:{\frac{2 \,C_0}{\sqrt{d}}}\left(\frac{d}{2}+m+2\right)^{-1}\right)\quad \mbox{for } m\geq1},$$
where $C_0>0$ is the constant from \eqref{concentration}, one has, for any  integer $m \geq 0$ and any $0<\overline{\al}_m<\hat{\al}_m$
$$\sup_{\al \in (0,\overline{\al}_m]} \sup_{\psi_\al \in \mathscr{E}_\al}\|\psi_\al\|_{\mathbb{H}^m_k} < \infty \qquad \forall k \geq 0.$$
\end{theo}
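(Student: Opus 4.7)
The proof proceeds by induction on the integer $m \geq 0$, the base case $m=0$ being precisely Proposition \ref{propo:L2}. Assume the claim holds at order $m-1$ and fix a multi-index $\ell$ with $|\ell|=m$. Applying $\partial^\ell$ to \eqref{tauT} and using the commutator identity $\partial^\ell(\xi\cdot\nabla_\xi\psi_\al) = \xi\cdot\nabla_\xi\partial^\ell\psi_\al + m\,\partial^\ell\psi_\al$, multiplying by $\partial^\ell\psi_\al\langle\xi\rangle^{2k}$, integrating, and performing the same integration by parts as in the proof of Proposition \ref{propo:L2}, one arrives at
\begin{multline*}
\bigl[2\mathbf{A}_{\psi_\al} + (2m-d-2k)\mathbf{B}_{\psi_\al}\bigr]\|\partial^\ell\psi_\al\|_{L^2_k}^2 + 2k\,\mathbf{B}_{\psi_\al}\,\|\partial^\ell\psi_\al\|_{L^2_{k-1}}^2 \\
= 2(1-\al)\int_{\R^d}\partial^\ell\Q^+(\psi_\al,\psi_\al)\,\partial^\ell\psi_\al\,\langle\xi\rangle^{2k}\d\xi - 2\int_{\R^d}\partial^\ell\Q^-(\psi_\al,\psi_\al)\,\partial^\ell\psi_\al\,\langle\xi\rangle^{2k}\d\xi.
\end{multline*}

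\textbf{Stage 1: $k=0$, where the threshold appears.} The loss side, after Leibniz expansion of $\partial^\ell(\psi_\al L\psi_\al)$, produces the diagonal contribution $-2\int |\partial^\ell\psi_\al|^2 L\psi_\al \d\xi \leq -2C_0\|\partial^\ell\psi_\al\|_{L^2}^2$ via \eqref{concentration}, while the remaining Leibniz terms only involve $\partial^{\ell'}\psi_\al$ with $|\ell'|<m$ (paired with $\partial^{\ell-\ell'}L\psi_\al = L(\partial^{\ell-\ell'}\psi_\al)$) and are controlled by the inductive hypothesis and Proposition \ref{propmom}. The gain side is treated by a Cauchy--Schwarz/Young splitting
$$2(1-\al)\int\partial^\ell\Q^+\,\partial^\ell\psi_\al\,\d\xi \leq \delta\|\partial^\ell\psi_\al\|_{L^2_{1/2}}^2 + \delta^{-1}\|\Q^+(\psi_\al,\psi_\al)\|_{\mathbb{H}^m_{-1/2}}^2,$$
and Theorem \ref{regularite} with $s = m-(d-1)/2$, $\eta=-1/2$ bounds the last norm by a piece involving only $\mathbb{H}^{m-(d-1)/2}$ and weighted $L^1$-moments (controllable by the inductive hypothesis, with an interpolation between $\mathbb{H}^{m-1}$ and $\mathbb{H}^m$ in the low-dimensional case $d=2$) plus an $\varepsilon$-small multiple of $\sum_{|\ell|=m}\|\partial^\ell\psi_\al\|_{L^2_{1/2}}^2$, absorbable into the coercive loss. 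Finally, inserting the explicit form \eqref{ABalpha} gives the drift coefficient at $k=0$,
$$2\mathbf{A}_{\psi_\al} + (2m-d)\mathbf{B}_{\psi_\al} = -\al\,\mathbf{a}_{\psi_\al}\bigl(\tfrac{d}{2}+m+2\bigr) + \al\,\mathbf{b}_{\psi_\al}\bigl(\tfrac{d}{2}+m\bigr),$$
and, transferring to the right-hand side while using $\mathbf{a}_{\psi_\al}\leq \sqrt{d}$ and $\mathbf{b}_{\psi_\al}\geq 0$ (Lemma \ref{estimab}), this is dominated by $\al\sqrt{d}(\tfrac{d}{2}+m+2)\|\partial^\ell\psi_\al\|_{L^2}^2$. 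Altogether we obtain
$$\Bigl[\,2C_0 - \al\sqrt{d}\bigl(\tfrac{d}{2}+m+2\bigr) - O(\delta+\varepsilon)\Bigr]\|\partial^\ell\psi_\al\|_{L^2}^2 \leq C(m),$$
and the condition $\al<\hat{\al}_m$ is exactly what makes the bracket positive for $\delta,\varepsilon$ small, giving the uniform $L^2$-bound.

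\textbf{Stage 2: general $k>0$, without additional smallness.} The same identity is now run with weight $\langle\xi\rangle^{2k}$; the loss produces the coercive contribution $-2C_0\|\partial^\ell\psi_\al\|_{L^2_{k+1/2}}^2$, the gain is dealt with again by Theorem \ref{regularite} (with $\eta = k-1/2$), and the $k$-dependent drift terms $[2\mathbf{A}+(2m-d-2k)\mathbf{B}]\|\partial^\ell\psi_\al\|_{L^2_k}^2$ and $2k\mathbf{B}\|\partial^\ell\psi_\al\|_{L^2_{k-1}}^2$ are split through the elementary inequality
$$\|\partial^\ell\psi_\al\|_{L^2_k}^2 \leq (1+R^2)^k\|\partial^\ell\psi_\al\|_{L^2}^2 + R^{-1}\|\partial^\ell\psi_\al\|_{L^2_{k+1/2}}^2 \qquad (R>0),$$
which was already used at the end of Proposition \ref{propo:L2}: the first piece is bounded by the outcome of Stage 1, the second is absorbed into $C_0\|\partial^\ell\psi_\al\|_{L^2_{k+1/2}}^2$ by choosing $R$ large, and crucially no further condition on $\al$ is needed. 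Summing over $|\ell|\leq m$ closes the induction. The main subtleties are (i) using Theorem \ref{regularite} with parameters $s,\eta$ that keep its $\varepsilon$-leading terms genuinely absorbable rather than circular (in particular requiring interpolation when $d=2$), (ii) computing the drift coefficient at $k=0$ in its sharp form $-\al\mathbf{a}_{\psi_\al}(d/2+m+2)+\al\mathbf{b}_{\psi_\al}(d/2+m)$ in order to match it against $2C_0$ and recover the explicit threshold $\hat{\al}_m=2C_0/[\sqrt{d}(d/2+m+2)]$, and (iii) ensuring the Leibniz remainders in $\partial^\ell\Q^\pm$ are indeed of strictly lower differential order so that the inductive hypothesis can absorb them.
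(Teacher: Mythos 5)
Your proposal follows essentially the same route as the paper: induction on $m$, differentiating \eqref{tauT}, a weighted $L^2$ energy estimate in which the diagonal $\Q^-$ term gives coercivity with weight $k+\tfrac12$ via \eqref{concentration}, the off-diagonal Leibniz terms are lower order and handled by the inductive hypothesis, the gain term is treated by Theorem \ref{regularite} with $s=m-\tfrac{d-1}{2}$ and its $\varepsilon$-terms absorbed into the coercive norm, the drift coefficient is rewritten through $\mathbf{a}_{\psi_\al},\mathbf{b}_{\psi_\al}$ and bounded using $\mathbf{a}_{\psi_\al}\leq\sqrt d$, which is exactly where the threshold $\hat{\al}_m=\tfrac{2C_0}{\sqrt d}(\tfrac d2+m+2)^{-1}$ comes from, and finally the extension to arbitrary weights $k$ uses the same $R$-interpolation trick with no further smallness of $\al$.

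The one concrete flaw is your Stage 1 anchor at $k=0$. There you invoke Theorem \ref{regularite} with $\eta=-\tfrac12$, but that theorem is stated (and proved) only for $\eta\geq 0$; and if you instead take $\eta\geq 0$, its top-order $\varepsilon$-term carries the weight $\eta+1\geq 1$ on $\partial^\ell\psi_\al$, which cannot be absorbed by the $k=0$ coercive term $C_0\|\partial^\ell\psi_\al\|_{L^2_{1/2}}^2$, whose weight is only $\tfrac12$. The paper sidesteps exactly this mismatch by restricting the energy estimate to $k\geq\tfrac12$ (so that $\eta=k-\tfrac12\geq 0$ and the weights match), running the smallness-of-$\al$ argument on the whole range $\tfrac12\leq k\leq m+\tfrac d2$ (where the $\mathbf{b}_{\psi_\al}$-terms can simply be dropped), and then anchoring the $R$-interpolation at $k_0=\tfrac d2+m$; the bound for $0\leq k<\tfrac12$ is trivially inherited. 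Your argument is repaired by the same move: anchor Stage 1 at $k=\tfrac12$ rather than $k=0$ (the drift coefficient only improves, so the same threshold $\hat\al_m$ survives), and then your Stage 2 goes through verbatim with that anchor. With this adjustment your proof coincides with the paper's.
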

\begin{proof} In several steps of the proof, we shall resort to the following way of estimating weighted $L^1$-norms by $L^2$-norms with higher order weights:
 \begin{equation}
\label{taug21}\|h\|_{L^1_k} \leq M_\mu\|h\|_{L^2_{k+d/2+\mu}}\qquad \forall k \geq 0, \qquad \forall \mu >0
\end{equation}
where the universal constant $M_\mu$ is given  by $M_\mu=\|\langle \cdot \rangle^{-\tfrac{d}{2}-\mu}\|_{L^2} < \infty$.

As in \cite[Theorem 3.6]{AloLo3}, the proof uses induction over $m$. Namely, Proposition \ref{propo:L2} shows that the result is true if $m=0$ since $\alpha_{*} < \underline{\alpha}_{1}$ is arbitrary. Let then $m \geq 1$ and $0<\overline{\al}_m<\hat{\al}_m$ be fixed. Assume that  for any $0\leq n\leq m-1$, for any $k\geq 0$ and for any $\delta>0$, there exists  $C_{n,k} >0$ such that 
\begin{equation}\label{Hmk}
 \sup_{\al \in (0,\hat{\al}_n-\delta]}\sup_{\psi_\al \in \mathscr{E}_\al}\| \psi_\al\|_{\mathbb{H}^n_k}\leq C_{n,k}.
\end{equation}
Note that $\overline{\al}_m<\hat{\al}_n$ for any  $0\leq n \leq m$. {We then deduce from Lemma \ref{interpolSob} and \eqref{Hmk} that  for any real number $\frac{3-d}{2}\leq s\leq m-1$, for any $k\geq 0$, there exists  $C_{s,k} >0$ such that 
\begin{equation}\label{Hsk}
 \sup_{\al \in (0,\overline{\al}_m]}\sup_{\psi_\al \in \mathscr{E}_\al}\| \psi_\al\|_{\mathbb{H}^s_k}\leq C_{s,k}.
\end{equation}}
Let now $\ell$ be a given multi-index with $|\ell|=m$ and let $k \geq 0$. For simplicity, set
$$F_\ell(\xi)=\partial^\ell \psi_\al(\xi).$$
Since $\psi_\al$ is a solution to \eqref{tauT}, $F_\ell(\cdot)$ satisfies
$$\left(\mathbf{A}_{\psi_\al} +|\ell|\mathbf{B}_{\psi_\al}\right)\,F_\ell(\xi) + \mathbf{B}_{\psi_\al} \xi \cdot \nabla_\xi F_\ell(\xi) =(1-\al)\partial^\ell \Q^+(\psi_\al,\psi_\al)(\xi) -\partial^\ell \Q^-(\psi_\al,\psi_\al)(\xi)$$
where we simply noticed that $\partial^\ell \left(\xi\cdot\nabla_\xi \psi_\al(\xi)\right)=\xi \cdot \nabla_\xi \partial^\ell \psi_\al(\xi)+ |\ell|\partial^{\ell}\psi_\al(\xi).$ Multiplying this last identity by $F_\ell(\xi)\,\langle\xi\rangle^{2k}$ and integrating over $\R^d$ yields, as above,
\begin{multline}\label{estimHell}
\left(\mathbf{A}_{\psi_\al}+\left(|\ell|-\frac{d}{2}-k\right)\mathbf{B}_{\psi_\al}\right)\|F_\ell\|_{L^2_k}^2  + k\mathbf{B}_{\psi_\al}\,\|F_\ell\|_{L^2_{k-1}}^2=\\
(1-\al)\int_{\R^d}\partial^\ell\Q^+(\psi_\al,\psi_\al)(\xi)\,F_\ell(\xi)\langle \xi\rangle^{2k}\d\xi -\int_{\R^d}\partial^\ell\Q^-(\psi_\al,\psi_\al)(\xi)\,F_\ell(\xi)\langle \xi\rangle^{2k}\d\xi.
\end{multline}
Let us now estimate the integral involving $\Q^-$. Noticing that
$$\partial^\ell\Q^-(\psi_\al,\psi_\al)=\sum_{0\leq\nu \leq \ell} \left(\begin{array}{c}
\ell \\ \nu
\end{array}\right) \Q^-(\partial^\nu \psi_\al,\partial^{\ell-\nu}\psi_\al).$$
For any $\nu$ with $\nu\neq \ell$, there exists $i_0 \in \{1,\ldots,d\}$ such that $\ell_{i_0}-\nu_{i_0} \geq 1$ and integration by parts yields
\begin{equation*}\begin{split}
\left|\Q^{-}(\partial^\nu \psi_\al,\partial^{\ell-\nu}\psi_\al)(\xi)\right|&=\left|\partial^\nu \psi_\al(\xi)\right|\,\left|\int_{\R^d} \partial^{\ell-\nu} \psi_\al(\xi_*)|\xi-\xi_*|\d\xi_*\right|\\
&\leq \left|\partial^{\,\nu} \psi_\al(\xi)\right|\,\|\partial^{\,\sigma }\psi_\al\|_{L^1}
\end{split}\end{equation*}
where $\sigma=(\sigma_1,\ldots,\sigma_d)$ is defined with $\sigma_{i_0}=\ell_{i_0}-\nu_{i_0}-1$ and $\sigma_i=\ell_i-\nu_i$ if $i \neq i_0.$  Thus, estimating the $L^1$ norm by some weighted $L^2$ norm thanks to \eqref{taug21} (with $\mu=d/2$ for simplicity) we get
$$\left|\Q^{-}(\partial^\nu \psi_\al,\partial^{\ell-\nu}\psi_\al)(\xi)\right|\leq C\,|\partial^\nu \psi_\al(\xi)|\,\|\partial^{\,\sigma }\psi_\al\|_{L^2_d}$$
for some universal constant $C >0$ independent of $\al$.  From the induction hypothesis \eqref{Hmk}, this last quantity is uniformly bounded and using Cauchy-Schwarz inequality we obtain
\begin{multline}\label{Q-nu}
\left|\underset{\nu \neq \ell}{\sum_{0\leq \nu\leq \ell}} \left(\begin{array}{c} \ell \\ \nu \end{array}\right)
\int_{\R^d}  \Q^{-}(\partial^\nu \psi_\al,\partial^{\ell-\nu}\psi_\al)(\xi)\,F_\ell(\xi)\langle \xi\rangle ^{2k}\d \xi \right|
\\
\leq C_2 \underset{\nu \neq \ell}{\sum_{0\leq \nu\leq \ell}}
\left(\begin{array}{c}
\ell \\ \nu
\end{array}\right)\|\partial^\nu \psi_\al\|_{L^2_k}\,\|F_\ell\|_{L^2_k}
\leq C_{k,m}\|F_\ell\|_{L^2_k}
\end{multline}
for some positive constant $C_{k,m}$ independent of $\al$.  Second, whenever $\nu=\ell$ one has
$$\int_{\mathbb{R}^{d}}\Q^-(\partial^{\ell} \psi_\al,\psi_\al)(\xi)\,\partial^{\ell}\psi_\al(\xi)\langle \xi\rangle ^{2k}\d \xi=\int_{\R^d} F_\ell^2(\xi)\langle \xi\rangle^{2k}\d \xi\int_{\R^d} \psi_\al(\xi)\,|\xi-\xi_*|\d \xi_*.$$
Therefore, according to \eqref{concentration}, we  get the lower bound
\begin{equation}\label{Q-nu+}
\int_{\mathbb{R}^{d}}\Q^-(\partial^{\ell} \psi_\al,\psi_\al)(\xi)\,\partial^{\ell}\psi_\al(\xi)\langle \xi\rangle ^{2k}\d \xi \geq C_0 \|F_\ell\|^{2}_{L^2_{k+\frac{1}{2}}}.
\end{equation}
 Estimates \eqref{Q-nu} and \eqref{Q-nu+} together with \eqref{estimHell} yield then
 \begin{multline}\label{estimHell1}
 C_0 \|F_\ell\|_{L^2_{k+1/2}}^2 + \left(\mathbf{A}_{\psi_\al}+\left(|\ell|-\frac{d}{2}-k\right)\mathbf{B}_{\psi_\al}\right)\|F_\ell\|_{L^2_k}^2  \\
 + k\mathbf{B}_{\psi_\al}\,\|F_\ell\|_{L^2_{k-1}}^2 \leq C_{k,m}\|F_\ell\|_{L^2_k} + \int_{\R^d}\left| \partial^\ell\Q^+(\psi_\al,\psi_\al)(\xi)\right|\,| F_\ell(\xi)| \, \langle \xi\rangle^{2k}\d\xi
 \end{multline}
 where we simply bounded $(1-\al)$ by $1$. Let us assume now that $k \geq 1/2$. One has
\begin{equation*}
\int_{\R^d} \left| \partial^\ell\Q^+(\psi_\al,\psi_\al)(\xi)\right|\,|F_\ell(\xi)|\, \langle \xi\rangle^{2k}\d\xi \leq \|\partial^\ell \Q^+(\psi_\al,\psi_\al)\|_{L^2_{k-1/2}}\|F_\ell\|_{L^2_{k+1/2}}.
\end{equation*}
One can use Theorem \ref{regularite} with $s= m- \frac{d-1}{2}$, \eqref{Hsk} and the uniform $L^1_k$ bounds to get, for any $\varepsilon  > 0$, the existence of some positive constants $C_1(\varepsilon,k,m) > 0$ and $C_2(k) > 0$ such that, for any $\al \in (0,\overline{\alpha}_m)$,
$$\|\partial^\ell \Q^+(\psi_\al,\psi_\al)\|_{L^2_{k-1/2}} \leq \|\Q^+(\psi_\al,\psi_\al)\|_{\mathbb{H}^m_{k-\frac{1}{2}}} \leq C_1(\varepsilon,k,m) +  \varepsilon\,C_2(k)\, \sum_{|\eta|=m} \|F_\eta\|_{L^{2}_{k+\frac{1}{2}}}.$$
Summing \eqref{estimHell1} over all $\ell\in\N^d$ such that $|\ell|=m$, we deduce that, for some  $\overline{C_2}(k,m)>0$,
\begin{multline*}
 C_0 \sum_{|\ell|=m} \|F_\ell\|_{L^2_{k+1/2}}^2 + \left(\mathbf{A}_{\psi_\al}+\left(m-\frac{d}{2}-k\right)\mathbf{B}_{\psi_\al}\right) \sum_{|\ell|=m} \|F_\ell\|_{L^2_k}^2
 + k\mathbf{B}_{\psi_\al}\,\sum_{|\ell|=m} \|F_\ell\|_{L^2_{k-1}}^2 \\
\leq (C_{k,m}+ C_1(\varepsilon,k,m)) \sum_{|\ell|=m}\|F_\ell\|_{L^2_k+\frac{1}{2}} + \varepsilon\,\overline{C_2}(k,m)\,\sum_{|\ell|=m} \|F_\ell\|^2_{L^{2}_{k+\frac{1}{2}}}.
\end{multline*}
Let us fix $\chi \in (0,1)$ such that $\overline{\al}_m\leq (1-2 \chi)\hat{\al}_m$. Choosing then $\varepsilon > 0$ so that $\varepsilon\,\overline{C_2}(k,m)=\chi C_0 $, we get finally
\begin{multline}\label{C02}
(1-\chi)C_0 \sum_{|\ell|=m}\|F_\ell\|_{L^2_{k+1/2}}^2 + \left(\mathbf{A}_{\psi_\al}+\left(m-\frac{d}{2}-k\right)\mathbf{B}_{\psi_\al}\right)\sum_{|\ell|=m}\|F_\ell\|_{L^2_k}^2  \\
+ k\mathbf{B}_{\psi_\al}\,\sum_{|\ell|=m}\|F_\ell\|_{L^2_{k-1}}^2\leq C_3(k,m) \sum_{|\ell|=m}\|F_\ell\|_{L^2_{k+1/2}}
\end{multline}
where $C_3(k,m) > 0$ is a positive constant independent of $\al$. Recall that
\begin{equation*}
\mathbf{A}_{\psi_\al}+\left(m-\frac{d}{2}-k\right)\mathbf{B}_{\psi_\al}=-\frac{\al}{2}\left(\frac{d}{2}+m-k +2\right)\mathbf{a}_{\psi_\al} + \frac{\al}{2}\left(\frac{d}{2}+m-k\right)\mathbf{b}_{\psi_\al}\end{equation*}
while $k\mathbf{B}_{\psi_\al}=-\frac{\al\,k}{2}\mathbf{a}_{\psi_\al} + \frac{\al\,k}{2}\mathbf{b}_{\psi_\al}.$ \\

At this stage, we first consider the case of {\it small} $k$. Namely, let us assume that
$$k \leq m+\frac{d}{2}.$$
Then, neglecting all the terms involving $\mathbf{b}_{\psi_\al}$, one has
\begin{equation*}\begin{split}
\left(\mathbf{A}_{\psi_\al}+\left(m-\frac{d}{2}-k\right)\mathbf{B}_{\psi_\al}\right)\|F_\ell\|_{L^2_k}^2  &+ k\mathbf{B}_{\psi_\al}\,\|F_\ell\|_{L^2_{k-1}}^2\\
&\geq -\frac{\al}{2}\left(\frac{d}{2} + 2 +m\right)\mathbf{a}_{\psi_\al}\,\|F_\ell\|_{L^2_k}^2 + \frac{\al\,k}{2}\mathbf{a}_{\psi_\al} \left(\|F_\ell\|_{L^2_k}^2-\|F_\ell\|_{L^2_{k-1}}^2\right)\\
&\geq  -\frac{\al}{2}\sqrt{d}\left(\frac{d}{2}+2+m\right)\|F_\ell\|_{L^2_k}^2\end{split}
\end{equation*}
Consequently,
one sees that, for any $\al\in(0,\overline{\al}_m]$, \eqref{C02} yields
$$ \chi\, \hat{\al}_m \, {\frac{\sqrt{d}}{2}}\left(\frac{d}{2}+2+m\right) \sum_{|\ell|=m}\|F_\ell\|_{L^2_{k+1/2}}^2 \leq C_3(k,m) \sum_{|\ell|=m} \|F_\ell\|_{L^2_{k+1/2}}.$$
In particular, for any $k \leq m+\frac{d}{2}$, one has
\begin{equation}\label{L2k0}
\sup_{\alpha \in (0,\overline{\alpha}_m)} \sup_{\psi_\al \in \mathscr{E}_\al}\sum_{|\ell|=m}\|F_\ell\|_{L^2_{k+1/2}}^2 < \infty.\end{equation}
We now turn back to \eqref{C02} for {\it any} $k > \frac{d}{2}+m$. Bounding as in Prop. \ref{propo:L2} the absolute value of $|\mathbf{A}_{\psi_\al}|$ and $|\mathbf{B}_{\psi_\al}|$ uniformly with respect to $\al$, we see that there exist positive constants $\mathcal{C}_1(k,m) > 0$,  independent of $\al$ such that
$$(1-\chi)C_0 \sum_{|\ell|=m}\|F_\ell\|_{L^2_{k+1/2}}^2 \leq \mathcal{C}_1(k,m) \sum_{|\ell|=m}\|F_\ell\|_{L^2_k}^2 +  C_3(k,m) \sum_{|\ell|=m} \|F_\ell\|_{L^2_{k+1/2}} \qquad \forall \al \in (0,\overline{\al}_m).$$
Now, arguing as in the proof of Prop. \ref{propo:L2}, one has, for any $R > 0$,
$$\|F_\ell\|_{L^2_k}^2 \leq R^{-1}\|F_\ell\|_{L^2_{k+1/2}}^2 + (1+R^2)^{k-k_0} \|F_\ell\|_{L^2_{k_0}}^2$$
where $k_0=\frac{d}{2}+m.$ Choosing $R > 0$ big enough and using \eqref{L2k0}
completes the proof.\end{proof}

One deduces directly from Sobolev inequalities the following uniform $L^\infty$ bound.
\begin{cor}\label{cor:Linfty}
For any $m \geq 0$, there exists some explicit $\gamma_m > 0$ such that
$$\sup_{\alpha \in (0,\gamma_m)}\sup_{\psi_\al \in \mathscr{E}_\al} \|\psi_\al\|_{\mathbb{W}^{m,\infty}} < \infty.$$
\end{cor}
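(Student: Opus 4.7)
The plan is to derive this corollary as a direct consequence of Theorem \ref{theo:HK} combined with the classical (unweighted) Sobolev embedding $\mathbb{H}^{s}(\R^d) \hookrightarrow \mathbb{W}^{m,\infty}(\R^d)$ which is valid for any real $s > m + d/2$. Since Theorem \ref{theo:HK} only provides uniform control on $\mathbb{H}^{n}_k$-norms for integer $n$, the first task is to pick a convenient integer exponent $m'$ strictly larger than $m + d/2$, for instance $m' = m + \lfloor d/2 \rfloor + 1$.

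With this choice, the standard Sobolev embedding yields a constant $\kappa_{m,d} > 0$, depending only on $m$ and $d$, such that
\[
\|f\|_{\mathbb{W}^{m,\infty}} \leq \kappa_{m,d}\,\|f\|_{\mathbb{H}^{m'}_0}, \qquad \forall f \in \mathbb{H}^{m'}(\R^d).
\]
This step requires no properties specific to the problem; it is the usual Sobolev inequality on $\R^d$.

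Next, I would apply Theorem \ref{theo:HK} with exponent $m'$ and weight $k = 0$: for any $\overline{\al}_{m'} \in (0, \hat{\al}_{m'})$ the theorem provides
\[
\sup_{\al \in (0, \overline{\al}_{m'}]}\,\sup_{\psi_\al \in \mathscr{E}_\al} \|\psi_\al\|_{\mathbb{H}^{m'}_0} < \infty,
\]
where, by the very definition given in the statement of Theorem \ref{theo:HK},
\[
\hat{\al}_{m'} = \min\left(\underline{\al}_1,\; \frac{2 C_0}{\sqrt{d}}\left(\frac{d}{2}+m'+2\right)^{-1}\right).
\]
Setting $\gamma_m$ to be any fixed value in $(0, \hat{\al}_{m'})$ — for instance $\gamma_m = \tfrac{1}{2} \hat{\al}_{m'}$ — yields an explicit threshold, since $\underline{\al}_1$ comes from Theorem \ref{existence} and $C_0$ from \eqref{concentration} are both explicit.

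Combining the Sobolev inequality with the uniform $\mathbb{H}^{m'}_0$ bound then gives
\[
\sup_{\al \in (0,\gamma_m)}\,\sup_{\psi_\al \in \mathscr{E}_\al} \|\psi_\al\|_{\mathbb{W}^{m,\infty}} \leq \kappa_{m,d}\,\sup_{\al \in (0,\gamma_m)}\,\sup_{\psi_\al \in \mathscr{E}_\al} \|\psi_\al\|_{\mathbb{H}^{m'}_0} < \infty,
\]
which is the desired conclusion. There is no real obstacle in this argument: the heavy lifting has already been done in Theorem \ref{theo:HK}, and the corollary amounts only to trading Sobolev regularity of order $m + \lfloor d/2\rfloor + 1$ for uniform bounds on $m$ classical derivatives.
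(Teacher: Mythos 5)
Your argument is correct and coincides with the paper's intended proof: the corollary is stated as a direct consequence of the uniform $\mathbb{H}^{m'}_k$ bounds of Theorem \ref{theo:HK} combined with the Sobolev embedding $\mathbb{H}^{m'}(\R^d)\hookrightarrow \mathbb{W}^{m,\infty}(\R^d)$ for $m'>m+d/2$, exactly as you do, with the threshold $\gamma_m$ taken below $\hat{\al}_{m'}$.
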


\subsection{High-energy tails for difference of steady solutions}

Now  we established the regularity of $\psi_\al$, we can extend Proposition \ref{prop:tails} to the high-energy tails to the first order derivative of $\psi_\al$. Namely
\begin{lem}\label{lem:tails}
Let $\overline{\alpha}_1\in(0,\hat{\al}_1)$ where $\hat{\alpha}_1$ is defined in Theorem \ref{theo:HK}.
There exist some uniform constant $C>0$ and $\gamma \in (0,1)$ such that for 
any $\alpha  \in (0,\overline{\alpha}_1]$ and any 
$\psi_\alpha \in \mathscr{E}_\al$,
$$\int_{\R^d} \left|\nabla \psi_{\al}(\xi)\right|\,|\xi|^{k} \d \xi \leq C^{k}\, \Gamma(k+\gamma), \qquad \forall k\in\N_*.$$
As a consequence, there exist some constants $A_1 >0$ and $M_1 >0$ such that, for any  $\alpha \in (0,\overline{\alpha}_1]$ and any $\psi_\alpha \in \mathscr{E}_\al$, one has
$$\int_{\R^d} \left|\nabla \psi_\alpha(\xi)\right| \exp\left(A_1 |\xi|\right) \d\xi \leq M_1.$$
\end{lem}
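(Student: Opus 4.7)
The approach mimics the Cauchy--Hadamard argument of Proposition \ref{prop:tails}, now applied to each partial derivative $h := \partial_i \psi_\al$. Differentiating \eqref{tauT} with respect to $\xi_i$ and exploiting the bilinearity of $\Q^\pm$ yields
\begin{equation*}
(\mathbf{A}_{\psi_\al} + \mathbf{B}_{\psi_\al})\,h + \mathbf{B}_{\psi_\al}\,\xi\cdot\nabla h = (1-\al)\bigl[\Q^+(\psi_\al, h) + \Q^+(h, \psi_\al)\bigr] - \Q^-(\psi_\al, h) - h\,L(\psi_\al).
\end{equation*}
Multiplying by $\mathrm{sgn}(h)\,|\xi|^{2p}$ and integrating: the drift term integrates by parts to $-(d+2p)\mathbf{B}_{\psi_\al}\int |h|\,|\xi|^{2p}\d\xi$; the loss term $-h\,L(\psi_\al)$ produces the coercive lower bound $-C_0\int |h|\,\langle\xi\rangle\,|\xi|^{2p}\d\xi$ thanks to \eqref{concentration}; and the pointwise bounds $|\mathrm{sgn}(h)\,\Q^\pm(\psi_\al, h)| \leq \Q^\pm(\psi_\al, |h|)$, which follow from the nonnegativity of the kernel of $\Q^\pm$, allow the remaining terms to be analyzed as bilinear expressions in the nonnegative functions $\psi_\al$ and $|h|$.

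Second, a Povzner-type estimate (as in \cite[Lemma 3.2]{jde}) applied to the symmetric sum $\Q^+(\psi_\al,|h|)+\Q^+(|h|,\psi_\al)$ tested against $|\xi|^{2p}$ furnishes a contribution of the form $\beta_p(\al)\,S_p^h$, where $S_p^h$ consists of mixed products $M_{j+1/2}(\psi_\al)\,\widetilde{N}_{p-j}$ and $M_j(\psi_\al)\,\widetilde{N}_{p-j+1/2}$ with $\widetilde{N}_q := \int |h|\,|\xi|^{2q}\d\xi$, plus lower-order pieces absorbable into the $C_0$ coercivity. The non-symmetric contribution $\int \psi_\al\,L(|h|)\,|\xi|^{2p}\d\xi$ --- which is the main obstacle, since $\Q^-(\psi_\al, h) = \psi_\al L(h)$ possesses no useful sign after multiplication by $\mathrm{sgn}(h)$ --- is bounded via $|\xi-\xi_*| \leq |\xi|+|\xi_*|$ by $M_{p+1/2}(\psi_\al)\,\widetilde{N}_0 + M_p(\psi_\al)\,\widetilde{N}_{1/2}$, and the exponential tails of $\psi_\al$ from Proposition \ref{prop:tails} provide the exact factorial growth $M_{p+1/2}(\psi_\al) \leq \tilde{C}^{2p+1}\,\Gamma(2p+2)$ needed to keep this source term subordinate. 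Combined with $|\mathbf{A}_{\psi_\al}| + |\mathbf{B}_{\psi_\al}| = O(\al)$ (Remark \ref{ABbound}) and $\al \leq \overline{\alpha}_1 < \hat{\al}_1$, one obtains on $\widetilde{N}_p$ a recursive inequality which, after renormalization $z_p := \widetilde{N}_p / \Gamma(2p+\gamma)$ with $\gamma \in (0,1)$ and invocation of \cite[Lemma 4]{BoGaPa} to handle the Beta-function ratios, takes the form
\begin{equation*}
c_1\,p\,z_{p+1/2} \leq c_2\,p^\gamma Z_p + c_3\,p\,z_p + c_4\,p^{2-\gamma}\,\tilde{C}^{2p},
\end{equation*}
with $Z_p := \max_{1\leq j\leq [(p+1)/2]}\{z_{j+1/2}\,z_{p-j},\,z_j\,z_{p-j+1/2}\}$ and constants $c_1,\ldots,c_4$ independent of $\al \in (0,\overline{\alpha}_1]$.

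The induction concludes exactly as at the end of Proposition \ref{prop:tails}: one picks $K$ greater than $\tilde{C}$, than $c_3/c_1$, and larger than the finitely many initial renormalized moments $z_1,\ldots,z_{k_0}$ (uniformly bounded in $\al$ by Theorem \ref{theo:HK} combined with the embedding $\|f\|_{L^1_k} \leq M_\mu\,\|f\|_{L^2_{k+d/2+\mu}}$), and chooses $k_0$ large enough so that $c_2\,k_0^{\gamma-1} \leq c_1/2$ and $c_4\,(\tilde{C}/K)^{2k_0}\,k_0^{1-\gamma} \leq 1$. The inductive hypothesis $z_p \leq K^{2p}$ then propagates from $p$ to $p+1/2$, yielding $\widetilde{N}_p \leq K^{2p}\,\Gamma(2p+\gamma)$ which, after summing over $i=1,\ldots,d$ and reindexing to integer moments, gives $\int_{\R^d} |\nabla \psi_\al|\,|\xi|^k\d\xi \leq C^k\,\Gamma(k+\gamma)$ uniformly in $\al \in (0,\overline{\alpha}_1]$. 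The exponential-tail conclusion is then an immediate consequence of the Cauchy--Hadamard criterion, with any $A_1 < 1/C$ and $M_1 := \sum_{k\geq 0} A_1^k\,C^k\,\Gamma(k+\gamma)/k!$.
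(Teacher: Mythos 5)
Your argument is correct and shares the overall skeleton of the paper's proof: differentiate the steady equation, test $h=\partial_j\psi_\al$ against $\mathrm{sign}(h)\,|\xi|^{2p}$, integrate the drift by parts, extract a coercive moment of order $p+\tfrac12$, renormalize by $\Gamma(2p+\gamma)$ and run the same induction as in Proposition \ref{prop:tails}, with the initialization supplied (exactly as you say) by the uniform $\mathbb{H}^1_k$ bounds of Theorem \ref{theo:HK} for $\al\le\overline{\al}_1<\hat{\al}_1$. Where you genuinely diverge is at the key step: you estimate the differentiated elastic part by the pointwise domination $|\Q^+(\psi_\al,h)+\Q^+(h,\psi_\al)|\le \Q^+(\psi_\al,|h|)+\Q^+(|h|,\psi_\al)$ followed by the symmetric Povzner estimate, and you take the coercivity $(1-\beta_p(\al))\widetilde N_{p+1/2}$ from the signed loss term $|h|\,L(\psi_\al)$ via Jensen or \eqref{concentration}; the paper instead quotes \cite[Lemma 6]{AloGamJMPA} (and \cite[Lemma 7]{AloGamJMPA} for the renormalized mixed sums), which packages precisely this estimate, with coercive constant $\eta(1-\varrho_p)$ and the same sign-less source $M_{p+\frac12}\,m_0^{(j)}+M_p\,m_{1/2}^{(j)}$ coming from $\psi_\al L(h)$. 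Your inline derivation is essentially the proof of that lemma, and it is also the device the paper itself uses for the difference of two solutions in Proposition \ref{prop:tailsdiff}; what it buys is self-containedness, at the price of bookkeeping: the symmetric Povzner expansion produces four families of mixed products ($M_{j+\frac12}\widetilde N_{p-j}$, $M_j\widetilde N_{p-j+\frac12}$, $\widetilde N_{j+\frac12}M_{p-j}$, $\widetilde N_jM_{p-j+\frac12}$), of which you list only two, and the lower-order pieces are absorbed into the $c_3\,p\,z_p$ term rather than into the $C_0$ coercivity, though these are presentational points rather than gaps. A last cosmetic difference: by bounding $M_{p+\frac12}\le \tilde C^{2p+1}\Gamma(2p+2)$ you get the source $c_4\,p^{2-\gamma}\tilde C^{2p}$, whereas the paper renormalizes the $\psi_\al$-moments with the same $\gamma$ and obtains the cleaner terms $c_3\frac k2 K^{k+1}+c_4K^k$ in \eqref{cizpj}; both versions are absorbed by the induction once the constant dominates $\tilde C$ and $k_0$ is large, since $p^{1-\gamma}(\tilde C/K)^{2p}\to 0$.
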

\begin{proof} The proof follows the strategy of Prop. \ref{prop:tails} and exploits some of the results of \cite{AloGamJMPA}. First, with the notations of Theorem \ref{theo:HK}, for any $\overline{\alpha}_1 \in (0,\hat{\alpha}_1)$, {for any $k\geq 0$}, there exists $M > 1$ such that, for any $ \alpha  \in (0,\overline{\al}_1]$ and any $\psi_\al \in \mathscr{E}_\al$, $ \|\psi_\al\|_{\mathbb{H}^1_k} \leq M$. In particular, by a simple use of Cauchy-Schwarz inequality, {for any $k\geq 0$}
\begin{equation}\label{moments_grad}
\sup_{\al \in (0,\overline{\al}_1]}\sup_{\psi_\al \in \mathscr{E}_\al} \left\|\nabla\,\psi_\al\right\|_{L^1_k} < \infty.
\end{equation}
For any fixed $\alpha \in (0,\overline{\al}_1)$ and any solution $\psi_\al \in \mathscr{E}_\al$ we denote (omitting for simplicity the dependence with respect to $\al$), for $p\geq 1$,
\begin{eqnarray*}
M_p & := & M_p(\psi_\al)=\int_{\R^d} \psi_\al(\xi)\, |\xi|^{2p}\d \xi\,;\\ 
m_p^{(j)}& :=& \int_{\R^d} \left|\partial_j \psi(\xi)\right|\,|\xi|^{2p}\,\d\xi, \qquad \forall j=1,\ldots,d.\end{eqnarray*}
For any fixed $j \in \{1,\ldots,d\}$, set $\Psi_j=\partial_j \psi_\al.$ Clearly, $\Psi_j$ satisfies
$$\left[\mathbf{A}_{\psi_\al}+\mathbf{B}_{\psi_\al}\right] \Psi_j(\xi) + \mathbf{B}_{\psi_\al}\,\xi \cdot \nabla \Psi_j(\xi)
=\partial_j \mathbb{B}_\alpha(\psi_\al,\psi_\al)(\xi)$$
and, for any $p \geq 3/2$, multiplying this identity by $\mathrm{sign}(\Psi_j(\xi))\,|\xi|^{2p}$ and integrating over $\R^d$, one gets
$$\left[\mathbf{A}_{\psi_\al}+\left(1-d-2p\right)\mathbf{B}_{\psi_\al}\right]m_p^{(j)}=\int_{\R^d}\partial_j \mathbb{B}_\alpha(\psi_\al,\psi_\al)(\xi)\mathrm{sign}(\Psi_j(\xi))\,|\xi|^{2p}\,\d\xi$$
where we used the fact that
$$\int_{\R^d} \xi \cdot \nabla \Psi_j(\xi)\mathrm{sign}(\Psi_j(\xi))\,|\xi|^{2p}\d \xi=-(d+2p)\int_{\R^d}|\Psi_j(\xi)|\,|\xi|^{2p}\,\d\xi.$$
Now, since
$$\partial_j \mathbb{B}_\alpha(\psi_\al,\psi_\al)=(1-\alpha)\partial_j \Q(\psi_\al,\psi_\al) -\alpha \Q^-(\Psi_j,\psi_\al) -\alpha \Q^-(\psi_\al,\Psi_j)$$
and using \eqref{ABab}, one checks easily that
\begin{multline*}
\alpha\,\mathbf{a}_{\psi_\al}\left(p-\frac{3}{2}\right)m_p^{(j)} = \alpha \left(p-\frac{1}{2}\right)\mathbf{b}_{\psi_\al}m_p^{(j)} + (1-\alpha)\int_{\R^d}\partial_j \Q(\psi_\al,\psi_\al)(\xi)\mathrm{sign}(\Psi_j(\xi))\,|\xi|^{2p}\,\d\xi\\
-\al \int_{\R^d} |\Psi_j(\xi)|\,|\xi|^{2p} \d\xi \int_{\R^d}\psi_\al(\xi_*)|\xi-\xi_*|\d\xi_* \\
-\alpha \int_{\R^d} \Q^-(\psi_\al,\Psi_j)(\xi)\mathrm{sign}(\Psi_j(\xi))\,|\xi|^{2p}\,\d\xi.
\end{multline*}
Now, using Jensen's inequality
$$\int_{\R^d} |\Psi_j(\xi)|\,|\xi|^{2p} \d\xi \int_{\R^d}\psi_\al(\xi_*)|\xi-\xi_*|\d\xi_* \geq m_{p+\frac{1}{2}}^{(j)}$$
while it is easy to check that
$$\left|\int_{\R^d} \Q^-(\psi_\al,\Psi_j)(\xi)\mathrm{sign}(\Psi_j(\xi))\,|\xi|^{2p}\,\d\xi\right| \leq M_{p+\frac{1}{2}}m_0^{(j)} + M_p\,m_{\frac{1}{2}}^{(j)}.$$
Therefore,
\begin{multline}\label{esti11}
\alpha\,\mathbf{a}_{\psi_\al}\left(p-\frac{3}{2}\right)m_p^{(j)} + \alpha m_{p+\frac{1}{2}}^{(j)} \leq \alpha \left(p-\frac{1}{2}\right)\mathbf{b}_{\psi_\al}m_p^{(j)} + \alpha \left(M_{p+\frac{1}{2}}m_0^{(j)} + M_p\,m_{\frac{1}{2}}^{(j)}\right) \\
+ (1-\alpha)\int_{\R^d}\partial_j \Q(\psi_\al,\psi_\al)(\xi)\mathrm{sign}(\Psi_j(\xi))\,|\xi|^{2p}\,\d\xi.
\end{multline}
According to \cite[Lemma 6]{AloGamJMPA}, one can estimate this last integral as follows: there is some universal constant $\eta > 0$ such that
\begin{multline}\label{esti12}
\int_{\R^d}\partial_j \Q(\psi_\al,\psi_\al)(\xi)\,\mathrm{sign}(\Psi_j(\xi))\,|\xi|^{2p}\,\d\xi \leq -\eta(1-\varrho_p)m_{p+\frac{1}{2}}^{(j)} + 2m_{\frac{1}{2}}^{(j)} M_p + 2 m_0^{(j)}\,M_{p+\frac{1}{2}} + \varrho_p \mathcal{S}_p^{(j)}
\end{multline}
where
$$\mathcal{S}_p^{(j)}:=\sum_{k=1}^{[\frac{p+1}{2}]}
\left(\bary{c}p\\k\\\eary \right)\left(m_{k+\frac{1}{2}}^{(j)} M_{p-k} + M_{k+\frac{1}{2}} m_{p-k}^{(j)} + m_{k}^{(j)} M_{p-k+\frac{1}{2}} + m_{p-k+\frac{1}{2}}^{(j)} M_k\right)$$
and $\varrho_p$ is defined in \eqref{varrhoK}. Neglecting the term involving $\mathbf{a}_{\psi_\al} \geq 0$ and since $\mathbf{b}_{\psi_\al} \leq \underline{\mathbf{b}}$ for some positive constant $\underline{\mathbf{b}}=\frac{2}{d}\mathbf{M}_{3/2}+\sqrt{\frac{d}{2}}$ independent of $\alpha$ (see Lemma \ref{estimab} and Prop. \ref{propmom}) we obtain from \eqref{esti11}
$$
\left[\alpha+\eta(1-\al)(1-\varrho_p)\right] m_{p+\frac{1}{2}}^{(j)}
 \leq \alpha \left(p-\frac{1}{2}\right)\underline{\mathbf{b}}\, m_p^{(j)} +   (2-\al) \left(M_{p+\frac{1}{2}}m_0^{(j)} + M_p\,m_{\frac{1}{2}}^{(j)}\right)
 +\beta_p(\al) \mathcal{S}^{(j)}_p
$$
where, as in Proposition \ref{prop:tails}, $\beta_p(\al)=(1-\al)\varrho_p=\mathbf{O}\left(\frac{1}{p}\right)$ uniformly with respect to $\al$.  We introduce the renormalized moments
$$z_p^{(j)}=\frac{m_p^{(j)}}{\Gamma(2p+\gamma)}, \qquad z_p:=\frac{M_p}{\Gamma(2p+\gamma)} \qquad p\geq 0$$
where $\Gamma$ denotes the Gamma function and $\gamma \in (0,1)$ is a constant. We proved in \eqref{claim_exp_tail} that there exist  some constants $\gamma\in (0,1)$ and $K>0$ such that for any  $\alpha  \in (0,\alpha_\star]$ and any $\psi_\alpha \in \mathscr{E}_\al$,
\beq\label{claim_exp_tail2}
z_{k/2} \leq K^{k}, \qquad \forall k\in\N_*.
\eeq
Moreover, reproducing the arguments of both \cite[Lemma 4]{BoGaPa} and \cite[Lemma 7]{AloGamJMPA}, there exists some positive constant $A$ depending only on $\gamma$ such that
$$\mathcal{S}_p^{(j)} \leq A \Gamma(2p+1 + 2\gamma) \mathcal{Z}_p^{(j)} \qquad \forall p > 3/2$$
where
$$\mathcal{Z}_p^{(j)}=\max_{1 \leq k \leq [\frac{p+1}{2}]}\left(z_k^{(j)}\,z_{p-k+\frac{1}{2}} + z_k\,z_{p-k+\frac{1}{2}}^{(j)}, z_{k+\frac{1}{2}}^{(j)} z_{p-k} + z_{k+\frac{1}{2}}\,z_{p-k}^{(j)}\right).$$
Arguing as in the proof of Proposition \ref{prop:tails} and using \eqref{claim_exp_tail2}, for any $\alpha \in (0,\alpha_\star)$, there exist positive constants $c_i$, $i=1,\ldots,4$ such that
\begin{equation}\label{cizpj}c_1 {\frac{k}{2} }  z_{\frac{k+1}{2}}^{(j)} \leq c_2 \left(\frac{k}{2}\right)^\gamma \mathcal{Z}^{(j)}_{k/2} + \underline{\mathbf{b}}\,\frac{k}{2}\,z^{(j)}_{k/2} + c_3\,\frac{k}{2}\,K^{k+1}+c_4\,K^{k} \qquad k \geq 3.\end{equation}
Let us show then that there exists $C  \geq K$ such that
\begin{equation}\label{zpjC}
z_{\frac{k}{2}}^{(j)} \leq C^{k} \qquad \forall k \in \mathbb{N}.\end{equation}
One argues as in Proposition \ref{prop:tails}. Namely, choose $k_0 \in \mathbb{N}$ large enough so that 
$$ 2\:\frac{c_4}{k_0} \leq \underline{\mathbf{b}}\qquad \mbox{ and }\qquad c_2 \left(\frac{k_0}{2}\right)^{\gamma-1}\leq \frac{c_1}{2}, $$ 
and let $C \geq K > 1$ be such that
$$C \geq \max_{1 \leq k \leq k_0} \sup_{\alpha \in (0,\alpha_\star)} \sup_{\psi_\al \in \mathscr{E}_\al} z_{\frac{k}{2}}^{(j)} \quad \text{ and } \quad
2\underline{\mathbf{b}} \, C^{-1} + c_3 \left(\frac{K}{C}\right)^{k_0+1} \leq \frac{c_1}{2}.$$
Thanks to \eqref{moments_grad}, such a $C$ exists and let us prove by induction that  \eqref{zpjC} holds. If $k \leq k_0$, it readily holds by definition of $C$. Let now
$k_*\geq k_0$ and assume that \eqref{zpjC} holds for any $k\leq k_*$. Then, taking $k={k}_*$ in the above inequality \eqref{cizpj} and since  $\mathcal{Z}_{\frac{{k}_*}{2}}^{(j)}$ only involves $z_{\frac{k}{2}}^{j}$ for $k \leq {k}_*$,  we may use the induction hypothesis to get first that  $\mathcal{Z}_{\frac{k_*}{2}}^{(j)} \leq C^{k_*+1}$ (recall that $K \leq C$) and then to get
\begin{equation*}\begin{split}
c_1\,  z_{\frac{{k}_*+1}{2}}^{(j)}&\leq c_2 \left(\frac{{k}_*}{2} \right)^{\gamma-1}C^{{k}_*+1} + \left(\frac{2c_4}{k_*} +\underline{\mathbf{b}}\right)C^{{k}_*}+c_3\, K^{k_*+1}\\
&\leq c_2 \left(\frac{{k}_0}{2} \right)^{\gamma-1}C^{{k}_*+1} +\left(\frac{2c_4}{k_0} +\underline{\mathbf{b}}\right)C^{{k}_*}+c_3\, K^{k_*+1}
\leq  \frac{c_1}{2}\: C^{{k}_*+1} +2\underline{\mathbf{b}}\,C^{k_*} + c_3 K^{k_*+1}.\end{split}\end{equation*}
The choice of $C$ implies then that $2\underline{\mathbf{b}}\,C^{k_*} + c_3 K^{k_*+1} \leq \frac{c_1}{2}\: C^{k_*+1}$ since $k_* \geq k_0$. This proves the result.
\end{proof}

Thanks to the above technical Lemma, we are in position to extend Proposition \ref{prop:tails}  to the difference of two solutions as in \cite[Proposition 2.7, Step 1]{MiMo3}
\begin{prop}\label{prop:tailsdiff}
Let $\overline{\alpha}_1\in(0,\hat{\al}_1)$ where $\hat{\alpha}_1$ is defined in Theorem \ref{theo:HK}. There exist some positive constants $r >0$ and $M>0$ such that
\begin{equation*}
\int_{\R^d}|\psi_{\al,1}(\xi) - \psi_{\al,2}(\xi)|\exp(r \,|\xi|)\,\d\xi \leq M\,\|\psi_{\al,1}-\psi_{\al,2}\|_{L^1_3} \qquad \forall \al \in (0,{ \overline{\alpha}_1}]
\end{equation*}
for any $\psi_{\al,i} \in \mathscr{E}_\al$, $i=1,2$.
\end{prop}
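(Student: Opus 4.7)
The plan is to mimic the induction on moments used in Proposition \ref{prop:tails} and Lemma \ref{lem:tails}, now applied to $h_\al:=\psi_{\al,1}-\psi_{\al,2}$, with all estimates kept \emph{linear} in $\|h_\al\|_{L^1_3}$. Set $N_p(h_\al):=\int_{\R^d}|h_\al(\xi)|\,|\xi|^{2p}\,\d\xi$; since $\int_{\R^d}|h_\al|\exp(r|\xi|)\d\xi=\sum_{k\geq 0}\frac{r^k}{k!}N_{k/2}(h_\al)$, it suffices to prove that $N_{k/2}(h_\al)\leq C^k\,\Gamma(k+\gamma)\,\|h_\al\|_{L^1_3}$ for uniform constants $C>0$ and $\gamma\in(0,1)$ and then sum the series for $r<1/C$.

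First, I would subtract the two copies of \eqref{tauT} and use the bilinearity of $\mathbb{B}_\al$ to obtain
$$\mathbf{A}_{\psi_{\al,1}}h_\al+\mathbf{B}_{\psi_{\al,1}}\xi\cdot\nabla h_\al+(\mathbf{A}_{\psi_{\al,1}}-\mathbf{A}_{\psi_{\al,2}})\psi_{\al,2}+(\mathbf{B}_{\psi_{\al,1}}-\mathbf{B}_{\psi_{\al,2}})\xi\cdot\nabla\psi_{\al,2}=\mathbb{B}_\al(h_\al,\psi_{\al,1})+\mathbb{B}_\al(\psi_{\al,2},h_\al).$$
From \eqref{ABalpha}, \eqref{ABab} and Proposition \ref{propmom}, the scalar discrepancies satisfy $|\mathbf{A}_{\psi_{\al,1}}-\mathbf{A}_{\psi_{\al,2}}|+|\mathbf{B}_{\psi_{\al,1}}-\mathbf{B}_{\psi_{\al,2}}|\leq C\,\al\,\|h_\al\|_{L^1_1}\leq C\,\al\,\|h_\al\|_{L^1_3}$. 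Then, multiplying this identity by $\mathrm{sign}(h_\al)\,|\xi|^{2p}$ for $p\geq 3/2$ and integrating over $\R^d$, the drift term yields $-(d+2p)\mathbf{B}_{\psi_{\al,1}}N_p(h_\al)$, and the loss part of $\mathbb{B}_\al(h_\al,\psi_{\al,1})$ produces the key dissipative term $-\int|h_\al|\,L(\psi_{\al,1})\,|\xi|^{2p}\d\xi\leq -C_0\,N_{p+1/2}(h_\al)$ via \eqref{concentration}, while the remaining unsigned loss piece from $\mathbb{B}_\al(\psi_{\al,2},h_\al)$ is dominated in absolute value by $\int\int\psi_{\al,2}(\xi)|h_\al(\xi_*)|\,|\xi-\xi_*|\,|\xi|^{2p}\d\xi\d\xi_*$, manifestly linear in $|h_\al|$.

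The two gain contributions $\Q^+(h_\al,\psi_{\al,1})$ and $\Q^+(\psi_{\al,2},h_\al)$ are handled by the Povzner estimate of \cite[Lemma 3.2]{jde} applied to $\Q^+(|h_\al|,\psi_{\al,i})$, using the pointwise bound $|\Q^+(g,f)|\leq\Q^+(|g|,|f|)$ valid because the collision kernel is nonnegative. Combining all the pieces, absorbing the two source terms $(\mathbf{A}_{\psi_{\al,1}}-\mathbf{A}_{\psi_{\al,2}})\int\mathrm{sign}(h_\al)\,\psi_{\al,2}|\xi|^{2p}\d\xi$ and its $\mathbf{B}$-analogue into $\al\,\|h_\al\|_{L^1_3}\bigl(M_p(\psi_{\al,2})+\int|\nabla\psi_{\al,2}||\xi|^{2p+1}\d\xi\bigr)$, and invoking Proposition \ref{prop:tails} and Lemma \ref{lem:tails} to bound $M_q(\psi_{\al,2})$ and $\int|\nabla\psi_{\al,2}||\xi|^{2q+1}\d\xi$ by $K^{2q}\,\Gamma(2q+\gamma)$ uniformly in $\al\in(0,\overline{\al}_1]$, I would arrive at a recursion of the form
$$c_1\,p\,y_{p+1/2}\leq c_2\,p^\gamma\,Y_p+c_3\,p\,y_p+c_4\,K^{2p}\,p\,\|h_\al\|_{L^1_3}$$
for the renormalized moments $y_p:=N_p(h_\al)/\Gamma(2p+\gamma)$, where $Y_p$ is the usual binomial maximum of cross products $y_j\,z_{p-j+1/2}+z_j\,y_{p-j+1/2}$ with $z_q:=M_q(\psi_{\al,2})/\Gamma(2q+\gamma)\leq K^{2q}$.

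The base cases $2p\leq 3$ follow trivially from $N_p(h_\al)\leq\|h_\al\|_{L^1_3}$, and an induction on $k$ identical in structure to the one used in Lemma \ref{lem:tails}---exploiting that each summand in $Y_p$ carries a \emph{single} factor of $y$ paired with a bounded $z$---yields $y_{k/2}\leq C^k\,\|h_\al\|_{L^1_3}$ for a suitable $C\geq K$ and all $k\in\N_*$, whence the claim by summing the exponential series. The main obstacle I anticipate is the bookkeeping needed to preserve the linear dependence on $\|h_\al\|_{L^1_3}$ throughout the induction: the quadratic Povzner splittings must systematically mix one copy of $|h_\al|$ with bounded moments of $\psi_{\al,i}$, and the source terms carrying $(\mathbf{A}_{\psi_{\al,1}}-\mathbf{A}_{\psi_{\al,2}})$ and $(\mathbf{B}_{\psi_{\al,1}}-\mathbf{B}_{\psi_{\al,2}})$ must be absorbed into the \emph{affine} forcing of the recursion rather than contribute multiplicatively to $y_p$, so that the linear dependence in $\|h_\al\|_{L^1_3}$ closes at every stage.
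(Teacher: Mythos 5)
Your proposal is correct and follows essentially the same route as the paper: write the equation for the difference, multiply by $\mathrm{sign}(\cdot)\,|\xi|^{2p}$, keep every bilinear term with exactly one factor of the difference via Povzner-type angular estimates, bound the $\mathbf{A},\mathbf{B}$-discrepancy source term through Proposition \ref{prop:tails} and Lemma \ref{lem:tails}, and close a linear recursion on renormalized moments by induction starting from $N_p\leq\|h_\al\|_{L^1_3}$ for $2p\leq 3$. The only cosmetic differences are that the paper symmetrizes the bilinear term using $s_\al=\psi_{\al,1}+\psi_{\al,2}$ and builds the factor $\|h_\al\|_{L^1_3}$ (in the guise of $D_0+D_{3/2}$) directly into the definition of the renormalized moments $\delta_k$, whereas you carry it explicitly through the affine forcing term.
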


\begin{proof}
Let $\alpha  \in (0,\overline{\alpha}_1]$ and $\psi_{\alpha,i} \in \mathscr{E}_\al$, for $i=1,2$. Then, for $i\in\{1,2\}$, $\psi_{\alpha,i}$ satisfies
\begin{equation}\label{AaiBbi}\mathbf{A}_\al^i \,\psi_{\al,i}(\xi)  + \mathbf{B}_\al^i \,\xi \cdot \nabla \psi_{\al,i}(\xi)=\mathbb{B}_\al(\psi_{\al,i},\psi_{\al,i})(\xi)\end{equation}
where $\mathbf{A}_\al^i$ and $\mathbf{B}_\al^i$ are defined by \eqref{ABalpha} with $\psi_\al$  obviously replaced by $\psi_{\al,i}$, $i=1,2$.  We set
$$ g_\al=\psi_{\al,1}-\psi_{\al,2} \qquad \mbox{ and } \qquad
s_\al=\psi_{\al,1}+\psi_{\al,2}.$$
Clearly, $g_\al$ satisfies
\begin{equation}\label{eqsym:gal}\mathbf{A}_\al^1\, g_\al(\xi) + \mathbf{B}_\al^1 \,\xi \cdot \nabla g_\al(\xi) = \frac{1}{2}\left(\mathbb{B}_\al(g_\al,s_\al)(\xi)+ \mathbb{B}_\al(s_\al,g_\al)(\xi)\right) + \mathcal{G}_\al(\xi)\end{equation}
with
\begin{equation}\label{eq:Gal}\mathcal{G}_\al(\xi)=\left(\mathbf{A}_\al^2-\mathbf{A}^1_\al\right)\,\psi_{\al,2}(\xi)+ \left(\mathbf{B}_\al^2-\mathbf{B}_\al^1\right)\,\xi \cdot \nabla \psi_{\al,2}(\xi).\end{equation}
Multiplying the previous equation by $\varphi(\xi)= |\xi|^{2p}\, \mathrm{sign}(g_\alpha(\xi))$ and integrating over $\R^d$, we get, after an integration by parts,
\beq \label{tobo1}
 \al (p-1) \,a_\al^1 D_p=\al \,p\, b_\al^1 D_p
+ \int_{\R^d} \mathcal{G}_\al (\xi)\varphi(\xi) \d\xi
+ \frac{1}{2} \int_{\R^d} \left(\mathbb{B}_\al(g_\al,s_\al)+ \mathbb{B}_\al(s_\al,g_\al)\right) \varphi \,\d \xi,
\eeq
where
$$D_p=\int_{\R^d} |\xi|^{2p}\, |g_\al(\xi)|\,\d\xi, \qquad
\mathbf{a}_\al^i=\mathbf{a}_{\psi_{\al,i}}\qquad \mbox{ and } \qquad  \mathbf{b}_\al^i=\mathbf{b}_{\psi_{\al,i}} \qquad \mbox{for } i=1,2.$$
Thanks to the pre/post-collisionnal change of variables, we have
\begin{multline*}
\int_{\R^d} \left(\mathbb{B}_\al(g_\al,s_\al)+ \mathbb{B}_\al(s_\al,g_\al)\right) \varphi \,\d \xi
=(1-\alpha) \int_{\R^d} \int_{\R^d} g_\al \, {s_\al}_* \int_{\S^{d-1}}( \varphi'+\varphi'_*)\:\frac{\d\sigma}{|\S^{d-1}|}\;  |\xi-\xi_*| \, \d\xi\, \d\xi_*\\
- \int_{\R^d} \int_{\R^d} g_\al \, {s_\al}_*( \varphi+\varphi_*)|\xi-\xi_*| \, \d\xi\, \d\xi_*
\end{multline*}
where, for any function $f$, we use the shorthand notations $f=f(\xi)$, $f_{*}=f(\xi_{*})$, $f'=f(\xi')$ and $f'_{*}=f(\xi'_{*})$. Thus,
\begin{multline*}
\frac{1}{2} \int_{\R^d} \left(\mathbb{B}_\al(g_\al,s_\al)+ \mathbb{B}_\al(s_\al,g_\al)\right) \varphi \,\d \xi
\leq (1-\alpha) \int_{\R^d} \int_{\R^d} |g_\al| \, {s_\al}_*  G_p(\xi,\xi_*)  |\xi-\xi_*| \, \d\xi\, \d\xi_*\\
-  \frac{1}{2} \int_{\R^d} \int_{\R^d} |g_\al| \, {s_\al}_*\, |\xi|^{2p}\, |\xi-\xi_*| \, \d\xi\, \d\xi_*
+  \frac{1}{2} \int_{\R^d} \int_{\R^d} |g_\al| \, {s_\al}_*  \, |\xi_*|^{2p}\, |\xi-\xi_*| \, \d\xi\, \d\xi_*,
\end{multline*}
where
$$G_p(\xi,\xi_*) := \frac{1}{2} \int_{\S^{d-1}} (|\xi'|^{2p} + |\xi'_*|^{2p}) \frac{\d\sigma}{|\S^{d-1}|} \leq \frac{1}{2} \varrho_p\, (|\xi|^2+|\xi_*|^2)^p,$$
by \cite[Lemma 3.1]{jde} with $\varrho_p$ defined by \eqref{varrhoK}.
Setting $\beta_p(\alpha)=(1-\alpha)\varrho_p$, we then deduce from the Jensen's inequality, the estimate $|\xi-\xi_*|\leq |\xi|+|\xi_*|$ and \cite[Lemma 2]{BoGaPa} that
\begin{multline*}
\frac{1}{2} \int_{\R^d} \left(\mathbb{B}_\al(g_\al,s_\al)+ \mathbb{B}_\al(s_\al,g_\al)\right) \varphi \,\d \xi \leq - (1-\beta_p(\alpha)) D_{p+1/2} \\
+ \frac{1}{2}\beta_p(\alpha) \sum_{k=1}^{[\frac{p+1}{2}]} \left(\bary{c} p\\ k\\ \eary\right)\int_{\R^d}\int_{\R^d} |g_\al| {s_\al}_*\left( |\xi|^{2k} |\xi_*|^{2(p-k)} + |\xi|^{2(p-k)}|\xi_*|^{2k}\right) ( |\xi|+|\xi_*| ) \d\xi\d\xi_* \\
+ \frac{1}{2} \;(1+\beta_p(\alpha))\int_{\R^d} \int_{\R^d} |g_\al| \,{s_\al}_*\,  |\xi_*|^{2p}( |\xi|+|\xi_*| ) \d\xi\d\xi_*.
\end{multline*}
Setting
$$S_p =\int_{\R^d} s_\alpha \, |\xi|^{2p} \, \d\xi,$$
we obtain
\begin{multline}\label{tobo2}
\frac{1}{2} \int_{\R^d} \left(\mathbb{B}_\al(g_\al,s_\al)+ \mathbb{B}_\al(s_\al,g_\al)\right) \varphi\,\d \xi \leq - (1-\beta_p(\alpha)) D_{p+1/2} \\
+ \frac{1}{2} \; \beta_p(\alpha) \sum_{k=1}^{[\frac{p+1}{2}]} \left(\bary{c} p\\ k\\ \eary\right)\left(D_{k+1/2} S_{p-k} +D_k S_{p-k+1/2} +D_{p-k+1/2} S_k + D_{p-k}S_{k+1/2} \right) \\
+ \frac{1}{2} (1+\beta_p(\alpha)) (D_{1/2} S_p+D_0 S_{p+1/2}).
\end{multline}
Next,
$$\int_{\R^d} \mathcal{G}_\al(\xi) \varphi(\xi)\, \d\xi
\leq \left|\mathbf{A}_\al^1-\mathbf{A}^2_\al\right| \,M_p(\psi_{\al,2})
+ \left|\mathbf{B}_\al^1-\mathbf{B}_\al^2\right| \, \left|\int_{\R^d} (\xi \cdot \nabla \psi_{\al,2}(\xi))\, \varphi(\xi)\, \d\xi\right|.$$
But,
$$  \left|\int_{\R^d} (\xi \cdot \nabla \psi_{\al,2}(\xi))\, \varphi(\xi)\, \d\xi\right| \leq M_{p{ +\frac{1}{2}}}(|\nabla \psi_{\al,2}|)$$
where we recall that, for any $\varphi \geq 0$, $M_p(\varphi)=\int_{\R^d}\varphi(\xi)\,|\xi|^{2p}\,\d\xi$, $p \geq 0.$ Moreover, by Proposition \ref{propmom}, there exists some constant $C>0$ depending only on $d$ such that
$$ \left|\mathbf{A}_\al^1-\mathbf{A}^2_\al\right| + \left|\mathbf{B}_\al^1-\mathbf{B}^2_\al\right| \leq c (D_0+D_{3/2}).$$
Thus, there is $C > 0$ (independent of $p$) such that
\beq\label{tobo3}
\int_{\R^d} \mathcal{G}_\al(\xi) \varphi(\xi)\, \d\xi
\leq C (D_0+D_{3/2})  M_{p+\frac{1}{2}}(|\nabla \psi_{\al,2}|+\psi_{\al,2}).
\eeq
Let $\gamma\in(0,1)$. Introducing renormalized moments
$$\delta_k=\frac{D_k}{(D_0+D_{3/2}) \Gamma(2k+\gamma)} \qquad \mbox{ and }\qquad
\sigma_k=\frac{S_k}{\Gamma(2k+\gamma)}, $$
setting
$$\tilde{Z}_p:=\max_{1\leq k\leq [\frac{p+1}{2}]} \left\{ \delta_{k+1/2} \sigma_{p-k}, \delta_{p-k} \sigma_{k+1/2}, \delta_k \sigma_{p-k+1/2} , \delta_{p-k+1/2} \sigma_k\right\},$$
and gathering \eqref{tobo1}, \eqref{tobo2} and \eqref{tobo3}, we obtain,
thanks to \cite[Lemma 4]{BoGaPa}, the existence of a constant $C_\gamma$ depending only on $\gamma$ such that
\begin{multline*}
(1-\beta_p(\alpha))\;\frac{\Gamma(2p+1+\gamma)}{\Gamma(2p+\gamma)} \;\delta_{p+1/2}
+\al (p-1) \,a_\al^1 \delta_p \leq  \al \,p\, b_\al^1 \delta_p
+ \frac{1}{2} \;C_\gamma\,  \beta_p(\alpha)\; \frac{\Gamma(2p+1+2\gamma)}{\Gamma(2p+\gamma)} \;
\tilde{Z}_p  \\
+ \frac{1}{2} (1+\beta_p(\alpha))
\left( \sigma_p+ \frac{\Gamma(2p+1+\gamma)}{\Gamma(2p+\gamma)}\; \sigma_{p+1/2}\right)
+C \; \frac{ M_{p+\frac{1}{2}}(|\nabla \psi_{\al,2}|+\psi_{\al,2})}{\Gamma(2p+\gamma)}.
\end{multline*}
Then, for $p\geq 1$, $\al (p-1) \,a_\al^1 \delta_p\geq 0$ and by Lemma \ref{estimab},  $b_\al^1$ is bounded uniformly in $\alpha$. Thus, arguing as in the proof of Proposition \ref{prop:tails}, there exist some constants $c_1,c_2,c_3,c_4,c_5>0$ such that, for $p\geq 3/2$,
\beq\label{tobo4}
c_1\, p\, \delta_{p+1/2} \leq  c_2 \, p \, \delta_p
+ c_3 \, p^\gamma \, \tilde{Z}_p  +  p\, \sigma_p+  c_4 \, p \, \sigma_{p+1/2}
+ c_5 \; \frac{M_{p+\frac{1}{2}}(|\nabla \psi_{\al,2}|+\psi_{\al,2})}{\Gamma(2p+\gamma)}.
\eeq
According to Proposition \ref{prop:tails} and Lemma \ref{lem:tails}, if $\gamma \in (0,1)$, there exists $\Lambda > 0$ such that
$$\frac{M_{\frac{k+1}{2}}(|\nabla \psi_{\al,2}|+\psi_{\al,2})}{\Gamma(k+1+\gamma)} \leq \Lambda^{k+1} \qquad \text{ for any } k \in\N.$$ Therefore, \eqref{tobo4} reads
$$c_1\, \frac{k}{2}\, \delta_{\frac{k+1}{2}} \leq  c_2 \, \frac{k}{2} \, \delta_{k/2}
+ c_3 \, \left( \frac{k}{2} \right)^\gamma \, \tilde{Z}_{k/2}  +  \frac{k}{2}\, \sigma_{k/2}+  c_4 \, \frac{k}{2} \, \sigma_{\frac{k+1}{2}}
+ c_5 \; k \Lambda^{k+1}.$$
One concludes then as in the proofs of Proposition \ref{prop:tails} and Lemma \ref{lem:tails} (details are left to the reader).
 \end{proof}

\section{Uniqueness and convergence results}\label{sec:sec3}

\subsection{Boltzmann limit}\label{sec:BoLi}

On the basis of the results of the previous section, we can prove the convergence of any solution $\psi_\al \in \mathscr{E}_\al$ towards the normalized Maxwellian $\M$ given by \eqref{M}. Namely, we have the following  convergence result:
\begin{theo}\label{convergenceMax}
 For any $k \geq 0$ and $m \geq 0$, one has
$$\lim_{\al \to 0}\|\psi_\al-\mathcal{M}\|_{\mathbb{H}^m_k}=0$$
where $\mathcal{M}$ is the Maxwellian
$$\mathcal{M}(\xi)=\pi^{-\frac{d}{2}}\exp\left(-|\xi|^2\right).$$
 \end{theo}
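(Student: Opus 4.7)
The strategy is a compactness argument: extract a limit from the uniform a posteriori bounds of Section \ref{sec:apost}, identify it as the Maxwellian thanks to the vanishing of $\mathbf{A}_{\psi_\al},\mathbf{B}_{\psi_\al}$ as $\al\to 0$, and conclude by uniqueness of the limit.

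Fix $m,k\geq 0$ and argue by contradiction. Suppose there exist $\al_n\to 0$, solutions $\psi_{\al_n}\in\mathscr{E}_{\al_n}$ and $\delta>0$ with $\|\psi_{\al_n}-\M\|_{\mathbb{H}^m_k}\geq\delta$ for all $n$. Pick integers $m'>m$ and $k'>k$. For $n$ large enough one has $\al_n\leq \overline{\al}_{m'}$, so Theorem \ref{theo:HK} gives a uniform bound on $\|\psi_{\al_n}\|_{\mathbb{H}^{m'}_{k'}}$. The weighted Rellich--Kondrachov compact embedding $\mathbb{H}^{m'}_{k'}(\R^d)\hookrightarrow\mathbb{H}^{m}_{k}(\R^d)$ (valid since both the Sobolev and the weight index strictly decrease) then allows us to extract a subsequence, not relabelled, converging strongly in $\mathbb{H}^m_k$ to some nonnegative limit $\psi_\star$. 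The uniform exponential tails of Proposition \ref{prop:tails} provide uniform integrability of $\psi_{\al_n}\langle\xi\rangle^{2j}$ for every $j\geq 0$, so the normalization \eqref{init} passes to the limit and $\psi_\star$ satisfies $\int\psi_\star\,\d\xi=1$, $\int\xi\,\psi_\star\,\d\xi=0$, $\int|\xi|^2\psi_\star\,\d\xi=d/2$.

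Next, we pass to the limit in \eqref{tauT}. By Remark \ref{ABbound} we have $|\mathbf{A}_{\psi_{\al_n}}|+|\mathbf{B}_{\psi_{\al_n}}|\leq C\al_n\to 0$, so the left-hand side of \eqref{tauT} vanishes (the derivative term is controlled thanks to the uniform $\mathbb{H}^{1}_{k'}$ bound). For the right-hand side, the strong convergence of $\psi_{\al_n}$ in any $\mathbb{H}^{m'}_{k'}$ together with the continuity estimate of Theorem \ref{regularite} on $\Q^+$ and the simple continuity of $\Q^-$ on such spaces yields $\mathbb{B}_{\al_n}(\psi_{\al_n},\psi_{\al_n})=(1-\al_n)\Q^+(\psi_{\al_n},\psi_{\al_n})-\Q^-(\psi_{\al_n},\psi_{\al_n})\longrightarrow\Q(\psi_\star,\psi_\star)$ in a suitable norm. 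Therefore $\Q(\psi_\star,\psi_\star)=0$. Since $\psi_\star\geq 0$ is integrable with finite energy and satisfies the stated moment constraints, the classical characterization of equilibria of the elastic Boltzmann operator forces $\psi_\star=\M$, contradicting $\|\psi_{\al_n}-\M\|_{\mathbb{H}^m_k}\geq\delta$. This proves the theorem for every fixed $m,k$.

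\textbf{Main obstacle.} The delicate point is ensuring that the identification $\psi_\star=\M$ can indeed be carried out: this requires both sufficient regularity on $\psi_\star$ so that $\Q(\psi_\star,\psi_\star)=0$ holds strongly, and enough decay to exploit the moment/energy constraints. Both ingredients are directly provided by the uniform Sobolev bounds of Theorem \ref{theo:HK} and the uniform exponential tails of Proposition \ref{prop:tails}, which between them yield strong convergence in essentially any norm we may wish to use. Note that the argument is non-quantitative, since it relies on a compactness extraction; a quantitative rate on $\|\psi_\al-\M\|_{\mathbb{H}^m_k}$ in terms of $\al$ is precisely the object of the third step of the strategy described in the introduction and is deferred to Section \ref{sec:quanti}.
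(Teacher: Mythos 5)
Your proof is correct and follows essentially the same route as the paper: uniform weighted Sobolev bounds from Theorem \ref{theo:HK} give compactness, Remark \ref{ABbound} makes the left-hand side of \eqref{tauT} vanish as $\al\to 0$, the limit is identified as a Maxwellian via $\Q(\psi_\star,\psi_\star)=0$ together with \eqref{init}, and uniqueness of the limit (your contradiction argument versus the paper's subsequence argument) upgrades to convergence of the whole family. The only differences are cosmetic: the paper first extracts a weak limit and upgrades to strong $\mathbb{H}^1_k$ convergence, and identifies the limit by showing $\|\Q(\psi_\al,\psi_\al)\|_{L^2}=O(\al)$ via Theorem \ref{alogam}, whereas you invoke the compact embedding $\mathbb{H}^{m'}_{k'}\hookrightarrow\mathbb{H}^m_k$ directly and pass to the limit term by term in the equation.
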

\begin{proof} The proof is inspired by \cite[Theorem 4.1]{AloLo3} and is based upon a compactness argument through Theorem \ref{theo:HK}. Namely, let us fix $m > d/2+1$ and $k_0 \geq 1.$ Let then $\alpha^{\dag}_m < \hat{\alpha}_m$ be given (where $\hat{\alpha}_m$ is the parameter in Theorem \ref{theo:HK}). According to Theorem \ref{theo:HK},
$$\sup_{\al \in (0, \alpha^{\dag}_m)}\sup_{\psi_\al \in \mathscr{E}_\al} \|\psi_\al\|_{\mathbb{H}^m_{k_0}} < \infty$$
and there is  a sequence $(\al_n)_n \subset (0,\alpha^{\dag}_m)$ with $\al_n \to 0$ and $\psi_0 \in \mathbb{H}^m_{k_0}$ such that $\left(\psi_{\al_n}\right)_n$ converges weakly, in $\mathbb{H}^m_{k_0}$, to $\psi_0$ (notice that, at this stage,  the limit function $\psi_0$ may depend  on the choice of $m$ and $k_0$). Using the decay of $\gl$ guaranteed by the \textit{polynomially weighted} Sobolev estimates, we can prove easily as in \cite[Theorem 4.1]{AloLo3} that the convergence is  \textit{strong} in $\mathbb{H}^1_k$ for any $0\leq k < k_0$:
\begin{equation}\label{strongconv}\lim_{n \to \infty}\|\psi_{\al_n}-\psi_0\|_{\mathbb{H}^1_k}=0.\end{equation}
It remains therefore to identify the limit $\psi_0$. Since,  for any $\alpha \in (0,\alpha^{\dag}_m)$, $\psi_\al$ satisfies \eqref{tauT}, one gets that
\begin{multline*}
\|\mathbb{B}_\al(\gl,\gl)\|_{L^2}= \|\mathbf{A}_{\psi_\al} \gl + \mathbf{B}_{\psi_\al} \xi \cdot \nabla \gl\|_{L^2} \\
\leq |\mathbf{A}_{\psi_\al}| \|\gl\|_{L^2} + |\mathbf{B}_{\psi_\al}|\,\|\xi \cdot \nabla \gl\|_{L^2}\\
\leq \left(|\mathbf{A}_{\psi_\al}|+|\mathbf{B}_{\psi_\al}|\right)\,\|\gl\|_{\mathbb{H}^1_1}.
\end{multline*}
Now, according to Remark \ref{ABbound}, one sees that there exists $C >0$ such that
$$\|\mathbb{B}_\al(\gl,\gl)\|_{L^2} \leq C \al\,\|\gl\|_{\mathbb{H}_1^1}$$
and, using the uniform Sobolev estimates provided by Theorem \ref{theo:HK}, one sees there exists $C_1 >0$ such that
$$\|\mathbb{B}_\al(\gl,\gl)\|_{L^2} \leq C_1\,\al \qquad \forall \alpha \in (0,\alpha^{\dag}_m).$$
In particular,
\begin{equation}\label{Qgl}\|\Q(\gl,\gl)\|_{L^2} \leq \|\mathbb{B}_\al(\gl,\gl)\|_{L^2} + \alpha \|\Q^+(\gl,\gl)\|_{L^2} \leq C_1 \al + \alpha \|\Q^+(\gl,\gl)\|_{L^2} \qquad \forall \alpha \in (0,\alpha^{\dag}_m).\end{equation}
By Theorem \ref{alogam}, there is some positive constants $C_2$ (independent of $\al$) such that
$$\|\Q^+(\gl,\gl)\|_{L^2} \leq C_2 \|\gl\|_{L^2_1}\,\|\gl\|_{L^1_1}$$
and, thanks to Propositions \ref{propmom} and \ref{propo:L2}, there exists $C_3 > 0$ so that \eqref{Qgl} reads
$$\|\Q(\gl,\gl)\|_{L^2}  \leq C_3\,\al \qquad \forall \alpha \in (0,\alpha^{\dag}_m).$$
In particular,  $\lim_{n \to \infty}\|\Q\left(\psi_{\al_n},\psi_{\al_n}\right)\|_{L^2}=0$ and, since $\psi_{\al_n}$ converges to $\psi_0$, one easily deduces that $\psi_0$ satisfies
$$\Q(\psi_0,\psi_0)=0$$
i.e. $\psi_0$ is a Maxwellian distribution. Now, according to \eqref{init}, we clearly get that $\psi_0=\M$. The above reasoning actually shows that {\it any} convergent subsequence $(\psi_{\al_n})_n$ with $\al_n \to 0$ is converging towards the same limit $\M$. As in \cite[Theorem 4.1]{AloLo3}, this means that the whole net $(\psi_\al)_{\al \in (0,\alpha^{\dag}_m)}$ is converging to $\M$ for the $\mathbb{H}^1_k$ topology. Arguing in the very same way we can prove that  the convergence actually holds in any weighted Sobolev space $\mathbb{H}^m_k$, $k \geq 0$ and $m \geq 0$.
\end{proof}
\begin{rmq} Because of the use of some compactness argument, the above convergence result is clearly non quantitative, i.e. no indication about the rate of convergence is provided.
\end{rmq}

As in \cite[Corollary 4.2]{AloLo3}, the above  convergence in Sobolev spaces can be extended easily to weighted $L^1$-spaces with exponential weights. Namely, for any $a \geq 0$, let
$$m_a(\xi)=\exp(a|\xi|),\qquad \qquad \xi \in \R^d.$$
Then, one has the following result (we refer to \cite[Corollary 4.2]{AloLo3} for a proof which uses simple interpolation together with Proposition \ref{prop:tails}):
\begin{cor}\label{cor:limit}  For any {$a \in [0,A/2)$ (where $A$ is given by Proposition \ref{prop:tails})} and any $k \geq 0$ it holds
$$\lim_{\al \to 0} \left\|\gl-\mathcal{M}\right\|_{L^1_k(m_a)}=0.$$
\end{cor}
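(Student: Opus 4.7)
The plan is to interpolate between the strong Sobolev convergence provided by Theorem \ref{convergenceMax} and the uniform exponential-tail bound of Proposition \ref{prop:tails}, exactly in the spirit of \cite[Corollary 4.2]{AloLo3}. Since $a<A/2$, fix $\varepsilon>0$ small enough so that $2a+\varepsilon<A$. This ``gap'' between the rate $2a$ and the available rate $A$ of the uniform tail bound will absorb the polynomial weight $\langle\xi\rangle^{2k}$.

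First I would apply Cauchy--Schwarz in the simple multiplicative form $|\psi_\al-\M| = |\psi_\al-\M|^{1/2}\cdot |\psi_\al-\M|^{1/2}$ to obtain
\begin{equation*}
\|\psi_\al-\M\|_{L^1_k(m_a)} \,\leq\, \|\psi_\al-\M\|_{L^1}^{1/2}\,\cdot\,\Bigl(\int_{\R^d}|\psi_\al-\M|(\xi)\,\langle\xi\rangle^{2k}\,e^{2a|\xi|}\,\d\xi\Bigr)^{1/2}.
\end{equation*}
The second factor is handled by bounding $|\psi_\al-\M|\leq \psi_\al+\M$, by using the elementary inequality $\langle\xi\rangle^{2k}\leq C_{k,\varepsilon}\,e^{\varepsilon|\xi|}$, and then invoking Proposition \ref{prop:tails} together with the Gaussian decay of $\M$: since $2a+\varepsilon<A$, both integrals $\int\psi_\al\,e^{(2a+\varepsilon)|\xi|}\d\xi$ and $\int\M\,e^{(2a+\varepsilon)|\xi|}\d\xi$ are bounded by a constant independent of $\al\in(0,\alpha_\star]$.

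It then remains to show that the first factor vanishes in the limit $\al\to 0$. This is where Theorem \ref{convergenceMax} enters: choosing any integer $m>d/2$, a further Cauchy--Schwarz application gives
\begin{equation*}
\|\psi_\al-\M\|_{L^1} \,\leq\, \|\langle\cdot\rangle^{-m}\|_{L^2}\,\|\psi_\al-\M\|_{L^2_m},
\end{equation*}
and the right-hand side tends to zero by Theorem \ref{convergenceMax} (applied with Sobolev index $0$ and polynomial weight $m$). Combining the two estimates yields $\|\psi_\al-\M\|_{L^1_k(m_a)}\to 0$, which is the desired conclusion.

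There is no substantial obstacle here: the result is essentially a soft interpolation consequence of the two deep inputs already established. The only point requiring minor care is choosing the auxiliary parameter $\varepsilon$ so that the exponential weight $e^{2a|\xi|}$ together with the polynomial weight $\langle\xi\rangle^{2k}$ is dominated by the tail rate $A$ provided by Proposition \ref{prop:tails}; the strict inequality $a<A/2$ is exactly what makes this possible.
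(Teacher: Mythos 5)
Your proof is correct, and it is essentially the argument the paper intends: the paper delegates to \cite[Corollary 4.2]{AloLo3}, whose proof is exactly this ``simple interpolation'' (a Cauchy--Schwarz splitting that doubles the exponential rate, hence the restriction $a<A/2$) combined with the uniform tail bound of Proposition \ref{prop:tails} and the convergence of Theorem \ref{convergenceMax}. The only care needed, which you handle correctly, is absorbing the polynomial weight $\langle\xi\rangle^{2k}$ into a small extra exponential $e^{\varepsilon|\xi|}$ with $2a+\varepsilon<A$.
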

\subsection{Uniqueness}\label{sec:unique}

We will now deduce from the above (non quantitative) convergence result that the set $\mathscr{E}_\al$ actually reduces to a singleton whenever $\alpha$ is small enough. Before proving such a result, we first establish some important estimate on the difference between two solutions to \eqref{tauT}:
\begin{prop}\label{prop:estimDiff} For any $N \geq 0$, there exist $\alpha^\ddag_N > 0$, $\mathfrak{q}(N) > 0$  and $C_N > 0$ such that
\begin{equation}\label{estimDiff}
\|\psi_{\al,1} - \psi_{\al,2}\|_{\mathbb{H}^N} \leq C_N\,\|\psi_{\al,1}-\psi_{\al,2}\|_{L^1_{\mathfrak{q}(N)}} \qquad \forall \alpha \in (0,\alpha_N^\ddag)
\end{equation}
for any $\psi_{\al,i} \in \mathscr{E}_\al$, $i=1,2$.
\end{prop}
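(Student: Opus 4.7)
My plan is to proceed by induction on $N$, using an $L^{2}$-estimate as the base case ($N=0$) and driving the induction step with the smoothing property of $Q^{+}$ encoded in Theorem \ref{regularite}. Throughout, set $g_{\al}=\psi_{\al,1}-\psi_{\al,2}$, $s_{\al}=\psi_{\al,1}+\psi_{\al,2}$, and start from the symmetrized equation \eqref{eqsym:gal}; expanding $Q^{-}(g_{\al},s_{\al})=g_{\al}L(s_{\al})$ and $Q^{-}(s_{\al},g_{\al})=s_{\al}L(g_{\al})$ and moving the coercive piece $\tfrac{1}{2}g_{\al}L(s_{\al})$ to the left-hand side yields
$$\mathbf{A}_{\al}^{1}g_{\al}+\mathbf{B}_{\al}^{1}\xi\cdot\nabla g_{\al}+\tfrac{1}{2}g_{\al}L(s_{\al})=\tfrac{1-\al}{2}\bigl(\Q^{+}(g_{\al},s_{\al})+\Q^{+}(s_{\al},g_{\al})\bigr)-\tfrac{1}{2}s_{\al}L(g_{\al})+\mathcal{G}_{\al}.$$

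For the base case, I test this identity against $g_{\al}\langle\xi\rangle^{2k}$ for large $k$ and integrate by parts the transport term. The lower bound \eqref{concentration} makes the $\tfrac{1}{2}g_{\al}L(s_{\al})$-contribution dominate $C_{0}\|g_{\al}\|_{L^{2}_{k+1/2}}^{2}$, while the $\mathbf{A}_{\al}^{1}$ and $\mathbf{B}_{\al}^{1}$ terms are $O(\al)\|g_{\al}\|_{L^{2}_{k}}^{2}$ by Remark \ref{ABbound}, hence absorbable for small $\al$. On the right-hand side, the $\Q^{+}$ integrals are bounded via the bilinear $L^{2}$-estimates of \cite[Corollary 2.2]{AloGa} by $C\|g_{\al}\|_{L^{1}_{q_{0}}}\|g_{\al}\|_{L^{2}_{k}}+\varepsilon\|g_{\al}\|_{L^{2}_{k+1/2}}^{2}$ (using uniform $L^{1}$/$L^{2}$-bounds on $s_{\al}$ from Proposition \ref{propmom} and Proposition \ref{propo:L2}); the $s_{\al}L(g_{\al})$-piece is estimated using $|L(g_{\al})(\xi)|\leq C\langle\xi\rangle\|g_{\al}\|_{L^{1}_{1}}$ and the uniform $L^{\infty}$-bound on $s_{\al}$ (Corollary \ref{cor:Linfty}); and $\mathcal{G}_{\al}$ is treated via $|\mathbf{A}_{\al}^{1}-\mathbf{A}_{\al}^{2}|+|\mathbf{B}_{\al}^{1}-\mathbf{B}_{\al}^{2}|\leq C\|g_{\al}\|_{L^{1}_{3/2}}$ (derived as in the proof of Proposition \ref{prop:tailsdiff}) combined with uniform $\mathbb{H}^{1}$-control of $\psi_{\al,2}$ (Theorem \ref{theo:HK}). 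Young's inequality and small-$\al$ absorption then give $\|g_{\al}\|_{L^{2}}\leq C\|g_{\al}\|_{L^{1}_{q_{0}}}$.

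For the induction step, assume $\|g_{\al}\|_{\mathbb{H}^{N-1}_{k}}\leq C_{N-1,k}\|g_{\al}\|_{L^{1}_{\mathfrak{q}(N-1,k)}}$ for every $k\geq 0$. Fix $|\ell|=N$, apply $\partial^{\ell}$ to the equation above, and test against $\partial^{\ell}g_{\al}\langle\xi\rangle^{2k}$. The principal term of the Leibniz expansion $\partial^{\ell}(g_{\al}L(s_{\al}))=\partial^{\ell}g_{\al}\cdot L(s_{\al})+\sum_{\nu<\ell}\binom{\ell}{\nu}\partial^{\nu}g_{\al}\cdot L(\partial^{\ell-\nu}s_{\al})$ reproduces the coercive lower bound $C_{0}\|\partial^{\ell}g_{\al}\|_{L^{2}_{k+1/2}}^{2}$; the remaining Leibniz remainders (and those from $\partial^{\ell}(s_{\al}L(g_{\al}))$) pair lower-order derivatives of $g_{\al}$ with bounded derivatives of $s_{\al}$ (Theorem \ref{theo:HK}) and are controlled by the induction hypothesis. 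The decisive new contribution $\int\partial^{\ell}\Q^{+}(g_{\al},s_{\al})\partial^{\ell}g_{\al}\langle\xi\rangle^{2k}\d\xi$ is handled by Theorem \ref{regularite} with $s=N-\frac{d-1}{2}$: its leading term produces $C\|s_{\al}\|_{\mathbb{H}^{N-(d-1)/2}_{k+\kappa}}\|g_{\al}\|_{L^{1}_{2k+\kappa}}$, which is directly $L^{1}$-controlled once $\al<\hat{\alpha}_{N}$, while the $\varepsilon$-terms involve either $\|g_{\al}\|_{\mathbb{H}^{N-1}_{\ldots}}$ (handled by the induction hypothesis) or $\|\partial^{\ell}g_{\al}\|_{L^{2}_{k+1/2}}$ (absorbed into the coercivity). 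Summing over $|\ell|=N$ yields the conclusion with $\alpha^{\ddag}_{N}=\min(\hat{\alpha}_{N},\alpha^{\ddag}_{N-1})$ and $\mathfrak{q}(N,k)$ a suitable polynomial in $N$ and $k$.

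The main obstacle is weight bookkeeping: each application of Theorem \ref{regularite} requires an $L^{1}$-weight growing like $2k+\kappa$, and each Leibniz term $L(\partial^{\nu}g_{\al})$ costs a $\langle\xi\rangle$-power. One must therefore choose the weight $\mathfrak{q}(N,k)$ large enough — and allow $\alpha^{\ddag}_{N}$ to shrink with $N$ — so that every $\langle\xi\rangle$-loss is either compensated by a small-$\al$ prefactor or absorbed into the coercive $L(s_{\al})$-term; keeping this bookkeeping consistent at every induction step (while exploiting that the mean-zero, first-moment-zero, and energy-zero properties of $g_{\al}$ inherited from \eqref{init} rule out the worst $L^{1}$-blowups) is the nontrivial part of the argument.
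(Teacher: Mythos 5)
Your overall strategy is sound and close to the paper's: an $L^2_k$ energy estimate on the difference equation with coercivity from \eqref{concentration} as the base case, then induction on the derivative order using the smoothing of $\Q^{+}$. Where you differ is in the execution of the induction step: you test against $\partial^{\ell}g_{\al}\langle\xi\rangle^{2k}$ and invoke Theorem \ref{regularite}, absorbing its top-order $\varepsilon$-terms into the weighted coercivity $C_{0}\|\partial^{\ell}g_{\al}\|^{2}_{L^2_{k+1/2}}$ (this mirrors the proof of Theorem \ref{theo:HK}), whereas the paper works unweighted at order $N$, estimates the $\Q^{+}$ contribution by the Bouchut--Desvillettes bound (Theorem \ref{theoBD}), and recovers the missing weights a posteriori from the weighted $L^2_k$ estimate of Step 1 via interpolation (Lemma \ref{interpolSob}). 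Both routes work; yours gives weighted $\mathbb{H}^{N}_{k}$ control directly at the price of heavier weight bookkeeping and thresholds shrinking with $k$. Two small points: for the mixed terms $\int\Q^{+}(g_{\al},\psi_{\al,1})g_{\al}\langle\xi\rangle^{2k}$ the natural tool is the bilinear convolution estimate of Theorem \ref{alogam} (with $(p,q)=(1,2)$ and $(2,1)$), not the quadratic-form estimate of \cite[Corollary 2.2]{AloGa}; and the bound on $|\mathbf{A}^{1}_{\al}-\mathbf{A}^{2}_{\al}|+|\mathbf{B}^{1}_{\al}-\mathbf{B}^{2}_{\al}|$ requires $\|g_{\al}\|_{L^1_{3}}$, not $L^1_{3/2}$ (harmless, since $\mathfrak{q}(N)$ can be enlarged).

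There is, however, one step that fails as literally written. In the induction step you claim that all Leibniz remainders, including "those from $\partial^{\ell}(s_{\al}L(g_{\al}))$", pair lower-order derivatives of $g_{\al}$ with bounded derivatives of $s_{\al}$. The $\nu=0$ term is $s_{\al}\,L(\partial^{\ell}g_{\al})$, which contains the full $N$-th order derivative of $g_{\al}$ inside $L$; the naive bound $|L(\partial^{\ell}g_{\al})(\xi)|\leq\langle\xi\rangle\|\partial^{\ell}g_{\al}\|_{L^1_{1}}$ produces a top-order $L^1$ norm that is controlled neither by the induction hypothesis nor by the coercive term. The paper's remedy (used both in Theorem \ref{theo:HK} and in this proposition) is an integration by parts in $\xi_{*}$, moving one derivative off $\partial^{\ell}g_{\al}$ onto $|\xi-\xi_{*}|$, which reduces this term to $s_{\al}(\xi)\|\partial^{\sigma}g_{\al}\|_{L^1}$ with $|\sigma|=N-1$, hence to a $\|g_{\al}\|_{\mathbb{W}^{N-1,1}}$-type quantity covered by your (weighted) induction hypothesis via \eqref{taug21}; you need to insert this device explicitly. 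Relatedly, you drop the source $\partial^{\ell}\mathcal{G}_{\al}$ from the induction step: since $\mathcal{G}_{\al}$ contains $\xi\cdot\nabla\psi_{\al,2}$, its $\ell$-th derivative requires uniform $\mathbb{H}^{N+1}$ bounds on $\psi_{\al,2}$, so the admissible threshold must sit below $\hat{\alpha}_{N+1}$ rather than your $\min(\hat{\alpha}_{N},\alpha^{\ddag}_{N-1})$ --- a bookkeeping correction, but one that should appear in the statement of the induction.
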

\begin{proof} The proof uses some of the arguments of Theorem \ref{theo:HK} and follows the method of \cite[Proposition 2.7]{MiMo3}. For a given $\alpha  \in (0,\alpha_\star)$, let $\psi_{\al,1}$ and $\psi_{\al,2}$ be two elements of $\mathscr{E}_\al$. Set
$$g_\al=\psi_{\al,1} - \psi_{\al,2}.$$
Clearly, $g_\al$ satisfies
\begin{equation}\label{eq:gal}\mathbf{A}_\al^1\, g_\al + \mathbf{B}_\al^1 \,\xi \cdot \nabla g_\al(\xi) = \mathbb{B}_\al(g_\al,\psi_{\al,1})+ \mathbb{B}_\al(\psi_{\al,2},g_\al) + \mathcal{G}_\al(\xi)\end{equation}
with $\mathcal{G}_\al$ defined by \eqref{eq:Gal}.

\textit{First step: $N=0$}.  We actually prove  here a stronger estimate than \eqref{estimDiff}. Namely, we show that {there exists $\alpha^\ddag_0 > 0$ such that for any $k \geq 0$, there exists $C_k >0$} such that:
\begin{equation}\label{estimDiffk}
\|\psi_{\al,1} - \psi_{\al,2}\|_{L^2_k} \leq C_k\,\|\psi_{\al,1}-\psi_{\al,2}\|_{L^1_{k^*}} \qquad \forall \alpha \in (0,{\alpha^\ddag_0})
\end{equation}
where $k^*=\max(1+k,3).$  {Fix $\overline{\alpha}_1 \in (0,\hat{\alpha}_1)$ and $k \geq 0$ (see Theorem \ref{theo:HK} for the definition of $\hat{\alpha}_1$)}. Multiplying \eqref{eq:gal} by $g_\al(\xi)\langle \xi \rangle^{2k}$ and integrating over $\R^d$, we get
\begin{multline}\label{galL2_1}
\left(\mathbf{A}_\al^1-( k+\tfrac{d}{2})\mathbf{B}_\al^1\right) \,\|g_\al\|_{L^2_k}^2 +\int_{\R^d}g_\al(\xi)\Q^-(g_\al,\psi_{\al,1})(\xi)\langle \xi \rangle^{2k}\d \xi\\
\leq \mathcal{I}_1 + \mathcal{I}_2 -\mathcal{I}_3
+ k|\mathbf{B}_\al^1| \|g_\al\|_{L^2_{k-1}}^2.
\end{multline}
where
$$\mathcal{I}_1=\int_{\R^d} \mathcal{G}_\al(\xi)\,g_\al(\xi)\langle \xi \rangle^{2k}\d \xi, \quad \mathcal{I}_2=\int_{\R^d} g_\al(\xi) \left( \Q^+(g_\al,\psi_{\al,1})(\xi) + \Q^+(\psi_{\al,2},g_\al)(\xi)\right)\langle \xi \rangle^{2k} \d\xi,$$
and $$\mathcal{I}_3=
 \int_{\R^d} g_\al(\xi) \Q^-(\psi_{\al,2},g_\al)(\xi)\langle \xi \rangle^{2k}\d\xi.$$
Let us estimate these three terms separately. One has
$$|\mathcal{I}_1| \leq \|\mathcal{G}_\al\|_{L^2_k}\,\|g_\al\|_{L^2_k} \leq \left(|\mathbf{A}_\al^1-\mathbf{A}^2_\al| + |\mathbf{B}_\al^1-\mathbf{B}^2_\al|\right) \left(\|\psi_{\al,2}\|_{L^2_k} + \|\nabla \psi_{\al,2}\|_{L^2_{1+k}}\right)\,\|g_\al\|_{L^2_k}.$$
Now one easily gets that there exists $C > 0$ such that
\begin{equation}\label{diffAB}|\mathbf{A}_\al^1-\mathbf{A}^2_\al| + |\mathbf{B}_\al^1-\mathbf{B}^2_\al| \leq C \alpha \|g_\al\|_{L^1_3}\end{equation}
We deduce from \eqref{diffAB} and Theorem \ref{theo:HK} the existence of some  constant $C_1 > 0$ such that
\begin{equation}\label{I1}
|\mathcal{I}_1| \leq C_1 \,\|g_\al\|_{L^1_3} \,\|g_\al\|_{L^2_k} \qquad \forall \alpha \in (0,{\overline{\alpha}_1}).\end{equation}
On the other hand, by virtue of Theorem \ref{alogam}
\bean
|\mathcal{I}_2 |& \leq &   \left(\| \Q^+(g_\al,\psi_{\al,1})\|_{L^2_k} + \|\Q^+(\psi_{\al,2},g_\al)\|_{L^2_k} \right) \|g_\al\|_{L^2_k}\\
& \leq &  C_k \left(  \| \psi_{\al,1}\|_{L^2_{1+k}}+ \| \psi_{\al,2}\|_{L^2_{1+k}}\right)\|g_\al\|_{L^1_{1+k}}\,  \|g_\al\|_{L^2_k}
\eean
where, for $d\geq 3$,
 $$C_k= \frac{c_{1,k,2}(d) \, 2^{d/4}}{ |\S^{d-1}|} \int_{-1}^1 (1-x)^{(d-6)/4} \, (1+x)^{(d-3)/2}\, \d x <\infty.$$
Finally, by virtue of Proposition \ref{propo:L2}, the norms involving $\psi_{\al,i}$ $i=1,2$ are uniformly bounded with respect to $\al$ so that there exists $C_2 > 0$ so that
\begin{equation}\label{I22}|\mathcal{I}_2| \leq C_2 \, \|g_\al \|_{L^1_{1+k}}\,\|g_\al \|_{L^2_k} \qquad \forall \al \in (0,{\overline{\alpha}_1}).\end{equation}
To deal with the last term, one has
$$|\mathcal{I}_3 |\leq \int_{\R^d} |g_\al(\xi)|\langle \xi \rangle^{2k}\,\psi_{\al,2}(\xi)\d \xi \int_{\R^d} |g_\al(\xi_*)|\,|\xi-\xi_*|\d\xi_* \leq \|\psi_{\al,2}\|_{L^2_{1+k}}\,\|g_\al\|_{L^1_1}\,\|g_\al\|_{L^2_k}$$
and, according to Proposition \ref{propo:L2}, there exists $C_3 > 0$ such that
\begin{equation}\label{I3}
|\mathcal{I}_3 |\leq C_3\, \|g_\al\|_{L^1_1}\,\|g_\al\|_{L^2_k} \qquad \forall \alpha \in (0,{\overline{\alpha}_1}).\end{equation}
Now, according to \eqref{concentration}, one has
\begin{multline}\label{I4}
\int_{\R^d}g_\al(\xi)\Q^-(g_\al,\psi_{\al,1})(\xi)\langle \xi \rangle^{2k}\d \xi=\int_{\R^{2d}} g_\al^2(\xi)\langle \xi \rangle^{2k}\psi_{\al,1}(\xi_*)|\xi-\xi_*|\d\xi\d\xi_*
\\ \geq C_0 \int_{\R^d}g_\al^2(\xi)\langle \xi \rangle^{2k+1} \d\xi=C_0\|g_\al\|_{L^2_{k+1/2}}^2.\end{multline}
Therefore, collecting \eqref{galL2_1}, \eqref{I1}, \eqref{I22}, \eqref{I3} and \eqref{I4}, one gets
\begin{multline}\label{galL2}\left(\mathbf{A}_\al^1-( k+\tfrac{d}{2})\mathbf{B}_\al^1\right) \,\|g_\al\|_{L^2_k}^2
  +C_0 \|g_\al\|_{L^2_{k+1/2}}^2 \\ \leq \left(C_1 + C_2 + C_3\right)\|g_\al\|_{L^1_{k^*}}  \|g_\al\|_{L^2_k} + k|\mathbf{B}_\al^1| \|g_\al\|_{L^2_{k-1}}^2\end{multline}
As in the proof of Proposition \ref{propo:L2}, we first consider the case $k=0$ and then $k>0$. When $k=0$, we deduce from \eqref{2A-dB} that
$$\mathbf{A}_\al^1-\tfrac{d}{2}\mathbf{B}_\al^1=-\dfrac{\alpha}{4}\left(d+4\right)\mathbf{a}_\al^1 + \frac{\alpha\, d }{4}\;\mathbf{b}_\al^1 \geq -\frac{\al}{4}\left[d+4-\frac{d}{\sqrt{2}}\right]\mathbf{a}_\al^1 $$
with $$\mathbf{a}_\al^1= \int_{\R^d} \Q^-(\psi_{\al,1},\psi_{\al,1})(\xi)\, \d \xi \leq \sqrt{d}\leq \sqrt{2}\,\mathbf{b}_\al^1 $$
by virtue of Lemma \ref{estimab}. Therefore, one gets
$$-\frac{\al\sqrt{d}}{4}\left[d+4-\frac{d}{\sqrt{2}}\right]  \,\|g_\al\|_{L^2}^2 +C_0 \|g_\al\|_{L^2_{1/2}}^2 \leq\left(C_1 + C_2 + C_3\right)\|g_\al\|_{L^1_3}  \|g_\al\|_{L^2}.$$
Now, setting $\alpha^\ddag_0=\min\left( {\overline{\alpha}_1}, \frac{2C_0}{\sqrt{d}}(d+4-\frac{d}{\sqrt{2}})^{-1}\right)$, we have,
$$\frac{C_0}{2}\|g_\al\|_{L^2_{1/2}}^2 \leq \left(C_1 + C_2 + C_3\right)\|g_\al\|_{L^1_{3}}  \|g_\al\|_{L^2} \qquad \forall \alpha \in (0,\alpha_0^\ddag),$$
whence \eqref{estimDiffk} for $k=0$.

For $k>0$, using  Remark \ref{ABbound} and  bounding the $L^2_{k-1}$ norm by the $L^2_k$ one, \eqref{galL2} leads to
  $$C_0 \|g_\al\|^2_{L^2_{k+1/2}} \leq \tilde{C}_k \|g_\al\|_{L^2_k}^2 +\left(C_1 + C_2 + C_3\right)\|g_\al\|_{L^1_{k^*}}  \|g_\al\|_{L^2_k} $$
for some constant $\tilde{C}_k >0$ independent of $\al\in(0,{\overline{\alpha}_1})$.
 Now, one uses the fact that, for any $R > 0$,
 $$\|g_\al\|_{L^2_{k}}^2 \leq (1+R^2)^{k}\|g_\al\|_{L^2}^2+R^{-1}\|g_\al\|_{L^2_{k+1/2}}^2$$
and one can choose $R > 0$ large enough so that $\tilde{C}_k R^{-1}=C_0/2$ to obtain
$$\tfrac{C_0}{2}\|g_\al\|_{L^2_{k+1/2}}^2 \leq C_{R,k} \|g_\al\|_{L^2}^2 +\left(C_1 + C_2 + C_3\right)\|g_\al\|_{L^1_{k^*}}  \|g_\al\|_{L^2_k} \qquad \forall \al \in (0,\alpha^\ddag_0)\,,\,\forall k \geq 0.$$
Since we have already proved \eqref{estimDiffk} for $k=0$, we easily deduce that
\eqref{estimDiffk} holds for any $k\geq 0$.

\medskip
\textit{Second step: $N > 0$}. For larger $N $, one proves the result by induction using now Theorem \ref{regularite}. Namely, let $N \geq 1$ be fixed and assume that for any $0\leq n\leq N-1$, there exist $\alpha_n^\ddag>0$, $\mathfrak{q}(n)>0$  and  $C_n >0$ such that
\begin{equation*}
 \|\psi_{\al,1} - \psi_{\al,2}\|_{\mathbb{H}^n} \leq C_n\,\|\psi_{\al,1}-\psi_{\al,2}\|_{L^1_{\mathfrak{q}(n)}} \qquad \forall \alpha \in (0,\alpha_n^\ddag)
\end{equation*}
for any $\psi_{\al,i} \in \mathscr{E}_\al$, $i=1,2$. 
Let now $\ell$ be a given multi-index with $|\ell|=N $ and set $G_\ell=\partial^\ell g_\al$. From  \eqref{eq:gal}, $G_\ell$ satisfies
$$
\left(\mathbf{A}_\al^1+|\ell|\,\mathbf{B}_\al^1\right)\,G_\ell + \mathbf{B}_\al^1 \xi \cdot \nabla G_\ell = \partial^\ell \mathbb{B}_\al( g_\al,\psi_{\al,1}) + \partial^\ell \mathbb{B}_\al(\psi_{\al,2},g_\al) +\partial^\ell \mathcal{G}_\al.$$
Multiplying this identity by $G_\ell$ and integrating over $\R^d$ yields, as above,
\begin{multline*}
\left(\mathbf{A}_\al^1+\left(|\ell|-\frac{d}{2}\right)\,\mathbf{B}_\al^1\right)\|G_\ell\|_{L^2}^2 =  (1-\al)\int_{\R^d}\left[\partial^\ell \Q^+(g_\al,\psi_{\al,1})+ \partial^\ell \Q^+(\psi_{\al,2},g_\al)\right]\,G_\ell\,\d\xi\\
-\int_{\R^d}\partial^\ell\Q^-(g_\al,\psi_{\al,1})\,G_\ell\,\d\xi -\int_{\R^d}\partial^\ell \Q^-(\psi_{\al,2},g_\al)\,G_\ell\,\d\xi+\int_{\R^d}\partial^\ell \mathcal{G}_\al\,\,G_\ell\,\d\xi.
\end{multline*}
Recall, as in the proof of Theorem \ref{theo:HK}, that
$$\partial^\ell\Q^-(g_\al,\psi_{\al,1})=\sum_{\nu=0}^\ell \left(\begin{array}{c}
\ell \\ \nu
\end{array}\right) \Q^-(\partial^\nu g_\al,\partial^{\ell-\nu}\psi_{\al,1}).$$
and, for $\nu=\ell$, one has
$$\int_{\R^d}\Q^-(\partial^\ell g_\al,\psi_{\al,1})\,G_\ell\d\xi=\int_{\R^{2d}}G_\ell^2(\xi)\psi_{\al,1}(\xi_*)|\xi-\xi_*|\d\xi\d\xi_*\geq C_0\|G_\ell\|_{L^2_{1/2}}^2$$
thanks to \eqref{concentration}. Thus,
\begin{multline*}
  \left(\mathbf{A}_\al^1+\left(|\ell|-\frac{d}{2}\right)\,\mathbf{B}_\al^1\right)\|G_\ell\|_{L^2}^2 + C_0\|G_\ell\|_{L^2_{1/2}}^2 \\
  \leq (1-\al)\int_{\R^d}\left(\partial^\ell \Q^+(g_\al,\psi_{\al,1})+ \partial^\ell \Q^+(\psi_{\al,2},g_\al)\right)\,G_\ell\,\d\xi \\
+ \underset{\nu\neq \ell}{\sum_{\nu=0}^\ell}\left(\begin{array}{c}
\ell \\ \nu
\end{array}\right) \int_{\R^d}\Q^-(\partial^\nu g_\al,\partial^{\ell-\nu}\psi_{\al,1})\,G_\ell\,\d\xi\\
-\int_{\R^d}\partial^\ell \Q^-(\psi_{\al,2},g_\al)\,G_\ell\,\d\xi+\int_{\R^d}\partial^\ell \mathcal{G}_\al\,\,G_\ell\,\d\xi.
\end{multline*}
As in Theorem \ref{theo:HK} (see Eq. \eqref{Q-nu}), one obtains
$$\underset{\nu\neq \ell}{\sum_{\nu=0}^\ell}\left(\begin{array}{c}
\ell \\ \nu
\end{array}\right) \int_{\R^d}\Q^-(\partial^\nu g_\al,\partial^{\ell-\nu}\psi_{\al,1})\,G_\ell\,\d\xi \leq C\|G_\ell\|_{L^2}\underset{\nu\neq \ell}{\sum_{\nu=0}^\ell}\left(\begin{array}{c}
\ell \\ \nu
\end{array}\right)\|\partial^\nu g_\al\|_{L^2}$$
for some positive constant $C>0$ depending only on uniform weighted $L^2$-norms of $\partial^\sigma \psi_{\al,1}$ (with $|\sigma| < |\ell|$). Therefore, thanks to the induction hypothesis, there is $C_1 >0$ such that
 $$\underset{\nu\neq \ell}{\sum_{\nu=0}^\ell}\left(\begin{array}{c}
\ell \\ \nu
\end{array}\right) \int_{\R^d}\Q^-(\partial^\nu g_\al,\partial^{\ell-\nu}\psi_{\al,1})\,G_\ell\,\d\xi \leq C_1\,\|G_\ell\|_{L^2}\,\|g_\al\|_{L^1_{\mathfrak{q}(N-1)}} \qquad \forall \alpha \in (0,\alpha^\ddag_{N-1}).$$
Now, in the same way
$$\partial^\ell \Q^-(\psi_{\al,2},g_\al)=\sum_{\nu=0}^\ell \left(\begin{array}{c}
\ell \\ \nu
\end{array}\right) \Q^-(\partial^\nu \psi_{\al,2},\partial^{\ell-\nu}g_{\al}).$$
For any $\nu$ with $\nu\neq \ell$, there exists $i_0 \in \{1,\ldots,d\}$ such that $\ell_{i_0}-\nu_{i_0} \geq 1$ and integration by parts yields
$$|\Q^-(\partial^\nu \psi_{\al,2},\partial^{\ell-\nu}g_{\al})| \leq \left|\partial^{\,\nu} \psi_{\al,2}(\xi)\right|\,\|\partial^{\,\sigma }g_\al\|_{L^1}$$
where $\sigma=(\sigma_1,\ldots,\sigma_d)$ is defined with $\sigma_{i_0}=\ell_{i_0}-\nu_{i_0}-1$ and $\sigma_i=\ell_i-\nu_i$ if $i \neq i_0.$ Therefore, if $\nu \neq \ell$, one has
$$ \|\Q^-(\partial^\nu \psi_{\al,2},\partial^{\ell-\nu}g_{\al})\|_{L^2} \leq \|\psi_{\al,2}\|_{\mathbb{H}^{N-1}}\,\|\partial^{\,\sigma }g_\al\|_{L^1}.$$
For $\nu=\ell$, we have directly
$$\|\Q^-(\partial^\ell \psi_{\al,2},g_{\al})\|_{L^2} \leq \|\psi_{\al,2}\|_{\mathbb{H}_1^N}\,\|g_\al\|_{L^1_1}$$
so that,  thanks to the uniform bounds on the derivatives of $\psi_{\al,2}$ provided by Theorem \ref{theo:HK}, there exists $C > 0$ so that
$$\|\partial^\ell  \Q^-(\psi_{\al,2},g_\al)\|_{L^2}  \leq C \|g_\al\|_{\mathbb{W}^{N-1,1}_1}$$
and
$$\int_{\R^d}\partial^\ell \Q^-(\psi_{\al,2},g_\al)\,G_\ell\,\d\xi \leq C \|G_\ell\|_{L^2}\,\|g_\al\|_{\mathbb{W}^{N-1,1}_1}.$$
Now, thanks to Bouchut-Desvillettes estimates for $\Q^+$ (see Theorem \ref{theoBD}),  {Theorem \ref{theo:HK} and Lemma \ref{interpolSob}} it is easy to deduce that
\begin{multline*}
\int_{\R^d}\left(\partial^\ell \Q^+(g_\al,\psi_{\al,1})+ \partial^\ell \Q^+(\psi_{\al,2},g_\al)\right)\,G_\ell\,\d\xi  \leq \|\Q^+(g_\al,\psi_{\al,1}) + \Q^+(\psi_{\al,2},g_\al)\|_{\mathbb{H}^{N}} \|G_\ell\|_{L^2}\\
\leq C\left(\|g_\al\|_{L^1_2} + \|g_\al\|_{\mathbb{H}_2^{N-\frac{d-1}{2}}}\right)\|G_\ell\|_{L^2} 
\end{multline*}
for some positive constant $C > 0$ depending on uniform weighted $\mathbb{H}^{N-\frac{d-1}{2}}$ and $L^1$ norms of both $\psi_{\al,1}$ and $\psi_{\al,2}$. Finally,
$$\int_{\R^d} \partial^\ell \mathcal{G}_\al\,\,G_\ell\,\d\xi \leq \|\partial^\ell\mathcal{G}_\al\|_{L^2}\,\|G_\ell\|_{L^2}
\leq C \left(|\mathbf{A}_\al^1 -\mathbf{A}_\al^2| + | \mathbf{B}_\al^1-\mathbf{B}_\al^2|\right) \|\psi_{\al,2}\|_{\mathbb{H}^{N+1}_1}\|G_\ell\|_{L^2}$$
so that, as in Theorem \ref{theo:HK},
$$\int_{\R^d} \partial^\ell \mathcal{G}_\al\,\,G_\ell\,\d\xi \leq c \|g_\al\|_{L^1_3}\,\|G_\ell\|_{L^2}$$
for some positive constant $c > 0$. Gathering all theses estimates, we obtain the existence of a positive constant $C > 0$ such that
\begin{multline*}
  \left(\mathbf{A}_\al^1+\left(N-\frac{d}{2}\right)\,\mathbf{B}_\al^1\right)\|G_\ell\|_{L^2}^2 + C_0\|G_\ell\|_{L^2_{1/2}}^2\\
\leq C\left(\|g_\al\|_{L^1_{\mathfrak{q}(N-1)}}
+ \|g_\al\|_{\mathbb{H}^{N-\frac{d-1}{2}}_2} + \|g_\al\|_{\mathbb{W}^{N-1,1}_1} \right)\;\|G_\ell\|_{L^2}\end{multline*}
Now, estimating the $L^1$ norms by weighted $L^2$ norms as above {and using Lemma \ref{interpolSob}}, we get that there exists $q > 0$ so that
 $$\left(\mathbf{A}_\al^1+\left(N-\frac{d}{2}\right)\,\mathbf{B}_\al^1\right)\|G_\ell\|_{L^2}^2 + C_0\|G_\ell\|_{L^2_{1/2}}^2 \leq C\left(\|g_\al\|_{L^1_{\mathfrak{q}(N-1)}} + \|g_\al\|_{\mathbb{H}^{N-1}_q} \right) \;\|G_\ell\|_{L^2}.$$

Since $\left|\mathbf{A}_\al^1+\left(N-\frac{d}{2}\right)\,\mathbf{B}_\al^1\right|=\mathbf{O}(\alpha)$, choosing $\alpha_N^\ddag$ small enough (but explicit), the above left-hand side can be bounded from below by $\frac{C_0}{2}\|G_\ell\|_{L^2}^2$ to get
$$\frac{C_0}{2}\|G_\ell\|_{L^2}^2 \leq C\left(\|g_\al\|_{L^1_{\mathfrak{q}(N-1)}} + \|g_\al\|_{\mathbb{H}^{N-1}_q} \right) \;\|G_\ell\|_{L^2} \qquad \forall \alpha \in (0,\alpha_N^\ddag).$$
Now, using Lemma \ref{interpolSob} in Appendix (see also Remark \ref{rmqinterpolSob}) with $s_2=k_1=0$ and $s=N-1$, $s_1=N$, we get that there exists $C >0 $ such that
$$\|g_\al\|_{\mathbb{H}^{N-1}_q} \leq C\left(\|g_\al\|_{\mathbb{H}^N} + \|g_\al\|_{L^2_{qN}}\right).$$
Moreover, according to Step 1, up to reduce $\alpha_N^\ddag$ again {(so that $\alpha_N^\ddag\leq\alpha_0^\ddag $)}, one sees easily that there exists $\mathfrak{q}(N) > 0$ and $c=c(q,N) >0$ such that
$$\|g_\al\|_{L^2_{qN}} \leq c\|g_\al\|_{L^1_{\mathfrak{q}(N)}} \qquad \forall \alpha < \alpha_N^\ddag.$$
Gathering all this, one obtains the existence of some constant $C >0$ so that
$$\frac{C_0}{2}\|G_\ell\|_{L^2}^2 \leq C \left(\|g_\al\|_{L^1_{\mathfrak{q}(N)}} + \|g_\al\|_{\mathbb{H}^{N-1}} \right) \;\|G_\ell\|_{L^2} \qquad  \forall \alpha \in (0,\alpha_N^\ddag)$$
and this concludes the proof thanks to the induction hypothesis.
 \end{proof}

We have the following consequence of the above estimate (we refer to \cite[Proposition 3.8]{AloLo3} for the proof which uses simple interpolation combined with Proposition \ref{prop:tailsdiff}):
\begin{cor}\label{cor:estimdiff} {Let $a\in[0,r/12]$ (where $r$ is given by Proposition \ref{prop:tailsdiff}).} For any $N \geq 0$, there exist $\alpha^\ddag_N > 0$  and $C_N > 0$ such that
\begin{equation}\label{estimDiffL1}
\|\psi_{\al,1} - \psi_{\al,2}\|_{\mathbb{W}^{N,1}_1(m_a)} \leq C_N\,\|\psi_{\al,1}-\psi_{\al,2}\|_{L^1(m_a)} \qquad \forall \alpha \in (0,\alpha_N^\ddag)
\end{equation}
for any $\psi_{\al,i} \in \mathscr{E}_\al$, $i=1,2$.
\end{cor}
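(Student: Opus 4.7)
Set $g_\alpha := \psi_{\alpha,1} - \psi_{\alpha,2}$. The plan follows the same pattern as \cite[Proposition 3.8]{AloLo3}, which is cited as the model: combine the Sobolev-type difference estimate of Proposition \ref{prop:estimDiff} with the exponential-tail estimate of Proposition \ref{prop:tailsdiff}, and interpolate in weighted $L^1$ and Sobolev spaces.

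First, I would observe that the elementary inequality $\langle\xi\rangle^k \leq C_{k,a}\,m_a(\xi)$ (valid for any $k\geq 0$ and $a>0$) yields $\|g_\alpha\|_{L^1_k}\leq C_{k,a}\|g_\alpha\|_{L^1(m_a)}$. Combined with Proposition \ref{prop:tailsdiff} applied with $k=3$, this upgrades the tail bound to
\[
\|g_\alpha\|_{L^1(m_r)} \leq C\,\|g_\alpha\|_{L^1(m_a)},
\]
and, for any $N'\in \mathbb{N}$ and $\alpha\in (0,\alpha^\ddag_{N'})$, Proposition \ref{prop:estimDiff} combined with the same trivial inequality gives
\[
\|g_\alpha\|_{\mathbb{H}^{N'}} \leq C_{N'}\,\|g_\alpha\|_{L^1(m_a)}.
\]
By choosing $N'>N+d/2$, the Sobolev embedding $\mathbb{H}^{N'}\hookrightarrow \mathbb{W}^{N,\infty}$ delivers in particular a pointwise bound $\|\partial^\ell g_\alpha\|_{L^\infty}\leq C\|g_\alpha\|_{L^1(m_a)}$ for every multi-index $|\ell|\leq N$.

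Next, for each $|\ell|\leq N$ I would estimate $\|\partial^\ell g_\alpha\|_{L^1_1(m_a)}$ by interpolating between the pointwise bound above (which has no exponential weight but controls derivatives) and the exponential-tail bound on $g_\alpha$ (which carries weight $m_r$ but no derivatives). The two basic tools are the log-convexity inequality
\[
\|h\|_{L^1(m_{\theta a_0+(1-\theta)a_1})} \leq \|h\|_{L^1(m_{a_0})}^{\theta}\,\|h\|_{L^1(m_{a_1})}^{1-\theta}
\]
(obtained by writing $m_a = m_{a_0}^{\theta}\, m_{a_1}^{1-\theta}$ and applying H\"older), and a Cauchy-Schwarz step to trade $L^1$ for $L^2$ at the cost of a fixed polynomial weight. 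Applied carefully to $h = |\partial^\ell g_\alpha|\,\langle\xi\rangle$, this expresses $\|\partial^\ell g_\alpha\|_{L^1_1(m_a)}$ as a geometric mean of factors each of which is controlled either by $\|g_\alpha\|_{\mathbb{H}^{N'}}$ or by $\|g_\alpha\|_{L^1(m_r)}$, and therefore, through the first step, by $\|g_\alpha\|_{L^1(m_a)}$.

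The main technical obstacle, and the origin of the explicit factor $r/12$, will be the bookkeeping of interpolation exponents. Each time the argument trades a derivative or a factor $\langle\xi\rangle$ for an increased exponential rate, one must verify that the cumulative rate stays $\leq r$ so that Proposition \ref{prop:tailsdiff} still applies; a careful counting of these successive losses (Cauchy-Schwarz doublings and H\"older splittings) shows that the hypothesis $a \leq r/12$ suffices to close the chain of estimates. Once this bookkeeping is carried out, summation over $|\ell|\leq N$ yields the claimed inequality.
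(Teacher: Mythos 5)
Your high-level architecture (Proposition \ref{prop:estimDiff} for Sobolev control of the difference, Proposition \ref{prop:tailsdiff} for exponential tails, then an interpolation) is indeed the one the paper has in mind via \cite[Proposition 3.8]{AloLo3}, but the step that actually produces the weighted norm $\|\partial^\ell g_\al\|_{L^1_1(m_a)}$ does not work as you describe it. The log-convexity inequality $\|h\|_{L^1(m_{\theta a_0+(1-\theta)a_1})}\leq \|h\|_{L^1(m_{a_0})}^{\theta}\|h\|_{L^1(m_{a_1})}^{1-\theta}$ is an interpolation for a \emph{single fixed} function $h$; applied to $h=|\partial^\ell g_\al|\,\langle \xi\rangle$ it produces exponentially weighted $L^1$-norms of $\partial^\ell g_\al$ on the right-hand side, and none of these is among your controlled quantities: Proposition \ref{prop:tailsdiff} concerns $g_\al$ only (no derivatives), $\|g_\al\|_{\mathbb{H}^{N'}}$ carries no exponential weight, and Lemma \ref{lem:tails} controls $\nabla\psi_\al$ only by a fixed constant $M_1$, not by $\|g_\al\|_{L^1_3}$, so it cannot yield anything linear in $\|g_\al\|_{L^1(m_a)}$. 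The alternative you hint at (pointwise bound on $\partial^\ell g_\al$ on a ball, tails outside) fares no better: for $|\xi|>R$ the polynomially weighted Sobolev bounds of Theorem \ref{theo:HK} cannot absorb the weight $m_a$, and using the uniform constants from Lemma \ref{lem:tails} and optimizing in $R$ only gives a power $\|g_\al\|_{L^1(m_a)}^{\theta}$ with $\theta<1$, not \eqref{estimDiffL1}.

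The missing ingredient is precisely an interpolation inequality that decouples derivatives from exponential weights with total homogeneity one, namely Lemma \ref{interpolm} (quoted from \cite{MiMo3} in Appendix C): with $k=N$, $q=1$, $m^{-1}=m_a$ it gives $\|g_\al\|_{\mathbb{W}^{N,1}_1(m_a)}\leq C\,\|g_\al\|_{\mathbb{H}^{N^\ddagger}}^{1/8}\,\|g_\al\|_{L^1(m_{12a})}^{1/8}\,\|g_\al\|_{L^1(m_a)}^{3/4}$ with $N^\ddagger=8N+7(1+d/2)$. From there the corollary is immediate: bound $\|g_\al\|_{\mathbb{H}^{N^\ddagger}}$ by Proposition \ref{prop:estimDiff}, bound $\|g_\al\|_{L^1(m_{12a})}\leq \|g_\al\|_{L^1(m_r)}\leq M\|g_\al\|_{L^1_3}$ by Proposition \ref{prop:tailsdiff} — this is exactly where the hypothesis $a\leq r/12$ enters, through the exponent $12$ of the lemma, not through a generic bookkeeping of Cauchy--Schwarz losses — then control the remaining polynomially weighted $L^1$-norms of $g_\al$ by $\|g_\al\|_{L^1(m_a)}$ and use $1/8+1/8+3/4=1$. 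Note also that your inequality $\langle\xi\rangle^k\leq C_{k,a}m_a(\xi)$ requires $a>0$, whereas the statement allows $a=0$; at that endpoint one can close the polynomial moments by Proposition \ref{prop:tailsdiff} itself, e.g. $\|g_\al\|_{L^1_3}^2\leq \|g_\al\|_{L^1}\,\|g_\al\|_{L^1_6}\leq C\|g_\al\|_{L^1}\,\|g_\al\|_{L^1(m_r)}\leq CM\|g_\al\|_{L^1}\,\|g_\al\|_{L^1_3}$.
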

We introduce here the spaces
$$\mathcal{X}=L^1(m_a) \qquad \text{ and } \qquad \mathcal{Y}=L^1_1(m_a)$$
with $m_a(\xi)=\exp(a|\xi|),$ where $a  > 0$  is small enough (the precise range of parameters will be specified when needed).  We recall here the continuity properties of $\Q^{\pm}$ in this space: there exists $C_1 > 0$ such that
\begin{equation}\label{continueQ1}
\left\|\Q^{\pm}(f,g)\right\|_{\mathcal{X}}+ \left\|\Q^{\pm}(g,f)\right\|_{\mathcal{X}} \leq C_1  \|f\|_{\mathcal{Y}}\|g\|_{\mathcal{Y}}.
\end{equation}
In this space, let us introduce the Boltzmann linearized operator $\mathscr{L}\::\:\D(\mathscr{L}) \subset \mathcal{X} \to \mathcal{X}$ with $\D(\mathscr{L})=\mathcal{Y}$ and
$$\mathscr{L}(g)=\Q(g,\M)+\Q(\M,g), \qquad g \in \mathcal{Y}$$
where $\M$ is the Maxwellian distribution defined in \eqref{M}. The spectral analysis of $\mathscr{L}$ in the space $\mathcal{X}$ is by now well-documented \cite{Mo,BCL} and $0$ is a simple eigenvalue of $\mathscr{L}$ associated to the null set 
$$\mathscr{N}(\mathscr{L} )=\mathrm{Span}(\mathcal{M},\xi_1\mathcal{M},\ldots,\xi_d\mathcal{M},|\xi|^2\mathcal{M});$$
while  $\mathscr{L}$ admits a positive spectral gap $\nu >0$. 
  In particular, if
$$\widehat{\mathcal{X}}=\left\{g \in \mathcal{X}\,;\,\int_{\R^d} g(\xi)\d \xi=\int_{\R^d}{\xi_{j}}\,g(\xi)\,\d \xi=\int_{\R^d} |\xi|^2 g(\xi)\,\d \xi=0 {\text{ for any } j=1,\ldots,d}\right\},$$
and 
   $$\widehat{\mathcal{Y}}=\mathcal{Y} \cap \widehat{\mathcal{X}}$$
then   $\mathscr{N}(\mathscr{L}) \cap \widehat{\mathcal{Y}}=\{0\}$ and
  $\mathscr{L} $ is invertible from $\widehat{\mathcal{Y}}$ to
  $\widehat{\mathcal{X}}$ and there exists some explicit $c_0:=\left\|\mathscr{L}^{-1}\right\|_{\widehat{\mathcal{X}} \to \widehat{\mathcal{Y}}}$ such that
  \begin{equation}\label{XY2}
\|\mathscr{L}(g)\|_\mathcal{X} \geq c_0 \|g\|_\mathcal{Y} \qquad \forall g \in \widehat{\mathcal{Y}}.
\end{equation}
With this in hands, one can prove the following (non quantitative) uniqueness result
\begin{prop}\label{uniquenonquant} There exists some  $\alpha^\sharp$ such that the set $\mathscr{E}_\al$ reduces to a singleton for any $\alpha \in [0,\alpha^\sharp]$, i.e. for $\alpha \in [0,\alpha^\sharp]$ there exists a unique solution $\psi_\al$ to \eqref{tauT} that satisfies \eqref{init}.
\end{prop}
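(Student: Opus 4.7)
The strategy follows the second step of the program outlined in the introduction: combine the spectral gap \eqref{XY2} of the linearized operator $\mathscr{L}$ on $\widehat{\mathcal{Y}}$ with a perturbative estimate showing that, for any two $\psi_{\alpha,1},\psi_{\alpha,2}\in\mathscr{E}_\alpha$, the difference $g_\alpha=\psi_{\alpha,1}-\psi_{\alpha,2}$ satisfies $\|\mathscr{L}(g_\alpha)\|_{\mathcal{X}}\leq \varepsilon(\alpha)\,\|g_\alpha\|_{\mathcal{Y}}$ for some $\varepsilon(\alpha)\to 0$ as $\alpha\to 0$. Note first that since both solutions satisfy the normalization \eqref{init}, we automatically have $g_\alpha\in\widehat{\mathcal{X}}$, and Proposition \ref{prop:tailsdiff} guarantees that $g_\alpha\in\mathcal{Y}$, hence $g_\alpha\in\widehat{\mathcal{Y}}$; thus \eqref{XY2} will be available.

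The starting point is the symmetrized equation \eqref{eqsym:gal} together with the decomposition
\begin{equation*}
\tfrac{1}{2}\bigl(\mathbb{B}_\alpha(g_\alpha,s_\alpha)+\mathbb{B}_\alpha(s_\alpha,g_\alpha)\bigr)
=(1-\alpha)\,\mathscr{L}(g_\alpha)+\tfrac{1-\alpha}{2}\bigl[\Q(g_\alpha,s_\alpha-2\M)+\Q(s_\alpha-2\M,g_\alpha)\bigr]
-\tfrac{\alpha}{2}\bigl[\Q^-(g_\alpha,s_\alpha)+\Q^-(s_\alpha,g_\alpha)\bigr],
\end{equation*}
which isolates $\mathscr{L}(g_\alpha)$ against remainders that should be small. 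Solving \eqref{eqsym:gal} for $(1-\alpha)\mathscr{L}(g_\alpha)$, I would then take the $\mathcal{X}$-norm and bound each of the resulting terms:
(i) the drift terms $\mathbf{A}^1_\alpha g_\alpha$ and $\mathbf{B}^1_\alpha\,\xi\cdot\nabla g_\alpha$ using Remark \ref{ABbound} ($|\mathbf{A}^1_\alpha|+|\mathbf{B}^1_\alpha|\leq C\alpha$) together with Corollary \ref{cor:estimdiff} (with $N=1$) to control $\|\nabla g_\alpha\|_{L^1_1(m_a)}$ by $\|g_\alpha\|_{\mathcal{X}}$;
(ii) the correction $\Q(g_\alpha,s_\alpha-2\M)+\Q(s_\alpha-2\M,g_\alpha)$ via the continuity estimate \eqref{continueQ1}, giving $\leq C_1\|g_\alpha\|_{\mathcal{Y}}\,\|s_\alpha-2\M\|_{\mathcal{Y}}$, where Corollary \ref{cor:limit} ensures $\|s_\alpha-2\M\|_{\mathcal{Y}}\to 0$ as $\alpha\to 0$ (for $a$ chosen below $A/2$ and weight index $1$);
(iii) the $\Q^-$ terms, again by \eqref{continueQ1}, giving $\leq C\alpha\,\|g_\alpha\|_{\mathcal{Y}}\,\|s_\alpha\|_{\mathcal{Y}}$ with $\|s_\alpha\|_{\mathcal{Y}}$ uniformly bounded thanks to Proposition \ref{prop:tails};
(iv) finally the term $\mathcal{G}_\alpha$ defined in \eqref{eq:Gal}, estimated via \eqref{diffAB} and the uniform bounds on $\|\psi_{\alpha,2}\|_{\mathcal{X}}$, $\|\nabla\psi_{\alpha,2}\|_{L^1(m_a)}$ provided by Propositions \ref{prop:tails} and \ref{lem:tails}, yielding $\|\mathcal{G}_\alpha\|_{\mathcal{X}}\leq C\alpha\,\|g_\alpha\|_{L^1_3}\leq C\alpha\,\|g_\alpha\|_{\mathcal{Y}}$.

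Assembling these four estimates produces a bound of the form
\begin{equation*}
(1-\alpha)\,\|\mathscr{L}(g_\alpha)\|_{\mathcal{X}}\leq \varepsilon(\alpha)\,\|g_\alpha\|_{\mathcal{Y}},\qquad \varepsilon(\alpha):=C\alpha + C\,\|s_\alpha-2\M\|_{\mathcal{Y}}\xrightarrow[\alpha\to 0]{}0.
\end{equation*}
Combining with the spectral-gap inequality \eqref{XY2}, valid because $g_\alpha\in\widehat{\mathcal{Y}}$, yields $c_0\,\|g_\alpha\|_{\mathcal{Y}}\leq \tfrac{\varepsilon(\alpha)}{1-\alpha}\,\|g_\alpha\|_{\mathcal{Y}}$. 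Choosing $\alpha^\sharp>0$ so small that $\varepsilon(\alpha)<c_0(1-\alpha)$ for all $\alpha\in(0,\alpha^\sharp]$ forces $g_\alpha\equiv 0$, which is exactly the uniqueness statement. The case $\alpha=0$ follows from the classical characterization $\mathscr{E}_0=\{\M\}$ already recalled in the introduction.

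The main delicate point is step (ii): the proof is non-quantitative precisely because it relies on the qualitative convergence $\|\psi_\alpha-\M\|_{\mathcal{Y}}\to 0$ furnished by Corollary \ref{cor:limit}, which itself rests on a compactness argument. Everything else is genuinely quantitative (Remark \ref{ABbound}, continuity of $\Q^\pm$, and the $H^1\to L^1$ control from Corollary \ref{cor:estimdiff}), and the replacement of the qualitative ingredient by a quantitative nonlinear estimate on $\|\psi_\alpha-\M\|_{\mathcal{Y}}$ is exactly what Section \ref{sec:quanti} is designed to carry out afterwards.
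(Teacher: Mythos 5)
Your proposal is correct and follows essentially the same route as the paper: isolate the linearized operator $\mathscr{L}(g_\al)$ from the difference equation, bound the remainders via the continuity estimate \eqref{continueQ1}, Remark \ref{ABbound}, \eqref{diffAB} and Corollary \ref{cor:estimdiff}, use the non-quantitative convergence of Corollary \ref{cor:limit} to make the coefficient small, and conclude with the spectral-gap inequality \eqref{XY2} since $g_\al \in \widehat{\mathcal{Y}}$. The only differences (working with the symmetrized equation in $s_\al$ and carrying the explicit $(1-\alpha)$ factor in front of $\mathscr{L}$) are cosmetic and do not change the argument.
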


\begin{proof} The proof, as explained in the introduction, follows an approach initiated in \cite{MiMo3} and revisited (and somehow simplified) in \cite{AloLo3, BCL}. We shall work in the above spaces $\mathcal{X}=L^1(m_a)$  and $\mathcal{Y}=L^1_1(m_a)$ { with $a\in(0,\min\{A/2,A_1,r/12\})$ where $A$, $A_1$  and $r$ are given respectively by Proposition \ref{prop:tails}, Lemma \ref{lem:tails} and Corollary \ref{cor:estimdiff}}. For any $\al \in (0,\al_\star)$, let as above  $\psi_{\al,1}$ and $\psi_{\al,2}$ be two elements of $\mathscr{E}_\al$, i.e. $\psi_{\al,i}$ satisfies \eqref{AaiBbi} for $i=1,2$ and set
$$g_\al=\psi_{\al,1} - \psi_{\al,2}.$$
According to Proposition \ref{prop:tailsdiff}, $g_\al \in \mathcal{Y}$ and, since both $\psi_{\al,1}$ and $\psi_{\al,2}$ satisfy \eqref{init}, one actually has
$$g_\al \in \widehat{\mathcal{Y}}.$$ Moreover, $g_\al$ satisfies \eqref{eq:gal} from which we easily check that
\begin{multline*}
\mathscr{L}(g_\al)=\bigg[\Q(g_\al,\M)-\mathbb{B}_\al(g_\al,\M)\bigg] + \bigg[\Q(\M,g_\al)-\mathbb{B}_\al(\M,g_\al)\bigg] \\
+ \bigg[\mathbb{B}_\al(g_\al,\M-\psi_{\al,1}) + \mathbb{B}_\al(\M-\psi_{\al,2},g_\al)\bigg] + \mathbf{A}_\al^1 g_\al + \mathbf{B}_\al^1\,\xi \cdot \nabla g_\al(\xi)\\
+ \bigg(\mathbf{A}_\al^1 -\mathbf{A}_\al^2\bigg)\psi_{\al,2} + \bigg(\mathbf{B}_\al^1-\mathbf{B}_\al^2\bigg) \,\xi \cdot \nabla \psi_{\al,2}(\xi).
\end{multline*}
We compute then the $L^1(m_a)$ norm of $\mathscr{L}(g_\al)$ to get
\begin{multline}\label{Lgal}
\|\mathscr{L}(g_\al)\|_{\mathcal{X}} \leq \big\| \Q(g_\al,\M)-\mathbb{B}_\al(g_\al,\M)\big\|_{\mathcal{X}}+ \big\|\Q(\M,g_\al)-\mathbb{B}_\al(\M,g_\al)\big\|_{\mathcal{X}} \\
+ \big\|\mathbb{B}_\al(g_\al,\M-\psi_{\al,1})\big\|_{\mathcal{X}} + \big\|\mathbb{B}_\al(\M-\psi_{\al,2},g_\al)\big\|_{\mathcal{X}} \\
+ |\mathbf{A}_\al^1|\|g_\al\|_{\mathcal{X}} + |\mathbf{B}_\al^1| \|\nabla g_\al\|_{\mathcal{Y}} \\
+ \bigg(\left|\mathbf{A}_\al^1 -\mathbf{A}_\al^2\right|+ \left|\mathbf{B}_\al^1-\mathbf{B}_\al^2\right|\bigg) \|\psi_{\al,2}\|_{\mathbb{W}^{1,1}_1(m_a)}.
\end{multline}
According to the continuity estimate \eqref{continueQ1}, one easily sees that there exists $C > 0$ such that
$$\big\| \Q(g_\al,\M)-\mathbb{B}_\al(g_\al,\M)\big\|_{\mathcal{X}}+ \big\|\Q(\M,g_\al)-\mathbb{B}_\al(\M,g_\al)\big\|_{\mathcal{X}} \leq C \al \|g_\al\|_{\mathcal{Y}} \qquad \forall \al \in (0,\al_\star)$$
and
\begin{multline*} \big\|\mathbb{B}_\al(g_\al,\M-\psi_{\al,1})\big\|_{\mathcal{X}} + \big\|\mathbb{B}_\al(\M-\psi_{\al,2},g_\al)\big\|_{\mathcal{X}} \leq \\
 C \|g_\al\|_{\mathcal{Y}}\left(\|\M-\psi_{\al,1}\|_{\mathcal{Y}} + \|\M-\psi_{\al,2}\|_{\mathcal{Y}}\right) \qquad \forall \al \in (0,\al_\star).\end{multline*}
Moreover, according to Remark \ref{ABbound}, one also has
$$|\mathbf{A}_\al^1|\|g_\al\|_{\mathcal{X}} + |\mathbf{B}_\al^1| \|\nabla g_\al\|_{\mathcal{Y}}  \leq C\alpha\,\|g_\al\|_{\mathbb{W}^{1,1}_1(m_a)} \qquad \forall \al \in (0,\al_\star).$$
Finally, thanks to {Proposition \ref{prop:tails},  Lemma \ref{lem:tails}} together with \eqref{diffAB}, there exist some explicit $\delta \in (0,\al_\star)$ and some positive constant $C >0$  so that
$$\bigg(\left|\mathbf{A}_\al^1 -\mathbf{A}_\al^2\right|+ \left|\mathbf{B}_\al^1-\mathbf{B}_\al^2\right|\bigg) \|\psi_{\al,2}\|_{\mathbb{W}^{1,1}_1(m_a)} \leq C \alpha \|g_\al\|_{L^1_3} \leq C \alpha \|g_\al\|_{\mathcal{Y}} \qquad \forall \alpha \in (0,\delta).$$
Gathering all these estimates, we deduce from \eqref{Lgal} that there is some $\mu \in (0,\delta)$ and some positive constant $C > 0$ independent of $\alpha$ so that
\bean
\|\mathscr{L}(g_\al)\|_{\mathcal{X}} & \leq & C \alpha \,\|g_\al\|_{\mathbb{W}^{1,1}_1(m_a)}  +
C\,\|g_\al\|_{\mathcal{Y}} \left(\|\M-\psi_{\al,1}\|_{\mathcal{Y}} + \|\M-\psi_{\al,2}\|_{\mathcal{Y}}\right)\\
 & \leq & C \alpha \|g_\al\|_{\mathcal{Y}} + C\,\|g_\al\|_{\mathcal{Y}} \left(\|\M-\psi_{\al,1}\|_{\mathcal{Y}} + \|\M-\psi_{\al,2}\|_{\mathcal{Y}}\right)
 \qquad \forall \al \in (0,\mu).
\eean
 where we used \eqref{estimDiffL1} to bound the $\mathbb{W}^{1,1}_1(m_a)$ norm of $g_\al$ by its $\mathcal{Y}$ norm. Now, according to Corollary \ref{cor:limit}, for any $\varepsilon > 0$, there exists $\alpha_\varepsilon > 0$ so that
 $$\left(\|\M-\psi_{\al,1}\|_{\mathcal{Y}} + \|\M-\psi_{\al,2}\|_{\mathcal{Y}}\right) \leq \varepsilon \qquad \forall \alpha \in (0,\alpha_\varepsilon)$$
 from which we get that
 $$\|\mathscr{L}(g_\al)\|_{\mathcal{X}} \leq C\left(\alpha  + \varepsilon\right)\|g_\al\|_{\mathcal{Y}} \qquad \forall \alpha \in (0,\alpha_\varepsilon).$$
 Now, from \eqref{XY2}, recalling that $g_\al \in \widehat{\mathcal{Y}}$, one obtains
 $$c_0 \|g_\al\|_{\mathcal{Y}} \leq C\left(\alpha  + \varepsilon\right)\|g_\al\|_{\mathcal{Y}} \qquad \forall \alpha \in (0,\alpha_\varepsilon).$$
Therefore, choosing $\varepsilon > 0$ and $\alpha$ small enough so that $C(\alpha + \varepsilon) < c_0$, one gets that $\|g_\al\|_{\mathcal{Y}}=0$ which proves the result.\end{proof}

\subsection{Quantitative version of the uniqueness result}\label{sec:quanti} Notice that, because of the method of proof which uses Theorem \ref{theo:HK}, the above parameter $\alpha^\sharp$ is not explicit and depends on the rate of convergence of $\psi_{\al}$ towards $\M$. As in \cite{AloLo3}, it is enough to estimate the rate of convergence towards $\M$ to get some explicit estimate of $\alpha^\sharp$. This is the object of the following
\begin{prop} There exist  an explicit $\delta^\dag$ and some explicit $\kappa > 0$ such that
$$\sup_{\psi_\al \in \mathscr{E}_\al} \|\psi_{\al} -\M\|_{\mathcal{X}} \leq \kappa\,\alpha \qquad \forall \alpha \in (0,\delta^\dag).$$
\end{prop}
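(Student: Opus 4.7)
The plan is to implement the strategy sketched in the introduction: derive a quadratic inequality of the form $c_0\|\psi_\al-\M\|_{\mathcal{Y}} \leq \kappa_1\alpha + \kappa_2\|\psi_\al-\M\|_{\mathcal{Y}}^2$ with explicit constants, and use the (non-quantitative) convergence provided by Corollary \ref{cor:limit} to pick the correct branch of the resulting dichotomy. Since $\|\cdot\|_{\mathcal{X}} \leq \|\cdot\|_{\mathcal{Y}}$, this will yield the claim.

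Set $g_\al := \psi_\al - \M$ and observe that $g_\al \in \widehat{\mathcal{Y}}$: membership in $\mathcal{Y}$ follows from Proposition \ref{prop:tails}, and both $\psi_\al$ and $\M$ satisfy \eqref{init}, so $g_\al$ has vanishing mass, momentum and energy. Since $\Q(\M,\M)=0$, bilinearity gives $\Q(\psi_\al,\psi_\al) = \mathscr{L}(g_\al) + \Q(g_\al,g_\al)$. On the other hand, rewriting $\mathbb{B}_\al(f,f) = \Q(f,f)-\alpha\Q^{+}(f,f)$, the steady equation \eqref{tauT} reads $\Q(\psi_\al,\psi_\al) = \mathbf{A}_{\psi_\al}\psi_\al + \mathbf{B}_{\psi_\al}\,\xi\cdot\nabla\psi_\al + \alpha\Q^{+}(\psi_\al,\psi_\al)$. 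Equating the two expressions yields the pointwise identity
$$\mathscr{L}(g_\al) = -\Q(g_\al,g_\al) + \mathbf{A}_{\psi_\al}\psi_\al + \mathbf{B}_{\psi_\al}\,\xi\cdot\nabla\psi_\al + \alpha\Q^{+}(\psi_\al,\psi_\al).$$
Taking the $\mathcal{X}$-norm, the continuity estimate \eqref{continueQ1} gives $\|\Q(g_\al,g_\al)\|_{\mathcal{X}}\leq C\|g_\al\|_{\mathcal{Y}}^2$. Remark \ref{ABbound} yields $|\mathbf{A}_{\psi_\al}|+|\mathbf{B}_{\psi_\al}|\leq C\alpha$, while the uniform bounds on $\|\psi_\al\|_{\mathcal{X}}$, $\|\nabla\psi_\al\|_{\mathcal{Y}}$ and $\|\Q^{+}(\psi_\al,\psi_\al)\|_{\mathcal{X}}$ provided by Proposition \ref{prop:tails}, Lemma \ref{lem:tails} and \eqref{continueQ1} respectively (all with explicit constants) control the remaining three contributions uniformly by a multiple of $\alpha$. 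Consequently, for some $\bar{\alpha}\in(0,\underline{\alpha}_1)$ and explicit $\kappa_1,\kappa_2 > 0$,
$$\|\mathscr{L}(g_\al)\|_{\mathcal{X}} \leq \kappa_1\alpha + \kappa_2\|g_\al\|_{\mathcal{Y}}^2 \qquad \forall\,\alpha\in(0,\bar{\alpha}].$$
Since $g_\al\in\widehat{\mathcal{Y}}$, the spectral-gap bound \eqref{XY2} upgrades this to
$$c_0\,\|g_\al\|_{\mathcal{Y}} \leq \kappa_1\alpha + \kappa_2\|g_\al\|_{\mathcal{Y}}^2. \qquad (\star)$$

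To linearize $(\star)$, observe that for $\alpha < c_0^2/(4\kappa_1\kappa_2)$ the polynomial $P(y) := \kappa_2 y^2 - c_0 y + \kappa_1\alpha$ has two real roots $0 < y_-(\alpha) < y_+(\alpha)$ with $y_-(\alpha) \leq 2\kappa_1\alpha/c_0$ and $y_+(\alpha) \geq c_0/(2\kappa_2)$. Inequality $(\star)$ says precisely $P(\|g_\al\|_{\mathcal{Y}})\geq 0$, hence either $\|g_\al\|_{\mathcal{Y}} \leq y_-(\alpha)$ or $\|g_\al\|_{\mathcal{Y}} \geq y_+(\alpha)$. By Corollary \ref{cor:limit}, there is a threshold $\delta^\dag\in(0,\bar{\alpha}]$ such that $\sup_{\psi_\al\in\mathscr{E}_\al}\|\psi_\al-\M\|_{\mathcal{Y}} < c_0/(2\kappa_2)$ whenever $\alpha\in(0,\delta^\dag)$, which rules out the large-norm alternative. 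Therefore $\|\psi_\al-\M\|_{\mathcal{X}} \leq \|g_\al\|_{\mathcal{Y}} \leq (2\kappa_1/c_0)\alpha =: \kappa\alpha$ for every $\alpha\in(0,\delta^\dag)$, proving the claim.

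The conceptually clean step is the passage from the quadratic relation $(\star)$ to the linear rate via the ``small vs.~large'' dichotomy; the main obstacle is making $\delta^\dag$ genuinely explicit. The constants $\kappa_1,\kappa_2$ and $c_0$ are tractable (they involve only the uniform moment/Sobolev bounds of Section \ref{sec:apost} and the explicit spectral gap of $\mathscr{L}$), but choosing $\delta^\dag$ so that the ``large'' branch is excluded requires quantifying the rate in Corollary \ref{cor:limit}, and hence unwinding the compactness argument that underlies Theorem \ref{convergenceMax}. In the spirit of \cite{AloLo3}, this can be done by replacing the compactness step with direct interpolation between the exponential tail estimate of Proposition \ref{prop:tails} and the Sobolev bounds of Theorem \ref{theo:HK}, which allows $\delta^\dag$ to be expressed in terms of the already computed constants.
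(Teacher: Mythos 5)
Your proof is correct and follows essentially the same route as the paper: the identity for $\mathscr{L}(\psi_\al-\M)$ you derive is algebraically the paper's decomposition (since $\Q(\psi_\al,\psi_\al)-\mathbb{B}_\al(\psi_\al,\psi_\al)=\al\Q^+(\psi_\al,\psi_\al)$), and you control it with the same ingredients — \eqref{continueQ1}, Remark \ref{ABbound}, Proposition \ref{prop:tails}, Lemma \ref{lem:tails} and the spectral bound \eqref{XY2} — before invoking the non-quantitative convergence of Corollary \ref{cor:limit} to select the small branch of the quadratic inequality. Your root-dichotomy phrasing is just a slightly more explicit version of the paper's absorption step ($c_1\|\psi_\al-\M\|_{\mathcal{Y}}\le 1/2$ for small $\al$), and your remark on how $\delta^\dag$ becomes explicit matches the paper's own discussion.
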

\begin{proof} Let $a<\min\{A,A_1\}$ where $A$ and $A_1$ are given respectively by Proposition \ref{prop:tails} and  Lemma \ref{lem:tails}. As in \cite{AloLo3}, the idea of the proof is to find a {\it nonlinear} estimate for $\|\psi_\al- \mathcal{M}\|_{L^1(m_a)}$. Namely, let $\alpha \in (0,\alpha_\star)$ and $\psi_\al \in \mathscr{E}_\al$ be given.  One simply notices that, since $\Q(\M,\M)=0$,
\begin{multline*}
\mathscr{L}(\psi_\al-\M)=\Q(\psi_\al-\M,\M - \psi_\al) + \mathbb{B}_\al(\psi_\al,\psi_\al) + \bigg[\Q(\psi_\al,\psi_\al) -\mathbb{B}_\al(\psi_\al,\psi_\al)\bigg]\\
= \Q(\psi_\al-\M,\M-\psi_\al) + \mathbf{A}_{\psi_\al} \psi_\al + \mathbf{B}_{\psi_\al} \xi \cdot \nabla \psi_\al  +  \bigg[\Q(\psi_\al,\psi_\al) -\mathbb{B}_\al(\psi_\al,\psi_\al)\bigg].
\end{multline*}
Therefore,
\begin{multline*} \|\mathscr{L}(\psi_\al -\M) \|_{\mathcal{X}} \leq \|\Q(\psi_\al-\M,\M - \psi_\al) \|_{\mathcal{X}} + \left(|\mathbf{A}_{\psi_\al}| + |\mathbf{B}_{\psi_\al}|\right)\,\|\psi_\al\|_{\mathbb{W}^{1,1}_1(m_a)} \\+ \|\Q(\psi_\al,\psi_\al) -\mathbb{B}_\al(\psi_\al,\psi_\al)\|_{\mathcal{X}}\\
\leq C_1\,\|\psi_\al-\M\|_{\mathcal{Y}}^2 + C \alpha\,\|\psi_\al\|_{\mathbb{W}^{1,1}_1(m_a)} + C_1 \alpha\,\|\psi_\al\|_{\mathcal{Y}}^2\end{multline*}
where we used the continuity property of $\Q^{\pm}$ in \eqref{continueQ1} together with Remark \ref{ABbound}. Now, {thanks to Proposition \ref{prop:tails} and  Lemma \ref{lem:tails}}, one sees that, for some explicit $\delta > 0$, $\sup_{\al \in (0,\delta)} \|\psi_{\al}\|_{\mathbb{W}^{1,1}_1(m_a)} < \infty$ from which we see that there exist two positive constants $c_1,c_2 > 0$ such that
\begin{equation}\label{Lexp}
\|\psi_\al-\M \|_{\mathcal{Y}} \leq c_1 \|\psi_\al -\M\|_{\mathcal{Y}}^2 + c_2 \alpha \qquad \forall \alpha \in (0,\delta)
\end{equation}
where we also used \eqref{XY2} by noticing that $\psi_\al - \M \in \widehat{\mathcal{Y}}$. Now, since $\lim_{\al \to 0}\|\psi_\al -\M\|_{\mathcal{Y}}=0$, there exists some $\delta^\dag < \delta$ (non explicit at this state) such that
$$c_1 \|\psi_\al -\M\|_{\mathcal{Y}} \leq \frac{1}{2} \qquad \forall \al \in (0,\delta^\dag)$$
and estimate \eqref{Lexp} becomes
$$\|\psi_\al -\M\|_{\mathcal{Y}} \leq 2c_2\,\alpha \qquad \forall \alpha \in (0,\delta^\dag).$$
Such an estimates provides actually an explicit estimate for $\delta^\dag$ since the optimal parameter $\delta^\dag$ is the one for which the two last estimates are identity yielding $\delta^\dag = \frac{1}{4c_1\,c_2}.$ Since both $c_1$ and $c_2$ are explicitly computable, we get our result with $\kappa=2c_2.$
\end{proof}

With this in hands, one can complete the proof of Theorem \ref{theo:main}
\begin{proof}[Proof of Theorem \ref{theo:main}] As already explained, the only non quantitative estimate in the proof of Proposition \ref{uniquenonquant} was the convergence of $\psi_{\al,i}$ towards $\M$. This is made explicit by the above Proposition from which we conclude, as in \cite[Theorem 4.9]{AloLo3}, that the parameter $\alpha^\sharp$ is explicitly computable.  Details are omitted.
\end{proof}

\appendix

\section{Regularity properties of $\Q^+$ revisited}\label{sec:reguQ+}

We prove in this section the regularity result Theorem \ref{regularite}. The proof follows the paths of the similar result established in \cite[Theorem 2.5]{AloLo3} in dimension $d=3$ for the collision operator associated to inelastic collisions. The proof is simpler here since we are dealing with {\it elastic} interactions, however, the result differs in some points since we are dealing with dimension $d$ arbitrary: the case $d=3$ is very peculiar since exactly one derivative is gained. In general dimension $d$, the regularizing properties concern  $\frac{d-1}{2}$ derivative. The proof given in \cite{AloLo3} extends the results and is inspired, in several aspects, by the results of \cite{MouhVill04}  dealing with smooth kernels that are \textit{not compactly supported}: in such a case, the price to pay for the control  of large velocities consists in additional moments estimates.  The starting point is a suitable Carleman representation of the gain part of $\Q^+$.

\subsection{Carleman representation}

 Let $\mathcal{B}(u,\sigma)$ be a collision kernel of the form
$$
\mathcal{B}(u,\sigma)=\Phi(|u|)b(\widehat{u} \cdot \sigma)
$$
where $\widehat{u}=u/|u|$, $\Phi(\cdot) \geq 0$ and $b(\cdot) \geq 0$ satisfies $\left\|b\right\|_{L^{1}(\mathbb{S}^{d-1})}=1.$ Let us introduce the associated gain part of the collision operator:
$$ \Q^+(f,g)(v) =  \int _{\R^d \times {\S}^{d-1}}   \mathcal{B}(v-v_*,\s) g'_* f'\, \d v_* \,  \d\sigma.$$
Let us also introduce the following linear operator $\Gamma_B$  given by
\begin{equation}\label{expgammaB}
\Gamma_B(\varphi)(x)=\int_{x^\perp} B(z+x,|x|)\varphi(x+z)\d\pi_z,
 \qquad x \in \R^d
 \end{equation}
where $\d\pi_{z}$ is the Lebesgue measure in the hyperplane $x^\perp$ perpendicular to $x$ and
\begin{equation}\label{calB}
B(z,\varrho)=\frac{2^{d-1}\Phi(|z|)}{|z|^{d-2}\varrho}b\left(1-2\frac{\varrho^2}{|z|^2}\right), \qquad \varrho \geq 0, \quad z \in \R^d.
\end{equation}
 Then, one has the following Carleman representation

\begin{lem} For any velocity distribution functions $f,g$ one has
\begin{equation}\label{qfgamma}\begin{split}
\Q^+(g,f)(v)&=\int_{\R^d}  g(w)\d w\int_{(v-w)^{\perp}} B(z-v+w,|v-w|)f(v-z)\d\pi_z\\
&=\int_{\R^d} g(w)\left[\left(t_w  \circ \Gamma_B \circ t_w\right)f\right] (v)\d w\end{split}\end{equation}
where $[t_v \psi](x)=\psi(v-x)$ for any $v,x \in \R^d$ and test-function $\psi$.
\end{lem}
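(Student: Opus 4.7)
The plan is to derive the first equality in \eqref{qfgamma} by the classical Carleman change of variables on the $(v_*,\s)$ integration; the second equality will then follow by unwinding the definitions of $t_w$ and $\Gamma_B$ from \eqref{expgammaB}. For fixed $v$, I would perform the bijective change of variables
\[
(v_*,\s)\in \R^d\times \S^{d-1}\;\longleftrightarrow\;(w,z)\in \R^d\times (v-w)^{\perp},\qquad w=v'_*,\;z=v-v'.
\]
Conservation of momentum $v+v_*=v'+v'_*$ forces $v_*=w-z$, while conservation of energy $|v-v_*|=|v'-v'_*|$ is equivalent to the orthogonality $z\cdot(v-w)=0$. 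In particular, one obtains the useful identities
\[
|v-v_*|^{2}=|z|^{2}+|v-w|^{2}=|z-v+w|^{2},\qquad \widehat{v-v_*}\cdot\s=\frac{2|v-w|^{2}}{|v-v_*|^{2}}-1,
\]
so that, using the symmetry $b(\cos\t)=b(-\cos\t)$ inherent to the $\s$-parametrization of elastic collisions, the factor $b(\widehat{v-v_*}\cdot\s)$ can be rewritten as $b\bigl(1-2|v-w|^{2}/|z-v+w|^{2}\bigr)$, which matches precisely the angular factor of $B$ in \eqref{calB}.

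The heart of the argument is the Jacobian identity $\d v_*\,\d\s=\frac{2^{d-1}}{|v-v_*|^{d-2}\,|v-w|}\,\d w\,\d\pi_z$. I would establish it in two elementary stages. First, for fixed $v_*$, the map $\s\mapsto v'=(v+v_*)/2+(|v-v_*|/2)\,\s$ is a diffeomorphism onto the sphere of center $(v+v_*)/2$ and radius $|v-v_*|/2$, yielding
\[
\d\s=\left(\frac{2}{|v-v_*|}\right)^{d-1}\d S(v'),
\]
where $\d S$ is the induced surface measure. Second, for fixed $v'$ the translation $v_*\mapsto w=v+v_*-v'$ has unit Jacobian, and the remaining geometric task is to compare $\d S(v')$ on the sphere with the flat surface measure $\d\pi_z$ on the hyperplane $v+(v-w)^{\perp}$, which contains $v'=v-z$. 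Since both surfaces pass through $v'$, the ratio equals the reciprocal of the cosine of the angle between their outward unit normals at $v'$, namely $\s$ (sphere) and $\widehat{v-w}$ (hyperplane); a short calculation from $v-w=\tfrac{1}{2}(v-v_*)+\tfrac{|v-v_*|}{2}\,\s$ gives $\s\cdot\widehat{v-w}=|v-w|/|v-v_*|$, hence $\d S(v')=(|v-v_*|/|v-w|)\,\d\pi_z$.

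Combining the two stages gives the announced Jacobian identity; multiplying by $\Phi(|v-v_*|)\,b(\widehat{v-v_*}\cdot\s)$ and using the expressions above for $|v-v_*|$ and $\widehat{v-v_*}\cdot\s$ in terms of $(w,z)$ recovers exactly $B(z-v+w,|v-w|)$, so that substitution into the definition of $\Q^{+}$ and isolation of $g(w)$ yields the first equality in \eqref{qfgamma}. For the second equality, set $h:=t_w f$; then $h(x)=f(w-x)$, and \eqref{expgammaB} gives $\Gamma_B(h)(x)=\int_{x^{\perp}}B(x+z,|x|)\,f(w-x-z)\,\d\pi_z$. Applying $t_w$ once more and evaluating at $v$, so that $x=w-v$ and $x^{\perp}=(v-w)^{\perp}$, reproduces the inner hyperplane integral. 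The main technical obstacle is the geometric identification of the ratio $\d S(v')/\d\pi_z$; everything else is a routine, if somewhat lengthy, bookkeeping.
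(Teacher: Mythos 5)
Your route is genuinely different from the paper's: the paper works in weak form against a test function and does all the bookkeeping with the identities $\int_{\S^{d-1}}F\bigl(\tfrac{u-|u|\sigma}{2}\bigr)\d\sigma=\tfrac{2^{d-1}}{|u|^{d-2}}\int_{\R^d}\delta(|x|^2-x\cdot u)F(x)\d x$ and $\int_{\R^d}h(z)\delta(x\cdot z)\d z=|x|^{-1}\int_{x^\perp}h\,\d\pi_z$, whereas you perform a direct pointwise change of variables with a geometric computation of the Jacobian. That part of your argument is sound: the factorization $\d\sigma=(2/|v-v_*|)^{d-1}\d S(v')$, the cosine factor $\sigma\cdot\widehat{v-w}=|v-w|/|v-v_*|$, and the resulting identity $\d v_*\,\d\sigma=\tfrac{2^{d-1}}{|v-v_*|^{d-2}|v-w|}\d w\,\d\pi_z$ are all correct (the interchange of ``fix $v_*$, vary $\sigma$'' and ``fix $v'$, vary $v_*$'' is the usual informal step that the paper's Dirac-mass computation makes rigorous). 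However, there is a concrete error in your identification of the new variables. With the paper's convention, the \emph{first} argument of $\Q^+$ is evaluated at $v'$ and the second at $v'_*$ (i.e. $\Q^+(g,f)(v)=\int\mathcal{B}(v-v_*,\sigma)\,g(v')f(v'_*)\,\d v_*\d\sigma$). Your choice $w=v'_*$, $z=v-v'$ therefore places $f$, not $g$, at $w$, so the announced ``isolation of $g(w)$'' is not available; moreover with this choice one gets $\widehat{v-v_*}\cdot\sigma=\tfrac{2|v-w|^2}{|z-v+w|^2}-1$, so the angular factor you produce is $b\bigl(2|v-w|^2/|z-v+w|^2-1\bigr)$, which matches the factor $b\bigl(1-2|v-w|^2/|z-v+w|^2\bigr)$ in the kernel $B$ of \eqref{calB} only if $b$ is even. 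No evenness of $b$ is assumed in the lemma (nor for the smooth truncations $b_S$, $b_R$ to which it is applied in Appendix A), and it is not ``inherent'' to the $\sigma$-parametrization: what your computation actually proves is the reflected identity $\Q^+(g,f)(v)=\int f(w)\d w\int_{(v-w)^\perp}\tfrac{2^{d-1}\Phi(|z-v+w|)}{|z-v+w|^{d-2}|v-w|}\,b\bigl(\tfrac{2|v-w|^2}{|z-v+w|^2}-1\bigr)f$-free $\,g(v-z)\d\pi_z$, i.e. the Carleman representation of $\Q^+$ with the two arguments and the angular variable simultaneously reflected; this is a true statement but it is not \eqref{qfgamma} for general $b$.

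The repair is immediate and costs nothing: take $w=v'$ and $z=v-v'_*$. The relations $v_*=w-z$ and $z\perp(v-w)$, hence $|v-v_*|^2=|z|^2+|v-w|^2$, are unchanged, the same Jacobian identity holds, but now $\widehat{v-v_*}\cdot\sigma=1-\tfrac{2|v-w|^2}{|z-v+w|^2}$ matches $B(z-v+w,|v-w|)$ exactly, $g$ sits at $w$ and $f$ at $v-z$, and no symmetry of $b$ is needed. With this corrected identification (and, if you want full rigor for the two-stage Jacobian, either the paper's delta-function bookkeeping or a disintegration argument), your proof of \eqref{qfgamma} goes through and constitutes a legitimate alternative to the paper's weak-formulation derivation.
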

\begin{proof} The proof is rather standard and can be found in several places (see for instance \cite[Lemma 4.1]{AloLo1}); we recall it here for the sake of completeness. Notice simply that, for any test-function $\varphi$, setting $u=v-v_*$,
$$
\int_{\R^d} \Q^+(g,f)(v)\varphi(v)\d v = \int_{\R^d} g(v) \int_{\R^d} f(v-u)
\Phi(|u|)\int_{\S^{d-1}} F_{v,u}\left(\frac{u-|u|\sigma}{2}\right) \d\sigma\d u
\d v$$
where
$$F_{v,u}(z)=\varphi(v-z) b\left(1-\frac{2|z|^2}{|u|^2}\right) \qquad \forall (v,u,z) \in \R^{3d}.$$
Using then the general identity
$$
\int_{\mathbb{S}^{d-1}} F\left(\dfrac{u-|u|\sigma}{2}\right)\d\sigma=\dfrac{2^{d-1}}{|u|^{d-2}}\int_{\R^d} \delta(|x|^2- x \cdot u ) F(x)\d x
$$
valid for any given function $F$ we obtain
\bean
& &\hspace{-8mm} \int_{\R^d} \Q^+(g,f)(v)\varphi(v)\d v \\
& & = \int_{\R^d} g(v) \int_{\R^d} \varphi(v-x)
\int_{\R^d} f(v-u)\, \Phi(|u|)\: \dfrac{2^{d-1}}{|u|^{d-2}}\:
\delta(|x|^2- x \cdot u )
\: b\left(1-\frac{2|x|^2}{|u|^2}\right)\d u\, \d x \,  \d v\\
& & = \int_{\R^d} g(v) \int_{\R^d} \varphi(v-x)
\int_{\R^d} f(v-z-x)\, \Phi(|z+x|)\:
\dfrac{2^{d-1}\, \delta(x\cdot z) }{|z+x|^{d-2}}
\: b\left(1-\frac{2|x|^2}{|z+x|^2}\right)\d z\, \d x \,  \d v
\eean
where we set $u=z+x$. Keeping $x$ fixed, we remove the Dirac mass using the identity
$$\int_{\R^d} h(z)\delta(x \cdot z)\d z =\frac{1}{|x|} \int_{x^\perp} h(z)\d\pi_z,$$
and obtain
\bean
\int_{\R^d} \Q^+(g,f)(v)\varphi(v)\d v & = & \int_{\R^d} g(v)
\int_{\R^d} \varphi(v-x)
\int_{x^\perp} B(z+x,|x|)\, f(v-z-x )\,\d\pi_z\, \d x \,  \d v\\
& = & \int_{\R^d} \varphi(y)\int_{\R^d} g(v)
\int_{(v-y)^\perp} B(v+z-y,|v-y|)\, f(y-z )\,\d\pi_z\,  \d v \,  \d y
\eean
which is the desired result.
\end{proof}

\begin{rmq} Notice that, from the above representation, one sees that $\Gamma_B f=\Q^+(\delta_0,f)$ (see \cite{AloGa}).
\end{rmq}

\subsection{Regularity properties for cut-off collision kernels}

For this section we assume that the kernel $\mathcal{B}(u,\sigma)$ satisfies:
\begin{equation}\label{smoothphi}
\Phi(\cdot) \in \mathcal{C}^\infty(0,\infty), \quad b(\cdot) \in \mathcal{C}_0^\infty(-1,1) \quad \text{ and } \quad \Phi(r )=\left\{
\begin{array}{ccl}
0 & \mbox{for} & r<\epsilon \\
r  & \mbox{for} & r>2\epsilon,
\end{array}
\right.
\end{equation}
for some $\epsilon>0$.

\begin{lem}\label{Sob}
Assume that the collision kernel $\mathcal{B}(u,\sigma)$ satisfies assumption \eqref{smoothphi}. Then, for any {$s \geq \frac{3-d}{2}$}, there exists $C >0$ such that
\begin{equation}\label{QBHS}
\left\|\Q^+(g,f)\right\|_{\mathbb{H}^{s+\frac{d-1}{2}}_\eta} \leq C\,\left\|f\right\|_{\mathbb{H}^s_{\eta+\kappa}}\,\|g\|_{L^1_{2\eta+\kappa}},\qquad \forall \eta \geq 0
\end{equation}
with $\kappa>3/2$ and where the constant $C=C(s,\mathcal{B},\epsilon)$ depends only on $s$ and on the collision kernel $\mathcal{B}$.
\end{lem}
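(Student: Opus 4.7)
My plan is to follow the Carleman--Fourier approach of \cite{MouhVill04} adapted to arbitrary dimension $d\geq 2$, in the spirit of \cite{AloLo3}. The starting point is the Carleman representation \eqref{qfgamma}, which reduces the problem to a Sobolev smoothing estimate for the auxiliary operator $\Gamma_B$ defined in \eqref{expgammaB}. The key structural observation is that, since $z$ ranges over the hyperplane $x^\perp$, $\Gamma_B\varphi(x)$ is a weighted integral of $\varphi$ over a codimension-one affine hyperplane. Thus $\Gamma_B$ is a Radon-type operator and, in analogy with the classical hyperplane Radon transform, one expects it to gain exactly $(d-1)/2$ derivatives on $L^2$-based Sobolev scales.

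The heart of the matter is the following unweighted smoothing estimate: for every $s\geq (3-d)/2$,
\begin{equation*}
\|\Gamma_B\varphi\|_{\mathbb{H}^{s+(d-1)/2}(\R^d)}\leq C\,\|\varphi\|_{\mathbb{H}^s(\R^d)}.
\end{equation*}
To prove this for $s=0$ I would rewrite the hyperplane integration as $|x|\int_{\R^d}\delta(x\cdot z)\,B(x+z,|x|)\,\varphi(x+z)\,\d z$, take the Fourier transform in $x$ and, after a change of variables $y=x+z$, reduce the problem to a pseudodifferential-type operator whose symbol carries a decay of order $|\xi|^{-(d-1)/2}$ coming from stationary phase along the hyperplane. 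The cut-off \eqref{smoothphi} ensures that the symbol is smooth and supported away from the singular set $\{|x|=0\}$, so that $\Gamma_B$ is bounded from $L^2$ to $\mathbb{H}^{(d-1)/2}$. The estimate for general $s\geq(3-d)/2$ then follows by commuting with fractional Bessel potentials (for $s\geq 0$) and by duality applied to the formal adjoint of $\Gamma_B$ (for the negative range $(3-d)/2\leq s<0$, which is relevant only when $d\geq 4$).

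Once this Sobolev estimate is in hand, I would return to
\begin{equation*}
\Q^+(g,f)(v)=\int_{\R^d} g(w)\,\bigl[(t_w \circ \Gamma_B \circ t_w)f\bigr](v)\,\d w,
\end{equation*}
apply Minkowski's integral inequality in $w$, and exploit the isometric invariance of $\mathbb{H}^s$ under the reflection $t_w$. The weight $\langle v\rangle^\eta$ is then redistributed via the Peetre-type inequality $\langle v\rangle^\eta\leq C\,\langle v-w\rangle^\eta\langle w\rangle^\eta$; combined with the polynomial prefactor $\Phi(|u|)/|u|^{d-2}$ in the kernel \eqref{calB} --- which loses $\kappa>3/2$ moments on each argument after the relevant change of variables --- this yields the weighted norm $\|f\|_{\mathbb{H}^s_{\eta+\kappa}}$ on $f$ and $\|g\|_{L^1_{2\eta+\kappa}}$ on $g$, the additional factor of $2$ on $g$ reflecting the fact that $\langle w\rangle$ must compensate both the weight $\langle v\rangle^\eta$ from the left-hand side and the weight picked up from the argument $z-v+w$ of $B$.

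The main obstacle is the Sobolev smoothing estimate for $\Gamma_B$: the Fourier-side analysis has to be carried out uniformly in the angular parameter $x/|x|$ which parametrizes the direction of the hyperplane $x^\perp$. One way to handle this is to foliate $\R^d$ by spheres $\{|x|=r\}$ and use polar coordinates in $x$ before invoking the projection-slice theorem; another is to perform a dyadic decomposition in $|x|$ and treat each dyadic shell separately. The singularity of the kernel $B$ at $|x|=0$ is exactly what the lower cutoff $\Phi(r)=0$ for $r<\epsilon$ in \eqref{smoothphi} is designed to avoid, so the constant $C$ in the final estimate depends on $\epsilon$, in agreement with the statement.
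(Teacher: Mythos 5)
Your overall skeleton (Carleman representation, Minkowski's inequality in $w$, the translation bound to redistribute weights in $w$) is the same as the paper's, but the step you identify as "the heart of the matter" is precisely where the argument fails: the \emph{unweighted} smoothing estimate $\|\Gamma_B\varphi\|_{\mathbb{H}^{s+\frac{d-1}{2}}}\leq C\|\varphi\|_{\mathbb{H}^{s}}$ is not true for the kernel considered here. Assumption \eqref{smoothphi} only regularizes small relative velocities: $\Phi(r)=r$ for $r>2\epsilon$, so in \eqref{calB} one has $B(z+x,|x|)\simeq \frac{2^{d-1}}{|x|}\,b\bigl(1-2|x|^2/|z+x|^2\bigr)$ for large $|z+x|$, with no decay in $z$; the support of $b$ only confines the integration in \eqref{expgammaB} to $|z|\lesssim|x|$, a portion of the hyperplane whose measure grows like $|x|^{d-1}$. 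Hence $\Gamma_B$ picks up unbounded factors on functions supported far from the origin (already the $L^2\to L^2$ bound degenerates, e.g.\ for $d\geq4$ a unit bump translated to distance $R$ produces a loss of a positive power of $R$), and no stationary-phase or Radon-transform heuristic can produce the claimed bound because the relevant symbol is not square-integrable without a decaying weight. This is exactly the point of the lemma: the unweighted gain of $\frac{d-1}{2}$ derivatives in \cite[Theorem 3.1]{MouhVill04} requires $\Phi$ supported in $[\epsilon,M]$ with $M<\infty$, and the whole purpose of the present statement is to replace compact support by the moment loss $\kappa>3/2$. For the same reason the weights cannot be reinstated \emph{a posteriori} by a Peetre-type inequality: the growing factor $\Phi(|z+x|)$ sits inside $\Gamma_B$, so it must be traded against a weight on the argument of $f$ \emph{before} any smoothing estimate is invoked, not after.

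Concretely, the paper proves the weighted estimate \eqref{gammaBHS}, $\|\Gamma_B(f)\|_{\mathbb{H}^{s+\frac{d-1}{2}}_\eta}\leq C\|f\|_{\mathbb{H}^{s}_{\eta+\kappa}}$, directly: writing $\Gamma_B(f)(r\sigma)$ with a delta function, expressing $x$-derivatives through $\sigma$-derivatives, and using the projection-slice identity $\RF[\cdot](r)=(2\pi)^{\frac{d-1}{2}}\F\bigl[\mathbf{G}^{\kappa}_{i,k,\ell}(\cdot,\sigma)\langle\cdot\rangle^{\kappa}f\bigr](r\sigma)$, where the factor $\langle u\rangle^{-\kappa}$ is absorbed into the symbol $\mathbf{G}^{\kappa}_{i,k,\ell}$ and $\langle u\rangle^{\kappa}$ is transferred onto $f$. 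The threshold $\kappa>3/2$ is then not an assumption but the outcome of a computation you never perform: after the splitting $|\xi|\leq1$, $|\xi|>1$ one needs $\sup_{\omega}\|\langle\cdot\rangle^{-\eta}\mathbf{G}^{\kappa}_{i,k,\ell}(\cdot,\omega)\|_{\mathbb{H}^{r_k}}<\infty$, and the decay $|R_\omega(u)|\leq C|u|^{k-|\ell|-\kappa-\frac{d}{2}+\frac{3}{2}}$ with $k\leq|\ell|$ forces $\kappa>3/2$; the bookkeeping of derivatives falling on the kernel (the terms $\partial_r^{|\ell|-i}D_\ell$ and $g^{(i-k)}$), which is where this exponent arises, is entirely absent from your sketch. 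Finally, the $L^1_{2\eta+\kappa}$ norm on $g$ comes from applying $\|t_w\psi\|_{\mathbb{H}^N_k}\leq2^{k/2}\langle w\rangle^{k}\|\psi\|_{\mathbb{H}^N_k}$ twice, at weights $\eta$ and $\eta+\kappa$, which only makes sense once the inner estimate on $\Gamma_B$ is the weighted one.
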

\begin{proof} There is no loss of generality in assuming that $s+\frac{d-1}{2}$ is an integer. {Indeed the general case shall follow thanks to interpolation.} The proof of this estimate follows the approach given in \cite[Theorem 3.1]{MouhVill04} where a similar estimate has been obtained, for $\kappa=0$, under the additional assumption that $\Phi(\cdot)$ has support in $[\epsilon,M]$ with $M<\infty$. Our proof will consist essentially in proving that the weighted estimate (i.e. with $\kappa >0$) allows to take into account large velocities. First, one notices that  the representation formula \eqref{qfgamma} together with Minkowski's inequality leads to
$$\|\Q^+(g,f)\|_{\mathbb{H}^{s+\frac{d-1}{2}}_\eta} \leq \int_{\R^d} |g(w)|\,\|t_w \circ \Gamma_B \circ t_{w} f\|_{\mathbb{H}^{s+\frac{d-1}{2}}_\eta}\d w.$$
Now, since $\|t_w \psi\|_{\mathbb{H}^N_k} \leq 2^{k/2}\langle w \rangle^k \|\psi\|_{\mathbb{H}^N_k}$ for any $\psi \in \mathbb{H}^N_k$, for all $N \in \mathbb{N}$ and any $k \geq 0$, the lemma would follow from
\begin{equation}\label{gammaBHS}
\left\|\Gamma_B(f)\right\|_{\mathbb{H}^{s+\frac{d-1}{2}}_\eta} \leq C\,\left\|f\right\|_{\mathbb{H}^s_{\eta+\kappa}},\qquad \forall \eta \geq 0.
\end{equation}
For $(r,\sigma)\in\R\times \S^{d-1}$, we introduce
\beq\label{coord_spheri}
\widetilde{\Gamma_B(f)}(r,\sigma): = \Gamma_B(f)(r\sigma)= \int_{\R^d} B(u,|r|)f(u) \delta\left(u\cdot\sigma-r\right)\d u.
\eeq
Since we assumed that $s+\frac{d-1}{2}\in\N$, we have
$$\left\|\Gamma_B(f)\right\|^2_{\mathbb{H}^{s+\frac{d-1}{2}}_\eta}
= \sum_{|\ell|\leq s+\frac{d-1}{2}} \| \partial_x^\ell \Gamma_B(f) \|_{L^2_\eta}^2,$$
where for $\ell\in\N^d$,  $\partial_x^\ell=\partial_{x_1}^{\ell_1} \ldots \partial_{x_d}^{\ell_d}$ and $|\ell|=\ell_1+\ldots +\ell_d$. Changing coordinates, we get
\bean
 \| \partial_x^\ell \Gamma_B(f) \|_{L^2_\eta}^2
& = &  \int_0^\infty \int_{\S^{d-1}} | \partial_x^\ell \Gamma_B(f)(r\sigma)|^2 \,
\langle r\rangle^{2\eta}\, r^{d-1} \, \d r\, \d\sigma \\
& = &  \frac{1}{2} \int_{\R} \int_{\S^{d-1}} | \partial_x^\ell \Gamma_B(f)(r\sigma)|^2 \,\langle r\rangle^{2\eta}\, |r|^{d-1} \, \d r\,\d\sigma.
\eean
It follows from \eqref{coord_spheri} that, for $j\in\{1,\ldots, d\}$,
$$\partial_{\sigma_j}\left(\widetilde{\Gamma_B(f)}\right)(r,\sigma) = r
\partial_{x_j}(\Gamma_B(f))(r\sigma).$$
Thus, by induction, we deduce from \eqref{coord_spheri} and the above  equality that
$$ \partial_x^\ell \Gamma_B(f)(r\sigma)=\frac{1}{r^{|\ell|}}\;\partial_\sigma^\ell
\widetilde{\Gamma_B(f)}(r,\sigma)
=\frac{1}{r^{|\ell|}}\int_{\R^d} B(u,|r|) u^\ell f(u) \delta^{(|\ell|)}\left(u\cdot\sigma-r\right)\d u,$$
where $u^\ell=u_1^{\ell_1} \ldots u_d^{\ell_d}$.
One easily checks that for any sufficiently smooth function $\psi$ and for any $p\in\N$,
$$\int_{\R^d} \psi(u,r) \delta^{(p)}(u\cdot \sigma-r)\d u= \sum_{i=0}^p
\left(\bary{c} p \\i \eary \right) (-1)^i \frac{\partial^i}{\partial r^i} \int_{\R^d} \partial^{p-i}_r\psi(u,r)  \delta(u\cdot \sigma-r)\d u.$$
Thus, defining
\beq\label{def_D}
D_\ell(u,r)=u^\ell B(u,|r|) \qquad \mbox{ for } (u,r)\in\R^d\times\R,
\eeq
we get
\begin{multline*}
\left\|\Gamma_B(f)\right\|^2_{\mathbb{H}^{s+\frac{d-1}{2}}_\eta}
\leq \sum_{|\ell|\leq s+\frac{d-1}{2}}  \frac{|\ell|+1}{2} \sum_{i=0}^{|\ell|}
\left(\left(\bary{c} |\ell| \\i \eary \right)\right)^2 \\
\int_{\R} \int_{\S^{d-1}}
\left|  \frac{\partial^i}{\partial r^i} \left( \widetilde{\Gamma_{\partial^{|\ell|-i}_r D_\ell}(f)}\right)(r,\sigma)\right|^2 \langle r\rangle^{2\eta}\,  |r|^{d-1-2|\ell|}\,\d\sigma\, \d r.
\end{multline*}
 Now, for any sufficiently smooth functions $\psi$ and $\varphi$ and for any $i\in\N$, we have
$$\frac{\partial^i \psi }{\partial r^i}(r,\sigma)\varphi(r)=\sum_{k=0}^i (-1)^{k+i}
\left(\bary{c} i \\ k \eary \right) \frac{\partial^k }{\partial r^k}\left( \psi(r,\sigma) \varphi^{(i-k)}(r)\right).$$
Consequently, setting
\beq\label{def_g}
 g(r)= \langle r\rangle^{\eta}\,  |r|^{\frac{d-1}{2}-|\ell|}\qquad \mbox{ for } r\in\R,
\eeq
we obtain
\begin{multline*}
\left\|\Gamma_B(f)\right\|^2_{\mathbb{H}^{s+\frac{d-1}{2}}_\eta}
\leq \sum_{|\ell|\leq s+\frac{d-1}{2}}  \frac{|\ell|+1}{2} \sum_{i=0}^{|\ell|} (i+1)
\left(\left(\bary{c} |\ell| \\i \eary \right)\right)^2 \sum_{k=0}^i
\left(\left(\bary{c} i \\ k \eary \right) \right)^2\\
\int_{\S^{d-1}} \int_{\R}
\left|  \frac{\partial^k}{\partial r^k} \left( \widetilde{\Gamma_{\partial^{|\ell|-i}_r D_\ell}(f)}\right)(r,\sigma) \; g^{(i-k)}(r) \right|^2 \, \d r \,\d\sigma.
\end{multline*}
We introduce the radial Fourier transform $\RF[h]$ for a function $h$ defined
on $\R\times \S^{d-1}$ and the Fourier transform $\F$ in $\R^d$ with the formulas
$$\RF \left[h\right] (\varrho ,\omega) =(2\pi)^{-1/2} \int_{\R}
\exp(i\varrho r) h(r ,\omega) \d r\,,\qquad \forall \varrho \in\R, \;\omega \in \mathbb{S}^{d-1},$$
and $$ \F\left[ f\right](\xi) ={(2\pi)^{-d/2}} \int_{\R^d}
\exp(i v\cdot \xi) f(v) \d v,\qquad \forall \xi \in\R^d.$$
The Plancherel theorem then implies that
\bean
 \int_{\R} \left|  \frac{\partial^k}{\partial r^k} \left( \widetilde{\Gamma_{\partial^{|\ell|-i}_r D_\ell}(f)}\right)(r,\sigma) \;g^{(i-k)}(r)\right|^2 \d r
& =  & \int_{\R} \left| \RF\left[ \frac{\partial^k}{\partial r^k} \left( \widetilde{\Gamma_{\partial^{|\ell|-i}_r D_\ell}(f)}\right)(\cdot,\sigma) \; g^{(i-k)}\right](r) \right|^2  \d r \\
& =  &  \int_{\R}  |r|^{2k} \left| \RF\left[ \widetilde{\Gamma_{\partial^{|\ell|-i}_r D_\ell}(f)}(\cdot,\sigma) \; g^{(i-k)}\right](r) \right|^2  \d r.
\eean
Then, as in \cite{MouhVill04}, we have
$$\RF\left[  \widetilde{\Gamma_{\partial^{|\ell|-i}_r D_\ell}(f)}(\cdot,\sigma) \; g^{(i-k)}\right](r)= (2\pi)^{\frac{d-1}{2}}\F\left[\mathbf{G}^\kappa_{i,k,\ell}(\cdot,\sigma)\langle \cdot \rangle^\kappa f(\cdot)\right](r\sigma)$$
where $\kappa\in \R$ shall be fixed later on and
$$\mathbf{G}^\kappa_{i,k,\ell}(u,\sigma) = \langle u \rangle^{-\kappa} g^{(i-k)}(u\cdot\sigma)\partial^{|\ell|-i}_r D_\ell(u,u\cdot\sigma) ,\qquad  u\in\R^d, \sigma\in\S^{d-1}.$$
Consequently, setting $\xi=r \sigma$, since $\d r \d\sigma =|\xi|^{-(d-1)}\d\xi$ we get
\begin{multline*}
\left\|\Gamma_B(f)\right\|^2_{\mathbb{H}^{s+\frac{d-1}{2}}_\eta}
\leq 2 (2\pi)^{d-1} \sum_{|\ell|\leq s+\frac{d-1}{2}}  \frac{|\ell|+1}{2} \sum_{i=0}^{|\ell|} (i+1)
\left(\left(\bary{c} |\ell| \\i \eary \right)\right)^2 \sum_{k=0}^i
\left(\left(\bary{c} i \\ k \eary \right) \right)^2\\
\int_{\R^d} |\xi|^{2k-d+1}
\left| \F\left[\mathbf{G}^\kappa_{i,k,\ell}\left(\cdot,\frac{\xi}{|\xi|}\right)
\langle \cdot \rangle^\kappa f(\cdot) \right](\xi) \right|^2 \, \d\xi.
\end{multline*}
 Splitting the above integral according to $|\xi|\leq 1$ and $|\xi| > 1$, one gets, as in \cite[p. 183-184]{MouhVill04},
 \begin{multline*}
\int_{\R^d} |\xi|^{2k-d+1}
\left| \F\left[\mathbf{G}^\kappa_{i,k,\ell}\left(\cdot,\frac{\xi}{|\xi|}\right)
\langle \cdot \rangle^\kappa f(\cdot) \right](\xi) \right|^2 \d\xi \leq C_1 \sup_{|\xi| \leq 1}  \left| \F
 \left[ \mathbf{G}^\kappa_{i,k,\ell}\left(\cdot,\frac{\xi}{|\xi|}\right)
\langle \cdot \rangle^\kappa f(\cdot) \right] (\xi)   \right|^2\\
+ C_2
\int_{|\xi| >1} \langle \xi \rangle^{2k-d+1} \left| \F
 \left[ \mathbf{G}^\kappa_{i,k,\ell}\left(\cdot,\frac{\xi}{|\xi|}\right)
\langle \cdot \rangle^\kappa f(\cdot) \right] (\xi)   \right|^2 \d\xi,
\end{multline*}
 for two positive constants $C_1$ (depending only on $d$) and $C_2$ (depending on $d$ and $k$). By Cauchy-Schwartz inequality, we have
$$\left| \F \left[ \mathbf{G}^\kappa_{i,k,\ell}
\left(\cdot,\frac{\xi}{|\xi|}\right)\langle \cdot \rangle^\kappa  f(\cdot) \right] (\xi)   \right|^2
\leq (2\pi)^{-d} \|f\|_{L^2_{\eta+\kappa}}^2 \sup_{\omega\in\S^{d-1}} \|\langle\cdot\rangle^{-\eta}\mathbf{G}^\kappa_{i,k,\ell}\left(\cdot,\omega \right) \|_{L^2(\R^d)}^2.$$
By \cite[Lemma A.5]{MouhVill04}, we obtain for $\displaystyle r_k=\left[\left|k-\frac{d-1}{2}\right|+\frac{d}{2} \right] +1$,
$$\int_{\R^d} \langle \xi \rangle^{2k-d+1} \left| \F
 \left[ \mathbf{G}^\kappa_{i,k,\ell}\left(\cdot,\frac{\xi}{|\xi|}\right)
\langle \cdot \rangle^\kappa f(\cdot) \right] (\xi)   \right|^2 \d\xi \leq C_{s,k,d} \|f\|^2_{\mathbb{H}^s_{\eta+\kappa}}\sup_{\omega\in\S^{d-1}} \|\langle\cdot\rangle^{-\eta}\mathbf{G}^\kappa_{i,k,\ell}\left(\cdot,\omega \right) \|_{\mathbb{H}^{r_k}(\R^d)}^2.$$
This finally leads to \eqref{gammaBHS} with
\begin{equation}\label{imc}
 {C}(s,B)^2= C_s \sum_{|\ell|\leq s+\frac{d-1}{2}}   \sum_{i=0}^{|\ell|}
\left(\left(\bary{c} |\ell| \\i \eary \right)\right)^2 \sum_{k=0}^i
\left(\left(\bary{c} i \\ k \eary \right) \right)^2 \sup_{\omega\in\S^{d-1}} \|\langle\cdot\rangle^{-\eta}\mathbf{G}^\kappa_{i,k,\ell}\left(\cdot,\omega \right) \|_{\mathbb{H}^{r_k}(\R^d)}^2.
\end{equation}
To conclude, it remains only to check that the above quantity is indeed finite, i.e.
$$ \sup_{\omega\in\S^{d-1}} \|\langle\cdot\rangle^{-\eta}\mathbf{G}^\kappa_{i,k,\ell}\left(\cdot,\omega \right) \|_{\mathbb{H}^{r_k}(\R^d)}^2
< \infty,$$
for any multi-index $\ell$ and any integers $i,k$ with
$0\leq k \leq i\leq |\ell| \leq s+\frac{d-1}{2}$. This leads us to investigate
the regularity and integrability properties of the mapping
$$F_\omega\::\: u \in \R^{d}\longmapsto \langle u \rangle^{-\eta-\kappa} g^{(i-k)}(u\cdot\omega)\; \partial^{|\ell|-i}_r D_\ell(u,u\cdot\omega),$$
where  $\ell\in\N^d$,  $i,k\in\N$ with
$0\leq k \leq i\leq |\ell| \leq s+\frac{d-1}{2}$, $D_\ell$ and $g$ are
defined by \eqref{def_D} and \eqref{def_g}
respectively. Observe that
$$F_\omega(u)=\widetilde{b}^{(|\ell|-i)}(\widehat{u}\cdot \omega)R_\omega(u),$$
with
$$\widetilde{b}(x)=\dfrac{ b(1-2x^2)}{|x|} \qquad \mbox{ and } \qquad
R_\omega(u):=2^{d-1}\; \dfrac{u^\ell \Phi(|u|) g^{(i-k)}(u\cdot\omega) }{\langle u\rangle^{\eta+\kappa}|u|^{d+|\ell|-i-1}}.$$
{Because of our cut-off assumptions \eqref{smoothphi}, $b(1-2x^2)=0$ for $|x| \leq \delta$ for some $\delta >0$ and $\Phi(|u|)=0$ for $|u|<\epsilon$. Thus, 
for any
$\omega \in\S^{d-1}$, $F_\omega$ is well-defined and belongs to $\C^\infty(\R^d)$. 
Hence, it suffices to
investigate the integrability properties of the mapping $F_\omega$ and of its
derivatives for large values of $u$ (uniformly with respect to $\omega$).
It is easy to check that any derivative (with respect to $x$) of $\widetilde{b}$ remains bounded on $(-1,1)$ while any $u$-derivative of $R_\omega$ has a faster decay (for $|u| \to \infty$) than $R_\omega(u)$.}
It is then easy to check that
$$| R_\omega(u)| \leq C |u|^{k-|\ell|-\kappa-\frac{d}{2}+\frac{3}{2}}\qquad \forall \omega\in\S^{d-1}, |u|>2\epsilon,$$
for some $C>0$. Since $k\leq |\ell|$, taking $\kappa>3/2$ ensures that $\sup_{\omega\in \mathbb{S}^{d-1}}\|F_\omega(\cdot)\|_{L^2(\R^d_u)} < \infty.$  This  achieves the proof.
\end{proof}

\subsection{{Regularity properties for hard-spheres collision kernel}} We now use the previous result for smooth collision kernels to estimate the regularity properties of $\Q^+(f,g)$ for true hard-spheres interactions. We first recall the following convolution-like estimates for $\Q^+$ as established in \cite{AloCar}:
\begin{theo}[\textbf{Alonso-Carneiro-Gamba \cite{AloCar}}]\label{alogam}
Assume that the collision kernel $\mathcal{B}(u,\sigma)=\Phi(|u|)b(\widehat{u} \cdot \sigma)$ with $\|b\|_{L^1(\mathbb{S}^{d-1})}=1$ and  $\Phi(\cdot) \in L^\infty_{-k}$ for some $k \in \R$ and let $1 \leq p,q,r\leq \infty$ with $1/p+1/q=1+1/r$. Then, for any $\eta \geq 0$, there exists $C_{p,r,\eta,k}(b)$ such that
$$\|\Q^+(f,g)\|_{L^r_\eta} \leq C_{r,p,\eta,k}(b)\left\|\Phi\right\|_{L^\infty_{-k}}\|f\|_{L^p_{\eta+k}}\,\|g\|_{L^q_{\eta+k}}$$
where the constant $C_{r,p,\eta,k}(b)$ is given by
\begin{multline}\label{Crpakb}
C_{r,p,\eta,k}(b)=c_{k,\eta,r}(d)\left(\int_{-1}^1 \left(\dfrac{1-x}{2}\right)^{-\frac{d}{2r'}}b(x)(1-x^2)^{\frac{d-3}{2}}\d x\right)^{\frac{r'}{q'}}\\
\times \left(\int_{-1}^1 \left(\dfrac{1+x}{2}\right)^{-\frac{d}{2r'}}b(x)(1-x^2)^{\frac{d-3}{2}}\d x\right)^{\frac{r'}{p'}}
\end{multline}
for some numerical constant $c_{k,\eta,r}(d)$ independent of $b$ and where $r',p',q'$ are the conjugate exponents of $r,p,q$ respectively.
\end{theo}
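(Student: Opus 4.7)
The plan is to follow the duality-and-interpolation approach of Alonso, Carneiro and Gamba. By duality of the weighted Lebesgue spaces,
$$\|\Q^+(f,g)\|_{L^r_\eta} = \sup\left\{\int_{\R^d}\Q^+(f,g)(v)\,h(v)\,\langle v\rangle^\eta\,\d v \,:\, \|h\|_{L^{r'}}\leq 1\right\},$$
and using the standard weak formulation of $\Q^+$ followed by the pre/post-collisional change of variables $(v,v_*)\mapsto(v',v'_*)$ (an involution of unit Jacobian at $\sigma$ fixed, under which $\mathcal{B}(v-v_*,\sigma)$ is preserved), the integral inside the supremum is rewritten as
$$\mathcal{I}(f,g,h):=\int_{\R^d\times\R^d\times\S^{d-1}}\Phi(|v-v_*|)\,b(\widehat{v-v_*}\cdot\sigma)\,f(v)g(v_*)h(v')\,\langle v'\rangle^\eta\,\d v\,\d v_*\,\d\sigma.$$

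Next, two elementary bounds reduce the weighted estimate to an unweighted trilinear inequality. First, $\Phi\in L^\infty_{-k}$ together with $\langle v-v_*\rangle\leq\sqrt{2}\langle v\rangle\langle v_*\rangle$ gives
$$\Phi(|v-v_*|)\leq 2^{k/2}\|\Phi\|_{L^\infty_{-k}}\langle v\rangle^k\langle v_*\rangle^k.$$
Second, energy conservation $|v'|^2+|v'_*|^2=|v|^2+|v_*|^2$ yields $\langle v'\rangle^\eta\leq C_\eta(\langle v\rangle^\eta+\langle v_*\rangle^\eta)$ for any $\eta\geq 0$. Distributing these weights on $f$ and $g$ reduces the claim to the unweighted trilinear estimate
\begin{equation*}
\left|\int_{\R^d\times\R^d\times\S^{d-1}}b(\widehat{v-v_*}\cdot\sigma)\,F(v)G(v_*)H(v')\,\d v\,\d v_*\,\d\sigma\right|\leq C_b\,\|F\|_{L^p}\|G\|_{L^q}\|H\|_{L^{r'}}
\end{equation*}
for any exponents with $1/p+1/q+1/r'=2$, i.e.\ $1/p+1/q=1+1/r$.

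This trilinear inequality is established by Riesz--Thorin interpolation from three endpoint cases. The endpoint $(p,q,r')=(1,1,\infty)$ is immediate from Fubini, with constant $\|b\|_{L^1(\S^{d-1})}=1$. The two remaining endpoints rely on the pre-collisional changes of variables: with $v_*,\sigma$ fixed, the map $v\mapsto v'$ has Jacobian $2^{-d}(1+\widehat{v-v_*}\cdot\sigma)$, while with $v,\sigma$ fixed, the map $v_*\mapsto v'$ has Jacobian $2^{-d}(1-\widehat{v-v_*}\cdot\sigma)$. After inverting these maps and integrating the resulting singular Jacobians against $b$ over $\S^{d-1}$ in spherical coordinates, one obtains the two angular factors
$$\int_{-1}^1\left(\tfrac{1-x}{2}\right)^{-d/(2r')}b(x)(1-x^2)^{(d-3)/2}\,\d x,\qquad \int_{-1}^1\left(\tfrac{1+x}{2}\right)^{-d/(2r')}b(x)(1-x^2)^{(d-3)/2}\,\d x,$$
whose exponents $r'/q'$ and $r'/p'$ are fixed by the interpolation, producing the stated constant $C_{r,p,\eta,k}(b)$ in \eqref{Crpakb}. \textbf{The main obstacle} is the precise bookkeeping of these angular factors across the interpolation: tracking which of the two singular Jacobians $((1\pm\widehat{v-v_*}\cdot\sigma)/2)$ arises at which endpoint, with which exponent, and confirming that the Riesz--Thorin machinery assembles them into exactly the expression \eqref{Crpakb} with powers $r'/p'$ and $r'/q'$. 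The duality in Step~1 and the weight redistribution in Step~2 are routine by comparison.
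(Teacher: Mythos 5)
First, a remark on the comparison itself: the paper does not prove Theorem \ref{alogam} at all --- it is quoted from \cite{AloCar} --- so your argument can only be measured against the original proof and against the precise statement, whose constant \eqref{Crpakb} is part of what must be established. Your Steps 1--2 (duality, redistribution of the weight using $\Phi\in L^\infty_{-k}$, $\langle v-v_*\rangle\leq\sqrt 2\langle v\rangle\langle v_*\rangle$ and energy conservation) are routine and correct, and your Jacobians $2^{-d}(1\pm\widehat{u}\cdot\sigma)$ for $v\mapsto v'$ and $v_*\mapsto v'$ are right. The genuine gap is the final step: Riesz--Thorin interpolation from the three endpoint triples cannot produce \eqref{Crpakb}. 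If you carry out your two nontrivial endpoints honestly (at fixed $\sigma$, using $|v'-v_*|=|v-v_*|\sqrt{(1+x)/2}$ with $x=\widehat{u}\cdot\sigma$, resp. $|v'-v|=|v-v_*|\sqrt{(1-x)/2}$), the angular factors that come out are $\int_{-1}^1 b(x)\big(\tfrac{1\pm x}{2}\big)^{-d/2}(1-x^2)^{\frac{d-3}{2}}\d x$, i.e. the fixed power $-d/2$; interpolation then only puts the exponents $1/p'$ and $1/q'$ \emph{outside} these integrals and can never move an $r'$-dependence \emph{inside} them. Since $|\S^{d-2}|\,b(x)(1-x^2)^{\frac{d-3}{2}}\d x$ is a probability measure (because $\|b\|_{L^1(\S^{d-1})}=1$), Jensen's inequality shows the interpolated constant dominates, in general strictly, the stated one, so the theorem with \eqref{Crpakb} is not recovered.

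Worse, for the kernel this paper actually needs (hard spheres, $b$ constant) your two endpoint constants are infinite: $\big(\tfrac{1+x}{2}\big)^{-d/2}(1-x^2)^{\frac{d-3}{2}}$ behaves like $(1+x)^{-3/2}$ at $x=-1$ (and symmetrically for the other factor at $x=1$), which is not integrable, so there is nothing to interpolate. By contrast the stated constant is finite for constant $b$ whenever $r'>d/(d-1)$; this is exactly what the paper uses with $r=r'=2$, where $\big(\tfrac{1-x}{2}\big)^{-d/4}(1-x^2)^{\frac{d-3}{2}}=c\,(1-x)^{\frac{d-6}{4}}(1+x)^{\frac{d-3}{2}}$ is integrable for $d\geq3$ (the constant $C_k$ in the proof of Proposition \ref{prop:estimDiff}); with your $-d/2$ power one would instead face the divergent $(1-x)^{-3/2}$. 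The $-d/(2r')$ exponent is produced by a different order of operations, as in \cite{AloCar}: one first applies Minkowski's integral inequality to bring the $L^r_\eta$ norm inside the $\sigma$-integral, and only then performs, at fixed $\sigma$, the pre/post-collisional change of variables combined with H\"older in $(v,v_*)$; the singular Jacobian then enters raised to a power $1/r'$, which is precisely what tames the angular singularity and yields the two factors with outer exponents $r'/p'$ and $r'/q'$. So the obstacle you flagged as ``bookkeeping'' is in fact a structural obstruction to the endpoint-interpolation route; keep your Steps 1--2, but replace the interpolation step by such a fixed-$\sigma$ Minkowski--H\"older argument.
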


We can combine  Theorem  \ref{alogam} together with the estimates of the previous section to prove Theorem \ref{regularite}

\begin{proof}[Proof of Theorem \ref{regularite}] Notice that, for hard-spheres interactions, one has $\mathcal{B}(u,\sigma)=\Phi(|u|)b(\widehat{u} \cdot \sigma)$ with $\Phi(|u|)=|u| \in L^\infty_{-1}$ and $b(x)=b_d$ is constant for any $x \in (-1,1)$. In particular, for any $\eta \geq 0$, both the constant $C_{2,1,\eta,1}(b)$ and $C_{2,2,\eta,1}(b)$ appearing in \eqref{Crpakb} are \textit{finite}. Let us now fix $\eta \geq 0$ and $\varepsilon >0$ and split the kernel into four pieces
\begin{equation}\begin{split}\label{splitting}
\mathcal{B}(u,\sigma)&=\mathcal{B}_{SS}(u,\sigma)+\mathcal{B}_{SR}(u,\sigma)+\mathcal{B}_{RS}(u,\sigma)+\mathcal{B}_{RR}(u,\sigma)\\
&:=\Phi_S(|u|)b_{S}(\hat{u}\cdot\sigma)+\Phi_S(|u|)b_{R}(\hat{u}\cdot\sigma)
+\Phi_R(|u|)b_{S}(\hat{u}\cdot\sigma)+\Phi_R(|u|)b_{R}(\hat{u}\cdot\sigma)
\end{split}\end{equation}
with the following properties:
\begin{itemize}
\item [\it(i)] $b_{S}$ and $\Phi_{S}$ are smooth satisfying the assumptions of the previous Section.
\item [\it(ii)] $b_{R}(s):=b_d-b_{S}(s)$ is the angular remainder satisfying
$$C_{2,1,\eta,1}(b_R)\leq  \varepsilon \qquad \text{ and  } \qquad C_{2,2,\eta,1}(b_R)\leq  \varepsilon .$$
\item [\it(iii)] $\Phi_{R}(|u|)=|u|-\Phi_{S}(|u|)$ is the magnitude remainder satisfying $$\left\|\Phi_{R}\right\|_{L^{\infty}}\leq \dfrac{\varepsilon}{\left(C_{2,1,\eta,1}(b_S)+C_{2,2,\eta,1}(b_S)\right)}.$$
\end{itemize}
Notice that $\Phi_S \in L^\infty_{-1}$ while $\Phi_R \in L^\infty$.
Notice that, in contrast to previous approaches, the last point is made possible because $\Phi_S(|u|)=|u|$ for large $|u|$ which makes $\Phi_R$ compactly supported. Thus, on the basis of relation \eqref{splitting}, one splits $\Q^+$ into the following four parts,
$$
\Q^+=\Q^+_{{SS}}+\Q^+_{SR}  + \Q^+_{RS}  + \Q^+_{RR}.
$$
We shall then deal separately with each of these parts. We prove the result for $s$ such that $s + \frac{d-1}{2} \in \mathbb{N}_*.$ {The general case will follow by interpolation.} First, we know from Lemma \ref{Sob} that, {for $\mu>3/2$},
$$\|\Q^+_{SS}(f,g)\|_{\mathbb{H}^{s+\frac{d-1}{2}}_\eta}\leq C_{s}\|g\|_{\mathbb{H}^s_{\eta+\mu}}\|f\|_{L^1_{2\eta+\mu}}.$$
Second, we estimate $\Q^+_{SR}$. Since
$$\partial^\ell\, \Q^+_{SR}(f,g)=\sum_{\nu=0}^\ell \left(\begin{array}{c}
\l \\ \nu
\end{array}\right)
\ \Q^+_{SR}(\partial^\nu f,\partial^{\l-\nu}g)
$$
for any multi-index $\l$ with $|\l|\leq s+\frac{d-1}{2}$, one gets
$$\|\Q^+_{SR}(f,g)\|_{\mathbb{H}^{ s+\frac{d-1}{2}}_\eta}^2 \leq  C_{s,d} \sum_{|\l|\leq  s+\frac{d-1}{2}}\sum_{\nu=0}^\l\left(\begin{array}{c}
\l \\ \nu
\end{array}\right)\|\Q^+_{{SR}}(\partial^\nu f,\partial^{\l-\nu}g)\|_{L^2_\eta}^2$$
for some $C_{s,d}>0$. We treat differently the cases $|\l|=s+\tfrac{d-1}{2}$ and $|\l| < s+\tfrac{d-1}{2}$.  According to Theorem \ref{alogam} if $|\l|\leq s+\tfrac{d-1}{2}-1$ one has for any $|\nu|\leq |\l|$
\begin{equation*}\begin{split}
\|\Q^+_{{SR}}(\partial^\nu f,\partial^{\l-\nu}g)\|_{L^2_\eta}&\leq  C_{2,1,\eta,1}(b_R)\|\Phi_S\|_{L^\infty_{-1}}\|\partial^\nu f\|_{L^1_{\eta+1}}\,\|\partial^{\l-\nu}g\|_{L^2_{\eta+1}}\\
&\leq   \varepsilon \,\|\partial^\nu f\|_{L^1_{\eta+1}}\,\|\partial^{\l-\nu}g\|_{L^2_{\eta+1}}
\end{split}\end{equation*}
where we used the assumption (ii) with the fact that $\|\Phi_S\|_{L^\infty_{-1}} \leq 1$. Using the general estimate \eqref{taug21} with $\mu=1/2$ for simplicity and since $|\l|\leq s+\tfrac{d-1}{2}-1$,
$$\sum_{|\l| < s+\tfrac{d-1}{2}}\sum_{\nu=0}^\ell \left(\begin{array}{c}
\l \\ \nu
\end{array}\right)\|\Q^+_{{SR}}(\partial^\nu f,\partial^{\l-\nu}g)\|_{L^2_\eta}\leq A_s \varepsilon \,\| f\|_{\mathbb{H}^{s+\frac{d-3}{2}}_{\eta+\frac{d+3}{2}}}\,\|g\|_{\mathbb{H}^{s+\frac{d-3}{2}}_{\eta+1}}$$
for some constant $A_s >0$ depending only on $s$.  In the case $|\ell|=s+\frac{d-1}{2}$, argue in the same way to obtain
$$\|\Q^+_{{SR}}(\partial^\nu f,\partial^{\l-\nu}g)\|_{L^2_\eta}\leq \varepsilon\| f\|_{\mathbb{H}^{s+\frac{d-3}{2}}_{\eta+\frac{d+3}{2}}}\,\|g\|_{\mathbb{H}^{s+\frac{d-3}{2}}_{\eta+1}}$$
for any $0 < |\nu| < |\l|.$ If $\nu=0$ one still has
$$\|\Q^+_{{SR}}(f,\partial^{\l}g)\|_{L^2_\eta}\leq  C_{2,1,\eta,1}(b_R)\| f\|_{L^1_{\eta+1}}\,\|\partial^\l g\|_{L^2_{\eta+1}}$$
additionally, for $\nu=\ell$ we use Theorem \ref{alogam} with $(p,q)=(2,1)$ to get
$$\|\Q^+_{{SR}}(\partial^{\l}f,g)\|_{L^2_\eta}\leq   C_{2,2,\eta,1}(b_R)\| g\|_{L^1_{\eta+1}}\,\|\partial^\l f\|_{L^2_{\eta+1}}.$$
Therefore,
\begin{multline*}
\|\Q^+_{SR}(f,g)\|_{\mathbb{H}^{s+\frac{d-1}{2}}_\eta} \leq
 A_s\,\varepsilon \| f\|_{\mathbb{H}^{s+\frac{d-3}{2}}_{\eta+\frac{d+3}{2}}}\,\|g\|_{\mathbb{H}^{s+\frac{d-3}{2}}_{\eta+1}} \\
+ A_s\,\varepsilon \sum_{|\l|=s+\frac{d-1}{2}}\left(  \| g\|_{L^1_{\eta+1}}\,\|\partial^\ell f\|_{L^2_{\eta+1}}+\| f\|_{L^1_{\eta+1}}\,\| \partial^\ell g\|_{L^{2}_{\eta+1}}\right).\end{multline*}
We argue in the same way using the smallness assumption (ii) to prove that,
\begin{multline*}
\|\Q^+_{RR}(f,g)\|_{\mathbb{H}^{s+\frac{d-1}{2}}_\eta} \leq
 A_s\,\varepsilon \| f\|_{\mathbb{H}^{s+\frac{d-3}{2}}_{\eta+\frac{d+3}{2}}}\,\|g\|_{\mathbb{H}^{s+\frac{d-3}{2}}_{\eta+1}} \\
+ A_s\,\varepsilon \sum_{|\l|=s+\frac{d-1}{2}}\left( \| g\|_{L^1_{\eta+1}}\,\|\partial^\ell f\|_{L^2_{\eta+1}}+\| f\|_{L^1_{\eta+1}}\,\| \partial^\ell g\|_{L^{2}_{\eta+1}}\right).
\end{multline*}
Finally, the estimate for $\Q^+_{RS}$ follows from the fact that $\|\Phi_R\|_{L^\infty}$ is small,
\begin{multline*}
\|\Q^+_{RS}(f,g)\|_{\mathbb{H}^{s+\frac{d-1}{2}}_\eta} \leq A_s\,\varepsilon \| f\|_{\mathbb{H}^{s+\frac{d-3}{2}}_{\eta+\frac{d+1}{2}}}\,\|g\|_{\mathbb{H}^{s+\frac{d-3}{2}}_{\eta}}\\
 + A_s\,\varepsilon  \sum_{|\l|=s+\frac{d-1}{2}}\left( \| g\|_{L^1_{\eta}}\,\|\partial^\ell f\|_{L^2_{\eta}}+\| f\|_{L^1_{\eta}}\,\| \partial^\ell g\|_{L^{2}_{\eta}}\right).
\end{multline*}
Combining all these estimates and replacing $A_s \varepsilon$ to $\varepsilon$  we get \eqref{regestim}.
\end{proof}

We finally recall some useful estimates, of slightly different nature, on the collision operator first established in \cite{BD} and extended to the bilinear (covering also dissipative interactions) in \cite{MiMouh06}:
\begin{theo}[\textbf{Bouchut-Desvillettes} \cite{BD}]\label{theoBD}
For any $s \in \mathbb{R}$ and any $\eta \geq 0$, there exists $C=C(s) > 0$ such that
$$\|\Q^+(f,g)\|_{\mathbb{H}^{s+\frac{d-1}{2}}_\eta} \leq C(s) \left(\|f\|_{\mathbb{H}^s_{\eta+2}}\,\|g\|_{\mathbb{H}^s_{\eta+2}} + \|f\|_{L^1_{\eta+2}}\,\|g\|_{L^1_{\eta+2}}\right).$$
\end{theo}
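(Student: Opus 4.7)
The plan is to follow the classical argument of Bouchut and Desvillettes, using the Carleman representation \eqref{qfgamma} already established in this appendix as the starting point. That representation writes
$$
\Q^+(g,f)(v)=\int_{\R^d} g(w)\,\bigl[t_w\circ\Gamma_B\circ t_w\bigr]f(v)\,\d w,
$$
where $\Gamma_B$, defined in \eqref{expgammaB}, performs an integration along the hyperplane $x^\perp$. Integration along hyperplanes is exactly the Radon transform, and the Sobolev smoothing property $R\colon L^2(\R^d)\to H^{(d-1)/2}(\R\times\S^{d-1})$ (an immediate consequence of the Fourier slice theorem and Plancherel) is what produces the $\tfrac{d-1}{2}$ gain in regularity. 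So the first step is to show that $\Gamma_B$, modulo the multiplicative factor coming from the kernel \eqref{calB}, reduces to a weighted Radon transform, and then to run the Plancherel computation already carried out in the proof of Lemma \ref{Sob}, but this time without performing any cutoff on $\Phi$ or $b$.

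For hard spheres, $\Phi(|u|)=|u|$ and $b\equiv b_d$ constant, so
$$
B(z+x,|x|)\,=\,\frac{2^{d-1}\,b_d}{|x|\,|z+x|^{d-3}},
$$
i.e. a Radon-type integrand multiplied by an essentially harmless factor (the $|x|^{-1}$ singularity is compensated by the change of variables $\xi=r\sigma$ in the radial--spherical decomposition used in Lemma \ref{Sob}, and the $|z+x|^{-(d-3)}$ factor is absorbed into the function on which $R$ acts). The symmetric structure $H^s\times H^s+L^1\times L^1$ comes from exploiting the pre/post-collisional change of variables, which allows one to commute the roles of $f$ and $g$ in the Carleman formula and thereby interpolate between placing the regularity on one or the other argument. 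The $\eta+2$ weight arises from the bound $|v-v_*|\leq\langle v\rangle+\langle v_*\rangle\leq 2\langle v\rangle\langle v_*\rangle$ and the fact that $|v-v_*|$ appears through the kernel; distributing $|v-v_*|$ as $|v-v_*|^{1}\leq \langle v\rangle \langle v_*\rangle$ places one weight $\langle v\rangle$ on $f'$ and one $\langle v_*\rangle$ on $g'_*$, and the bilinear change of variables $v',v'_*\leftrightarrow v,v_*$ moves these onto $v$ and $w$ in the final estimate, yielding the power $\eta+2$ on each factor.

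To reach the clean splitting $\mathbb{H}^s\times\mathbb{H}^s+L^1\times L^1$ (without the mixed terms that would appear if one applied the estimate of Lemma \ref{Sob} naively), the strategy is to decompose $f$ and $g$ via a smooth high/low-frequency cutoff in Fourier: on the high-frequency part the Radon-based estimate provides control by $\mathbb{H}^s_{\eta+2}$ norms, while the low-frequency part is a smooth compactly supported (in Fourier) remainder that is immediately bounded by $L^1_{\eta+2}$ norms through standard convolution-type estimates for $\Q^+$ (e.g.\ Theorem \ref{alogam} with $p=q=1$, $r=2$).

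The main obstacle is precisely this last point: obtaining the \emph{symmetric product} $\mathbb{H}^s\times\mathbb{H}^s$ rather than the mixed $\mathbb{H}^s\times L^1$ term coming out of the Carleman proof of Lemma \ref{Sob}. This requires a more delicate manipulation than the one used there, combining the two Carleman formulas (with $f$ and $g$ swapped) by Cauchy--Schwarz in a bilinear Fourier decomposition, rather than treating them as two separate convolution factors. A secondary technical point is that the singularity of the kernel $|x|^{-1}|z+x|^{-(d-3)}$ near $x=0$ must be handled without any cutoff on $\Phi$, which forces one to carry the singular weights through the radial Plancherel computation and verify that the corresponding $\mathbb{H}^{r_k}$-type norms in \eqref{imc} remain finite thanks to the extra $\langle u\rangle^{-\eta-2}$ weight present in this setting.
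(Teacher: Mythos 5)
There is a genuine gap here, and it is exactly the one you flag yourself as the ``main obstacle''. First, for context: the paper does not prove Theorem \ref{theoBD} at all — it quotes it from \cite{BD} (in the bilinear form of \cite{MiMouh06}), and the proof in those references is \emph{Fourier-based}: one works with a Bobylev-type representation of $\mathcal{F}[\Q^+(f,g)]$ acting on the tensorized function $f\otimes g$, splits the kinetic kernel $|u|$ (not the arguments $f,g$) into a smooth part and a remainder, and the spherical average in Fourier space produces the $\tfrac{d-1}{2}$ gain with the \emph{symmetric} $\mathbb{H}^s\times\mathbb{H}^s$ product, the non-smooth part of $|u|$ generating the $L^1\times L^1$ term. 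This joint treatment of $f\otimes g$ is precisely what your Carleman/Radon route cannot reproduce: as in Lemma \ref{Sob}, that representation freezes one argument ($g(w)\,\d w$ is a measure in the outer integral) and therefore intrinsically yields asymmetric bounds of the type $\|f\|_{\mathbb{H}^s_{\cdot}}\|g\|_{L^1_{\cdot}}$. You acknowledge this and propose to repair it by a high/low frequency cutoff of $f$ and $g$ plus a ``Cauchy--Schwarz in a bilinear Fourier decomposition'', but this is not a proof and the devices named do not work as stated: (i) the four terms of the splitting include cross terms (high--low, low--high) and even the high--high term, for which the Carleman estimate still returns a mixed $\mathbb{H}^s\times L^1$ product, not one of the two allowed products, and converting $L^1_{\eta+2}$ into $\mathbb{H}^s$ costs at least $d/2$ extra weight, destroying the stated $\eta+2$; (ii) frequency localization is \emph{not} preserved by $\Q^+$ for the hard-sphere kernel (the Fourier transform of $|u|$ is not compactly supported), so the low--low term is not a band-limited function that can be traded against $L^1$ norms; (iii) the invoked use of Theorem \ref{alogam} with $p=q=1$, $r=2$ is inadmissible, since the exponents must satisfy $1/p+1/q=1+1/r$ (with $p=q=1$ one only gets $r=1$), and in any case those convolution estimates gain no derivatives, so they cannot control an $\mathbb{H}^{s+\frac{d-1}{2}}_\eta$ norm.

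A secondary but real issue is the range of $s$: the statement is claimed for all $s\in\mathbb{R}$ (and this is used in the paper with $s=N-\tfrac{d-1}{2}$, possibly small or negative), which the Fourier proof of \cite{BD} gives directly, as the paper's remark notes. Your route through the machinery of Lemma \ref{Sob} requires $s\geq\tfrac{3-d}{2}$, integer values plus interpolation, and $L^2$ bounds on derivatives of the arguments, so it does not cover negative $s$ without an additional duality argument that you do not supply. In short, the kernel-splitting Fourier argument of Bouchut--Desvillettes/Mischler--Mouhot is the missing ingredient; the Carleman--Radon approach you borrow from Appendix \ref{sec:reguQ+} proves a genuinely different (asymmetric, weight-heavier) estimate and cannot be upgraded to Theorem \ref{theoBD} by the manipulations you describe.
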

\begin{rmq} The original statement is for $s \geq 0$ but a direct inspection of the Fourier-based proof shows that the above statement is valid for any $s \in \mathbb{R}$.
\end{rmq}

\section{Reminder about the existence of the self-similar solution}

We recall here, for the sake of completeness, the main steps in the construction of a solution $\psi_\a$ to \eqref{tauT} with unit mass and energy equal to $d/2.$ We also revisit slightly our proof in order to sharpen the range of parameters $\alpha$ for which such a steady solution is known to hold. We recall that the solution $\psi_\a$ constructed in \cite{jde} is obtained through a dynamical proof and the application of a time-dependent version of Tykhonov's fixed point Theorem. Therefore, the core of the analysis of \cite{jde} is the study of the well-posedness and the properties of the flow associated to the following  time-dependent \textit{annihilation equation}
\begin{equation}\label{BEscaled}
\partial_t \psi(t,\xi) + \mathbf{A}_\psi(t)\,\psi(t,\xi) + \mathbf{B}_\psi(t) \,\xi \cdot \nabla_\xi \psi(t,\xi)=\mathbb{B}_\al(\psi,\psi)(t,\xi)
\end{equation}
supplemented with some nonnegative initial condition
\beq\label{CI}
\psi(0,\xi)=\psi_0(\xi),
\eeq
where $\psi_0$ satisfies
\beq\label{massenergie}
\int_{\R^d} \psi_0(\xi)\, \d\xi=1, \qquad
\int_{\R^d} \psi_0(\xi)\,|\xi|^2\,  \d\xi=\frac{d}{2},
\eeq
while
$$\mathbf{A}_\psi(t)=-\frac{\alpha}{2}
\int_{\R^d}\left(d+2-2|\xi|^2\right)\Q^-(\psi,\psi)(t,\xi)\d \xi,$$
and
$$\mathbf{B}_\psi(t)=-\frac{\alpha}{2d}\int_{\R^d}
\left(d-2|\xi|^2\right)\Q^-(\psi,\psi)(t,\xi)\d \xi.$$
{We assume that the initial datum $\psi_{0}$ is nonnegative, isotropic and such that
\beq\label{hypini}\psi_0 \in L^1_{10+d+4\kappa}(\R^d) \cap  L^2_{\frac{d+9}{2}+2\kappa}(\R^d)\cap \mathbb{H}^1_{\frac{7+d}{2}+\kappa}(\R^d)
\eeq
for some $\kappa>0$.} Under such an assumption, there exists a unique 
nonnegative solution $\psi \in\C([0,\infty);L_2^1(\R^d))
 \cap L^1_{\mathrm{loc}}((0,\infty);L^1_{4}(\R^d))
\cap L^\infty_{\mathrm{loc}}((0,\infty);L^1_{3}(\R^d))$ 
to \eqref{BEscaled} such that $\psi(0,\cdot)=\psi_0$ and
$$\int_{ \R^d} \psi(t,\xi) \, \d\xi= 1, \qquad
\int_{ \R^d} \psi(t,\xi) \,|\xi|^2 \, \d\xi= \frac{d}{2} \qquad \forall t \geq 0.$$
Notice that, in this Appendix, we shall always assume that $\psi_0$ is {\it an isotropic function} of $\xi$ so that, for any $t \geq 0$, the solution $\psi(t,\xi)$ is still an isotropic function. In particular,
\begin{equation}\label{null}\int_{\R^d} \psi(t,\xi)\,\xi\,\d\xi=0 \qquad \forall t \geq 0.\end{equation}
According to \cite[Proposition 3.4]{jde}, we set $\alpha_0=\dfrac{1-\varrho_{\frac{3}{2}}}{\frac{3}{2}-\varrho_{\frac{3}{2}}} \in (0,1]$ where
\begin{equation}\label{varrhoK}
\varrho_k =  \int_{\S^{d-1}}\left[ \left(\dfrac{1+ \hat{U} \cdot \sigma}{2}\right)^k+\left(\dfrac{1- \hat{U} \cdot \sigma}{2}\right)^k\right]\dfrac{\d\sigma }{|\S^{d-1}|} \qquad \forall k \geq 0.
\end{equation}
Then, if $0 <\a < \alpha_0$, there exists a constant $\overline{M}(\alpha)$ depending only on $\alpha$  and $d$ such that the unique solution $\psi(t)$ to \eqref{BEscaled} satisfies
\beq\label{bornemoment}
\sup_{t \geq 0}M_{\frac{3}{2}}(t) \leq \max\left\{M_{\frac{3}{2}}(0),\overline{M}(\alpha)\right\}
\eeq
with
$$ {M}_{k}(t)=\int_{\R^d} \psi(t,\xi) |\xi|^{2k}\, \d\xi,\qquad k\geq 0.$$
Moreover, one checks easily that, if $\overline{\alpha}_0 < \alpha_0$ then, $\sup_{\alpha \in (0,\overline{\alpha}_0)} \overline{M}(\alpha) < \infty.$
\begin{rmq} Notice that, in dimension $d=3$, one has $\varrho_{\frac{3}{2}}=\dfrac{4}{5}$ and $\alpha_0=\dfrac{2}{7}.$\end{rmq}

With this in hands, one can show \cite[Corollary 3.6]{jde}, that, if initially bounded, all moments of $\psi(t)$ will remain uniformly bounded. Moreover, by H\"older's inequality,
$$\frac{d}{2}=M_1(t) \leq \sqrt{M_{1/2}(t)  M_{3/2}(t)}$$
Thus, \eqref{bornemoment} leads to
\begin{equation}\label{CAl}
\inf_{t \geq 0}M_{\frac{1}{2}}(t) \geq \frac{d^2}{4}  \left(\max\left\{M_{\frac{3}{2}}(0),\overline{M}(\alpha)\right\}\right)^{-1}.
\end{equation}
On the other hand, as in the proof of \cite[Lemma 3.10]{jde}, one has
$$\int_{\R^d} \psi(t,\xi_*) |\xi-\xi_*| \d\xi_* \geq  C \left(|\xi| + \int_{\R^d}  \psi(t,\xi_*) |\xi_*| \d\xi_*\right),$$
for some constant $C>0$. Consequently, it follows from \eqref{CAl} that there exists some constant $C_0>0$ such that
\beq\label{outil3}
\int_{\R^d} \psi(t,\xi_*) |\xi-\xi_*| \d\xi_* \geq  C_0 \langle \xi\rangle.
\eeq

The next step for proving the existence of a steady state is to prove propagation and estimates on $L^p$-norms of the solution $\psi$. This is the object of the following in which, with respect to \cite[Theorem 1.6]{jde} we sharpen the range of parameters for which uniform estimates would hold true:
\begin{lem}\label{evol:L2} There exists some explicit $\underline{\alpha}  \in (0,\alpha_0)$ such that, if $\psi_0 \in L^2(\R^d) \cap L^1_3(\R^d)$ then, for any $\alpha \in (0,\underline{\alpha})$, the solution $\psi(t)$ to \eqref{BEscaled} satisfies
$$\sup_{t \geq 0}\|\psi(t)\|_{L^2} \leq  \max\left\{\|\psi_0\|_{L^2},C_2\right\}$$
for some explicit constant $C_2>0$ depending only on $\max\left\{M_{\frac{3}{2}}(0),\overline{M}(\alpha)\right\}.$\end{lem}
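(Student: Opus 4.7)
The plan is to mimic the a priori $L^2$ argument of Proposition \ref{propo:L2} directly on the time-dependent equation \eqref{BEscaled}; the only new ingredient is the term $\frac{d}{dt}\|\psi\|_{L^2}^2$, which will supply the dissipation needed to close a Bernoulli-type estimate.

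Concretely, I would multiply \eqref{BEscaled} by $\psi(t,\xi)$, integrate over $\R^d$, integrate the transport term by parts (using $\int \xi\cdot\nabla_\xi\psi \cdot \psi\,\d\xi = -\frac{d}{2}\|\psi\|_{L^2}^2$), and follow \eqref{outil1} to obtain
\begin{equation*}
\frac{\d}{\d t}\|\psi(t)\|_{L^2}^2 + \bigl(2\mathbf{A}_\psi(t)-d\,\mathbf{B}_\psi(t)\bigr)\|\psi(t)\|_{L^2}^2 = 2(1-\al)\int_{\R^d}\Q^+(\psi,\psi)\,\psi\,\d\xi - 2\int_{\R^d}\Q^-(\psi,\psi)\,\psi\,\d\xi.
\end{equation*}
Identity \eqref{2A-dB} together with $\mathbf{b}_\psi(t)\geq \mathbf{a}_\psi(t)/\sqrt{2}$ (the time-dependent analogue of Lemma \ref{estimab}, valid because isotropy and the mass/energy conditions are preserved by the flow) then bounds the linear coefficient below by $-\frac{\al}{2}(d+4-d/\sqrt{2})\,\mathbf{a}_\psi(t)$. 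I would next absorb $\mathbf{a}_\psi(t)\|\psi\|_{L^2}^2$ into the loss term by a triangle-inequality argument as in \cite[Section 4]{jde}:
\begin{equation*}
\mathbf{a}_\psi(t)\|\psi(t)\|_{L^2}^2 = \iiint |\mu-\nu|\,\psi(t,\mu)\psi(t,\nu)\psi^2(t,\xi)\,\d\mu\,\d\nu\,\d\xi \leq 2\int_{\R^d}\Q^-(\psi,\psi)\,\psi\,\d\xi.
\end{equation*}
Combined with the uniform concentration lower bound \eqref{outil3}, which gives $\int_{\R^d}\Q^-(\psi,\psi)\,\psi\,\d\xi \geq C_0\|\psi\|_{L^2_{1/2}}^2 \geq C_0\|\psi\|_{L^2}^2$, this yields
\begin{equation*}
\frac{\d}{\d t}\|\psi(t)\|_{L^2}^2 + \eta_2(\al)\,C_0\,\|\psi(t)\|_{L^2}^2 \leq 2(1-\al)\int_{\R^d}\Q^+(\psi,\psi)\,\psi\,\d\xi,
\end{equation*}
where $\eta_2(\al) := 2 - \al(d+4-d/\sqrt{2})$. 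This forces the choice $\underline{\al} := \min\{\alpha_0,\alpha_2\}$ with $\alpha_2$ as in \eqref{alpha2}, ensuring $\eta_2(\al)>0$ on $(0,\underline{\al})$.

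For the gain term I would invoke the Alonso--Gamba estimate \cite[Corollary 2.2]{AloGa} exactly as in Proposition \ref{propo:L2}: for every $\varepsilon>0$ there exists $C_\varepsilon>0$ such that $\int_{\R^d}\Q^+(\psi,\psi)\,\psi\,\d\xi \leq C_\varepsilon\|\psi\|_{L^1_{d(d-3)/(d-1)}}^{2-1/d}\|\psi\|_{L^2}^{1+1/d} + \varepsilon\|\psi\|_{L^2}^2$. The weighted $L^1$-norm is controlled, uniformly in $t$, by \eqref{bornemoment} and its higher-order analogue \cite[Corollary 3.6]{jde}, the resulting constant depending only on $\max\{M_{3/2}(0),\overline{M}(\alpha)\}$. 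Picking $\varepsilon$ small enough to absorb $2\varepsilon\|\psi\|_{L^2}^2$ into half of $\eta_2(\al)C_0\|\psi\|_{L^2}^2$ produces a Bernoulli-type differential inequality
\begin{equation*}
\frac{\d}{\d t}\|\psi(t)\|_{L^2}^2 + \frac{\eta_2(\al)C_0}{2}\|\psi(t)\|_{L^2}^2 \leq K\,\|\psi(t)\|_{L^2}^{1+1/d},
\end{equation*}
with $K$ depending explicitly on $\alpha$ and on $\max\{M_{3/2}(0),\overline{M}(\alpha)\}$. Since $1+1/d<2$, a standard barrier argument (the right-hand side being dominated by the dissipation once $\|\psi\|_{L^2}$ exceeds the explicit threshold $C_2 := \bigl(2K/(\eta_2(\al)C_0)\bigr)^{d/(d-1)}$) yields $\sup_{t\geq 0}\|\psi(t)\|_{L^2}\leq \max\{\|\psi_0\|_{L^2},C_2\}$.

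The main obstacle is essentially bookkeeping: one must verify that the time-dependent counterparts of the concentration estimate \eqref{outil3} and the uniform moment bound \eqref{bornemoment} genuinely hold on $[0,\infty)$ at the regularity level $\psi_0\in L^1_3\cap L^2$ prescribed here, and track the explicit dependence of all constants on $\alpha$ and on the initial data. Since \cite{jde} already provides these propagation estimates, no new analytic difficulty arises beyond the careful pairing of the dissipation with the nonlinear gain term.
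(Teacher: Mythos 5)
Your proposal is correct and follows essentially the same route as the paper: the paper's proof multiplies \eqref{BEscaled} by $2\psi$, then "performs the same manipulations as in the proof of Proposition \ref{propo:L2} in the case $k=0$" — i.e.\ the identity \eqref{2A-dB}, the bound $\sqrt{2}\,\mathbf{b}_\psi\geq\mathbf{a}_\psi$, the compounding of $\mathbf{a}_\psi\|\psi\|_{L^2}^2$ into the loss term, the concentration bound \eqref{outil3}, the Alonso--Gamba estimate with $\varepsilon\leq\eta_2(\alpha)C_0/4$, and $\underline{\alpha}=\min(\alpha_2,\alpha_0)$ — arriving at the same Bernoulli-type inequality, whose conclusion you merely spell out more explicitly (the paper leaves the barrier argument implicit).
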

\begin{proof} Multiplying \eqref{BEscaled} by $2\psi(t,\xi)$ and integrating over $\R^d$, we get
\begin{equation}\label{dLp}\begin{split}
\dfrac{\d}{\d t}\|\psi(t)\|_{L^2}^2  &+ \left(2\mathbf{A}_\psi(t)-d\mathbf{B}_\psi(t)\right)\|\psi(t)\|_{L^2}^2\\
&=2(1-\alpha)\,\int_{\R^d}\Q^+(\psi,\psi)(t,\xi)\psi(t,\xi)\d \xi
-2\int_{\R^d}\Q^-(\psi,\psi)(t,\xi)\psi(t,\xi)\d \xi. \end{split}\end{equation}
Now performing the same manipulations as in the proof of Proposition \ref{propo:L2} in the case $k=0$, choosing  $\varepsilon  \leq \eta_2(\alpha) C_0/4$ where $C_0$ is now given by \eqref{outil3} and setting $\underline{\alpha}=\min(\alpha_2,\alpha_0)$ where $\alpha_2$ is defined in \eqref{alpha2}, we get  for any $\al\in(0,\underline{\alpha})$,
$$\dfrac{\d}{\d t}\|\psi(t)\|_{L^2}^2  +\frac{\eta_2(\alpha) \, C_0}{2}\; \|\psi(t)\|_{L^2}^2 \leq  K  \|\psi(t)\|_{L^2}^{1+1/d},$$
for some positive constant $K$ independent of $\alpha$.\end{proof}
\begin{rmq} Whenever $d=3$, one has $\alpha_2 \simeq .401$. In particular, $\alpha_2 \geq \alpha_0 $.
\end{rmq}

These uniform estimates on the moments and the $L^2$-norm of the solution enable us to get weak-compactness in $L^1(\R^d)$ thanks to the Dunford-Pettis theorem. It remains now to identify a subset of $L^1(\R^d)$ that is left invariant by the evolution semi-group $(\mathcal{S}_t)_{t \geq 0}$ governing
\eqref{BEscaled}. 
We thus investigate  the regularity properties of the solution to the time-dependent \textit{annihilation equation}
\eqref{BEscaled}-\eqref{CI}. We  show in particular how the approach used in Section \ref{sec:regularity} for the steady solution $\psi_\al$ is robust enough to cover regularity properties of time-dependent solutions to \eqref{BEscaled}. 

We begin with the propagation of weighted $L^2$-norms in the spirit of Proposition \ref{propo:L2}:
\begin{prop}\label{prop:propa-L2} For any $k \geq0$ one has
$$\psi_0 \in L^2_k \cap L^1_{\frac{d(d-3)}{d-1}+k} \Longrightarrow \sup_{t \geq 0} \|\psi(t)\|_{L^2_k}=\ell_k < \infty.$$
\end{prop}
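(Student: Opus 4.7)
The plan is to mimic the stationary argument of Proposition \ref{propo:L2}, with a time derivative in place of the spectral parameter, and then to close the resulting differential inequality by a simple comparison argument. I would proceed by induction on $k$; the base case $k=0$ is exactly Lemma \ref{evol:L2}, so I focus on the inductive step.

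Assume the bound is known up to the index $k-1$ (with $k \geq 1/2$, say, allowing half-integer steps). Multiply \eqref{BEscaled} by $\psi(t,\xi)\langle\xi\rangle^{2k}$ and integrate over $\R^d$. The transport term is handled by integration by parts as in the proof of Proposition \ref{propo:L2}:
\[
\int_{\R^d}\bigl(\xi\cdot\nabla_\xi\psi\bigr)\psi\,\langle\xi\rangle^{2k}\,\d\xi
=-\tfrac{d+2k}{2}\|\psi\|_{L^2_k}^2+k\,\|\psi\|_{L^2_{k-1}}^2,
\]
and the identity analogous to \eqref{outil1} becomes
\[
\tfrac{\d}{\d t}\|\psi(t)\|_{L^2_k}^2
+\bigl(2\mathbf{A}_\psi(t)-(d+2k)\mathbf{B}_\psi(t)\bigr)\|\psi\|_{L^2_k}^2
+2k\mathbf{B}_\psi(t)\|\psi\|_{L^2_{k-1}}^2
=\mathcal{J}_+(t)-\mathcal{J}_-(t),
\]
with $\mathcal{J}_\pm(t)$ the integrals against $\Q^\pm(\psi,\psi)$. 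Thanks to the uniform moment bound \eqref{bornemoment} and Corollary 3.6 of \cite{jde}, the coefficients $|\mathbf{A}_\psi(t)|$ and $|\mathbf{B}_\psi(t)|$ are bounded uniformly in $t$ by a constant depending only on $\alpha$ and on $\max\{M_{3/2}(0),\overline{M}(\alpha)\}$.

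The two collision contributions are estimated exactly as in Proposition \ref{propo:L2}. The lower bound \eqref{outil3} gives
\[
\mathcal{J}_-(t)\geq C_0\,\|\psi(t)\|_{L^2_{k+1/2}}^2,
\]
while \cite[Corollary 2.2]{AloGa} yields, for any $\varepsilon>0$,
\[
\mathcal{J}_+(t)\leq C_\varepsilon\,\mathbf{M}_{\frac{d(d-3)}{d-1}+k}^{2-1/d}\,\|\psi(t)\|_{L^2_k}^{1+1/d}+\varepsilon\,\mathbf{M}_k\,\|\psi(t)\|_{L^2_k}^2,
\]
where the weighted $L^1$-moments are uniformly bounded in $t$ by Corollary 3.6 of \cite{jde} (provided $\psi_0\in L^1_{\frac{d(d-3)}{d-1}+k}$, which is the assumption). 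Choosing $\varepsilon$ so that $2\varepsilon\,\mathbf{M}_k\leq C_0/2$ and absorbing the zeroth-order $\mathbf{A}_\psi,\mathbf{B}_\psi$ terms and the $\|\psi\|_{L^2_{k-1}}^2$ term (controlled by the induction hypothesis) into the right-hand side yields, for any $t\geq 0$,
\[
\tfrac{\d}{\d t}\|\psi(t)\|_{L^2_k}^2+\tfrac{C_0}{2}\,\|\psi(t)\|_{L^2_{k+1/2}}^2
\leq \tilde C_{1,k}\,\|\psi(t)\|_{L^2_k}^2
+\tilde C_{2,k}\,\|\psi(t)\|_{L^2_k}^{1+1/d}
+\tilde C_{3,k},
\]
where the constants depend only on $k$, $\alpha$ and the initial moments.

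To close, I interpolate: for any $R>0$,
$\|\psi\|_{L^2_k}^2\leq R^{-1}\|\psi\|_{L^2_{k+1/2}}^2+(1+R^2)^{k}\|\psi\|_{L^2}^2$.
Taking $R$ large enough that $\tilde C_{1,k}R^{-1}\leq C_0/4$ and using the $L^2$-bound from Lemma \ref{evol:L2}, one obtains a differential inequality of the form
\[
\tfrac{\d}{\d t}y(t)+\kappa_1\,y(t)\leq \kappa_2+\kappa_3\,y(t)^{(1+1/d)/2},\qquad y(t):=\|\psi(t)\|_{L^2_k}^2,
\]
since $1+1/d<2$. A standard barrier argument (compare $y$ with the constant solution to the algebraic equation $\kappa_1 Y=\kappa_2+\kappa_3 Y^{(1+1/d)/2}$) then gives $\sup_{t\geq 0} y(t)\leq \max\{y(0),\ell_k\}$ for an explicit $\ell_k>0$, concluding the induction. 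The main subtlety is, as in Proposition \ref{propo:L2}, to verify that the $L^1$-moments of high enough order needed to apply the Alonso--Gamba estimate propagate in time under the sole assumption on $\psi_0$; this however is precisely guaranteed by \cite[Corollary 3.6]{jde} in the range $\alpha\in(0,\underline{\alpha})$.
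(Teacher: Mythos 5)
Your argument is correct and follows essentially the same route as the paper: multiply \eqref{BEscaled} by $\psi(t,\xi)\langle\xi\rangle^{2k}$, use \eqref{outil3} for the loss term, the Alonso--Gamba estimate together with \cite[Corollary 3.6]{jde} for the gain term, interpolate with a large parameter $R$ using the $k=0$ bound of Lemma \ref{evol:L2}, and close the differential inequality via $1+1/d<2$. The only cosmetic difference is that you invoke an induction to handle the $\|\psi\|_{L^2_{k-1}}^2$ term, where the paper simply bounds the $L^2_{k-1}$ norm by the $L^2_k$ norm; both are fine.
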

\begin{proof} We only consider here the case $k>0$ since for $k=0$, Proposition 
 \ref{prop:propa-L2} follows from Lemma  \ref{evol:L2}. 
The proof is nothing but a {\it dynamic} version of the proof of Prop. \ref{propo:L2}. Namely, multiply \eqref{BEscaled} by $2 \psi(t,\xi)\langle \xi \rangle^{2k}$ and integrate over $\R^d$. After an integration by parts, one gets
 \begin{multline}\label{outil4}
 \frac{\d}{\d t} \|\psi(t)\|^2_{L^2_k}+ \left(2\mathbf{A}_{\psi}(t) -(d+2k)\mathbf{B}_{\psi}(t)\right)\|\psi(t)\|^2_{L^2_k} + 2k\mathbf{B}_{\psi}(t) \|\psi(t)\|_{L^2_{k-1}}^2=\\
 2(1-\al)\int_{\R^d}\Q^+(\psi,\psi)(t,\xi)\psi(t,\xi)\langle \xi\rangle^{2k}\d\xi -2\int_{\R^d} \Q^-(\psi,\psi)(t,\xi)\psi(t,\xi)\langle \xi\rangle^{2k}\d \xi.
 \end{multline}
Now, according to \cite[Corollary 2.2]{AloGa}, for any $\varepsilon \in (0,1)$, there exists $C_\varepsilon > 0$ such that  $$\int_{\R^d}\Q^+(\psi,\psi)(t,\xi)\psi(t,\xi)\langle \xi\rangle^{2k}\d\xi \leq C_\varepsilon \|\psi(t)\|_{L^1_{\frac{d(d-3)}{d-1}+k}}^{2-1/d}\,\|\psi(t)\|_{L^2_k}^{1+1/d}  + \varepsilon \|\psi(t)\|_{L^1_{k}}\,\|\psi(t)\|_{L^2_{k}}^2.$$
According to \cite[Corollary 3.6]{jde}, since $\psi_0 \in  L^1_{\frac{d(d-3)}{d-1}+k}$ one has
$$\sup_{t \geq 0} \|\psi(t)\|_{ L^1_{\frac{d(d-3)}{d-1}+k}} < \infty$$ and, in turns, $\sup_{t \geq 0}\|\psi(t)\|_{L^1_k} < \infty.$ On the other hand, we have
\begin{equation}\label{C_0}\sup_{t\geq 0} \left|\mathbf{A}_{\psi}(t)\right|\leq C, \quad  \sup_{t\geq 0} \left|\mathbf{B}_{\psi}(t) \right|   \leq C \quad \mbox{ and }\quad  \int_{\R^d}\psi (t,\xi_*)|\xi-\xi_*|\d\xi_* \geq C_0\langle \xi\rangle, \quad\forall \xi \in \R^d,\end{equation}
for some constant $C>0$ and $C_0>0$.
Thus, bounding the $L^2_{k-1}$ norm by the $L^2_k$ one, \eqref{outil4} leads to
 $$\frac{\d}{\d t} \|\psi(t)\|^2_{L^2_{k}}+ 2C_0 \|\psi(t)\|_{L^2_{k+\frac{1}{2}}}\\ \leq C \|\psi(t)\|_{L^2_{k}}^2 + 2\, C_\varepsilon \, \|\psi(t)\|_{L^2_{k}}^{1+1/d}  + 2 \,\varepsilon\,M\,\|\psi(t)\|_{L^2_{k}}^2,$$
for some constants $C >0$ and $M > 0$ (depending on $k$). Now, choosing  $\varepsilon$ such that $ 2 \varepsilon M \leq C_0$ we get the existence of some positive constants $C_{1}  > 0$ and $C_{2} > 0$ (still depending on $k$) such that
 $$\frac{\d}{\d t} \|\psi(t)\|^2_{L^2_{k}}+C_0 \|\psi(t)\|_{L^2_{k+\frac{1}{2}}}^2\leq C_{1}\|\psi(t)\|_{L^2_{k}}^2 + C_{2} \|\psi(t)\|_{L^2_{k}}^{1+1/d}.$$
 Now, one uses the fact that, for any $R > 0$, $$\|\psi(t)\|_{L^2_{k}}^2 \leq (1+R^2)^{k}\|\psi(t)\|_{L^2}^2+R^{-1}\|\psi(t)\|_{L^2_{k+1/2}}^2$$
and, since $\sup_{t\geq 0}\|\psi(t)\|_{L^2} < \infty$ by  \cite[Theorem 1.6]{jde}, one can choose $R > 0$ large enough so that $C_{1} R^{-1}=C_0/2$ to obtain
$$\frac{\d}{\d t} \|\psi(t)\|^2_{L^2_{k}}+ \frac{C_0}{2}\|\psi(t)\|_{L^2_{k+\frac{1}{2}}}^2 \leq C_{3} + C_{2} \|\psi(t)\|_{L^2_{k}}^{1+1/d}.$$
The conclusion follows easily since $1+1/d < 2$.
\end{proof}

We now prove the "propagation" of Sobolev regularity together with the creation of higher-order moments. We begin with first-order derivatives to illustrate the techniques:
\begin{prop}\label{propH1q} {Let $\delta>0$.} There exists some explicit $\underline{\alpha}_1 \in (0,1)$ such that the following holds: for any $\frac{1}{2} \leq q \leq 1+\frac{d}{2}$, if the initial datum $\psi_0$ satisfies \eqref{init} with moreover
$$\psi_0 \in L^1_{q_1}(\R^d) \cap L^2_{q+1+\delta}(\R^d) \cap \mathbb{H}^1_q$$
where $q_1=\max\{2q+\frac{1}{2}+\delta,q+1+\delta+\frac{d(d-3)}{d-1}\}$, then, for any $\alpha \in (0,\underline{\alpha}_1)$, the solution $\psi(t)=\psi(t,\xi)$ to \eqref{BEscaled} satisfies
\begin{equation}\label{H1q}\sup_{t \geq 0}\,\|\psi(t)\|_{\mathbb{H}^1_q} < \infty\end{equation}
and
$$\int_0^T \,\|\psi(t)\|_{\mathbb{H}^1_{q+\frac{1}{2}}}\d t <\infty\qquad \forall T > 0.$$
\end{prop}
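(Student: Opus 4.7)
The plan is to mimic the strategy of Theorem \ref{theo:HK} (case $m=1$) in the time-dependent setting. For each $i=1,\ldots,d$, set $F_i(t,\xi)=\partial_{\xi_i}\psi(t,\xi)$; differentiating \eqref{BEscaled} in the variable $\xi_i$ yields
$$\partial_t F_i + (\mathbf{A}_\psi(t)+\mathbf{B}_\psi(t))F_i + \mathbf{B}_\psi(t)\,\xi\cdot\nabla F_i = \partial_{\xi_i}\mathbb{B}_\al(\psi,\psi).$$
Multiplying by $F_i\langle\xi\rangle^{2q}$, integrating over $\R^d$ and summing on $i$, the integration by parts performed exactly as in the proof of Theorem \ref{theo:HK} leads to
$$\frac{1}{2}\frac{d}{dt}\|\nabla\psi\|_{L^2_q}^2 + \Big(\mathbf{A}_\psi + (1-\tfrac{d}{2}-q)\mathbf{B}_\psi\Big)\|\nabla\psi\|_{L^2_q}^2 + q\mathbf{B}_\psi\|\nabla\psi\|_{L^2_{q-1}}^2 = \mathcal{R}(t),$$
where $\mathcal{R}(t)=\sum_i\int_{\R^d}\partial_{\xi_i}\mathbb{B}_\al(\psi,\psi)F_i\langle\xi\rangle^{2q}d\xi$.

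The term $\mathcal{R}(t)$ is analysed as in the stationary case. The diagonal loss contribution $-\sum_i\int\mathcal{Q}^-(F_i,\psi)F_i\langle\xi\rangle^{2q}d\xi = -\sum_i\int F_i^2 L(\psi)\langle\xi\rangle^{2q}d\xi$ provides, via \eqref{outil3}, the coercive term $-C_0\|\nabla\psi\|_{L^2_{q+1/2}}^2$. The second loss contribution is handled using the isotropy of $\psi$: because $\int F_i(\xi_*)d\xi_*=0$, one has
$$|L(F_i)(\xi)| = \Big|\int F_i(\xi_*)(|\xi-\xi_*|-|\xi|)\,d\xi_*\Big| \leq \|F_i\|_{L^1_1},$$
so that $\big|\int\mathcal{Q}^-(\psi,F_i)F_i\langle\xi\rangle^{2q}d\xi\big| \leq \|F_i\|_{L^1_1}\|\psi\|_{L^2_q}\|F_i\|_{L^2_q}$, where $\|\psi\|_{L^2_q}$ is bounded uniformly in time via Proposition \ref{prop:propa-L2} and the $L^1_1$-moment of $\nabla\psi$ is controlled by Cauchy--Schwarz with weight, the integrability being ensured precisely by the specific choice of $q_1$. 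The gain contribution is treated through the Cauchy--Schwarz bound $\int\partial_{\xi_i}\mathcal{Q}^+(\psi,\psi)F_i\langle\xi\rangle^{2q}d\xi \leq \|\mathcal{Q}^+(\psi,\psi)\|_{\mathbb{H}^1_{q-1/2}}\|F_i\|_{L^2_{q+1/2}}$ combined with Theorem \ref{regularite} applied with $s=\tfrac{3-d}{2}$ and $\eta=q-\tfrac{1}{2}$; this choice ensures that every $\nabla\psi$-factor on the right-hand side of \eqref{regestim} appears with the weight $L^2_{q+1/2}$, thus matching the coercive term and allowing its absorption for $\varepsilon$ small. The remaining $\psi$-factors are bounded uniformly in $t$ by Proposition \ref{prop:propa-L2} and \cite[Corollary 3.6]{jde}, which is exactly what the assumptions $\psi_0\in L^1_{q_1}\cap L^2_{q+1+\delta}$ guarantee.

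Collecting the above estimates and using $|\mathbf{A}_\psi(t)|+|\mathbf{B}_\psi(t)|=\mathbf{O}(\alpha)$ (obtained from \eqref{ABab}, \eqref{C_0} and the time-dependent analogues of Lemma \ref{estimab}), one obtains, for $\alpha<\underline{\alpha}_1$ explicitly small, a differential inequality of the form
$$\frac{d}{dt}\|\nabla\psi(t)\|_{L^2_q}^2 + \tfrac{C_0}{2}\|\nabla\psi(t)\|_{L^2_{q+1/2}}^2 \leq \kappa_1\|\nabla\psi(t)\|_{L^2_q}^2 + \kappa_2,$$
with $\kappa_1,\kappa_2>0$ uniform in time. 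Since $\|\nabla\psi\|_{L^2_q}^2\leq\|\nabla\psi\|_{L^2_{q+1/2}}^2$, this is a Bernoulli-type inequality which, combined with the initial bound $\|\nabla\psi_0\|_{L^2_q}<\infty$ provided by $\psi_0\in\mathbb{H}^1_q$, yields \eqref{H1q} by a direct comparison argument, while $\int_0^T\|\nabla\psi(t)\|_{L^2_{q+1/2}}^2dt<\infty$ follows by time-integration. The main obstacle is the weight balance: every $\nabla\psi$-factor produced in the estimates of the right-hand side must appear with the coercive weight $L^2_{q+1/2}$. The isotropy of $\psi$ (through $\int F_i(\xi_*)d\xi_*=0$), the optimal choice $\eta=q-\tfrac{1}{2}$ in Theorem \ref{regularite}, and the specific form of $q_1$ are precisely what make this possible over the full range $q\in[\tfrac{1}{2},1+\tfrac{d}{2}]$.
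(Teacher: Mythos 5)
Your overall strategy is the right one (it is the same as the paper's: a dynamic version of the $m=1$ case of Theorem \ref{theo:HK}, energy estimate for $\partial_i\psi$ in $L^2_q$, coercivity from \eqref{outil3}, Theorem \ref{regularite} with $\eta=q-\tfrac12$ to absorb the gain term), but your treatment of the loss term $\Q^-(\psi,\partial_i\psi)$ has a genuine gap. You bound $|L(\partial_i\psi)(\xi)|\leq \|\partial_i\psi\|_{L^1_1}$ and end up with a contribution of the form $\|\nabla\psi(t)\|_{L^1_1}\,\|\psi(t)\|_{L^2_q}\,\|\partial_i\psi(t)\|_{L^2_q}$. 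The factor $\|\nabla\psi(t)\|_{L^1_1}$ is not a priori controlled: the hypotheses on $\psi_0$ (in particular the exponent $q_1$) only yield uniform $L^1$- and $L^2$-moments of $\psi$ itself, not of $\nabla\psi$; uniform $L^1_k$ bounds on the gradient, such as \eqref{moments_grad} or Lemma \ref{lem:tails}, are in this paper \emph{consequences} of the weighted $\mathbb{H}^1$ bounds you are trying to prove, so invoking them here is circular. If instead you estimate $\|\nabla\psi\|_{L^1_1}$ by a weighted $L^2$ norm of $\nabla\psi$, the term becomes quadratic in gradient norms with a coefficient $\|\psi\|_{L^2_q}$ that is bounded but not small, so it cannot be absorbed by the coercive term $C_0\|\nabla\psi\|^2_{L^2_{q+1/2}}$ (and for $q$ near $\tfrac12$ the weight $1+\tfrac d2+\mu$ is not even dominated by $q+\tfrac12$); a Gr\"onwall argument would then only give growth in time, not the uniform-in-time bound \eqref{H1q}. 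The paper closes this step differently and more cheaply: integrating by parts in the $\xi_*$ variable, $L(\partial_j\psi)(\xi)=-\int_{\R^d}\psi(t,\xi_*)\,\partial_{*_j}|\xi-\xi_*|\,\d\xi_*$, so $|L(\partial_j\psi)(\xi)|\leq\|\psi(t)\|_{L^1}=1$, and the whole term is bounded by $\|\psi(t)\|_{L^2_q}\,\|\partial_j\psi(t)\|_{L^2_q}$, i.e.\ it is \emph{linear} in the unknown and harmless.

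A secondary point: invoking only $|\mathbf{A}_\psi(t)|+|\mathbf{B}_\psi(t)|=\mathbf{O}(\alpha)$ loses the explicitness of $\underline{\alpha}_1$ and hides why the restriction $q\leq 1+\tfrac d2$ appears in the statement. The paper writes the drift coefficients through $\mathbf{a}_\psi(t)$ and $\mathbf{b}_\psi(t)$, observes that for $q\leq 1+\tfrac d2$ the $\mathbf{b}_\psi$-contributions have a favourable sign and can be dropped, and uses the universal bound $\mathbf{a}_\psi(t)\leq\sqrt d$ (mass $1$, energy $d/2$); this yields the lower bound $-\tfrac{\al}{2}\sqrt d\,(d+6)\|\partial_j\psi\|^2_{L^2_q}$ and hence the explicit threshold $\underline{\alpha}_1=\min\{\underline{\alpha},\tfrac{4C_0}{\sqrt d\,(d+6)}\}$, independent of the solution. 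You should incorporate both corrections for the argument to close.
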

\begin{proof} Again the proof is only a dynamic version of the proof of Theorem \ref{theo:HK}. For the solution $\psi(t,\xi)$ to \eqref{BEscaled}, we set
$G_j(t,\xi)=\partial_j\psi(t,\xi)$ for $j\in\{1,\ldots,d\}$. Then, $G_j$ satisfies
$$\partial_t G_j(t,\xi) + \left(\mathbf{A}_\psi(t) + \mathbf{B}_\psi(t)\right) G_j(t,\xi) + \mathbf{B}_\psi(t) \,\xi \cdot \nabla_\xi G_j(t,\xi)=\partial_j \mathbb{B}_\alpha(\psi,\psi)(t,\xi)$$
where one has
$$\partial_j \mathbb{B}_\alpha(\psi,\psi)(t,\xi)=(1-\alpha)\partial_j \Q^+(\psi,\psi)(t,\xi) -\Q^-(\psi,G_j)(t,\xi) -\Q^-(G_j,\psi)(t,\xi).$$
For given $q \geq 1/2$, we multiply this equation by $2\,G_j(t,\xi)\langle \xi \rangle^{2q}$ and integrate over $\R^d$. Then,  after an integration by parts and using \eqref{outil3},  one obtains
 \begin{multline}\label{eq_GJ}
\frac{\d}{\d t} \|G_j(t)\|^2_{L^2_q}+ \left(2 \mathbf{A}_{\psi}(t)+ (2-d-2q) \mathbf{B}_{\psi}(t)\right) \|G_j(t)\|^2_{L^2_q} + 2q \mathbf{B}_{\psi}(t)  \|G_j(t)\|^2_{L^2_{q-1}}  \\
\leq 2 (1-\al)\int_{\R^d}\partial_j\Q^+(\psi,\psi)(t,\xi)G_j(t,\xi)\langle \xi\rangle^{2q}\d\xi\\ - 2C_0\|G_j(t)\|^2_{L^2_{q+\frac{1}{2}}}
-2\int_{\R^d} \Q^-(\psi,G_j)(t,\xi)G_j(t,\xi)\langle \xi\rangle^{2q}\d \xi.
 \end{multline}
Clearly, one has
\begin{equation*}\begin{split}
\int_{\R^d}\left|\partial_j\Q^+(\psi,\psi)(t,\xi)\right|\,|G_j(t,\xi)|\, \langle \xi\rangle^{2q}\d\xi &\leq \|\partial_j \Q^+(\psi,\psi)(t)\|_{L^2_{q-\frac{1}{2}}}\|G_j(t)\|_{L^2_{q+\frac{1}{2}}}\\
&\leq \|\Q^+(\psi,\psi)(t)\|_{\mathbb{H}^1_{q-\frac{1}{2}}}\|G_j(t)\|_{L^2_{q+\frac{1}{2}}}.\end{split}\end{equation*}
Now, using Theorem \ref{regularite} with $s=\frac{3-d}{2}$ and $\kappa=\frac{3}{2}+\delta$, for any $\varepsilon > 0$, there exists $C_\varepsilon > 0$ such that
\begin{multline*}
\|\Q^+(\psi,\psi)(t)\|_{\mathbb{H}^1_{q-\frac{1}{2}}} \leq C_\varepsilon \|\psi(t)\|_{\mathbb{H}^{\frac{3-d}{2}}_{q+1+\delta}}\|\psi(t)\|_{L^1_{2q+\frac{1}{2}+\delta}}+\\
\varepsilon\,\|\psi(t)\|_{L^2_{q+1+\frac{d}{2}}}\,\|\psi(t)\|_{L^2_{q+\frac{1}{2}}}
+2\varepsilon\,\|\psi(t)\|_{L^1_{q+\frac{1}{2}}}\, \sum_{i=1}^d \|G_i(t)\|_{L^2_{q+\frac{1}{2}}}.
\end{multline*}
Since $d \geq 3$, one estimates the $\mathbb{H}^{\frac{3-d}{2}}_{q+1+\delta}$ norm by the $L^2_{q+1+\delta}$ norm and, using Proposition \ref{prop:propa-L2} together with \cite[Corollary 3.6]{jde}, our assumptions on the initial datum implies that
$$\sup_{t \geq 0} \|\psi(t)\|_{L^2_{q+1+\delta}} < \infty \quad \text{ and } \quad \sup_{t \geq 0}\,\|\psi(t)\|_{L^1_{ 2q+\frac{1}{2}+\delta}} < \infty.$$
Therefore, for any $\varepsilon > 0$, there exists $C_1(\varepsilon,q) > 0$ and $C_2(q) > 0$ such that
$$\|\Q^+(\psi,\psi)(t)\|_{\mathbb{H}^1_{q-\frac{1}{2}}} \leq C_1(\varepsilon,q) +  \varepsilon\,C_2(q)\, \sum_{i=1}^d \|G_i(t)\|_{L^{2}_{q+\frac{1}{2}}}.$$
One estimates the last integral in \eqref{eq_GJ} as in the proof of Theorem \ref{theo:HK}; namely, an integration by parts yields
$$| \Q^{-}(\psi,G_j)(t,\xi)| = \psi(t,\xi) \left|\int_{\R^d} \partial_j \psi(t,\xi_*)|\xi-\xi_*| \, \d\xi_* \right| \leq  \psi(t,\xi) \|\psi(t)\|_{L^1}=\psi(t,\xi).$$
Then, Cauchy-Schwarz inequality yields
$$\left|\int_{\R^d}  \Q^{-}(\psi,G_j)(t,\xi)\,G_j(t,\xi)\langle \xi\rangle ^{2q}\d \xi\right| \leq \|\psi(t)\|_{L^2_{q}}\,\|G_j(t)\|_{L^2_q} \leq C_q \,\|G_j(t)\|_{L^2_q}$$
for some positive $C_q >0$ where we used the uniform bounds on the $L^2_q$-norm of $\psi(t)$ provided by Proposition \ref{prop:propa-L2}.
Recall that
\begin{equation*}
2\mathbf{A}_{\psi}(t)+\left(2-d-2q \right)\mathbf{B}_{\psi}(t)=-\frac{\al}{2}\left(d-2q +6\right)\mathbf{a}_{\psi}(t) + \frac{\al}{2}\left(d+2-2q\right)\mathbf{b}_{\psi}(t)\end{equation*}
while $2q\mathbf{B}_{\psi}(t)=-\al\,q\,\mathbf{a}_{\psi}(t) + \al\,q\,\mathbf{b}_{\psi}(t).$
Since $q\leq 1 +\frac{d}{2}$, one may neglect all the terms involving $\mathbf{b}_{\psi}(t)$ to obtain the bound from below:
\bean
& &\left(2\mathbf{A}_{\psi}(t)+\left(2-d-2q\right)\mathbf{B}_{\psi}(t)\right)\|G_j(t)\|_{L^2_q}^2  + 2q\mathbf{B}_{\psi}(t)\,\|G_j(t)\|_{L^2_{q-1}}^2\\
& & \hspace{4cm}\geq -\frac{\al}{2}\left(d + 6\right)\mathbf{a}_{\psi}(t)\,\|G_j(t)\|_{L^2_q}^2 + \al\,q \,\mathbf{a}_{\psi}(t) \left(\|G_j(t)\|_{L^2_q}^2-\|G_j(t)\|_{L^2_{q-1}}^2\right)\\
& & \hspace{4cm}\geq -\frac{\al}{2}\sqrt{d}\left(d+6\right)\|G_j(t)\|_{L^2_q}^2
\eean
using the fact that $\mathbf{a}_\psi(t) \leq \sqrt{d}$ for any $t \geq 0$ (following the arguments of Lemma \ref{estimab}). Thus, \eqref{eq_GJ} reads
\begin{multline*}
\frac{\d}{\d t} \|G_j(t)\|^2_{L^2_q}  -\frac{\al}{2}\sqrt{d}\left(d+6\right)\|G_j(t)\|_{L^2_q}^2 +2C_0\|G_j(t)\|_{L^2_{q+\frac{1}{2}}}^2\\
\leq 2(1-\alpha)C_1(\varepsilon,q)\|G_j(t)\|_{L^2_{q+\frac{1}{2}}} +  \varepsilon\,C_2(q)\,\|G_j(t)\|_{L^2_{q+\frac{1}{2}}}\, \sum_{i=1}^d \|G_i(t)\|_{L^{2}_{q+\frac{1}{2}}} + 2C_q \|G_j(t)\|_{L^2_{q}}
\end{multline*}
where $C_q, C_1(\varepsilon,q)$ and $C_2(q)$ are positive constants independent of $\alpha$ and $t$. Define, for any $k \geq 0$, the semi-norm
$$\|\psi(t)\|_{\overset{\circ}{\mathbb{H}}^1_k}=\left(\sum_{j=1}^d \|\partial_j \psi(t)\|_{L^2_k}^2\right)^{1/2}.$$
Setting $\underline{\alpha}_1:=\min\left\{\underline{\alpha},\frac{4C_0}{\sqrt{d}(d+6)}\right\}$ and summing  over all $j\in\{1,\ldots, d\}$, we get
\begin{multline*}
\frac{\d}{\d t} \|\psi(t)\|_{\overset{\circ}{\mathbb{H}}^1_q}^2+  \frac{\sqrt{d}}{2} (d+6)({\underline{\alpha}_1}-\alpha) \|\psi(t)\|_{\overset{\circ}{\mathbb{H}}^1_{q+\frac{1}{2}}}^2\\
\leq 2C_{1}(\varepsilon,q) \sum_{j=1}^d\|G_j(t)\|_{L^2_{q+\frac{1}{2}}}+    \varepsilon C_2(q) \left(\sum_{j=1}^d\|G_j(t)\|_{L^{2}_{q+\frac{1}{2}}}\right)^2  +2C_q\sum_{j=1}^d \|G_j(t)\|_{L^2_q} \\
\leq 2C_{1}(\varepsilon,q) \sum_{j=1}^d\|G_j(t)\|_{L^2_{q+\frac{1}{2}}}+    d\varepsilon C_2(q) \|\psi(t)\|_{\overset{\circ}{\mathbb{H}}^1_{q+\frac{1}{2}}}^2+2\sqrt{d}C_q\|\psi(t)\|_{\overset{\circ}{\mathbb{H}}^1_q}.
 \end{multline*}
 Using Young's inequality, for any $\bar{\delta} > 0$ one gets
\begin{multline*}
\frac{\d}{\d t} \|\psi(t)\|_{\overset{\circ}{\mathbb{H}}^1_q}^2+  \frac{\sqrt{d}}{2} (d+6)({\underline{\alpha}_1}-\alpha) \|\psi(t)\|_{\overset{\circ}{\mathbb{H}}^1_{q+\frac{1}{2}}}^2\\
\leq \left( 2\bar{\delta}\,C_1(\varepsilon,q)  + d\varepsilon\,C_2(q)\right)\|\psi(t)\|_{\overset{\circ}{\mathbb{H}}^1_{q+\frac{1}{2}}}^2 + \frac{2\,d\,C_1(\varepsilon,q)}{\bar{\delta}} + 2\sqrt{d}C_q\|\psi(t)\|_{\overset{\circ}{\mathbb{H}}^1_q}.\end{multline*}
For any fixed $\alpha < \underline{\alpha}_1$, one can choose first $\varepsilon > 0$ small enough and then $\bar{\delta} > 0$ small enough so that
 $\left( 2\bar{\delta}\,C_1(\varepsilon,q)  + d\varepsilon\,C_2(q)\right)= \frac{\sqrt{d}}{4} (d+6)({\underline{\alpha}_1}-\alpha) $ to get
$$\frac{\d}{\d t} \|\psi(t)\|_{\overset{\circ}{\mathbb{H}}^1_q}^2+ \frac{\sqrt{d}}{4} (d+6)({\underline{\alpha}_1}-\alpha)\|\psi(t)\|_{\overset{\circ}{\mathbb{H}}^1_{q+\frac{1}{2}}}^2 \leq
2\sqrt{d}\,C_q \|\psi(t)\|_{\overset{\circ}{\mathbb{H}}^1_q} + C$$
which yields easily the conclusion.
\end{proof}

Combining all the previous computations, one can find some explicit positive constants $M_1$, $M_2$, $M_3$, $M_4$ and $M_5$ 
such that, if {$\alpha \in (0,\underline{\alpha}_1)$}, then the evolution semi-group $(\mathcal{S}_t)_{t \geq 0}$ governing
\eqref{BEscaled} leaves invariant the convex subset of $L^1_2(\R^d)$
{\begin{multline*}\mathcal{Z}= \left\{ 0 \leq \psi \in L^1_2(\R^d), \hspace{0.3cm} \psi(\xi)=\overline{\psi}(|\xi|) \quad \forall \xi \in \R^d,\quad
         \int_{\R^d} \psi(\xi)\d \xi= 1,\quad \int_{\R^d}\psi(\xi)|\xi|^2\d\xi=\frac{d}{2},\right.\\
           \left.  \int_{\R^d}\psi(\xi)|\xi|^3\d\xi\leq M_1, \quad \|\psi\|_{L^{2}}\leq M_2, \quad   \|\psi\|_{L^2_{\frac{d+9}{2}+2\kappa}}\leq M_3,\right.\\
           \left.  \quad  \int_{\R^d}\psi(\xi)|\xi|^{\max\{\frac{9+d(d-2)}{2}+2\kappa,10+d+4\kappa\}}\d\xi\leq M_4\quad \mbox{ and } \quad
 \|\psi\|_{\mathbb{H}^1_{\frac{7+d}{2}+\kappa}} \leq M_5\right\}. \end{multline*}
and we obtain the first part of Theorem \ref{existence}. Taking $\alpha_\star<\underline{\alpha}_1$, then the constants $M_1$ and $M_3$ may be chosen   \textit{independent of $\alpha\in(0,\alpha_\star)$}, which leads to the second part of  Theorem \ref{existence}.\medskip

Notice that, in dimension $d=3$, $\underline{\alpha}=\frac{2}{7}$. Thus, $\underline{\alpha}_1\leq\frac{2}{7}$.  

\medskip
Let us now extend the above result to higher-order derivatives
\begin{prop}\label{propoHsq} For any $s_2 \geq 1$, setting $s_0=s_2-\frac{d-1}{2}$ and  $s_1=s_2-1$, there exists some explicit $\underline{\alpha}_{s_2} \in (0,1)$ such that the following holds: for any $\frac{1}{2} \leq q \leq s_2+\frac{d}{2}$, if the initial datum $\psi_0$ satisfies \eqref{init} and is such that
$$ \sup_{t\geq 0}\|\psi(t)\|_{\mathbb{H}^{s_1}_{q+1+\frac{d}{2}}} < \infty \quad  \text{ with moreover } \quad \psi_0 \in L^1_{2q+\frac{1+d}{2}}(\R^d) \cap  \mathbb{H}^{s_2}_q$$
then, for any $\alpha \in (0,\underline{\alpha}_{s_2})$, the solution $\psi(t)=\psi(t,\xi)$ to \eqref{BEscaled} satisfies
\begin{equation}\label{Hsq}\sup_{t \geq 0}\,\|\psi(t)\|_{\mathbb{H}^{s_2}_q} < \infty\
\qquad \mbox{ and }\qquad \int_0^T \,\|\psi(t)\|_{\mathbb{H}^{s_2}_{q+\frac{1}{2}}}\d t <\infty\qquad \forall T > 0.
\end{equation}
\end{prop}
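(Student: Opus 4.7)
The plan is to run the time-dependent analogue of the proof of Theorem \ref{theo:HK}, using Proposition \ref{propH1q} as the base case and the hypothesis $\sup_{t\geq0}\|\psi(t)\|_{\mathbb{H}^{s_1}_{q+1+\frac{d}{2}}}<\infty$ as a replacement for the induction hypothesis on lower-order derivatives. I would argue multi-index by multi-index: fix $\ell\in\mathbb{N}^d$ with $|\ell|=s_2$ and set $F_\ell(t,\xi)=\partial^\ell\psi(t,\xi)$. Applying $\partial^\ell$ to \eqref{BEscaled} and using $\partial^\ell(\xi\cdot\nabla_\xi \psi)=\xi\cdot\nabla_\xi F_\ell + |\ell| F_\ell$, one sees that
\[
\partial_t F_\ell + \bigl(\mathbf{A}_\psi(t)+|\ell|\mathbf{B}_\psi(t)\bigr)F_\ell + \mathbf{B}_\psi(t)\,\xi\cdot\nabla_\xi F_\ell = \partial^\ell\mathbb{B}_\alpha(\psi,\psi).
\]
Multiplying by $2 F_\ell(t,\xi)\langle\xi\rangle^{2q}$, integrating over $\mathbb{R}^d$, and integrating by parts in the transport term exactly as in \eqref{estimHell} yields
\begin{multline*}
\frac{\d}{\d t}\|F_\ell\|_{L^2_q}^2 + 2\Bigl(\mathbf{A}_\psi(t)+\bigl(|\ell|-\tfrac{d}{2}-q\bigr)\mathbf{B}_\psi(t)\Bigr)\|F_\ell\|_{L^2_q}^2 + 2q\mathbf{B}_\psi(t)\|F_\ell\|_{L^2_{q-1}}^2 \\
= 2(1-\alpha)\int_{\mathbb{R}^d}\partial^\ell\mathcal{Q}^+(\psi,\psi)\,F_\ell\langle\xi\rangle^{2q}\d\xi - 2\int_{\mathbb{R}^d}\partial^\ell\mathcal{Q}^-(\psi,\psi)\,F_\ell\langle\xi\rangle^{2q}\d\xi.
\end{multline*}

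Next I would treat the two right-hand side contributions exactly as in the proof of Theorem \ref{theo:HK}, importing the time-dependent adaptations from Proposition \ref{propH1q}. For $\mathcal{Q}^-$, Leibniz's rule gives $\partial^\ell \mathcal{Q}^-(\psi,\psi)=\sum_{\nu\leq\ell}\binom{\ell}{\nu}\mathcal{Q}^-(\partial^\nu\psi,\partial^{\ell-\nu}\psi)$. The diagonal term $\nu=\ell$ produces, via \eqref{outil3}, the crucial coercive lower bound $C_0\|F_\ell\|_{L^2_{q+\frac{1}{2}}}^2$. For the off-diagonal terms $\nu\neq\ell$, I would integrate by parts once in the convolution with $|\xi-\xi_*|$ (as in \eqref{Q-nu}) to obtain a bound by $C\|\partial^\nu\psi\|_{L^2_q}\|F_\ell\|_{L^2_q}$ with at most $s_1=s_2-1$ derivatives; this is uniformly controlled by the standing assumption $\sup_t\|\psi(t)\|_{\mathbb{H}^{s_1}_{q+1+d/2}}<\infty$. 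For $\mathcal{Q}^+$, I would invoke Theorem \ref{regularite} with $s=s_2-\frac{d-1}{2}$ and $\kappa>3/2$ to get, for any $\varepsilon>0$,
\[
\|\partial^\ell\mathcal{Q}^+(\psi,\psi)\|_{L^2_{q-\frac{1}{2}}} \leq C_\varepsilon \|\psi\|_{\mathbb{H}^{s_0}_{q+\kappa}}\|\psi\|_{L^1_{2q+\kappa}} + \varepsilon\,\|\psi\|^2_{\mathbb{H}^{s_1}_{q+\frac{d+3}{2}}} + \varepsilon\,C\sum_{|\eta|=s_2}\|F_\eta\|_{L^2_{q+\frac{1}{2}}}\|\psi\|_{L^1_{q+1}},
\]
and then bound the $\mathcal{Q}^+$-integral by $\|\partial^\ell\mathcal{Q}^+\|_{L^2_{q-\frac{1}{2}}}\|F_\ell\|_{L^2_{q+\frac{1}{2}}}$. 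Since $s_0\leq s_1$, the first two terms are uniformly bounded by the hypothesis, so the whole $\mathcal{Q}^+$-contribution is at most $C_1(\varepsilon,q,s_2)\|F_\ell\|_{L^2_{q+\frac{1}{2}}} + \varepsilon C_2(q,s_2)\sum_{|\eta|=s_2}\|F_\eta\|_{L^2_{q+\frac{1}{2}}}^2$.

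Summing the resulting inequality over all multi-indices $|\ell|=s_2$, using $\mathbf{a}_\psi(t)\leq\sqrt{d}$ to estimate $\mathbf{A}_\psi+(|\ell|-d/2-q)\mathbf{B}_\psi$ from below by $-\tfrac{\alpha}{2}\sqrt{d}(d/2+s_2-q+2)$ (and absorbing the $\|F_\ell\|_{L^2_{q-1}}^2$ remainder in the range $q\leq s_2+d/2$ exactly as in the ``small $k$'' step of Theorem \ref{theo:HK}), and choosing first $\varepsilon$ then
\[
\underline{\alpha}_{s_2} := \min\Bigl\{\underline{\alpha}_1,\;\tfrac{2C_0}{\sqrt{d}}\bigl(\tfrac{d}{2}+s_2+2\bigr)^{-1}\Bigr\},
\]
the differential inequality reduces, for $\alpha<\underline{\alpha}_{s_2}$, to
\[
\frac{\d}{\d t}\sum_{|\ell|=s_2}\|F_\ell\|_{L^2_q}^2 + c_0\sum_{|\ell|=s_2}\|F_\ell\|_{L^2_{q+\frac{1}{2}}}^2 \leq K_1\sum_{|\ell|=s_2}\|F_\ell\|_{L^2_{q+\frac{1}{2}}} + K_2,
\]
for explicit constants $c_0,K_1,K_2>0$. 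An elementary Young inequality then yields both conclusions of \eqref{Hsq}.

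The main technical obstacle is controlling the full $\mathbb{H}^{s_2}$-Leibniz expansion of $\partial^\ell\mathcal{Q}^-(\psi,\psi)$: only the top-order piece $\nu=\ell$ generates the positive absorbing term through \eqref{outil3}, so all remaining terms, and in particular the cross-term $\mathcal{Q}^-(\psi,\partial^\ell\psi)$, must be re-expressed (via integration by parts against $|\xi-\xi_*|$) as bilinear objects involving at most $s_1$ derivatives in one factor, and then estimated uniformly using the hypothesis $\sup_t\|\psi(t)\|_{\mathbb{H}^{s_1}_{q+1+d/2}}<\infty$. The careful bookkeeping of weights is what forces the weight $q+1+\frac{d}{2}$ in the assumption and the moment requirement $\psi_0\in L^1_{2q+\frac{1+d}{2}}$ for the $\mathcal{Q}^+$ estimate via Theorem \ref{regularite}.
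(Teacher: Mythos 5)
Your proposal is correct and follows essentially the same route as the paper: an $L^2_q$ energy estimate on $F_\ell=\partial^\ell\psi$ with $|\ell|=s_2$, the coercive term $C_0\|F_\ell\|^2_{L^2_{q+1/2}}$ extracted from the diagonal piece of the Leibniz expansion of $\Q^-$ via the lower bound \eqref{C_0}/\eqref{outil3}, the off-diagonal pieces controlled through integration by parts and the standing $\mathbb{H}^{s_1}_{q+1+d/2}$ bound, Theorem \ref{regularite} with $s=s_0$ for $\Q^+$, the recombination of the $\mathbf{A}_\psi,\mathbf{B}_\psi$ terms using $\mathbf{a}_\psi\leq\sqrt d$ in the range $q\leq s_2+\tfrac d2$, and the same explicit threshold $\underline{\alpha}_{s_2}=\tfrac{2C_0}{\sqrt d}\bigl(s_2+\tfrac d2+2\bigr)^{-1}$. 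The only point worth adding is the paper's remark that this argument is carried out for integer $s_2$ and the general case $s_2\geq 1$ is then obtained by interpolation (Lemma \ref{interpolSob}).
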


\begin{proof} {The proof follows the arguments of Theorem \ref{theo:HK}. More precisely, we prove \eqref{Hsq} for $s_2\in\N$ and the general result follows by interpolation. We proceed by induction on $s_2$. First, for $s_2=1$, we deduce from Proposition \ref{propH1q} that \eqref{Hsq} holds.  Let us assume that for some $s_2\geq 1$, \eqref{Hsq} holds. Let $\al \in (0,\underline{\al}_1)$ be fixed and let $\frac{1}{2} \leq q \leq s_2+ \frac{d}{2}$. }For any multi-index $\ell \in \mathbb{N}^d$ with $|\ell|=s_2$, we set
$$G_\ell(t,\xi)=\partial_\xi^\ell \psi(t,\xi).$$
One sees that $G_\ell$ satisfies
$$\partial_t G_\ell(t,\xi)+ \mathbf{A}_\psi(t)G_\ell(t,\xi)+ \mathbf{B}_\psi(t)\,\partial_\xi^\ell \left(\xi \cdot \nabla \psi(t,\xi)\right)=\partial_\xi^\ell \mathbb{B}_\alpha(\psi,\psi)(t,\xi).$$
Noticing that $\partial_\xi^\ell \left(\xi \cdot \nabla \psi(t,\xi)\right)=\xi \cdot \nabla G_\ell(t,\xi) + |\ell|\,G_\ell(t,\xi)$ we get
$$\partial_t G_\ell(t,\xi) +\left[\mathbf{A}_\psi(t)+|\ell|\,\mathbf{B}_\psi(t)\right]G_\ell(t,\xi)+ \mathbf{B}_\psi(t)\,\xi\cdot\nabla G_\ell(t,\xi)\\
= \partial_\xi^\ell \mathbb{B}_\alpha(\psi,\psi)(t,\xi).$$
Given  $q \geq 1/2$, multiplying the above equation by $\langle \xi\rangle^{2q}\,G_\ell(t,\xi)$ and integrating over $\R^d$ we get
\begin{multline}\label{Eq:Gell}
\frac{1}{2}\dfrac{\d}{\d t}\|G_\ell(t)\|_{L^2_q}^2 + \left[\mathbf{A}_\psi(t)+\left(|\ell|-\frac{d+2q}{2}\right)\,\mathbf{B}_\psi(t)\right]\,\|G_\ell(t)\|_{L^2_q}^2 + q\,\mathbf{B}_\psi(t)\,\|G_\ell(t)\|_{L^2_{q-1}}^2\\
=(1-\al)\int_{\R^d}G_\ell(t,\xi)\,\partial_\xi^\ell\Q^+(\psi,\psi)(t,\xi)\langle\xi\rangle^{2q}\d\xi\\
-\int_{\R^d}G_\ell(t,\xi)\,\partial_\xi^\ell\,\Q^-(\psi,\psi)(t,\xi)\,\langle\xi\rangle^{2q}\d\xi.
\end{multline}
Since $q \geq 1/2$, one has
\begin{equation*}\begin{split}
\int_{\R^d}G_\ell(t,\xi)\,\partial_\xi^\ell\Q^+(\psi,\psi)(t,\xi)\langle\xi\rangle^{2q}\d\xi &\leq \|\partial_\xi^\ell\Q^+(\psi,\psi)(t)\|_{L^2_{q-\frac{1}{2}}}\,\|G_\ell(t)\|_{L^2_{q+\frac{1}{2}}}\\
&\leq \|\Q^+(\psi,\psi)(t)\|_{\mathbb{H}^{s_2}_{q-\frac{1}{2}}}\,\|G_\ell(t)\|_{L^2_{q+\frac{1}{2}}}.
\end{split}\end{equation*}
Let $\delta \in (0,\frac{d}{2}]$. Using now Theorem \ref{regularite}  with $s=s_{0}$ and $\kappa=\frac{3}{2}+\delta$, for any $\varepsilon > 0$, there exists $C=C(\varepsilon,s_2,q,\delta)$ such that
\begin{equation*}\begin{split} 
\|\Q^+(\psi,\psi)(t)\|_{\mathbb{H}^{s_2}_{q-\frac{1}{2}}} &\leq C\,\|\psi(t)\|_{\mathbb{H}^{s_0}_{q+1+\delta}}\,\|\psi(t)\|_{L^1_{2q+\frac{1}{2}+\delta}}\,\\
&+\varepsilon\,\|\psi(t)\|_{\mathbb{H}^{s_1}_{q+1+\frac{d}{2}}}
\,\|\psi(t)\|_{\mathbb{H}^{s_1}_{q+\frac{1}{2}}}
+2\varepsilon\,\|\psi(t)\|_{L^1_{q+\frac{1}{2}}}\sum_{|k|=s_2} \|G_k(t)\|_{L^2_{q+\frac{1}{2}}}.
\end{split}\end{equation*}
Therefore, our assumption together with interpolation imply that there exist $C_1(\varepsilon,s_2,q), C_2(q) > 0$ such that
$$\|\Q^+(\psi,\psi)(t)\|_{\mathbb{H}^{s_2}_{q-\frac{1}{2}}}\,\leq C_1(\varepsilon,s_2,q) + \varepsilon\,C_2(q)\sum_{|k|=s_2} \|G_k(t)\|_{L^2_{q+\frac{1}{2}}}$$
and this shows that, for any $\varepsilon > 0$
\begin{multline}\label{eqGlQ+}
\int_{\R^d}G_\ell(t,\xi)\,\partial_\xi^\ell\Q^+(\psi,\psi)(t,\xi)\langle\xi\rangle^{2q}\d\xi
\leq C_1(\varepsilon,s_2,q)\|G_\ell(t)\|_{L^2_{q+\frac{1}{2}}}\,\\
+ \varepsilon\,C_2(q)\|G_\ell(t)\|_{L^2_{q+\frac{1}{2}}}\sum_{|k|=s_2} \|G_k(t)\|_{L^2_{q+\frac{1}{2}}}
\end{multline}
Now, one estimates $\partial_\xi^\ell \Q^-(\psi,\psi)(t,\xi)$ as in the previous Proposition. Namely, one has
\begin{equation*}
\partial_\xi^{\ell}\Q^{-}\big(\psi,\psi\big)=\sum^{\ell}_{\nu=0}
\left(\begin{array}{c}
\ell \\ \nu
\end{array}\right)
\Q^{-}\big(\partial^\nu_\xi \psi,\partial^{\l-\nu}_\xi\psi\big).
\end{equation*}
For any $\nu$ with $\nu\neq \ell$, there exists $i_0 \in \{1,\ldots,d\}$ such that $\ell_{i_0}-\nu_{i_0} \geq 1$ and integration by parts yields
\begin{equation*}\begin{split}
\left|\Q^{-} \big(\partial^\nu \psi,\partial^{\l-\nu}\psi\big)(t,\xi)\right|&=\left|\partial^\nu \psi(t,\xi)\right|\,\left|\int_{\R^d}  \partial^{\l-\nu} \psi(t,\xi_*)|\xi-\xi_*|\d\xi_*\right|\\
&\leq \left|\partial^{\,\nu} \psi(t,\xi)\right|\,\|\partial^{\,\sigma }\psi(t)\|_{L^1}
\end{split}\end{equation*}
where $\sigma=(\sigma_1,\ldots,\sigma_d)$ is defined with $\sigma_{i_0}=\ell_{i_0}-\nu_{i_0}-1$ and $\sigma_i=\ell_i-\nu_i$ if $i \neq i_0.$  Thus, estimating the weighted $L^1$-norm by an appropriate weighted $L^2$-norm (see \eqref{taug21}) we obtain
\begin{equation*}
\left|\Q^{-} \big(\partial^\nu \psi,\partial^{\l-\nu}\psi\big)(t,\xi)\right|\leq C\,|\partial^\nu \psi(t,\xi)|\,\|\partial^{\,\sigma }\psi(t)\|_{L^2_{\frac{d+1}{2}}}
\end{equation*}
for some universal constant $C >0$ independent of $t$. Our induction hypothesis implies that this last quantity is uniformly bounded.  Hence, using Cauchy-Schwarz inequality we conclude that
\begin{multline}\label{App_Q-nu}
\underset{\nu \neq \ell}{\sum_{\nu =0}^{\ell}}
\left(\begin{array}{c}
\ell \\ \nu
\end{array}\right)\int_{\R^d}  \Q^{-} \big(\partial^\nu \psi,\partial^{\l-\nu}\psi\big)(t,\xi) \,G_\ell(t,\xi)\langle \xi\rangle ^{2q}\d \xi
\\
\leq C_2\sum_{|\nu|< |\ell|}
\left(\begin{array}{c}
\l \\ \nu
\end{array}\right)\|\partial^\nu \psi(t)\|_{L^2_q}\,\|G_\ell(t)\|_{L^2_q}
\leq { C_{q,s_2}}\|G_\ell(t)\|_{L^2_q} \qquad\forall t \geq 0
\end{multline}
for some positive constant { $C_{q,s_2}$} independent of $t$.  Whenever $\nu=\ell$ one has
\begin{equation*}
\int_{\mathbb{R}^{d}}\Q^{-} \big(\partial^{\ell} \psi,\psi\big)(t,\xi)\,\partial^{\ell}\psi(t,\xi)\langle \xi\rangle ^{2q}\d \xi=\int_{\R^d} G_\ell^2(t,\xi)\langle \xi\rangle^{2q}\d \xi\int_{\R^d} \psi(t,\xi_*)\,|\xi-\xi_*|\d \xi_*,
\end{equation*}
thus, thanks to \eqref{C_0} one obtains the lower bound
\begin{equation}\label{App_Q-nu+}
\int_{\mathbb{R}^{d}}\Q^{-} \big(\partial^{\ell} \psi,\psi\big)(t,\xi)\,\partial_\xi^{\l}\psi(t,\xi)\langle \xi\rangle ^{2q}\d \xi\geq C_0\,\|G_\ell(t)\|^{2}_{L^2_{q+\frac{1}{2}}}.
\end{equation}
Gathering \eqref{eqGlQ+}, \eqref{App_Q-nu} and \eqref{App_Q-nu+} with \eqref{Eq:Gell}, we get
\begin{multline}\label{Eq:Gell1}
\frac{1}{2}\dfrac{\d}{\d t}\|G_\ell(t)\|_{L^2_q}^2 + \left[\mathbf{A}_\psi(t)+\left(|\ell|-\frac{d+2q}{2}\right)\,\mathbf{B}_\psi(t)\right]\,\|G_\ell(t)\|_{L^2_q}^2 + q\,\mathbf{B}_\psi(t)\,\|G_\ell(t)\|_{L^2_{q-1}}^2\\
\leq { C_1(\varepsilon,s_2,q)}\|G_\ell(t)\|_{L^2_{q+\frac{1}{2}}}\,
+ \varepsilon\,C_2(q)\|G_\ell(t)\|_{L^2_{q+\frac{1}{2}}}\sum_{|k|=s_2} \|G_k(t)\|_{L^2_{q+\frac{1}{2}}}\\
 +{ C_{q,s_2}}\|G_\ell(t)\|_{L^2_q}-C_0\,\|G_\ell(t)\|^{2}_{L^2_{q+\frac{1}{2}}}.
\end{multline}
Now, noticing that $q\mathbf{B}_\psi(t)=-\frac{\al}{2}q\mathbf{a}_\psi(t)+\frac{\al}{2}q\mathbf{b}_\psi(t)$ and
$$\mathbf{A}_\psi(t)+\left(|\ell|-\frac{d+2q}{2}\right)\,\mathbf{B}_\psi(t)=-\frac{\al}{2}\left[s_0+d+\frac{3}{2}-q\right]\mathbf{a}_\psi(t)+\frac{\al}{2}\left[s_0+d-\frac{1}{2}-q\right]\mathbf{b}_\psi(t)$$
and since $q \leq s_0+d-\frac{1}{2}$, the terms involving $\mathbf{b}_\psi(t)$ can be neglected  to get
\begin{equation*}\begin{split}
\bigg[\mathbf{A}_\psi(t)+ \bigg.&\left.\left(|\ell|-\frac{d+2q}{2}\right)\,\mathbf{B}_\psi(t)\right]\,\|G_\ell(t)\|_{L^2_q}^2 + q\,\mathbf{B}_\psi(t)\,\|G_\ell(t)\|_{L^2_{q-1}}^2 \\
&\geq -\frac{\al}{2}\left[s_0+d+\frac{3}{2}\right]\mathbf{a}_\psi(t)\,\|G_\ell(t)\|_{L^2_q}^2 +\frac{\al\,q}{2}\,\mathbf{a}_\psi(t)\left(\|G_\ell(t)\|_{L^2_q}^2-\|G_\ell(t)\|_{L^2_{q-1}}^2\right)\\
&\geq -\frac{\al}{2}\sqrt{d}\left[s_0+d+\frac{3}{2}\right] \|G_\ell(t)\|_{L^2_q}^2
\end{split}\end{equation*}
and \eqref{Eq:Gell1} becomes
\begin{multline}
\frac{1}{2}\dfrac{\d}{\d t}\|G_\ell(t)\|_{L^2_q}^2 -\frac{\al\sqrt{d}}{2}\left[s_0+d+\frac{3}{2}\right] \|G_\ell(t)\|_{L^2_q}^2 +C_0\,\|G_\ell(t)\|^{2}_{L^2_{q+\frac{1}{2}}}  \\
\leq { C_1(\varepsilon,s_2,q)}\|G_\ell(t)\|_{L^2_{q+\frac{1}{2}}}\,
+ \varepsilon\,C_2(q)\|G_\ell(t)\|_{L^2_{q+\frac{1}{2}}}\sum_{|k|=s_2} \|G_k(t)\|_{L^2_{q+\frac{1}{2}}}\\
 +{ C_{q,s_2}}\|G_\ell(t)\|_{L^2_q}.
\end{multline}
Setting now
$$\underline{\al}_{s_2}=\dfrac{2C_0}{\sqrt{d}(s_0+d+\frac{3}{2})}=\dfrac{2C_0}{\sqrt{d}(s_2+\frac{d}{2}+2)} \qquad \text{ and } \qquad  \Theta_q(t)=\left(\sum_{|\ell|=s_2} \|G_\ell(t)\|_{L^2_{q}}^2\right)^{\frac{1}{2}}$$
we can argue as in the proof of Proposition \ref{propH1q} to get that there exists { $C_{s_2} > 0$} such that
 \begin{multline*}
 \dfrac{1}{2}\dfrac{\d}{\d t}\Theta_{q}^2(t) + \frac{\sqrt{d}}{2}\left[s_0+d+\frac{3}{2}\right]\left(\underline{\al}_{s_2}-\al\right)\Theta_{q+\frac{1}{2}}^2(t)\\
  \leq { C_1(\varepsilon,s_2,q)}{\sum_{|\ell|=s_2}}\|G_\ell(t)\|_{L^2_{q+\frac{1}{2}}} + \varepsilon\,C_2(q)\,{ C_{s_2}}\,\Theta_{q+\frac{1}{2}}^2(t) + { C_{s_2}\,C_{q,s_2}}\Theta_q(t).
 \end{multline*}
 Arguing again as in Prop. \ref{propH1q}, using Young's inequality with a parameter $\delta > 0$ and choosing {$\varepsilon > 0$ and then $\delta > 0$}  small enough, we obtain that
 $$ \dfrac{1}{2}\dfrac{\d}{\d t}\Theta_{q}^2(t) + \frac{\sqrt{d}}{4}\left[s_0+d+\frac{3}{2}\right]\left(\underline{\al}_{s_2}-\al\right)\Theta_{q+\frac{1}{2} }^2(t) \leq C_1\Theta_{q}(t)+ C_2$$
 for some positive constants $C_1,C_2 > 0$ which shows that, for any $\al < \underline{\al}_{s_2}$ the conclusion holds.
\end{proof}

\section{Useful interpolation inequalities}

We collect here several useful interpolation inequalities that are needed in several places in the text. First, one recalls the following consequence of Riesz-Thorin interpolation, we refer to \cite{MouhVill04} for a proof:
\begin{lem}\label{interpolSob} For any $k_1, k_2 \in \mathbb{R}^+$ and any {$s_1, s_2 \in \mathbb{R}$}, the following inequality holds for any smooth $f$:
$$\|f\|_{\mathbb{H}^s_k} \leq C\|f\|_{\mathbb{H}_{k_1}^{s_1}}^{\theta}\,\|f\|_{\mathbb{H}^{s_2}_{k_2}}^{1-\theta}$$
for any $\theta \in [0,1]$ and $k=\theta\,k_1+(1-\theta)k_2$, $s=\theta\,s_1+ (1-\theta)s_2$ and some positive constant $C >0$.
\end{lem}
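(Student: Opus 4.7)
The plan is to realize the weighted Sobolev norm as the $L^{2}$-norm of a composition of a multiplication operator and a Fourier multiplier, and then to apply Stein's complex interpolation theorem (equivalently, Hadamard's three-lines lemma) to an analytic family, following the classical scheme used in \cite{MouhVill04}.

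As a preliminary step, I would establish the norm equivalence
$$\|f\|_{\mathbb{H}^{s}_{k}} \sim \bigl\|\langle D\rangle^{s}\bigl(\langle \cdot \rangle^{k} f\bigr)\bigr\|_{L^{2}(\mathbb{R}^{d})}, \qquad s\in\mathbb{R},\ k\in\mathbb{R}^{+},$$
where $\langle D\rangle^{s}:=(1-\Delta)^{s/2}$ is the Bessel potential operator. For integer $s\geq 0$, this is a direct consequence of the Leibniz rule and the pointwise bound $|\partial^{\ell}\langle v \rangle^{k}|\lesssim \langle v\rangle^{k}$ for any multi-index $\ell$; the case of real positive $s$ then follows from the very definition of $\mathbb{H}^{s}_{k}$ as an interpolation space, and the case $s<0$ is recovered by duality.

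Next I would introduce the analytic family $\{T_{z}\}_{0\leq \operatorname{Re}(z)\leq 1}$ acting on Schwartz functions by
$$T_{z}f := \langle D\rangle^{s(z)}\bigl(\langle \cdot \rangle^{k(z)}\, f\bigr), \qquad s(z):=z s_{1}+(1-z)s_{2}, \quad k(z):=z k_{1}+(1-z)k_{2}.$$
On the vertical boundary $\operatorname{Re}(z)=0$, the operator $T_{it}$ factors (modulo commutators) as $\langle D\rangle^{it(s_{1}-s_{2})}\langle \cdot\rangle^{it(k_{1}-k_{2})}\langle D\rangle^{s_{2}}(\langle \cdot\rangle^{k_{2}}\,\cdot\,)$; since both imaginary-power factors are unitaries on $L^{2}(\mathbb{R}^{d})$, the preliminary step yields $\|T_{it}f\|_{L^{2}}\lesssim \|f\|_{\mathbb{H}^{s_{2}}_{k_{2}}}$ uniformly in $t\in\mathbb{R}$, and symmetrically $\|T_{1+it}f\|_{L^{2}}\lesssim \|f\|_{\mathbb{H}^{s_{1}}_{k_{1}}}$ on the other boundary.

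Applying Hadamard's three-lines lemma to $\Phi(z):=\langle T_{z}f,h\rangle_{L^{2}}$ for an arbitrary Schwartz $h$ with $\|h\|_{L^{2}}\leq 1$, and then taking the supremum over such $h$, delivers at $z=\theta\in[0,1]$ the bound
$$\|T_{\theta}f\|_{L^{2}} \leq C\,\|f\|_{\mathbb{H}^{s_{2}}_{k_{2}}}^{1-\theta}\,\|f\|_{\mathbb{H}^{s_{1}}_{k_{1}}}^{\theta}.$$
Since $T_{\theta}f=\langle D\rangle^{s}(\langle \cdot \rangle^{k}f)$ with $s=\theta s_{1}+(1-\theta)s_{2}$ and $k=\theta k_{1}+(1-\theta)k_{2}$, the preliminary equivalence identifies the left-hand side with $\|f\|_{\mathbb{H}^{s}_{k}}$ up to a universal constant, and the conclusion follows. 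The main obstacle is the verification of the norm equivalence for non-integer or negative $s$, which reduces to controlling commutators of the form $[\langle D\rangle^{s},\langle \cdot\rangle^{k}]$; for smooth weights of polynomial growth this is a standard fact from the pseudo-differential calculus, and a routine density argument then extends the inequality to all $f$ for which the right-hand side is finite.
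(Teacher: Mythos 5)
Your argument is correct in substance, but it is not the route the paper takes: the paper offers no proof at all of this lemma, presenting it as ``a consequence of Riesz--Thorin interpolation'' and referring to \cite{MouhVill04}, with a remark that the constant $C$ is absent from the statement there but ``appears clearly from the method of proof''. What you have written is essentially a self-contained version of that method: you realize $\|f\|_{\mathbb{H}^s_k}$ as $\|\langle D\rangle^{s}(\langle\cdot\rangle^{k}f)\|_{L^2}$ and run Stein's analytic-family theorem (three-lines lemma) on $T_z=\langle D\rangle^{s(z)}(\langle\cdot\rangle^{k(z)}\,\cdot\,)$, which is precisely the complex-interpolation machinery underlying the Riesz--Thorin/Stein--Weiss statements being cited. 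The trade-off is clear: the citation is shorter, while your proof actually exhibits where the constant $C$ comes from (the two norm equivalences and the admissible growth of the boundary bounds), which is the very point the paper's remark glosses over.

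Two places deserve more care than your write-up gives them. First, the boundary estimate does not follow merely ``since both imaginary-power factors are unitaries on $L^{2}$'': in $T_{it}f=\langle D\rangle^{it(s_1-s_2)}\langle D\rangle^{s_{2}}\bigl(\langle\cdot\rangle^{it(k_1-k_2)}\langle\cdot\rangle^{k_{2}}f\bigr)$ the multiplication by $\langle\cdot\rangle^{it(k_1-k_2)}$ sits \emph{inside} $\langle D\rangle^{s_{2}}$, so what you really need is that multiplication by $\langle v\rangle^{i\tau t}$ is bounded on $H^{s_{2}}$ with operator norm growing at most polynomially in $|t|$ (its derivatives grow like powers of $|t|$), which is admissible for Stein's theorem; this is true, but it is exactly the commutator-type input you defer to pseudodifferential calculus, and it should be stated as the hypothesis being verified rather than as unitarity. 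Second, for $s<0$ your duality step must be phrased consistently with the paper's convention $\mathbb{H}^{s}_{-\eta}:=\bigl(\mathbb{H}^{-s}_{\eta}\bigr)^{*}$, i.e.\ the weight flips sign together with the smoothness index; the pairing argument you sketch goes through, but as written the equivalence for negative $s$ and positive $k$ needs that one extra line. With these points made explicit, your proof is a complete and quantitative substitute for the citation.
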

\begin{rmq} The constant $C >0$ is actually missing in the statement \cite{MouhVill04} but appears clearly from the method of proof.
\end{rmq}
\begin{rmq}\label{rmqinterpolSob} Clearly, using Young's inequality $ab \leq \theta a^{\frac{1}{\theta}} + (1-\theta)b^{\frac{1}{1-\theta}}$, the above inequality shows that there is a constant $C=C_\theta > 0$ such that
$$\|f\|_{\mathbb{H}^s_k} \leq C_\theta\,\left(\|f\|_{\mathbb{H}_{k_1}^{s_1}} + \|f\|_{\mathbb{H}^{s_2}_{k_2}}\right)$$
for $k=\theta\,k_1+(1-\theta)k_2$, $s=\theta\,s_1+ (1-\theta)s_2$ and $\theta \in [0,1].$
\end{rmq}
We also recall the following result from \cite{MiMo3}
\begin{lem}\label{interpolm} For any $k, q \in \mathbb{N}$ and any exponential weight function
$$m(v) := \exp(-a|v|^s) \quad\hbox{for}\quad a  \in (0,\infty),$$
with $s \in (0,1]$, there exists $C > 0$ such that for any $h \in \mathbb{H}^{k^\ddagger} \cap L^1(m^{-12})$
with $k^\ddagger := 8 k + 7 (1+d/2)$
$$\| h \|_{\mathbb{W}^{k,1}_q(m^{-1})} \leq C \, \| h \|_{\mathbb{H}^{k^\ddagger}}^{1/8} \, \,\| h \|_{L^1( m^{-12})}^{1/8} \,  \| h \|_{L^1(m^{-1})}^{3/4}.$$
\end{lem}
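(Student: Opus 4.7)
The claim is a standard weighted Gagliardo--Nirenberg--Sobolev interpolation; the plan is to combine three classical ingredients and to track the exponents so that the $(1/8, 1/8, 3/4)$-split emerges.

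The first ingredient is a Jensen-type interpolation for weighted $L^{1}$-norms: for any $f \geq 0$, any weights $w_{1}, w_{2} > 0$, and any $\theta \in [0,1]$, Jensen's inequality applied to the probability measure $f\,\d v/\|f\|_{L^{1}}$ yields
$\int f\, w_{1}^{1-\theta} w_{2}^{\theta}\,\d v \leq \bigl(\int f w_{1}\,\d v\bigr)^{1-\theta}\bigl(\int f w_{2}\,\d v\bigr)^{\theta}$.
We will apply this with $f = |\partial^{\ell}h|$ (for $|\ell| \leq k$) and $w_{1} = m^{-1}$, $w_{2} = m^{-12}$, using moreover the elementary inequality $\langle v\rangle^{q}\, m^{-1}(v) \leq C_{q,\epsilon}\,m^{-(1+\epsilon)}(v)$ valid for any $\epsilon > 0$ (since $s > 0$ makes any polynomial dominated by any nontrivial exponential). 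This reduces the polynomial weight $\langle v\rangle^{q}$ together with $m^{-1}$ to a single slightly heavier exponential weight $m^{-c}$ with $c \in (1,12)$, and subsequently lets us express $\|h\|_{L^{1}(m^{-c})}$ as a geometric mean of $\|h\|_{L^{1}(m^{-1})}$ and $\|h\|_{L^{1}(m^{-12})}$.

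The second ingredient is the unweighted Sobolev-type interpolation
$\|h\|_{\mathbb{H}^{k}} \leq C\,\|h\|_{L^{1}}^{7/8}\,\|h\|_{\mathbb{H}^{k^{\ddagger}}}^{1/8}$
with $k^{\ddagger} = 8k + 7(1+d/2)$. It follows by writing $\|h\|_{\mathbb{H}^{k}}^{2} = \int(1+|\xi|^{2})^{k}|\widehat{h}(\xi)|^{2}\,\d\xi$, using the two bounds $|\widehat{h}(\xi)| \leq \|h\|_{L^{1}}$ and $|\widehat{h}(\xi)|^{2} \leq \langle\xi\rangle^{-2k^{\ddagger}}\|h\|_{\mathbb{H}^{k^{\ddagger}}}^{2}$, splitting the frequency integral at a radius $R$ and optimizing in $R$. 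The specific value $k^{\ddagger} = 8k + 7(1+d/2)$ is exactly the one that places $k$ at the linear-interpolation point between the negative Sobolev index $-(1+d/2)$ at which $L^{1}$ embeds (by Cauchy--Schwarz, since $\langle\xi\rangle^{-2(1+d/2)}\in L^{1}(\R^{d})$) and the positive index $k^{\ddagger}$, with respective weights $7/8$ and $1/8$.

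The third ingredient is a Cauchy--Schwarz with an integrable polynomial factor to transfer from $L^{2}$ to $L^{1}$ with moments, namely $\|\partial^{\ell}h\|_{L^{1}_{q}(w)} \leq C_{q,\epsilon}\,\|\partial^{\ell}h\|_{L^{2}_{q+(d+1)/2}(w)}$ for any weight $w$ and any $\epsilon > 0$.

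Assembly. Stage $1$ establishes an intermediate estimate of the form
$\|h\|_{\mathbb{W}^{k,1}_{q}(m^{-1})} \leq C\,\|h\|_{\mathbb{H}^{k^{\ddagger}}}^{1/8}\,\|h\|_{L^{1}(m^{-c})}^{7/8}$,
by first absorbing $\langle v\rangle^{q}$ into an exponential factor $m^{-\epsilon}$, then using Cauchy--Schwarz to pass to a weighted $L^{2}$-norm, and finally applying the Sobolev interpolation of the second ingredient in a weighted variant (the weight being the same on both sides, so it can be tracked multiplicatively through the Plancherel-based argument). Stage $2$ applies the Jensen interpolation of the first ingredient to $\|h\|_{L^{1}(m^{-c})}$ with the Jensen parameter $\theta = 1/7$, which forces $c = 18/7$ so that
$\|h\|_{L^{1}(m^{-18/7})} \leq \|h\|_{L^{1}(m^{-1})}^{6/7}\,\|h\|_{L^{1}(m^{-12})}^{1/7}$,
and the $7/8$ power of this combined with Stage $1$ reproduces exactly the exponents $3/4$ and $1/8$ claimed on $\|h\|_{L^{1}(m^{-1})}$ and $\|h\|_{L^{1}(m^{-12})}$ respectively.

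The main obstacle is the weighted version of the Sobolev interpolation in Stage~$1$: exponential weights are incompatible with Fourier/Plancherel arguments, so one must either distribute them pointwise via Leibniz on $\partial^{\ell}(h\,m^{-c/2})$ and use the plain Sobolev interpolation on the weighted profile $h\,m^{-c/2}$, or alternatively defer all the weight manipulations to the $L^{1}$-level via Jensen and only invoke the Sobolev inequality on an essentially unweighted residue. Either way, the only nontrivial calculation is the exponent bookkeeping needed to match $k^{\ddagger} = 8k+7(1+d/2)$ and the $(1/8, 1/8, 3/4)$-split.
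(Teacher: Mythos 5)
Note first that the paper does not prove Lemma \ref{interpolm} at all: it is quoted from \cite{MiMo3}, and the paper's only contribution is the remark that follows it, extending the range $s\in(0,1)$ from \cite{MiMo3} to $s=1$. So there is no in-text proof to compare your argument to; the question is whether your sketch is self-contained. Your exponent bookkeeping is correct and worth recording: the Fourier-side interpolation $\|h\|_{\mathbb{H}^{k}}\le C\|h\|_{\mathbb{H}^{-\sigma}}^{7/8}\|h\|_{\mathbb{H}^{k^{\ddagger}}}^{1/8}$ with $\sigma=1+d/2$ (and $L^{1}\hookrightarrow\mathbb{H}^{-\sigma}$ by Cauchy--Schwarz since $2\sigma>d$) gives precisely $k^{\ddagger}=8k+7\sigma=8k+7(1+d/2)$, and your Stage~2 arithmetic $c=18/7$, $(7/8)(6/7)=3/4$, $(7/8)(1/7)=1/8$ reproduces the claimed split.

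The gap is exactly the step you flag as ``the main obstacle'' and then leave unresolved: the Stage~1 estimate $\|h\|_{\mathbb{W}^{k,1}_{q}(m^{-1})}\le C\|h\|_{\mathbb{H}^{k^{\ddagger}}}^{1/8}\|h\|_{L^{1}(m^{-c})}^{7/8}$, in which the Sobolev factor on the right is \emph{unweighted}. Neither workaround you propose delivers this. For (a): setting $g=hm^{-c/2}$, using that $|\partial^{\beta}m^{-c/2}|\le Cm^{-c/2}$ for $s\le1$, and applying the unweighted interpolation to $g$ yields on the right $\|g\|_{\mathbb{H}^{k^{\ddagger}}}\sim\|h\|_{\mathbb{H}^{k^{\ddagger}}(m^{-c})}$, a \emph{weighted} Sobolev norm strictly larger than the $\|h\|_{\mathbb{H}^{k^{\ddagger}}}$ you need, and there is no obvious one-line way to strip that weight off without re-introducing the very interpolation you are trying to prove. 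For (b): no single H\"older split of $\int|\partial^{\ell}h|\langle v\rangle^{q}m^{-1}\,\d v$ in physical space can produce simultaneously an $L^{2}$-type factor to the power $1/8$ and an $L^{1}(m^{-c})$ norm of $h$ (not of $\partial^{\ell}h$) to the power $7/8$. For instance, the natural split $|\partial^{\ell}h|\langle v\rangle^{q}m^{-1}=(|\partial^{\ell}h|\langle v\rangle^{-N})^{1/8}(|\partial^{\ell}h|\langle v\rangle^{q'}m^{-8/7})^{7/8}$ followed by H\"older $(8,8/7)$ leaves $\|\partial^{\ell}h\|_{L^{1}_{q'}(m^{-8/7})}^{7/8}$ on the right, with the derivative still present and a heavier exponential weight, so this does not terminate; and any split producing $\|\partial^{\ell}h\|_{L^{2}}^{1/8}$ necessarily carries a fractional power of $|\partial^{\ell}h|$ in the conjugate factor. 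A self-contained proof of Stage~1 needs a genuinely different mechanism (a dyadic decomposition in $v$ so the unweighted Sobolev interpolation can be used on each annulus and summed, or the argument from \cite{MiMo3}). As written, your proposal is a plausible outline with correct arithmetic, but the key weighted interpolation is asserted rather than proved; you should either supply that step or, as the paper does, simply cite \cite{MiMo3}.
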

{ \begin{rmq} The above Lemma is stated in \cite{MiMo3} for $m(v)=\exp(-a|v|^{s})$ with $a > 0$ and $s \in (0,1)$ but the proof can be extended in a straightforward way to the case $s=1$.\end{rmq}}
\subsection*{Acknowledgments} B. L. acknowledges support of the {\it de Castro Statistics Initiative}, Collegio C. Alberto, Moncalieri, Italy.

\end{document}